\newcommand{\Par}[1]{\left( #1 \right)}
\newcommand{\dgi}[0]{g_j-g_i-h_i(x_j-x_i)}
\newcommand{\dfi}[0]{f_j-f_i-g_i(x_j-x_i)-\frac{h_i}{2}(x_j-x_i)^2}
\newcommand{\dxi}[0]{x_j-x_i}
\newcommand{\dhhi}[0]{h_j-h_i}
\newcommand{\dg}[0]{T^g_{12}}
\newcommand{\df}[0]{T^f_{12}}
\newcommand{\dx}[0]{\Delta x_{12}}
\newcommand{\dhh}[0]{\Delta h_{12}}
\newcommand{\quadr}[0]{S=\{(x_i,f_i,g_i,h_i)\}_{i\in[N]}}
\newcommand{\quadrnof}[0]{S=\{(x_i,g_i,h_i)\}_{i\in[N]}}
\newcommand{\yun}[0]{y_1}
\newcommand{\ydeux}[0]{y_2}
\newcommand{\zun}[0]{z_1}
\newcommand{\zdeux}[0]{z_2}
\newcommand{\ra}[1]{\renewcommand{\arraystretch}{#1}}
\newcommand{\FLip}[1]{\mathcal{F}_{#1}} 
\newcommand{\GLip}[1]{\mathcal{G}_{#1}} 
\newcommand{\HLip}[1]{\mathcal{H}_{#1}} 
\newcommand{\R}{\mathbb{R}}
\newcommand{\bR}{\bar{\mathbb{R}}}
\newcommand{\bC}{\bar{\mathcal{C}}}
\newcommand{\HH}{\mathcal{H}}
\newcommand{\F}{\mathcal{F}}
\newcommand{\C}{\mathcal{C}}
\newcommand{\Sc}{\mathcal{S}_{M,+}}
\newcommand{\QSc}{\mathcal{T}_{M,+}}
\newcommand{\ba}{{\beta(\alpha)}}
\newcommand{\figureH}[1]{
    \ifthenelse{\equal{\displayfigures}{false}}
      {
       \textcolor{blue}{\textit{hidden figure}}
      }
      {
      #1 
      }
}
\newcommand{\displayfigures}{true}
\def\mycolor{blue}
\newtheorem{assump}{Assumption}
\begin{document}

\title{Performance Estimation of second-order optimization methods on classes of univariate functions
}

\titlerunning{Performance Estimation of second-order optimization methods}        

\author{Anne Rubbens$^{1\dagger}$ \and
        Nizar Bousselmi$^{1\dagger}$\and
        Julien M. Hendrickx$^{1}$ \and
        François Glineur$^{1,2}$
}

\authorrunning{A. Rubbens, N. Bousselmi, J. M. Hendrickx, F. Glineur} 

\institute{
$^{1}$ UCLouvain, ICTEAM, Louvain-la-Neuve, Belgium \\
$^{2}$ UCLouvain, CORE, Louvain-la-Neuve, Belgium  \\
\email{firstname.surname@uclouvain.be}  \\
$^\dagger$ These authors contributed equally to this work. The order of authorship was determined by a random draw. 
}

\date{Received: date / Accepted: date}

\maketitle

\begin{abstract}
We develop a principled approach to obtain exact computer-aided worst-case guarantees on the performance of second-order optimization methods on classes of univariate functions. We first present a generic technique to derive interpolation conditions for a wide range of univariate functions, and use it to obtain such conditions for generalized self-concordant functions (including self-concordant and quasi-self-concordant functions) and functions with Lipschitz Hessian (both convex and non-convex). We then exploit these conditions within the Performance Estimation framework to tightly analyze the convergence of second-order methods on univariate functions, including (Cubic Regularized) Newton's method and several of its variants. Thereby, we improve on existing convergence rates, exhibit univariate lower bounds (that thus hold in the multivariate case), and
analyze the performance of these methods with respect to the same criteria.
\keywords{Performance Estimation \and Interpolation conditions \and Second-Order Optimization \and Newton's method \and Worst-case analysis}
\subclass{68Q25 \and 90C53 \and 90C25 \and 26A06}
\end{abstract}

\section{Introduction}\label{sect:introduction}
Consider the unconstrained minimization problem
\begin{equation}
    \min_{x\in \mathbb{R}^d} f(x),\label{eq:minprob}
\end{equation} 
where $f$ belongs to a given function class $\F$, e.g, the class of twice-differentiable functions $f:\R^d\to \R$ whose Hessian is $M$-Lipschitz continuous, i.e,
\begin{align}\label{eq:hess1}
    \|\nabla^2 f(x)-\nabla^2 f(y)\|\leq M\|x-y\|, \quad \forall x,y \in \R^d.
\end{align}
To solve \eqref{eq:minprob}, consider $N$ iterations of a black-box (oracle-based) second-order optimization method $\mathcal{M}$, e.g., Newton's method, i.e, 
\begin{align*}
    x_{k+1}=x_k- \nabla^2 f(x_k)^{-1} \nabla f(x_k).
\end{align*}
Our goal is to answer the following question.
\begin{center}
    \noindent \textbf{What is the worst-case convergence rate after $N$ iterations of method $\mathcal{M}$ on the class $\F$?
    } 
\end{center}

A worst-case convergence rate of a method on a function class is an upper bound on how quickly the method converges to a (local or global) minimum for any function in the class. This bound is said to be \emph{tight} or \emph{exact} when it is attained by a function of the class considered. Obtaining an exact rate allows understanding the possible sources of inefficiency and to tune a given method, by choosing its parameters as to minimize the rate. In addition, obtaining the exact convergence rate of different methods provides a basis for accurately comparing these methods. In this work, we extend the Performance Estimation framework \cite{drori2014performance,taylor2017smooth} that allows to automatically compute tight convergence rates of first-order methods, to the tight analysis of second-order methods applied to \emph{univariate} functions.

\subsection{Second-order optimization}
Second-order methods such as Newton's method or Cubic Regularized Newton method \cite{nesterov2006cubic}, exploit evaluations of the Hessian of the objective function. They have attracted a lot of attention due to their local quadratic convergence and importance for interior-point methods \cite{conn2000trust,nesterov1994interior} based on self-concordant functions. Several globally convergent variants of Newton's method were then also proposed on different classes of functions \cite{cartis2011adaptive,doikov2024super,hanzely2022damped,hildebrand2021optimal,ivanova2024optimal,mishchenko2023regularized,nesterov2006cubic,polyak2009regularized}. 

The potential tightness of existing convergence rates is an open question in general.
In practice, the methods often behave significantly better than predicted by the theory, either because worst-case instances are very unlikely to appear in practical problems, or because the current convergence rates are too pessimistic and can be improved.
In addition, a fair comparison between the different variants of Newton's method is difficult to perform, in the sense that the performance measures and hypotheses vary for each analysis. A tight and systematic analysis of these variants on a few well-chosen performance measures would thus allow improving on the existing bounds, provide insight into their level of pessimism, and allow for a more fair comparison. More detailed state-of-the-art about the methods we analyze in this work is provided in Section \ref{sec:results}.

\subsection{Obtaining convergence rates: the Performance Estimation Framework}
For a wide range of methods, the convergence rate after a fixed number of iterations can be computed via the Performance Estimation Problem (PEP) approach \cite{drori2014performance,taylor2017smooth}, which builds on the following observation: in principle, our main question can be reformulated as the following type of problem, where we select the distance to the minimizer as the performance measure:
\begin{equation}\label{PEP_untractable}
    \begin{aligned}
   &  && \underset{x_0, x_\star \in \R^d, f \in \F}{\max \ } ||x_N-x_\star||\\
     &\text{Method's definition:}&&\text{ s.t. } x_{k+1}=x_k-\nabla^2f(x_k)^{-1} \nabla f(x_k), \quad k=0,...,N-1 \\
     &\text{Initial condition:}&&\quad \quad||x_0-x_\star||\leq R \\
     &\text{Local optimality:}&&\quad \quad \nabla f(x_\star)=0, \nabla^2 f(x_\star) \succeq 0. 
\end{aligned}
\end{equation}
That is, we are looking for the function $f$, initial point $x_0$, and local optimizer $x_\star$ leading to the worst performance of the method $\mathcal{M}$, and hence ensure that $\mathcal{M}$ behaves better or equivalently on all functions of the class $\F$. Problem \eqref{PEP_untractable} is an infinite-dimensional problem, but \cite{drori2014performance,taylor2017smooth} have shown how it admits a finite reformulation. The idea is to replace optimization over $f\in \F$ by optimization over the set $S=\{(x_k,g_k,h_k,f_k)\}_{k=0,\ldots,N,\star}$ containing all iterates $x_k$ (and the minimizer $x_\star$) along with the corresponding gradients $g_k$, Hessians $h_k$, and function values $f_k$, under the constraint that $S$ is consistent with an actual function in $\F$. Indeed, \eqref{PEP_untractable} is equivalent to
\begin{equation}\label{PEP_untractable_bis}
    \begin{aligned}
   &  && \underset{S=\{(x_k,f_k,g_k,h_k)\}_{k=0,\ldots,N,\star}}{\max \ } ||x_N-x_\star||\\
     &\text{Method's definition:}&&\text{ s.t. } x_{k+1}=x_k-h_k^{-1} g_k, \quad k=0,...,N-1 \\
     &\text{Initial condition:}&&\quad \quad||x_0-x_\star||\leq R \\
     &\text{Local optimality:}&&\quad \quad g_\star=0, h_\star \succeq 0\\
     &\text{Consistency with $\F$:}&&\quad \quad \exists f\in \F: f(x_k)=f_k, \ \nabla f(x_k)=g_k, \nabla^2 f(x_k)=h_k, \\
     &&&\quad \quad \quad \quad \quad \quad k=0,\ldots,N,\star.
\end{aligned}
\end{equation}%
Making the last constraint in \eqref{PEP_untractable_bis} explicit requires having access to \emph{interpolation conditions} for $\F$, that is necessary and sufficient conditions to impose on a data set $S=\{(x_k,f_k,g_k,h_k)\}_{k=0,\ldots,N,\star}$ to ensure its consistency with a function in $\F$.  Solving a PEP as in \eqref{PEP_untractable_bis} with interpolation conditions ensures tightness of the resulting bound. If instead we impose necessary but not sufficient conditions on the set $S$, we obtain a relaxation of \eqref{PEP_untractable}, potentially leading to bounds that are not tight (see \cite{rubbens2024constraintbased,taylor2017smooth} for examples). Figure \ref{fig:qsc} illustrates the difference between relying on exact interpolation conditions and necessary conditions when solving a PEP. 
\begin{figure}[ht!]
    \centering
    \figureH{
    \includegraphics[width=1\linewidth]{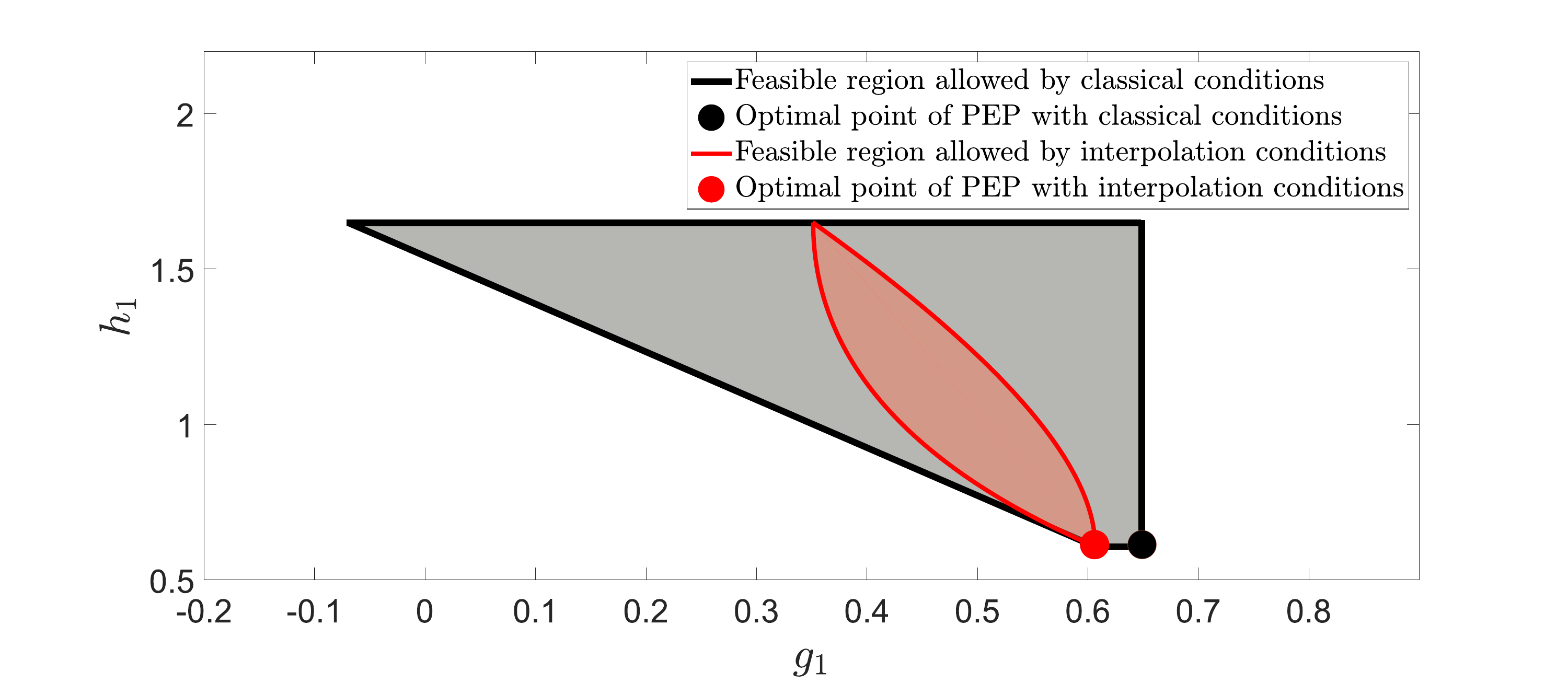}
    }
    \setlength{\belowcaptionskip}{-10pt}
    \caption{Given $(x_0,g_0,h_0)=(0,1,1)$, $M=1$ and $x_1=\frac{1}{2}$, the plot shows the admissible region for $(g_1,h_1)$ such that $\{(x_i,g_i,h_i)\}_{i=0,1}$ satisfies (i) classical conditions defining quasi-self-concordant functions, i.e., \eqref{eq:qsc_existing}, (black curves), or (ii) interpolation conditions for this function class, i.e., Corollary \ref{th:IC_qsc} (red curves). It also shows the points $(g_1,h_1)$ corresponding to the worst-case of a variant of Newton's method, whose iterations are given by \eqref{eq:GNM1}, according to both conditions. Using interpolation conditions instead of necessary conditions significantly restricts the domain $(g_1,h_1)$ of the PEP, and the optimal point obtained by solving a PEP with necessary conditions (black dot) is not consistent with any actual function of the class, hence the result is not tight.}
    \label{fig:qsc}
\end{figure}
\FloatBarrier

Obtaining tight bounds thus requires (i) obtaining interpolation constraints for the function class $\F$ of interest, and (ii) solving the PEP \eqref{PEP_untractable_bis}, either analytically or numerically. We briefly present the state of the art regarding these two steps.
\subsubsection*{Interpolation conditions}
Interpolation conditions are known so far for a wide range of functions involving zeroth or first-order oracles, e.g., Lipschitz continuous functions \cite{gruyer2009minimal,whitney1934differentiable}, (smooth) weakly or strongly convex functions \cite{azagra2017whitney,azagra2019smooth,rotaru2024exact,taylor2017smooth,yan2012extension}, weakly convex functions with bounded subgradients \cite{rubbens2024constraintbased}, smooth Lipschitz functions \cite{thesis}, indicator functions \cite{luner2024performance,thesis,taylor2018exact}, difference-of-convex functions \cite{abbaszadehpeivasti2024rate}, and several other classes of functions (see, e.g.,~\cite{guille2022gradient,goujaud2022optimal,goujaud2024pepit}). In addition, interpolation conditions exist for (strongly) monotone, cocoercive, Lipschitz, or linear operators \cite{bousselmi2023interpolation,ryu2020operator}. It turns out that imposing any of these interpolation conditions on a data set $S$ in \eqref{PEP_untractable_bis} can be expressed in a convex way, with the use of a Gram-matrix based lifting \cite{taylor2017smooth}. Obtaining such conditions from scratch is in general difficult, see, e.g., \cite{rubbens2025constructive} for details.

To this day, not much has been achieved on the topic of interpolation conditions for function classes involving higher-order properties, e.g., functions whose second or third derivative is Lipschitz continuous. In particular, to the best of our knowledge, no interpolation condition exists yet for any second-order function class. Some progress has been made in obtaining interpolation conditions for convex functions, when these are meant to interpolate a dataset up to their second derivative \cite{yan2012extension}, but these are sufficient conditions only, and require the dataset to cover a convex region. On the contrary, we seek to obtain necessary and sufficient conditions to be imposed on a finite number of points, hence a non-convex domain.

\subsubsection*{Solving a PEP efficiently}
Obtaining tight bounds via the PEP framework requires being able to solve the associated PEP \eqref{PEP_untractable_bis} efficiently. For instance, for a wide range of methods and function classes, one can show that \eqref{PEP_untractable_bis} can be reformulated as a convex semidefinite program. This is the case for first-order methods and interpolation conditions that can be formulated as (in-)equalities convex in function values and scalar products of iterates and (sub-)gradients, on which the PEP framework allowed obtaining tight bounds \cite{colla2022automatic,de2017worst,dragomir2021optimal,drori2020complexity,gannot2022frequency,kamri2022worst,lee2022convergence,taylor2019stochastic,taylor2018exact}.

Very recently, the framework was extended to include non-convex PEPs \cite{gupta2023nonlinear,das2024branch}, which do not admit such convex reformulation, by relying on a non-convex quadratically constrained quadratic program formulation of \eqref{PEP_untractable_bis}. The use of these non-convex formulations is to this day restricted to the analysis of first-order methods.

To the best of our knowledge, the only attempt at computer-aided analysis of second-order methods is the study of a single iteration of Newton's method on the class of self-concordant functions in \cite{taylorinexact}, but these results are based on interpolation conditions of a larger function class, hence not a priori tight, and are inherently restricted to the analysis of a single iteration as they rely on the use of a metric related to the starting iterate.

\begin{remark}
    Solving non-convex formulations of PEP has been done on a few occasions previously to (i)~find a 2-dimensional worst case using KKT conditions and computer algebra system \cite[Section 4.1]{ryu2020operator}, (ii)~design optimal method using global non-convex solver \cite{gupta2023nonlinear}, (iii)~analyze adaptive first-order methods \cite{barre2020complexity}.
\end{remark}

\subsection{Organization and contributions}
In this work, we first propose a generic approach to derive exact interpolation conditions for classes of univariate functions, namely, we
\begin{itemize}
    \item[(i)] Propose a technique to lift interpolation conditions from a given univariate function class $\F$ to an associated univariate function class $\int \F$, defined as the set of functions whose derivative belongs to $\F$ (Section \ref{sec:intrp} and Theorem \ref{lem:technique}). 
    \item[(ii)] Present a range of function classes on which the technique applies, that is functions satisfying $|f^{(k+1)}(x)|\leq A f^{(k)}(x)^\alpha$ for some $A,\alpha \geq 0$, where $f^{(k)}$ denotes the $k^{\text{th}}$ derivative of $f$ (Section \ref{sec:fun_classes}). This definition includes generalized smoothness \cite{li2023} and generalized self-concordance \cite{sun2019generalized}, and in particular adapts to (strongly convex) functions with $M$-Lipschitz continuous Hessian and (quasi)-self-concordant functions \cite{sun2019generalized}.
    We propose a unified interpretation of these classes by showing they satisfy a Lipschitz condition on a specific quantity depending on $\alpha$ (Proposition \ref{prop:def2}).
    \item[(iii)] Exploit the technique to derive explicit interpolation conditions for these function classes, summarized in Table \ref{tab:tab2} (Section \ref{sec:new_sec3}).
    \begin{table}[ht!]
    \scriptsize
    \centering
    \ra{1.5}
    \setlength{\abovecaptionskip}{0.5pt}
    \caption{Summary of interpolation conditions of classes of univariate functions.}
        \begin{tabular}{@{}llll@{}}
            \toprule
            Class & $\{(x_i,h_i)\}$ &  $\{(x_i,g_i,h_i)\}$ & $\{(x_i,f_i,g_i,h_i)\}$ \\
            \midrule 
            Hessian Lipschitz & Th.\@ \ref{prop:interp_basic} &  Th.\@ \ref{thm:IC_generalized_self_concordant} or Cor.\@ \ref{prop:Hinterpnofi} & Th.\@ \ref{th:ICf_hessian_Lipschitz} (new) \\
            Hessian Lipschitz and strongly convex & Th.\@ \ref{prop:interp_basic} (new) & Th.\@ \ref{thm:IC_generalized_self_concordant} or Cor. \ref{th:ICg_convex_hessian_Lipschitz}  (new) & \\
            Self-concordant & Th.\@ \ref{prop:interp_basic} (new) & Th.\@ \ref{thm:IC_generalized_self_concordant} or Cor.\@ \ref{th:IC_sc} (new) &  \\
            Quasi-self-concordant & Th.\@ \ref{prop:interp_basic} (new) & Th.\@ \ref{thm:quasi_self} or Cor.\@ \ref{th:IC_qsc} (new) &   \\
            Generalized self-concordant & Th.\@ \ref{prop:interp_basic} (new) & Th.\@ \ref{thm:IC_generalized_self_concordant} (new) &   \\
            \bottomrule
        \end{tabular} 
    \label{tab:tab2}
\end{table}
\end{itemize}

We then use these conditions to analytically analyze several second-order methods (Section \ref{sec:results}), thereby
\begin{itemize}
    \item[(i)] Obtaining a tight bound for one iteration of the Cubic Regularized Newton method on Hessian Lipschitz univariate functions~\cite{nesterov2006cubic} (Theorem \ref{lem:new_descent_lemma}) and Gradient Regularized Newton method on quasi-self-concordant univariate functions \cite{doikov2023minimizing} (Lemma \ref{lem:descent_qsc}). These automatically serve as multivariate lower bounds, that improve on existing ones.
    \item[(ii)] Proving tightness of existing results in the \emph{multivariate case}: Newton's method and Gradient method on Hessian Lipschitz functions \cite{nesterov2018lectures}  (Theorems \ref{th:local_quadratic_tight} and \ref{th:Nesterov_GM}) and Newton's method on quasi-self-concordant and strongly convex functions (Lemma \ref{lem:lemma_qsc_improved}).
    \item[(iii)] Proposing an alternative proof of an existing result on the convergence of Newton's method on self-concordant functions (Section \ref{sect:sc_nm});
\end{itemize}

Finally, we leverage on the advances in solving non-convex PEPs to solve formulations associated with several second-order methods (Section \ref{sec:results}), thereby
\begin{itemize}
    \item[(i)] Numerically improving convergence rates in the univariate case: Cubic Regularized Newton method on Hessian Lipschitz functions (Figure \ref{fig:CNM_descent_lemma}, for more than one iteration), Gradient Regularized Newton method on (strongly) convex Hessian Lipschitz functions (Figure \ref{fig:GRN_local_strongly}), Newton and damped Newton methods on self-concordant functions (Theorem \ref{th:dnm_sc_our}), Gradient Regularized Newton method on quasi-self-concordant functions (Figure \ref{fig:LRN2}, for more than one iteration). 
    \item[(ii)] Tuning methods in the univariate case: Cubic Regularized Newton method with a stepsize on Hessian Lipschitz functions (Figure \ref{fig:CNM_alpha}), and Fixed damped Newton method on Hessian Lipschitz functions (Figure \ref{fig:DNM_alpha}).
    \item[(iii)] Comparing different variants of Newton's method in the univariate case (Figure \ref{fig:all_methods}).
\end{itemize}
Table \ref{tab:tab1} summarizes the state-of-the-art bounds on the performance of the second-order methods we analyzed. 
{\small
\begin{table}[ht!]
    \centering
    \ra{1.5}
    \setlength{\abovecaptionskip}{0.5pt}
    \caption{Summary of existing convergence results of the literature together with our analytical or numerical contributions (in the univariate case).}
        \begin{tabular}{@{}llllll@{}}
            \toprule
            Method$^1$ & Class$^2$ & Perf.\@ meas.\@ & Initial condition & References & Contributions$^3$ (univariate case)\\
            \midrule 
            CNM & HL & $\min\limits_{k}|f'(x_{k})|$ & $f(x_0)-f_\star$ & \cite[Th.\@ 1]{nesterov2006cubic} & Improved guarantee (Th.\@ \ref{lem:new_descent_lemma} and Fig.\@ \ref{fig:CNM_descent_lemma}) \\
            CNM & HL & $|f'(x_{N})|$ & $f(x_0)-f_\star$ &  & First (numerical) guarantee (Fig.\@ \ref{fig:CNM_descent_lemma}) \\
            CNM & HL $\cap$ C & $f(x_N)-f_\star$ & $f(x_0)-f_\star$ & \cite[Th.\@ 6]{nesterov2006cubic} & Improved guarantee (Rem.\@ \ref{rem:CNM_gradient_dominated}) \\
            CNM & HL $\cap$ sC & $f(x_N)-f_\star$ & $f(x_0)-f_\star$ & \cite[Th.\@ 7]{nesterov2006cubic} & Improved guarantee (Rem.\@ \ref{rem:CNM_gradient_dominated}) \\
            GNM2 & HL $\cap$ sC & $|f'(x_{k+1})|$ & $|f'(x_{k})|$ & \cite[Th.\@ 2.7]{mishchenko2023regularized} & Improved (Fig.\@ \ref{fig:GRN_local_strongly}) \\
            NM & HL & $|x_{k+1} - x_\star|$ & $|x_k - x_\star|$ & \cite[Th.\@ 1.2.5]{nesterov2018lectures} & Tightness proved (Th.\@ \ref{th:local_quadratic_tight}) \\
            GM & HL & $|x_{k+1} - x_\star|$ & $|x_k - x_\star|$ & \cite[Th.\@ 1.2.4]{nesterov2018lectures} & Tightness proved (Th.\@ \ref{th:local_linear_tight})\\
            DNM & HL & $|x_{k+1} - x_\star|$ & $|x_k - x_\star|$ &  & First (numerical) guarantee (Figs.\@ \ref{fig:DNM_r} and \ref{fig:DNM_alpha}) \\
            NM & SC & $\lambda_f(x_{k+1})$ & $\lambda_f(x_k)$ & \cite[Eq.\@ (11)]{hildebrand2021optimal} & Worst-case function (Sect. \ref{sect:sc_nm}) \\
            DNM & SC & $\lambda_f(x_{N})$ & $\lambda_f(x_0)$ & \cite[Eq.\@ (11)]{hildebrand2021optimal} & Tightness proved (Th.\@ \ref{th:dnm_sc_our}) \\
            GNM1 & QSC & $|f'(x_{N})|/f''(x_N)$ & $|f'(x_0)|/f''(x_0)$ & \cite[Eq.\@ (49)]{doikov2023minimizing} & Improved guarantee (Lem.\@ \ref{lem:descent_qsc} and Fig.\@ \ref{fig:LRN2}) \\
            NM & QSC $\cap$ sC & $|f'(x_{k+1})|$ & $|f'(x_k)|$ & \cite[Eq.\@ (53)]{doikov2023minimizing} & Tightness proved (Lem.\@ \ref{lem:lemma_qsc_improved}) \\
            all & HL $\cap$ sC & $|f'(x_{k+1})|$ & $|f'(x_{k})|$ &  & First (numerical) comparison (Fig.\@ \ref{fig:all_methods}) \\
            \bottomrule
        \end{tabular} 
        {\scriptsize \begin{flushleft}
        $^1$ NM: Newton's method, CNM: Cubic Regularized Newton method, GNM: Gradient Regularized Newton method, GM: Gradient method, DNM: fixed damped Newton method. \\
        $^2$ HL: Hessian Lipschitz, C: Convex, sC: Strongly convex, SC: Self-Concordant, QSC: Quasi-Self-Concordant functions.\\
        $^3$ Our contributions are proved when referred to as Lem., Rem., Sect., or Th.\@ and they are only conjectured based on numerical experiments when referred to as Fig.\@ or Conj.
        \end{flushleft}
        }
    \label{tab:tab1}
\end{table}
}
\FloatBarrier
\subsection{Restriction to the univariate setting}
Our method and analysis are restricted to classes of univariate functions ($d=1$) due to two main difficulties, preventing a straightforward generalization to higher dimensions. First, it is in general not straightforward to obtain interpolation conditions for given function classes. Nevertheless, we propose a technique to automatically obtain such conditions in the univariate case, which heavily relies on properties specific to univariate functions. Second-order interpolation conditions for classes of multivariate functions remain an open question. Second, there is no known efficient way to solve PEPs involving second-order quantities in the multivariate case: these are a priori highly non-convex and of large dimensions. On the contrary, we manage to solve PEPs in various univariate second-order settings due to their smaller size.

This work thus only takes a first step into the problem of tightly analyzing second-order methods. However, we believe it is important for several reasons. First, univariate interpolation conditions can provide intuition about candidate multivariate conditions. Second, exact univariate bounds provide lower bounds on the multivariate worst-case performance of the method. In fact, in many settings (first and second-order methods), univariate functions are observed to provide the general multivariate worst-case \cite{cartis2010complexity,doikov2021new,taylor2017smooth,toint2024examplesslowconvergenceadaptive,rotaru2024exact} (it is even more often the case when considering a single iteration). Finally, in case the obtained univariate lower bound matches a known analytical multivariate upper bound, the associated worst-case instance proves tightness of this upper bound.

\subsection{Notation}
Given $N \in \mathbb{N}$, we let $[N]=\{0, \cdots,N\}$. Throughout, we consider the space $\mathbb{R}$, and let $\bR=\R\cup \{+\infty\}$.

We denote by $\C^{m}$ the class of univariate functions $f:\R\to \R$ that are at least $m$ times everywhere differentiable, with derivative up to order $m$ everywhere continuous. We denote by $\bar{\C}^0$ the class of functions $f:\R \to \bar{\R}$ that are everywhere continuous, i.e., $\forall c\in \R$, $\lim_{x\to c}f(x)=f(c)$, in the following sense:
\begin{align*}
    \begin{cases}
        \forall \epsilon >0, \ \exists \delta>0, \forall x\in \R: \ |x-c|<\delta \Rightarrow |f(x)-f(c)|<\epsilon &\text{ for finite  $f(c)$},\\
        \forall N >0, \ \exists \delta>0, \forall x\in \R: \ |x-c|<\delta \Rightarrow f(x)>N &\text{ for infinite  $f(c)$}.
    \end{cases}
\end{align*}
We denote by $\text{dom } f=\{x\in \R:\ f(x) \neq +\infty \}$ the \emph{effective domain of} $f\in \bar{\C}^0$. 

Let $f\in \bar{\C}^0$, and suppose $f$ is at least $m$ times differentiable on $\text{dom }  f$. We denote by $f^{(k)}$ the $k^{\text{th}}$ derivative of $f$ (where $f^{(0)}=f$), defined $\forall c\in \R$ and for $k\geq 1$ as
\begin{align*}
    f^{(k)}(c)=\lim_{x\to c} \frac{f^{(k-1)}(x)-f^{(k-1)}(c)}{x-c},
\end{align*}
i.e. as the classical order $k$ derivative on the effective domain, and $+\infty$ otherwise. We say $f\in \bC^m$ if (i) $f^{(k)}$ exists, and (ii) $f^{(k)}\in \bC^0$, $\forall k\leq m$.

Finally, we say a function $f:\R\to \bar{\R}$ is $M$-Lipschitz when \begin{align}\label{eq:M_Lip}
    |f(x)-f(y)|\leq M|x-y|, \ \forall x,y \in \R,
\end{align} $M$-smooth when it is differentiable with $M$-Lipschitz derivative, convex when $f(\lambda x+(1-\lambda)y)\leq \lambda f(x)+(1-\lambda)f(y) \quad \forall x,y \in \R, \ \lambda \in (0,1)$, and strongly convex with parameter $\mu > 0$ when $f(x)-\mu \frac{x^2}{2}$ is convex.

\section{Generic technique to lift interpolation conditions from known classes to higher-order classes}\label{sec:intrp}
We seek for \emph{interpolation conditions} for classes of univariate functions $\F \subseteq \bar{\C}^m$, i.e., necessary and sufficient conditions ensuring $\F$-\emph{interpolability} of a discrete data set.
\begin{definition}[Interpolation with function values]\label{def:interp_fun}
Given a univariate function class $\F \subseteq \bar{\C}^m$, a set $S=\{(x_i, f_i^0, f_i^1, ..., f_i^m)\}_{i\in [N]}\in (\R\times \ldots \times \R)^N$ is $\F$-interpolable with function values if and only if
    \begin{equation}
        \exists f \in \F :
           f^{(k)}(x_i)  = f^k_i, \ \forall k\in[m], \quad \forall i\in[N].
    \end{equation}
\end{definition}
In some circumstances, given a dataset $S$, one seeks to ensure the existence of a function in $\F$ consistent with $S$ except possibly for its function values $f_i^0$. Definition \ref{def:interp_fun} can be straightforwardly extended to this case, called interpolation without function values.
\begin{definition}[Interpolation without function values]\label{def:interp_nofun}
Given a univariate function class $\F \subseteq \bar{\C}^m$, a set $S=\{(x_i, f_i^0, f_i^1, ..., f_i^m)\}_{i\in[N]}\in (\R\times \ldots \times \R)^N$ is $\F$-interpolable without function values if and only if
    \begin{equation}
        \exists f \in \F  : f^{(k)}(x_i)  = f^k_i, \quad \forall k=1,\cdots,m , \ \forall i\in[N].
    \end{equation}
\end{definition}
The quantities $f_i^0$ are thus ignored in interpolation without function values and are not necessarily equal to function values $f^{(0)}(x_i)$.

\begin{remark}
In the univariate case, interpolation conditions without function values are often more concise than interpolation conditions with function values. Hence, we resort to this formulation whenever possible, for instance when analyzing methods that do not involve function values in their settings (see, e.g., \cite[Section 3.4]{thesis}). 
\end{remark}
Whenever considering the cases $m= 1, 2$, we denote $f_i^0:=f_i$, $f_i^1:=g_i$, $f_i^2:=h_i$, $f^{(1)}(x):=f'(x)$ and $f^{(2)}(x):=f''(x)$.

\subsection{Overview of the technique}\label{sec:overview}
Suppose interpolation conditions are known for a class $\mathcal{F} \subseteq \bar{\mathcal{C}}^m$. We propose a technique, summarized in Theorem~\ref{lem:technique}, to derive interpolation conditions for the associated class
\begin{align}
    \int \mathcal{F} := \{ f : \mathbb{R} \to \bar{\mathbb{R}} \mid f' \in \mathcal{F} \} \subseteq \bar{\mathcal{C}}^{m+1},
    \label{def:Fm-1}
\end{align}
consisting of functions whose derivative belongs to $\mathcal{F}$. This approach thus lifts interpolation conditions from $\mathcal{F}$ to the higher-order class $\int \mathcal{F}$.

Before presenting the method in details, we illustrate its two main steps on a simple example, depicted in Figure~\ref{fig:illustration_method}. Consider the class $\mathcal{F}_M$ of $M$-Lipschitz functions, whose interpolation conditions correspond to a discretization of \eqref{eq:M_Lip}, see, e.g., \cite{kirszbraun1934zusammenziehende,valentine1945lipschitz}. Using these conditions, we aim to derive interpolation conditions for the class $\int \mathcal{F}_M$ of $M$-smooth functions, i.e., conditions that ensure the consistency of a dataset $S = \{(x_i,f_i,g_i)\}_{i\in[N]}$ with a $M$-smooth function defined everywhere.

First, we address interpolation conditions without function values for $\int \F_M$. By the fundamental theorem of calculus, every continuous function in $\mathcal{F}_M$ is the derivative of some function in $\int \mathcal{F}_M$, so these conditions coincide with the interpolation conditions with function values for $\mathcal{F}_M$ (Lemma~\ref{lemma:interpnof}). Specifically, $S$ is $\int \mathcal{F}_M$-interpolable without function values if and only if
\begin{align*}
    |g_i - g_j| \leq M |x_i - x_j| \quad \forall i,j.
\end{align*}
\begin{figure}[ht!]
    \centering
     \begin{subfigure}[b]{0.85\textwidth}
         \centering
         \includegraphics[width=\textwidth]{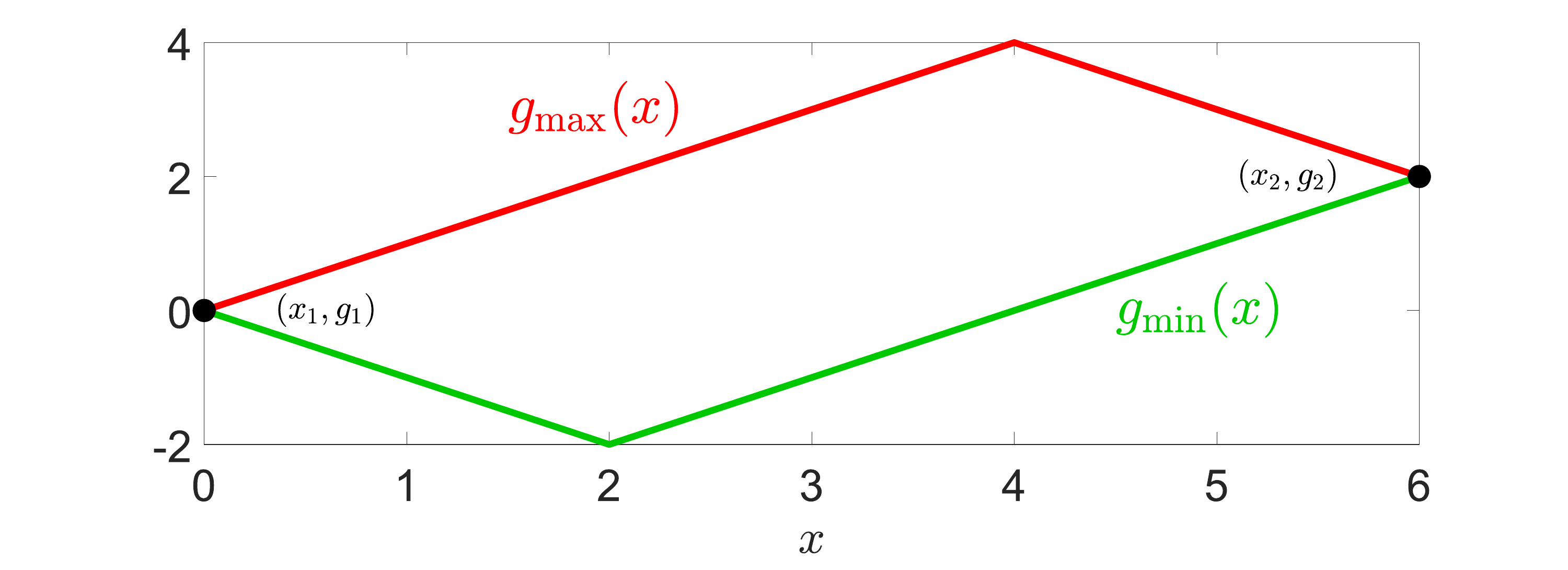}
     \end{subfigure}
     \begin{subfigure}[b]{0.85\textwidth}
         \centering
         \includegraphics[width=\textwidth]{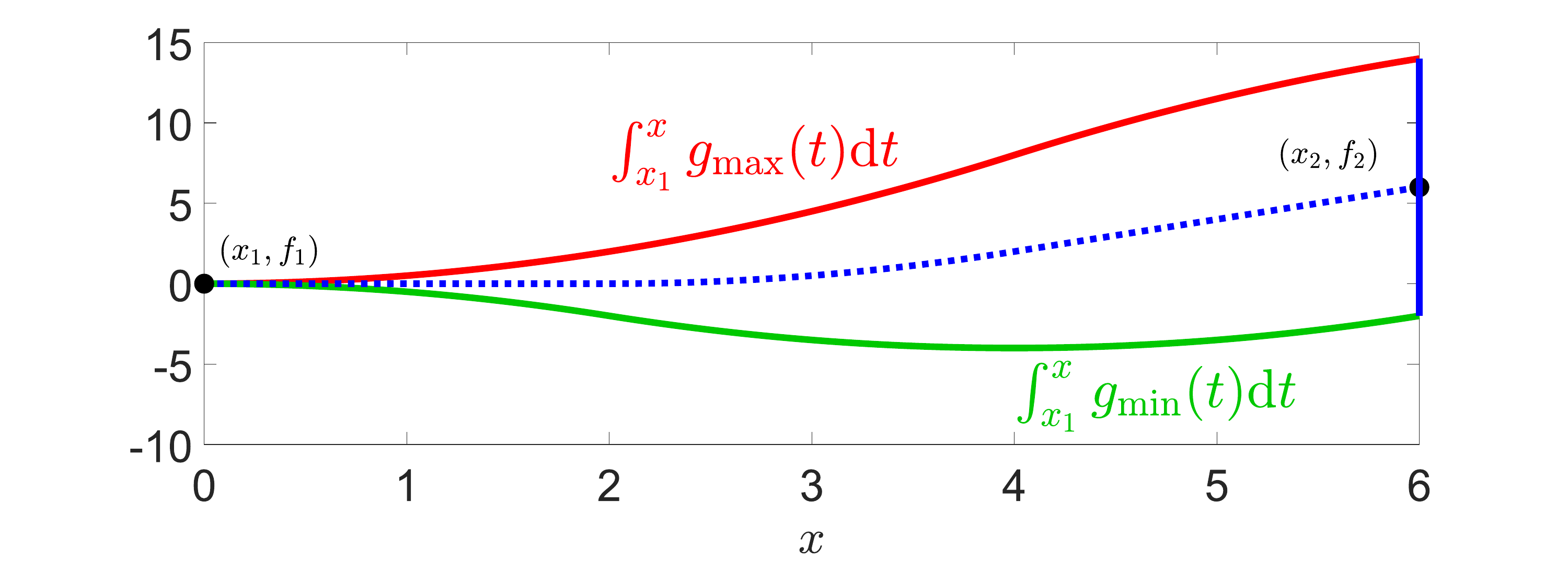}
     \end{subfigure}
    \caption{Illustration of the method to obtain interpolation conditions ensuring $S=\{(x_i,f_i,g_i)\}_{i=1,2}$ to be $\FLip{M}$-interpolable: extremal $M$-Lipschitz interpolants $g_{\min}$ (above, green curve) and $ g_{\max}$ (above, red curve) of $\{(x_i,g_i)\}_{i=1,2}$ (see Proposition \ref{prop:properties} for their derivation). These extremal interpolants are integrated (below, red and green curves), and define an interval including all possible smooth functions interpolating $S$, except possibly $f_2$. The interval $[f_1+\int_{x_1}^{x_2}g_{\min}(x) \mathrm{d}x, f_1+\int_{x_1}^{x_2}g_{\max}(x) \mathrm{d}x]$ (below, blue line) is exactly the interval of admissible values for $f_2$, in the sense that whenever $f_2$ belongs to this interval, there exists a convex combination of the integrals of $g_{\min}$ and $g_{\max}$ (below, blue dotted curve) which is in $\int \FLip{M}$ and interpolates $S$.}
    \label{fig:illustration_method}
\end{figure}

Second, we exploit these interpolation conditions without function values to derive interpolation conditions \emph{with} function values for \(\int \mathcal{F}_M\). Suppose  \(S\) is \(\int \mathcal{F}_M\)-interpolable without function values. Since piecewise smooth continuous functions are globally smooth, the problem reduces to interpolation on a single interval \([x_i, x_j]\) (see Lemma~\ref{lem:interval}). 

On this interval, consider all functions in \(\mathcal{F}_M\) that interpolate \(\{(x_i, g_i), (x_j, g_j)\}\), and in particular the \emph{extremal} interpolants \(g_{\min}\) and \(g_{\max}\) (Definition~\ref{def:gextr}), see Figure~\ref{fig:illustration_method} for an illustration. Given \(f_i\), integration of these extremal interpolants over \([x_i, x_j]\) yields the interval
\[
\left[\, f_i + \int_{x_i}^{x_j} g_{\min}(x) \, \mathrm{d}x, \quad f_i + \int_{x_i}^{x_j} g_{\max}(x) \, \mathrm{d}x \,\right]
\]
of admissible values for \(f_j\) (cf.\ Lemma~\ref{prop:interm1}). Indeed, if \(f_j\) lies within this interval, there exists a convex combination \(g = \lambda g_{\min} + (1-\lambda) g_{\max}\) such that the function $f_i + \int_{x_i}^x g(t) \, \mathrm{d}t$ (i) is \(M\)-smooth by convexity of the smoothness property, i.e., any convex combination of smooth functions is itself smooth, and (ii) interpolates \(S\). Conversely, if \(f_j\) lies outside this interval, no function in \(\mathcal{F}_M\) interpolates \(S\).

Hence, imposing the interpolation condition
\begin{align}
\int_{x_i}^{x_j} g_{\min}(x) \, \mathrm{d}x \leq f_j - f_i \leq \int_{x_i}^{x_j} g_{\max}(x) \, \mathrm{d}x,
\end{align}
in addition to the conditions without function values, yields interpolation conditions with function values for \(\int \mathcal{F}_M\), see Theorem \ref{thm:IC_generalized_self_concordant}.

In this toy example, to recover interpolation conditions for \(\int \mathcal{F}_M\) from those of \(\mathcal{F}_M\), we relied on several key properties of both \(\mathcal{F}_M\) and \(\int \mathcal{F}_M\):
\begin{enumerate}
\item Access to interpolation condition without function values for $\int \F$;

\item Existence of extremal interpolants;

\item Convexity of $\int \F$;

\item Piecewise \(\int \mathcal{F}\)-interpolability implies global \(\int \mathcal{F}\)-interpolability.
\end{enumerate}
For general classes of continuous univariate functions, Point 1 is obtained naturally using the continuous and univariate properties of functions. Points 2, 3, and 4 will be guaranteed by Assumptions \ref{assum:extrgrad}, \ref{assum:conv}, and \ref{assum1}.

Before stating Theorem~\ref{lem:technique}, we detail the steps of the method and underlying assumptions. In Section~\ref{sec:fun_classes}, we demonstrate that Theorem~\ref{lem:technique} applies to a broad range of function classes, including (convex) functions with Lipschitz Hessians and (quasi-) self-concordant functions. For illustrative applications of the technique to these and other classes, see Section~\ref{sec:interp_basic}.
\subsection{Interpolation without function values}
In the univariate case, interpolation without function values of $\int \F$ amounts exactly to interpolation with function values of $\F$, for continuous function classes.
\begin{lemma} \label{lemma:interpnof}
    Consider univariate function classes $\F \subseteq \bar{\C}^m$ and $\int \F \subseteq \bar{\C}^{m+1}$ (defined in \eqref{def:Fm-1}). A set $S=\{(x_i,f_i^0,f_i^1,...,f_i^m)\}_{i\in[N]}$ is $\int \F$-interpolable (without function values) if and only if $\tilde S:=\{(x_i,f_i^1,...,f_i^m)\}_{i\in[N]}$ is $\F$-interpolable (with function values).
    \begin{proof}
        By Definition \ref{def:interp_nofun}, $S$ is $\int \F$-interpolable without function values if 
        \begin{alignat*}{4}
             & \exists \ f\in \textstyle \int \F & &:   f^{(k)}(x_i)=f_i^k, && \forall k=1,\ldots,m, \ \forall i \in [N] \\
             \overset{\text{\eqref{def:Fm-1}}}{\Leftrightarrow}\ & \exists \ f: f'=g \in \F&& , \ f^{(k)}(x_i)=f_i^k, \ &&\forall k =1,\ldots,m, \ \forall i \in [N]\\
            \Leftrightarrow \ \ &\exists \ g\in \F && : g^{(k)}(x_i)=f_i^{k+1}, &&\ \forall k=0,\ldots,m-1, \ \forall i \in [N]\\
             \mathclap{\hspace{5.6cm}\overset{\text{Def.\@ \ref{def:interp_fun}}}{\Leftrightarrow}  \tilde S \text{ is $\F$-interpolable with function values,}}
       \end{alignat*}
        where the second equality follows from the fundamental theorem of calculus, see, e.g., \cite[Chapter 6]{rudin1976principles} applied on the effective domain of $g$.
    \qed \end{proof}
\end{lemma}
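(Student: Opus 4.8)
The plan is to prove both directions by unwinding the definitions of interpolability (Definitions~\ref{def:interp_nofun} and~\ref{def:interp_fun}), the only genuinely analytic ingredient being the fundamental theorem of calculus. Concretely, I would present the argument as a short chain of equivalences between ``$\exists\, f\in\int\F$ matching the prescribed derivative values of orders $1,\dots,m$ at the points $x_i$'' and ``$\exists\, g\in\F$ matching the prescribed values $f_i^1,\dots,f_i^m$, read as $g(x_i),g'(x_i),\dots,g^{(m-1)}(x_i)$, at the points $x_i$'', the bridge being the substitution $g=f'$.

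For the direction ``$\int\F$-interpolable without function values $\Rightarrow$ $\F$-interpolable with function values'', I would take an $f\in\int\F$ witnessing $\int\F$-interpolability of $S$, set $g:=f'$, which lies in $\F$ by the definition~\eqref{def:Fm-1} of $\int\F$, and note that $g^{(k)}=f^{(k+1)}$ for every $k\ge 0$; hence the conditions $f^{(k)}(x_i)=f_i^k$ ($k=1,\dots,m$) become $g^{(k)}(x_i)=f_i^{k+1}$ ($k=0,\dots,m-1$), which says exactly that $\tilde S$ is $\F$-interpolable with function values. This direction is purely formal.

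For the converse, I would start from a $g\in\F$ witnessing $\F$-interpolability of $\tilde S$ with function values and reconstruct $f$ by integration. Since $g\in\F\subseteq\bC^{m}$ it is continuous on its effective domain, which contains every $x_i$ because the prescribed values $f_i^1,\dots,f_i^m$ are finite; fixing a base point $x_0\in\operatorname{dom}g$ and setting $f(x):=\int_{x_0}^{x}g(t)\,\mathrm{d}t$, extended by $+\infty$ where the integral diverges, I would invoke the fundamental theorem of calculus to obtain $f'=g$, hence $f\in\int\F$. Then $f^{(k)}=g^{(k-1)}$ for $k\ge 1$ gives $f^{(k)}(x_i)=f_i^{k}$ for $k=1,\dots,m$, so $S$ is $\int\F$-interpolable without function values; the quantity $f_i^0$ plays no role, which is why the constant of integration is free.

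I expect the only delicate point to be the well-definedness of this antiderivative as an element of $\bC^{m+1}$ in the $\bR$-valued setting: one should check that integrating the continuous function $g$ over its effective domain yields a function that, together with the $\bC^0$ continuity convention, belongs to $\bC^{m+1}$ and has derivative exactly $g$ --- in particular that it takes the value $+\infty$ and blows up in the prescribed sense at the boundary of $\operatorname{dom}g$ --- and that the $(m+1)$-th derivative of the reconstructed $f$, which $S$ does not constrain, causes no obstruction. Everything else is index bookkeeping, so I would keep the final write-up to the compact chain of equivalences above.
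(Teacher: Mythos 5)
Your proposal is correct and follows essentially the same route as the paper: unwind Definitions~\ref{def:interp_fun} and~\ref{def:interp_nofun}, set $g=f'$ so that indices shift by one, and use the fundamental theorem of calculus (on the effective domain) to reconstruct $f$ from $g$ in the converse direction. The paper packages this as a single chain of equivalences rather than two directions, and does not dwell on the $\bR$-valued subtleties you flag, but the mathematical content is the same.
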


\subsection{Interpolation conditions with function values (2 points)}
We now show how to go from interpolation without function values to interpolation with function values. Informally, given a set $S=\{(x_i,f_i^0,f_i^1,\ldots,f_i^{m+1})\}_{i\in [N]}$ which is $\int \F$-interpolable without function values, we ensure that one of the functions in $\F$ interpolating $S$ (except possibly for $f_i^0, \ i\in [N]$) is the derivative of a function $f$ satisfying $f(x_i)=f_i^0, \ i\in [N]$.

We take the simplest situation possible and restrict our attention to the interpolation of a single pair of data points $x_1<x_2$.

Consider a set $S=\{(x_i,f_i^0,f_i^1,\ldots,f_i^{m+1})\}_{i=1,2}$ which is $\int \F$-interpolable without function values and the associated set $\tilde S=\{(x_i,f_i^1,\ldots,f_i^{m+1})\}_{i=1,2}$. By Lemma \ref{lemma:interpnof}, $\tilde S$ is $\F$-interpolable, hence there exists at least one function $g \in \F$ interpolating $\tilde S$. For each of such $g$, we can construct an associated function $f \in \int \F$, interpolating $S$ with the exception of $f(x_2)$, which could differ from $f_2^0$.
\begin{lemma} \label{lem:fromgtof}
Consider a univariate function class $\F \subseteq\bar{\C}^m$, and let \break$S=\{(x_i,f_i^0,f_i^1,\ldots,f_i^{m+1})\}_{i=1,2}$ be $\int \F$-interpolable without function values, $\tilde S=\{(x_i,f_i^1,\ldots,f_i^{m+1})\}_{i=1,2}$, and $g\in \F$ be a function interpolating $\tilde S$.  Then, 
    \begin{align}\label{def:associated_fun}
    f(x):=f_1^0-\int_{-\infty}^{x_1} g(z) \mathrm{d}z +\int_{-\infty}^x g(z) \mathrm{d}z,
    \end{align}
    belongs to $\int \F$ and interpolates $S$ except $f(x_2)$, which could differ from $f_2^0$.
\begin{proof}    
    It holds that $f'(x)=g(x)$, hence by Definition \ref{def:Fm-1}, $f\in \int \F$. In addition, 
    \begin{align*}
        & f^{(k)}(x_i)=g^{(k-1)}(x_i)= f_i^k,\ i=1,2, \ k=1,2,\ldots,m+1, \text{ and } f(x_1)=f_1^0,
    \end{align*}
    hence $f$ interpolates $S$ except possibly for $f(x_2)$.
\qed \end{proof}
\end{lemma}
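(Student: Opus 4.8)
The plan is a direct verification that the function $f$ defined in \eqref{def:associated_fun} does the job, namely that (i)~$f\in\int\F$ and (ii)~$f$ reproduces the data $S$ at $x_1$ and $x_2$ with the single possible exception of the value $f_2^0$. Everything hinges on the observation that, by construction, $f$ is simply an antiderivative of $g$: rewriting \eqref{def:associated_fun} as $f(x)=f_1^0+\int_{x_1}^{x}g(z)\,\mathrm{d}z$, the fundamental theorem of calculus (applied on $\mathrm{dom}\,g$, exactly as in the proof of Lemma~\ref{lemma:interpnof}) gives $f'(x)=g(x)$ wherever $g$ is finite and continuous, with $f=+\infty$ off $\mathrm{dom}\,g$, consistently with the paper's extended-real conventions; hence $f'=g$ as elements of $\bar{\C}^{m}$.

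First I would establish membership: since $g\in\F$ and $f'=g$, Definition~\eqref{def:Fm-1} immediately yields $f\in\int\F\subseteq\bar{\C}^{m+1}$, the regularity being automatic because the antiderivative of a function in $\bar{\C}^m$ lies in $\bar{\C}^{m+1}$. Second I would check interpolation. From $f'=g$ we get $f^{(k)}=g^{(k-1)}$ for every $k\ge 1$, so, using that $g$ interpolates $\tilde S$, $f^{(k)}(x_i)=g^{(k-1)}(x_i)=f_i^k$ for $i=1,2$ and $k=1,\ldots,m+1$. For the zeroth order, evaluating \eqref{def:associated_fun} at $x=x_1$ makes the two improper integrals cancel, so $f(x_1)=f_1^0$; at $x=x_2$ one only obtains $f(x_2)=f_1^0+\int_{x_1}^{x_2}g(z)\,\mathrm{d}z$, which carries no reason to coincide with $f_2^0$, whence the exception in the statement. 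This exhausts all conditions of Definition~\ref{def:interp_fun} for $S$ except the value at $x_2$, and the lemma follows.

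There is essentially no deep obstacle here; the only point requiring a little care is the well-posedness of the formula \eqref{def:associated_fun}, since the improper integrals $\int_{-\infty}^{x}g$ need not individually converge (e.g.\ when $g$ is a nonzero constant). The safe reading — the one used throughout — is that the two improper integrals are to be interpreted together as the genuine integral $\int_{x_1}^{x}g(z)\,\mathrm{d}z$, which is finite for every $x$ since $g$ is continuous on its effective domain; with this understood, the proof reduces to the one-line computation above.
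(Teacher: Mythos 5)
Your proposal is correct and follows the same route as the paper: show $f'=g$, invoke Definition~\eqref{def:Fm-1} for membership in $\int\F$, then verify $f^{(k)}(x_i)=g^{(k-1)}(x_i)=f_i^k$ and $f(x_1)=f_1^0$ by direct evaluation. The only addition is your well-taken remark that \eqref{def:associated_fun} should really be read as $f(x)=f_1^0+\int_{x_1}^{x}g(z)\,\mathrm{d}z$ to avoid divergence of the individual improper integrals, a point the paper glosses over but which does not change the argument.
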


Consider, among these functions $g\in \F$ interpolating $S$, the lowest and highest functions, called the \emph{extremal interpolants} of $S$, see Figure \ref{fig:illustration_method} for an illustration. As such, extremal interpolants are not well-defined and might not exist or be non-unique. We thus introduce the notion of \emph{lower and upper interpolating envelopes}, that is pointwise minimum and maximum of all functions in $\F$ interpolating $S$.
\begin{definition}[Extremal interpolating envelopes]\label{def:gextr}
Consider a univariate function class $\F \subseteq \bar{\C}^m$ and an $\F$-interpolable set $S=\{(x_i,g_i^0,g_i^1,...,g_i^m)\}_{i=1,2}$, where $x_1<x_2$. The \emph{extremal interpolating envelopes} $g_{\min}$ and $g_{\max}$ of $S$ are defined as, $\forall x\in \R$,
\begin{align} \label{eq:defgmin}
g_{\min\text{ (resp. } \max)}(x) =\underset{g:\mathbb{R}\rightarrow \mathbb{R}}{\inf} \text{ (resp. $\sup$)}\  \quad & g(x) \\
\text{s.t.} \quad & g \in  \F \nonumber \text{ and }g^{(k)}(x_i)=g_i^k, \ i=1,2, \ k\in [m]. \nonumber 
\end{align}
\end{definition}
Extremal interpolating envelopes always exist, but do not always belong to $\F$. To ensure it is the case, we require $\F$ to be \emph{extremally interpolable}:
\begin{assump}[Extremally interpolable function class]\label{assum:extrgrad}
    We say a function class $\F \subseteq \bar{\C}^m$ of univariate functions is \emph{extremally interpolable} if any $\F$-interpolable set $ S=\{(x_i,g_i^0,g_i^1,...,g_i^m)\}_{i=1,2}$ satisfies $g_{\min}, \ g_{\max} \in \F$ where $g_{\min}$ and $g_{\max}$ are defined in \eqref{eq:defgmin}.
\end{assump}
Whenever $\F$ is extremally interpolable, we refer to extremal interpolating envelopes of $S$ as extremal interpolants of $S$.

Suppose now that such extremal interpolants exist, and that the associated functions $f_{\min},f_{\max} \in \int \F$ as defined in \eqref{def:associated_fun} evaluated at $x_2$ are respectively smaller and bigger than $f_2^0$. Then, there exists a function $f \in \int \F$ interpolating $S$ (including $f(x_2)=f_2^0$), provided $\int \F$ is \emph{extremally completable}.
\begin{assump}[Extremally completable function class]\label{assum:conv}
Let $\F \subseteq\bar{\C}^m$ be an extremally interpolable (Assumption \ref{assum:extrgrad}) class of univariate functions, and consider $\int \F \in \bar{\F}^{m+1}$. Let $S=\{(x_i,f_i^0,f_i^1,\ldots,f_i^{m+1})\}_{i=1,2}$ be $\int \F$-interpolable without function values, $\tilde S=\{(x_i,f_i^1,\ldots,f_i^{m+1})\}_{i=1,2}$, and $g_{\min}, g_{\max}\in \F$ be the extremal interpolants of $\tilde S$, as defined in \eqref{eq:defgmin}. Let, in addition,
\begin{align*}
    f_1^0+\int_{x_1}^{x_2} g_{\min}(z) \mathrm{d}z\leq f_2^0\leq f_1^0+\int_{x_1}^{x_2} g_{\max}(z) \mathrm{d}z.
\end{align*}
We then say $\int \F$ is \emph{extremally completable} if
\begin{align*}
    \exists f \in \int \F :\ f^{(k)}(x_i)= f^{k}_i, \quad \forall i\in [N], \quad \forall k\in [m+1].
\end{align*}  
\end{assump}
Instances of function classes $\int \F$ that are extremally completable include any function class satisfying the more natural notion of \emph{convexity}, including quasi-self-concordant functions and functions with Lipschitz Hessian, see Proposition \ref{prop:properties}.
\begin{assump}[Convex function class]\label{assum:conv1}
        We say a class $\F$ of univariate functions is \emph{convex} if, given any $f_a,f_b \in \F$ and any $\lambda \in [0,1]$,  
        \begin{align*}
            \lambda f_a+(1-\lambda) f_b \in \F.
        \end{align*}  
\end{assump}
Convex function classes are extremally interpolable.
\begin{lemma}
    Let $\F \subseteq\bar{\C}^m$ be an extremally interpolable (Assumption \ref{assum:extrgrad}) class of univariate functions. If $\int \F$ is convex (Assumption \ref{assum:conv1}), then $\int \F$ is extremally completable (Assumption \ref{assum:conv}).
\end{lemma}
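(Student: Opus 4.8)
The plan is to exhibit the required interpolant in $\int\F$ as a convex combination of the two antiderivatives associated with the extremal interpolants of $\tilde S$, choosing the combination weight so that the value at $x_2$ equals exactly $f_2^0$. All of the work of producing an element of $\int\F$ with the correct derivatives is already carried out by Lemma~\ref{lem:fromgtof}; the only remaining degree of freedom is the function value at $x_2$, which I would pin down through a one-parameter (affine, hence continuous) family together with the intermediate value theorem.

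In detail: since $S$ is $\int\F$-interpolable without function values, Lemma~\ref{lemma:interpnof} gives that $\tilde S=\{(x_i,f_i^1,\ldots,f_i^{m+1})\}_{i=1,2}$ is $\F$-interpolable, and because $\F$ is extremally interpolable (Assumption~\ref{assum:extrgrad}) its extremal interpolating envelopes $g_{\min},g_{\max}$ (Definition~\ref{def:gextr}) lie in $\F$. Applying Lemma~\ref{lem:fromgtof} to $g=g_{\min}$ and to $g=g_{\max}$ produces $f_{\min},f_{\max}\in\int\F$, each satisfying $f_{\bullet}^{(k)}(x_i)=f_i^k$ for $k=1,\ldots,m+1$ and $i=1,2$ and $f_{\bullet}(x_1)=f_1^0$, with
\[
f_{\min}(x_2)=f_1^0+\int_{x_1}^{x_2} g_{\min}(z)\,\mathrm{d}z\le f_2^0\le f_1^0+\int_{x_1}^{x_2} g_{\max}(z)\,\mathrm{d}z=f_{\max}(x_2),
\]
the inequalities being exactly the standing hypothesis of Assumption~\ref{assum:conv}. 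Now set $f_\lambda:=\lambda f_{\min}+(1-\lambda)f_{\max}$ for $\lambda\in[0,1]$. Convexity of $\int\F$ (Assumption~\ref{assum:conv1}) gives $f_\lambda\in\int\F$; differentiation is linear, so $f_\lambda^{(k)}(x_i)=\lambda f_i^k+(1-\lambda)f_i^k=f_i^k$ for $k=1,\ldots,m+1$ and $i=1,2$, and $f_\lambda(x_1)=f_1^0$. Finally $\lambda\mapsto f_\lambda(x_2)=\lambda f_{\min}(x_2)+(1-\lambda)f_{\max}(x_2)$ is affine and continuous, equal to $f_{\max}(x_2)\ge f_2^0$ at $\lambda=0$ and to $f_{\min}(x_2)\le f_2^0$ at $\lambda=1$; by the intermediate value theorem there is $\lambda^\star\in[0,1]$ (explicitly $\lambda^\star=\frac{f_{\max}(x_2)-f_2^0}{f_{\max}(x_2)-f_{\min}(x_2)}$ when the denominator is nonzero, arbitrary otherwise) with $f_{\lambda^\star}(x_2)=f_2^0$. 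Then $f_{\lambda^\star}\in\int\F$ interpolates $S$ completely, i.e.\ $f_{\lambda^\star}^{(k)}(x_i)=f_i^k$ for all $i=1,2$ and all $k\in[m+1]$, which is precisely what extremal completability demands.

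The argument is essentially bookkeeping once the pieces are assembled, so I do not expect a genuine obstacle; the one point that deserves care is the well-definedness of $f_{\min}$ and $f_{\max}$ as $\bR$-valued functions, i.e.\ that $\int_{x_1}^{x_2}g_{\min}$ and $\int_{x_1}^{x_2}g_{\max}$ are finite, so that the bracketing inequalities and the affine dependence on $\lambda$ make literal sense. This is guaranteed by Lemma~\ref{lem:fromgtof} (whose hypotheses hold here, $g_{\min},g_{\max}$ being $\F$-functions interpolating $\tilde S$) together with the finiteness of $f_2^0$ in the hypothesis of Assumption~\ref{assum:conv}, which rules out the degenerate $+\infty$ integrals. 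One should also record the trivial case $f_{\min}(x_2)=f_{\max}(x_2)$, where the hypothesis forces this common value to equal $f_2^0$ and any $\lambda$ works.
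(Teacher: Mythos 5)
Your proof is correct and follows essentially the same route as the paper: construct $f_{\min}$ and $f_{\max}$ via Lemma~\ref{lem:fromgtof} applied to the extremal interpolants $g_{\min},g_{\max}$, observe that the hypothesis brackets $f_2^0$ between $f_{\min}(x_2)$ and $f_{\max}(x_2)$, then take the convex combination that hits $f_2^0$ exactly and invoke Assumption~\ref{assum:conv1}. The additional points you flag (finiteness of the integrals, linearity of differentiation under the convex combination, and the degenerate case $f_{\min}(x_2)=f_{\max}(x_2)$) are reasonable to record but are left implicit in the paper's terser argument.
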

\begin{proof}
    With the same notation as in Assumption \ref{assum:conv}, let     \begin{align*}
    f_{\min/\max}(x):=f_1^0-\int_{-\infty}^{x_1} g_{\min/\max}(z) \mathrm{d}z +\int_{-\infty}^x g_{\min/\max}(z) \mathrm{d}z.
    \end{align*}
 By Lemma \ref{lem:fromgtof}, $f_{\min}$ and $f_{\max}$ belong to $\int \F$. In addition, the two functions interpolate $S$ with the exception of $f_2^0=\lambda f_{\text{lo}}(x_2)+(1-\lambda)f_{\text{hi}}(x_2)$ for some $\lambda\in [0,1]$. Hence $f:= \lambda  f_{\text{lo}}+ (1-\lambda) f_{\text{hi}}$ interpolates $S$ (including $f(x_2)=f_2$), and by convexity of $\int \F$, we have $f\in \int \F$.
\qed \end{proof}
Convexity on its own proves to be too restrictive, since not satisfied by classes of interest, e.g., self-concordant functions. This justifies the introduction of the weaker notion of extremal completability, satisfied by self-concordant functions.

For classes $\F$ that are extremally interpolable, and such that $\int \F$ is extremally completable, we obtain interpolation conditions with function values for $\int \F$, on a single interval, by enforcing $f_2^0$ to belong to the interval $[f_1^0+\int_{x_1}^{x_2} g_{\min}(z) \mathrm{d}z, f_1^0+\int_{x_1}^{x_2} g_{\max}(z) \mathrm{d}z]$ defined by the integrals of the extremal interpolants of $S$.
\begin{lemma}
    \label{prop:interm1}
    Let $\F \subseteq \bar{\C}^m$ be an extremally interpolable (Assumption \ref{assum:extrgrad}) class of univariate functions and let $\int \F \subseteq \bar{\C}^m$ (defined in \eqref{def:Fm-1}) be extremally completable (Assumption \ref{assum:conv}).
    
    A set $S=\{(x_i,f_i^0,f_i^1,...,f_i^{m+1})\}_{i=1,2}$, where $x_1<x_2$ is $\int \F$-interpolable with function values if and only if $S$ is $\int \F$-interpolable without function values, and $f_1^0, \ f_2^0$ satisfy
            \begin{align}\label{eq:integr}
                \int_{x_1}^{x_2} g_{\min}(x) \mathrm{d}x\leq f_2^0-f_1^0\leq \int_{x_1}^{x_2} g_{\max}(x) \mathrm{d}x,
            \end{align}
            where $g_{\min}$ and $g_{\max}$ are defined as in \eqref{eq:defgmin}.
\begin{proof}
    Sufficiency follows the definition of extremal completability. To prove necessity, suppose $S=\{(x_i,f_i^0,f_i^1,...,f_i^{m+1})\}_{i=1,2}$ is $\int \F$-interpolable (with function values). Then, by Lemma \ref{lemma:interpnof}, $S$ is $\F$-interpolable (without function values). In addition, by Definition \ref{def:gextr} of extremal interpolants, any function $g:\R \to \R$ such that (i) $\exists f\in \int \F: \ g(x)=f'(x), \ \forall x\in \R$ and (ii) $g$ interpolates $S$,  satisfies $$g_{\min}(x)\leq g(x)\leq g_{\max}(x), \ \forall x\in [x_1,x_2].$$
    Hence, \eqref{eq:integr} is necessarily satisfied.
    
\qed \end{proof}
\end{lemma}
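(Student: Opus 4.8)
\textbf{Proof plan for Lemma \ref{prop:interm1}.}

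The statement is a two-point interpolation result: it characterizes $\int\F$-interpolability with function values of $S=\{(x_i,f_i^0,\ldots,f_i^{m+1})\}_{i=1,2}$ as the conjunction of (a) $\int\F$-interpolability without function values and (b) the integral sandwich \eqref{eq:integr}. The sufficiency direction is essentially given to us: if (a) and (b) hold, then by Lemma \ref{lemma:interpnof} the reduced set $\tilde S$ is $\F$-interpolable, the extremal interpolants $g_{\min},g_{\max}$ exist and lie in $\F$ by Assumption \ref{assum:extrgrad} (extremal interpolability), and condition (b) says exactly that $f_2^0$ lies in the interval $[f_1^0+\int_{x_1}^{x_2}g_{\min},\,f_1^0+\int_{x_1}^{x_2}g_{\max}]$; invoking Assumption \ref{assum:conv} (extremal completability of $\int\F$) produces the desired $f\in\int\F$ interpolating $S$ including the function value $f(x_2)=f_2^0$. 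So for sufficiency I would simply verify that the hypotheses of Assumption \ref{assum:conv} are met and quote it.

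For necessity, suppose $S$ is $\int\F$-interpolable with function values, witnessed by some $f\in\int\F$ with $f^{(k)}(x_i)=f_i^k$ for all $k\in[m+1]$, $i=1,2$. Then a fortiori $S$ is $\int\F$-interpolable without function values, establishing (a). For (b), set $g:=f'$; by \eqref{def:Fm-1} we have $g\in\F$, and since $g^{(k)}(x_i)=f^{(k+1)}(x_i)=f_i^{k+1}$ for $k=0,\ldots,m$ and $i=1,2$, this $g$ interpolates $\tilde S$ (and hence, in the notation of Definition \ref{def:gextr}, interpolates $S$ as far as the relevant data $\{(x_i,g_i^0,\ldots,g_i^m)\}$ is concerned). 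By Definition \ref{def:gextr}, $g_{\min}$ and $g_{\max}$ are the pointwise infimum and supremum over \emph{all} such interpolants, so $g_{\min}(x)\le g(x)\le g_{\max}(x)$ for every $x\in[x_1,x_2]$. Integrating this pointwise inequality over $[x_1,x_2]$ and using the fundamental theorem of calculus (Lemma \ref{lemma:interpnof}'s ingredient, applied on the effective domain) to write $\int_{x_1}^{x_2}g(x)\,\mathrm{d}x = f(x_2)-f(x_1) = f_2^0-f_1^0$ yields \eqref{eq:integr}.

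The only subtlety — and the step I'd be most careful about — is the integration step: one needs $g_{\min}, g$, and $g_{\max}$ to be integrable on $[x_1,x_2]$ so that the pointwise sandwich passes through the integral, and one needs the effective domains to cooperate (the functions live in $\bar\C^m$, so $+\infty$ values on the complement of $\mathrm{dom}\,f$ could in principle appear, though on the bounded interval $[x_1,x_2]$ between two data points where $f$ is finite this is handled exactly as in the proof of Lemma \ref{lemma:interpnof}, i.e. continuity on $\bar\C^0$ forces the relevant portion to lie in the effective domain). Since $g_{\min}$ and $g_{\max}$ belong to $\F\subseteq\bar\C^m$ and $g\in\F$ as well, all three are continuous on $[x_1,x_2]$ and the integrals are well-defined; monotonicity of the integral then finishes the argument. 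Everything else is bookkeeping with the definitions.
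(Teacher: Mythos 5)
Your proof is correct and follows essentially the same route as the paper: sufficiency is dispatched by invoking extremal completability (Assumption \ref{assum:conv}), and necessity is shown by taking the witness $f\in\int\F$, observing that $g:=f'\in\F$ interpolates $\tilde S$ and hence is sandwiched between $g_{\min}$ and $g_{\max}$ on $[x_1,x_2]$ by Definition \ref{def:gextr}, then integrating. Your extra care about integrability and effective domains is a reasonable elaboration of a step the paper treats implicitly, but it does not change the substance of the argument.
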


To obtain full interpolation conditions for $\int \F$, it remains to consider the case of an arbitrary number of points to interpolate.
\subsection{Interpolation conditions with function values ($N$ points)}
In the univariate case, interpolation of an arbitrary number of points is equivalent to that of a single pair, provided the class $\int \F$ is \emph{order $m+1$ connectable}, in the sense that the juxtaposition of different functions in $\int \F$, where the functions coincide up to order $m+1$ on the boundaries of some intervals, is itself a function in $\int \F$.
\begin{assump}[Order $m$ connectable function class]\label{assum1}
A class $\F \subseteq \bar{\C}^m$ of univariate functions is \emph{order $m$ connectable} if, given any $x_0\leq ...\leq x_{K+1} \in \mathbb{R}\cup \{-\infty,\infty\}$, where $x_0=-\infty$ and $x_{K+1}=\infty$, and any $K$ functions $f_j \in \F$ such that 
\begin{align*}
    f_j^{(l)}(x_{j+1})=f_{j+1}^{(l)}(x_{j+1}), \quad \forall l\in[m], \quad j\in [K-1],
\end{align*}
then the piecewise function \begin{align*}
    f:\R \to \R:\ f(x):=f_j(x) \quad \forall x\in [x_j,x_{j+1}], \, \forall j\in [K]
\text{ belongs to $\F$.}
\end{align*} 
\end{assump}
\begin{remark}
Linear functions are order $1$ connectable, since the juxtaposition of several linear functions whose slope is identical is linear, but not order $0$ connectable, since the juxtaposition of linear functions with different slopes is not linear. Hence, an appropriate choice of order is essential when determining whether a function class is connectable.
\end{remark}

Order $m+1$ connectivity of $\int \F$ allows extending Lemma \ref{prop:interm1} to any arbitrary number of points.
\begin{lemma}\label{lem:interval}
   Let $\F \subseteq \bar{\C}^m$ be an order $m$ connectable class of univariate functions (Assumption \ref{assum1}). A set $S=\{(x_i,f_i^0,f_i^1,...,f_i^m)\}_{i\in[N]}$, with ordered points $x_0\leq x_1\leq\ldots\leq x_N$, is $\F$-interpolable with function values if and only if,
    \begin{equation}
        \forall i\in[N-1] : \left\{ (x_i,f_i^0,f_i^1, ..., f_i^m),(x_{i+1},f_{i+1}^0,f_{i+1}^1, ..., f_{i+1}^m)\right\} \text{ is } \F \text{-interpolable}. \label{eq:goal}
    \end{equation}
\end{lemma}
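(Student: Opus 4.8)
The statement is an equivalence, and the forward direction (``$\F$-interpolable $\Rightarrow$ pairwise interpolable'') is immediate: if $f\in\F$ interpolates all of $S$, then the same $f$ certifies interpolability of every consecutive pair. So the entire content is the converse, and the plan is to build a global interpolant by gluing together the local ones using the order-$m$ connectability hypothesis (Assumption \ref{assum1}).

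First I would invoke \eqref{eq:goal} to obtain, for each $i\in[N-1]$, a function $\hat f_i\in\F$ interpolating the pair $\{(x_i,f_i^0,\dots,f_i^m),(x_{i+1},f_{i+1}^0,\dots,f_{i+1}^m)\}$, i.e.\ $\hat f_i^{(k)}(x_i)=f_i^k$ and $\hat f_i^{(k)}(x_{i+1})=f_{i+1}^k$ for all $k\in[m]$. The key observation is that at each interior node $x_{i+1}$ the two neighbouring pieces $\hat f_i$ and $\hat f_{i+1}$ agree up to order $m$, precisely because both match the common data $(f_{i+1}^0,\dots,f_{i+1}^m)$ there; this is exactly the matching condition $f_j^{(l)}(x_{j+1})=f_{j+1}^{(l)}(x_{j+1})$ required in Assumption \ref{assum1}. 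One has to be a little careful with the boundary pieces: Assumption \ref{assum1} is phrased with $x_0=-\infty$ and $x_{K+1}=+\infty$, so I would simply take the first piece $\hat f_0$ to be used on $(-\infty,x_1]$ and the last piece $\hat f_{N-1}$ on $[x_{N-1},+\infty)$, i.e.\ set $f_0:=-\infty$, $f_{N}:=+\infty$ in the notation of the assumption (with $K=N$). A minor bookkeeping point: if some of the $x_i$ coincide, the corresponding ``interval'' $[x_i,x_{i+1}]$ degenerates to a point and the pairwise-interpolability hypothesis already forces $f_i^k=f_{i+1}^k$ for all $k$, so such repeated points cause no difficulty and can be collapsed; after this reduction we may assume $x_0<x_1<\dots<x_N$ strictly, matching the genuine hypothesis of Assumption \ref{assum1}.

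With the pieces and matching conditions in hand, I would define the piecewise function $f:\R\to\R$ by $f(x):=\hat f_i(x)$ for $x\in[x_i,x_{i+1}]$, and apply Assumption \ref{assum1} directly to conclude $f\in\F$. It then remains to check that $f$ interpolates $S$: for each $i\in[N]$ and each $k\in[m]$ we need $f^{(k)}(x_i)=f_i^k$. For an interior node $x_i$ with $1\le i\le N-1$, both adjacent pieces give the right value, and since they agree up to order $m$ the derivative $f^{(k)}(x_i)$ is well-defined and equals $f_i^k$; for the endpoints $x_0$ and $x_N$ the single relevant piece ($\hat f_0$, resp.\ $\hat f_{N-1}$) already interpolates the data there. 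This completes the converse, and hence the equivalence.

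\textbf{Main obstacle.} There is no deep difficulty here---the lemma is essentially a clean packaging of Assumption \ref{assum1}. The only thing requiring genuine care is the interface between the \emph{pointwise} derivative data and the \emph{piecewise} construction: one must verify that $f^{(k)}(x_i)$ is unambiguously defined at the gluing points, which is exactly why $\F$ being in $\bar\C^m$ together with the order-$m$ agreement of neighbouring pieces is needed, and one must handle the half-infinite boundary intervals and possible coincident abscissae without gaps. Once those bookkeeping issues are settled, the argument is a direct application of the assumption.
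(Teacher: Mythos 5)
Your proof is correct and follows essentially the same route as the paper's: forward direction by restricting a global interpolant, converse by gluing the local interpolants on consecutive intervals (with half-infinite end pieces) and invoking order-$m$ connectability. You are actually somewhat more careful than the paper's own proof, which silently glosses over the coincident-abscissae case and the well-definedness of derivatives at the gluing nodes.
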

The proof of Lemma \ref{lem:interval} is deferred to Appendix \ref{app:interval}.
\begin{remark}
    This differs from the multivariate case, where interpolation of a single pair can significantly differ from interpolation of an arbitrary number of points (see, e.g., \cite[Proposition 4]{ryu2020operator}). For example, some inequalities are interpolation conditions whenever applied to a single pair but are only necessary otherwise.
\end{remark}

\subsection{Main Theorem}
We are now ready to present our main Theorem, allowing to lift interpolation conditions from $\F$ to $\int \F$.
\begin{theorem}
    \label{lem:technique}
    Let $\F \subseteq \bar{\C}^m$ be an extremally interpolable (Assumption \ref{assum:extrgrad}) class of univariate functions, and let $\int \F \subseteq \bar{\C}^{m+1}$ (defined in \eqref{def:Fm-1}) be extremally completable  (Assumption \ref{assum:conv}) and order $m+1$ connectable (Assumption \ref{assum1}).
    
    A set $S=\{(x_i,f_i^0,f_i^1,...,f_i^{m})\}_{i\in [N]}$, where $x_0\leq x_1 \leq ... \leq x_N$ is $\int \F$-interpolable if and only if $S$ is $\int \F$-interpolable without function values, and $\forall i \in [N]$,
            \begin{align} \label{eq:cond_finale}
               \int_{x_i}^{x_{i+1}} g_{\min}(x) \mathrm{d}x\leq f_{i+1}-f_i\leq\int_{x_i}^{x_{i+1}} g_{\max}(x) \mathrm{d}x,
            \end{align}
            where $g_{\min}$ and $g_{\max}$ are defined as in \eqref{eq:defgmin}.
    \begin{proof}
    It suffices to combine Lemmas \ref{prop:interm1} and \ref{lem:interval}.
    
    \qed \end{proof} 
\end{theorem}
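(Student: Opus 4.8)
The plan is to obtain the statement by simply chaining the two-point characterization of Lemma~\ref{prop:interm1} with the pair-to-$N$-point reduction of Lemma~\ref{lem:interval}, the latter applied \emph{to the class $\int\F$ itself}: since $\int\F\subseteq\bC^{m+1}$ is assumed order $m+1$ connectable, Lemma~\ref{lem:interval} applies with its ``$m$'' replaced by ``$m+1$'' and its ``$\F$'' replaced by ``$\int\F$''. Both directions of the equivalence then follow by unwinding definitions.

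For the ``only if'' direction I would argue as follows. If $S$ is $\int\F$-interpolable with function values, then it is a fortiori $\int\F$-interpolable without function values, which is the first of the two asserted conditions. Moreover, Lemma~\ref{lem:interval} (applied to $\int\F$) shows that every consecutive pair $S_i:=\{(x_i,f_i^0,\dots,f_i^m),(x_{i+1},f_{i+1}^0,\dots,f_{i+1}^m)\}$ is $\int\F$-interpolable with function values; Lemma~\ref{prop:interm1} — whose hypotheses ($\F$ extremally interpolable, $\int\F$ extremally completable) are exactly those assumed here — then forces each $S_i$ to satisfy the two-point inequality~\eqref{eq:integr}, which is precisely \eqref{eq:cond_finale} at index $i$.

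For the ``if'' direction, suppose $S$ is $\int\F$-interpolable without function values and that \eqref{eq:cond_finale} holds for each $i$. Restricting the interpolating function of $S$ to any two consecutive nodes shows that each pair $S_i$ is $\int\F$-interpolable without function values; combined with \eqref{eq:cond_finale} at index $i$, this is exactly the hypothesis of the converse implication in Lemma~\ref{prop:interm1}, yielding that each $S_i$ is $\int\F$-interpolable with function values. A final invocation of Lemma~\ref{lem:interval} on $\int\F$ then glues these two-point interpolants into a single function of $\int\F$ interpolating all of $S$, which is the desired conclusion.

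Since both building blocks are already established, I do not expect a genuine obstacle here; the points that require care are purely bookkeeping ones: verifying that the regularity and order indices line up when Lemma~\ref{lem:interval} is invoked on $\int\F$ rather than on $\F$ (so that ``order $m$ connectable'' there becomes ``order $m+1$ connectable''), and observing that the global ``without function values'' hypothesis on $S$ restricts correctly to each consecutive pair, so that no additional connectability assumption on $\F$ itself is needed.
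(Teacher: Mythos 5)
Your proof is correct and takes the same approach as the paper, whose entire proof is the one-liner ``It suffices to combine Lemmas~\ref{prop:interm1} and~\ref{lem:interval}''; you have simply spelled out the two directions of that combination. Your bookkeeping remarks --- that Lemma~\ref{lem:interval} is invoked on $\int\F$ at order $m+1$, and that no connectability assumption on $\F$ itself is required because the global ``without function values'' hypothesis restricts to each consecutive pair --- are accurate and resolve exactly what the paper leaves implicit.
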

\begin{remark}
    Theorem \ref{lem:technique} requires imposing conditions on ordered pairs only. However, imposing conditions on all pairs yields, by Lemma \ref{lem:interval}, equivalent interpolation conditions, that are often clearer and easier to handle.
\end{remark}
\begin{remark}
    While the sequel focuses on function classes with second-order properties, Theorem \ref{lem:technique} can provide interpolation conditions for function classes with higher-order or first-order properties.
\end{remark}

\section{Generalized Lipschitz functions}\label{sec:fun_classes}
This section presents a unified characterization of classes of interest in second-order optimization, including, e.g., (quasi)-self-concordant functions and (convex) functions with Lipschitz Hessian, and on which the technique presented in Section \ref{sec:intrp} applies. We show that these functions can all be defined as functions whose second derivative satisfies some kind of generalized Lipschitz condition, and belongs to given classes $\F$. We call such classes $\F$ \emph{basic function classes}, and will be interested in obtaining interpolation conditions for $\int^{(2)}\F$.

We first present several characterizations of $\F$, before proving they satisfy all assumptions required for validity of Theorem \ref{lem:technique}. 

\subsection{Basic function classes}
Basic classes $\F$ are defined as satisfying a notion of generalized Lipschitzness, in the spirit of generalized smoothness introduced in \cite{li2023}  and generalized self-concordance introduced in \cite{sun2019generalized}. Their interest lies in the fact that the associated class $\int^{(2)} \F$ include all classes of generalized self-concordant functions \cite[Equation (1)]{sun2019generalized} of the form
\begin{equation}\label{eq:generalized_sc}
    |f'''(x)| \leq A f''(x)^\alpha, \ f''(x)\geq 0,  
\end{equation}
for some $A,\alpha\geq 0$, as well as the class of functions with Lipschitz Hessian.
Throughout, given $\alpha \geq 0$, we define 
\begin{align}\label{eq:beta}
    \ba=\begin{cases}
        \frac{1}{1-\alpha} & \text{ if }\alpha \neq 1\\
        1& \text{ if }\alpha = 1.
    \end{cases}
\end{align}
\begin{definition}[Generalized Lipschitz function] \label{def:function_class}
    Let $M,\alpha\geq 0$, and $\ba$ be defined as in \eqref{eq:beta}. We say that $f:\R\to\bR$ is $(M,\alpha,+)$-generalized Lipschitz if $f$ (i) is non-negative everywhere, (ii) belongs to $\bC^0$ if $\alpha>1$, and $C^0$ if $\alpha\leq 1$, (iii)~is piecewise $\bar{\C}^1$ if $\alpha>1$, and piecewise $\C^1$ if $\alpha \leq 1$, and (iv) satisfies, whenever differentiable,
    \begin{align}
        |f'(x)|& \leq |\ba|Mf(x)^\alpha. \label{eq:function_class}
    \end{align}
    We denote by $\FLip{M,\alpha,+} \subseteq \bar{\C}^0$ (and $\C^0$ when $\alpha\leq 1$) the class of $(M,\alpha,+)$-generalized Lipschitz functions. In addition, we say $f$ is $(M,0)$-generalized Lipschitz if $(M,0,+)$-generalized Lipschitz without the non-negativity constraint. 
\end{definition}
Observe that $\mathcal{F}_{M,0}\triangleq\mathcal{F}_{M}$. We use the notation $\FLip{M,\alpha,(+)}$ to refer to both $\FLip{M,\alpha,+}$ and $\F_{M,0}$. We propose an alternative definition of $\FLip{M,\alpha,(+)}$ that does not involve derivatives.
\begin{proposition}[2-points definition]\label{prop:def2}
    Let $M,\alpha\geq 0$, $f\in\bar{\C}^0$, and $f$ piecewise $\bar{\C}^1$. In addition, if $\alpha\leq 1$, let $f\in \C^0$, and $f$ piecewise $\C^1$. Then, $f\in \FLip{M,\alpha,+}$, if and only if $f=0$, or $\forall x,y \in \R$:
    \begin{align}
            &|\tilde f(x)-\tilde f(y)|\leq M |x-y|, \label{eq:def1} \\
            & \begin{cases}
                f(x) \geq 0, & \text{ if } \alpha < 1, \\
                f(x) >0,     &\text{ if } \alpha \geq 1,
            \end{cases}
    \end{align}
     where $
        \tilde f(x)= 
        \begin{cases}
            f(x)^{1-\alpha}, & \text{if  }\alpha\neq 1,\hspace{1cm} \\
            \log(f(x)), &  \text{if } \alpha=1.
        \end{cases}  
        $   
        
    \noindent In addition, $f\in \FLip{M,0}$ if and only if it satisfies \eqref{eq:def1}.
\begin{proof}
     When $f'(x)$ exists and $f(x)\neq 0$, we have $\tilde{f}'(x) = \frac{f'(x)}{|\ba| f(x)^\alpha} \ \forall \alpha \geq 0$. In addition, on the interior of any interval over which $f$ is null, so are $\tilde f$ and $\tilde f'$.

    \noindent \textit{(Necessity of \eqref{eq:def1})}
    Let $f\in \FLip{M,\alpha,+}$. By assumptions on $f$ and definition of $\tilde f$, $\tilde f$ is continuous and piecewise $\C^1$. Indeed, if $\alpha \leq 1$, $f\in \C^0$ and is piecewise $\C^1$, and $\tilde f$ preserves these properties, with points of discontinuity the intersection of all points where $f$ is non-differentiable, and the points where $f$ switches from non-zero to zero values. On the other hand, if $\alpha>1$, then $\tilde f=0$ when $f=\infty$, hence, $\tilde f\in \C^0$ and is piecewise $\C^1$ despite $f\in \bar{\C}^0$, and piecewise $\bar{\C}^1$.
    
    Let $x_1<x_2<\ldots<x_K$ be the points at which $\tilde f$ is non-differentiable. Consider any interval $\mathcal{I}_i$ on which $\tilde{f}$ is differentiable, and non-zero. By the mean value theorem, it holds that $\forall x< y\in \mathcal{I}_i$, $\exists c \in (x,y)$ such that
    \begin{align*}
        \frac{|\tilde f(x)-\tilde f(y)|}{|x-y|}=|\tilde f'(c)|=\frac{|f'(c)|}{|\ba||f(c)^\alpha|} \overset{\eqref{eq:function_class}}{\leq} \frac{|\ba| M |f(c)^\alpha|}{|\ba||f(c)^\alpha|} = M.
    \end{align*}
In addition, on any interval over which $\tilde f=0$, $\tilde f$ is $M$-Lipschitz. Therefore, $\tilde f$ is piecewise $M$-Lipschitz. Consider now $x<x_i<\ldots<x_j<y$ where $1\leq i\leq j\leq K$; i.e., $x,y$ may belong to distinct intervals. Then,
    \begin{align*}
        \frac{|\tilde f(x)-\tilde f(y)|}{|x-y|}&\leq \frac{|\tilde f(x)-\tilde f(x_i)|+|\tilde f(x_i)-\tilde f(x_{i+1})|+\ldots+|\tilde f(x_j)-\tilde f(y)|}{|x-y|} \\&\leq  M\frac{x_i-x+x_{i+1}-x_{i}+\ldots+y-x_j}{y-x}=M.
    \end{align*}

    Finally, nonnegativity is due to Definition \ref{def:function_class} and positivity in the case $\alpha \geq 1$ arises from the limit case of \eqref{eq:def1}: if $f\in \FLip{M,\alpha,+}$, and $f(x)=0$ at some $x\in \R$, then $f=0$.
    
    \noindent \textit{(Sufficiency of \eqref{eq:def1})}
    Let $f\in \bC^0$, piecewise $\bC^1$ and satisfying \eqref{eq:def1}. At all $x\in\R$ where $f$ is differentiable and non-zero, it holds:
        \begin{align*}
        &|\tilde f'(x)|=\frac{|f'(x)|}{|\ba||f(x)|^\alpha}=\lim_{h\to 0}\frac{|\tilde f(x+h)-\tilde f(x)|}{h} \overset{\eqref{eq:def1}}{\leq} M \Rightarrow |f'(x)|\leq|\ba| M f(x)^\alpha.
    \end{align*}
    In addition, at all $x$ where $f(x)=0$, by nonnegativity of $f$, either $f'(x)$ does not exist, or $f'(x)=0$. Hence, $f\in\F_{M,\alpha,+}$.

    Finally, the case $\F_{M,0}$ follows the same argument, except there is no need to handle the non-negativity constraint.    
\qed \end{proof}
\end{proposition}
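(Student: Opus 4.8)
The plan is to prove both directions of the equivalence by reducing the pointwise bound \eqref{eq:function_class} to a global Lipschitz bound on the transformed function $\tilde f$, using as a bridge the chain-rule identity $|\tilde f'(x)| = \frac{|f'(x)|}{|\ba|\, f(x)^\alpha}$, valid at every point where $f$ is differentiable and $f(x)\neq 0$ (for all $\alpha\geq 0$, with the $\alpha=1$ convention $\tilde f=\log f$). Under this identity, the estimate $|f'(x)|\leq |\ba|Mf(x)^\alpha$ is literally the statement $|\tilde f'(x)|\leq M$ at such points. I would also record the degenerate behaviour: on the interior of any interval where $f\equiv 0$ one has $\tilde f\equiv 0$ and $\tilde f'\equiv 0$, which can happen only for $\alpha<1$, since for $\alpha\geq 1$ a zero of $f$ sends $\tilde f$ to $\pm\infty$ there. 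With these preliminaries, each direction becomes a standard passage between a pointwise derivative bound and a Lipschitz bound.

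For \textbf{necessity}, assume $f\in\FLip{M,\alpha,+}$ with $f\not\equiv 0$. First I would verify that $\tilde f$ is continuous and piecewise $\C^1$: for $\alpha\leq 1$ this is inherited from $f$, with at most finitely many extra breakpoints appearing at zeros of $f$; for $\alpha>1$ the transformation actually gains regularity, because $\tilde f\to 0$ exactly where $f\to+\infty$, so $\tilde f\in\C^0$ even though $f$ is only in $\bC^0$. On each subinterval where $\tilde f$ is $\C^1$ and $f>0$, the mean value theorem together with \eqref{eq:function_class} give $|\tilde f(x)-\tilde f(y)|/|x-y| = |\tilde f'(c)|\leq M$; on subintervals where $f\equiv 0$ the bound is trivial. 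Since there are finitely many breakpoints inside any interval, a telescoping triangle inequality over them upgrades the local bounds to the global bound \eqref{eq:def1}. Nonnegativity is immediate from Definition~\ref{def:function_class}, and strict positivity for $\alpha\geq 1$ follows because if $f(x_0)=0$ then $\tilde f(x_0)$ is infinite while $\tilde f$ is finite elsewhere, incompatible with $M$-Lipschitzness unless $f\equiv 0$, a contradiction.

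For \textbf{sufficiency}, assume $f$ has the regularity stated in the proposition, $\tilde f$ is $M$-Lipschitz, and the (strict, when $\alpha\geq 1$) sign condition holds. At any $x$ where $f$ is differentiable with $f(x)\neq 0$, the definition of $\tilde f'(x)$ as a limit of difference quotients, bounded by $M$ via Lipschitzness of $\tilde f$, combined with the chain-rule identity, yields $|f'(x)|\leq |\ba|Mf(x)^\alpha$. At any $x$ with $f(x)=0$ (possible only for $\alpha<1$), nonnegativity makes $x$ a global minimizer, so $f'(x)$ either fails to exist or equals $0$, and then $|f'(x)|\leq |\ba|Mf(x)^\alpha=0$ holds. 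Hence \eqref{eq:function_class} holds wherever $f$ is differentiable, and together with the assumed continuity and piecewise-$\C^1$ structure this is exactly $f\in\FLip{M,\alpha,+}$. The claim for $\FLip{M,0}$ is the specialization $\alpha=0$, $\ba=1$, $\tilde f=f$, with no sign condition to carry along, and the arguments above apply verbatim and more simply.

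The step I expect to require the most care is the degenerate regime $\alpha\geq 1$: turning ``$\tilde f$ is infinite at a point'' into the clean conclusion $f\equiv 0$ demands interpreting the Lipschitz inequality \eqref{eq:def1} in the extended reals consistently with the $\bC^0$ conventions fixed earlier. A secondary technical point is bookkeeping the breakpoints of $\tilde f$ against those of $f$ and checking the gluing step — that a continuous function which is $M$-Lipschitz on each of finitely many subintervals is globally $M$-Lipschitz — which, though routine, is precisely where the finiteness of the breakpoint set and the continuity of $\tilde f$ (including the $\alpha>1$ subtlety) are genuinely used.
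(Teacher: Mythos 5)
Your proposal is correct and follows essentially the same route as the paper's proof: the chain-rule identity $\tilde f'(x)=f'(x)/(|\beta(\alpha)|f(x)^\alpha)$ as the bridge between the pointwise bound and Lipschitzness of $\tilde f$, the mean-value theorem plus a telescoping triangle inequality over finitely many breakpoints for necessity, the limit of difference quotients for sufficiency, and the same handling of the degenerate cases (zeros of $f$ for $\alpha<1$, strict positivity via the Lipschitz estimate in the extended reals for $\alpha\geq 1$). The only differences are cosmetic phrasings of the degenerate-case arguments.
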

Proposition \ref{prop:def2} thus reduces generalized self-concordant functions to functions $f$ whose associated quantity $\tilde f''$ is simply Lipschitz continuous, which allows obtaining interpolation conditions almost straightforwardly for all basic function classes and their higher-order associated classes, once interpolation conditions are known for $\F_{M,0}$.

\subsection{Examples of function classes $\FLip{M,\alpha,(+)}$ and $\int^{(k)}\FLip{M,\alpha,(+)}$}
We propose a non-exhaustive list of function classes that fall under Definition \ref{def:function_class}. 

\subsubsection*{Zeroth and first-order classes}
Classical classes of functions whose description involves zeroth and first-order derivatives include the class of $M$-Lipschitz functions $\FLip{M,0} \subseteq \C^0$, i.e., functions satisfying $|f(x)-f(y)|\leq M |x-y|, \ \forall x,y \in \R$ and the class of $M$-smooth functions $\GLip{M,0}=\int \FLip{M,0} \subseteq \C^1$, i.e., functions satisfying $|f'(x)-f'(y)|\leq M |x-y|, \ \forall x,y \in \R$.

\subsubsection*{Second-order classes}
The class of Hessian Lipschitz functions can be expressed as $\HLip{M,0} :=\int^{(2)} \FLip{M,0} \subseteq \C^2$, and the one of convex generalized self-concordant functions \eqref{eq:generalized_sc} corresponds to $\HLip{M,\alpha,+} :=\int^{(2)} \FLip{M,\alpha,+} \subseteq \bar{\C}^2 $ if $\alpha >1$, $\HLip{M,\alpha,+} :=\int^{(2)} \FLip{M,\alpha,+} \subseteq \C^2 $ if $\alpha \leq 1$. This class includes, e.g., convex functions with Lipschitz Hessian, $\HLip{M,0,+}$, self-concordant functions, $\HLip{M,3/2,+}$, and quasi-self-concordant functions, $\HLip{M,1,+}$. Table \ref{tab:tab5} summarizes these classes.
{\small
\begin{table}[ht!]
    \centering
    \ra{1.5}
    \setlength{\abovecaptionskip}{0.5pt}
    \caption{Classes of generalized self-concordant functions $\HLip{M,\alpha,+}$. SC=Self-concordant.}
        \begin{tabular}{@{}lllll@{}}
            \toprule
            $\alpha$  & $\alpha\neq {1,2,\frac{3}{2}}$ & $\alpha=1$ & $\alpha=2$ & $\alpha=\frac{3}{2}$ \\
            \midrule 
                        1-pt definition& $|f'''(x)|\leq M|\ba|f''(x)^{\alpha}$&$|f'''(x)|\leq Mf''(x)$&$|f'''(x)|\leq Mf''(x)^{\alpha}$&$|f'''(x)|\leq 2Mf''(x)^{\alpha}$\\
            2-pts definition & $\tilde f''(x)=f''(x)^{1/\ba}$ & $\tilde f''(x)=\log(f''(x))$&$\tilde f''(x)=f''(x)^{-1}$&$\tilde f''(x)=f''(x)^{-1/2}$   \\ 
($\tilde f''(x)$ $M$-Lipschitz) &  & &&   \\ 
            Function classes & Generalized SC & Quasi-SC& / & SC\\
            References & \cite{hanzely2022damped,nesterov2018lectures,sun2019generalized}&\cite{bach2010self,doikov2023minimizing}&/&\cite{nesterov1994interior}\\      
            \bottomrule
        \end{tabular} 
    \label{tab:tab5}
\end{table}
}

\subsection{Properties of classes $\FLip{M,\alpha,(+)}$}
We show that the function classes defined in Definition \ref{def:function_class} satisfy some of the assumptions in Theorem \ref{lem:technique}:
\begin{proposition}\label{prop:properties}
    Let $\alpha,M\geq 0$. Then, $\FLip{M,\alpha,(+)}$, $\int \FLip{M,\alpha,(+)}$ and $\int^{(2)} \FLip{M,\alpha,(+)}$ are respectively order $0$, $1$ or $2$ connectable (Assumption \ref{assum1}). In addition, when $\alpha \leq 1$, $\int \FLip{M,\alpha,(+)}$ and $\int^{(2)} \FLip{M,\alpha,(+)}$ are also convex (Assumption \ref{assum:conv1}).
\end{proposition}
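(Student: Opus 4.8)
The statement bundles two kinds of claims — connectability of $\FLip{M,\alpha,(+)}$, $\int\FLip{M,\alpha,(+)}$, $\int^{(2)}\FLip{M,\alpha,(+)}$, and convexity of the latter two when $\alpha\le 1$ — and I would prove them in that order, reducing everything through Proposition~\ref{prop:def2} to statements about the auxiliary function $\tilde f$.

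\emph{Connectability.} For $\FLip{M,\alpha,(+)}$ (order $0$) the task is: glue finitely many $g_j\in\FLip{M,\alpha,(+)}$ that merely agree in value at the junction points $x_j$, and show the result lies in the class. By Proposition~\ref{prop:def2} each $g_j$ is equivalent to having $\tilde g_j$ $M$-Lipschitz (and $g_j$ of the right sign). The piecewise function $g$ has associated $\tilde g$ which is continuous (value-agreement at junctions transfers to $\tilde g$ since $\tilde\cdot$ is a fixed monotone transform) and $M$-Lipschitz on each piece; the standard chaining argument already used inside the proof of Proposition~\ref{prop:def2} (splitting $|x-y|$ across the intermediate junction points and using the triangle inequality with the per-piece bounds) shows $\tilde g$ is globally $M$-Lipschitz, and the sign/regularity hypotheses are inherited piecewise, so $g\in\FLip{M,\alpha,(+)}$. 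For $\int\FLip{M,\alpha,(+)}$ (order $1$): if the pieces $f_j$ agree up to first order at the junctions, then $f_j'\in\FLip{M,\alpha,(+)}$ agree in value at the junctions, so by the order-$0$ case the piecewise derivative $g:=f'$ lies in $\FLip{M,\alpha,(+)}$; moreover $C^0$-agreement of the $f_j$ makes $f$ itself $C^1$, hence $f\in\int\FLip{M,\alpha,(+)}$. The order-$2$ case for $\int^{(2)}\FLip{M,\alpha,(+)}$ is identical with one more integration layer: second-order agreement at junctions $\Rightarrow$ $f''=g$ agrees in value $\Rightarrow$ $g\in\FLip{M,\alpha,(+)}$ by order-$0$, and first/zeroth order agreement gives $f\in C^2$.

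\emph{Convexity for $\alpha\le 1$.} Here I would \emph{not} work with $\tilde f$ (the transform $f\mapsto f^{1-\alpha}$ is concave in $f$ when $0<1-\alpha<1$, so it does not play nicely with convex combinations of $f$), but go back to the one-point definition~\eqref{eq:function_class}. Take $f_a,f_b\in\int\FLip{M,\alpha,(+)}$ and $\lambda\in[0,1]$, set $f=\lambda f_a+(1-\lambda)f_b$, and set $g=f'=\lambda f_a'+(1-\lambda)f_b'$ with $g_a=f_a', g_b=f_b'\in\FLip{M,\alpha,+}$. I must check $|g'(x)|\le|\ba|Mg(x)^\alpha$ and $g\ge 0$ wherever differentiable; nonnegativity and the regularity conditions (piecewise $C^1$, continuity) are immediate from those of $g_a,g_b$. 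For the gradient bound, $|g'|=|\lambda g_a'+(1-\lambda)g_b'|\le\lambda|\ba|Mg_a^\alpha+(1-\lambda)|\ba|Mg_b^\alpha$, and since $t\mapsto t^\alpha$ is concave on $[0,\infty)$ for $\alpha\in[0,1]$, Jensen gives $\lambda g_a^\alpha+(1-\lambda)g_b^\alpha\le(\lambda g_a+(1-\lambda)g_b)^\alpha=g^\alpha$, which closes the estimate. (When $\alpha=1$ this is just linearity of $t\mapsto t$, and when $\alpha=0$ there is nothing to prove beyond $|g'|\le M$.) Convexity of $\int^{(2)}\FLip{M,\alpha,(+)}$ then follows: a convex combination of two such functions has second derivative a convex combination of elements of $\FLip{M,\alpha,+}$, which lies in $\FLip{M,\alpha,+}$ by the case just proved, while the combination is still $C^2$ (or $\bar{\C}^2$).

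\emph{Main obstacle.} None of the steps is deep, but two points need care. First, the chaining/triangle-inequality argument for global $M$-Lipschitzness of $\tilde g$ from piecewise $M$-Lipschitzness must correctly account for the points where $f$ (hence $\tilde f$) transitions between zero and nonzero values, and for the $\alpha>1$ case where $f$ may take the value $+\infty$ while $\tilde f=0$ there — exactly the bookkeeping already carried out in the proof of Proposition~\ref{prop:def2}, which I would invoke rather than redo. Second, and this is the genuinely load-bearing observation, convexity is claimed only for $\alpha\le 1$ precisely because the Jensen step needs concavity of $t\mapsto t^\alpha$; for $\alpha>1$ this fails and indeed the classes are not convex (e.g. self-concordant functions with $\alpha=3/2$), which is why the paper introduces the weaker notion of extremal completability — so I would make sure the write-up emphasizes that the restriction $\alpha\le 1$ is used in an essential way here.
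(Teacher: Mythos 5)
Your proof is correct, and your convexity argument is essentially the paper's: both pass to the one-point definition~\eqref{eq:function_class}, apply the triangle inequality to $|f''|$, and then invoke concavity of $t\mapsto t^\alpha$ on $[0,\infty)$ for $\alpha\le 1$, lifting to $\int^{(2)}$ by one more derivative order. Where you diverge is on connectability. The paper's argument is strikingly shorter: since Definition~\ref{def:function_class} imposes the differential inequality only ``whenever differentiable,'' and membership requires only \emph{piecewise} $\C^1$ (resp.\ $\bar{\C}^1$), gluing finitely many pieces merely adds finitely many junction points where differentiability may fail, which the class tolerates by definition; the inequality then holds pointwise away from junctions because it held for each piece. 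Higher orders follow identically with higher derivatives. You instead route through Proposition~\ref{prop:def2}, re-expressing membership as global $M$-Lipschitzness of $\tilde g$, and re-derive that by chaining the per-piece two-point Lipschitz estimates across junctions. That works, and it has the pedagogical merit of exhibiting connectability as ``global Lipschitzness from local Lipschitzness,'' but it repeats bookkeeping (degenerate/infinite values of $g$, zero sets) that the one-point definition sidesteps for free, and it is logically heavier since Proposition~\ref{prop:def2} itself already contains the chaining step you cite. In short: same result, but the paper's connectability proof exploits the permissiveness built into the definition (pointwise inequality off a finite set), while yours reconstructs the global two-point estimate from scratch.
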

\begin{proof}
\textit{(Connectability)} Definition \ref{def:function_class}, involving derivatives, is pointwise and holds everywhere except for a finite set of points. Hence, by definition, $\FLip{M,\alpha,(+)}$ is order $0$-connectable, and the function $f$, juxtaposition of several functions in $\FLip{M,\alpha,(+)}$ might be non-differentiable only at the junctions between intervals. The same argument holds for $\int\FLip{M,\alpha,(+)}$ and $\int ^{(2)}\FLip{M,\alpha,(+)}$ (and $\int^{(k)}\FLip{M,\alpha,(+)}$), except that it involves higher-order derivatives.

\textit{(Convexity)} When $\alpha \leq 1$, $\int \FLip{M,\alpha,(+)}$ is convex  since, given $f_1, f_2 \in \int \FLip{M,\alpha,(+)}$, $\lambda \in [0,1]$, and $f = \lambda f_1 + (1-\lambda)f_2$ it holds:
\begin{align}\label{eq:proof_complet_a0}
\begin{split}
   |f''(x)| & = 
   |\lambda f_1''(x)+(1-\lambda) f_2''(x)|\\
   &\leq \lambda |f_1''(x)|+(1-\lambda) |f_2''(x)|\\ 
   & \overset{\eqref{eq:function_class}}{\leq} M_1 (\lambda f_1'(x)^\alpha+(1-\lambda) f_2'(x)^{\alpha}) \\
   & \leq  M_1(\lambda f_1'(x) +(1-\lambda) f_2'(x))^{\alpha}
\end{split}
\end{align}
by concavity of $g(t)=t^{\alpha}, \ t\geq 0, \ \alpha \leq 1$. Moreover, $\forall x\in \R$, if $f_1(x), f_2(x) \geq 0$ then $f(x) \geq 0$. The same arguments hold for $\int ^{(2)} \FLip{M,\alpha,(+)}$ (with $f'''$ and $f''$ instead of $f''$ and $f'$ in \eqref{eq:proof_complet_a0}). \qed \end{proof}

The other properties involved in Theorem \ref{lem:technique}, i.e., extremal interpolability and, when $\alpha>1$, extremal completability of $\FLip{M,\alpha,(+)}$ and $\int \FLip{M,\alpha,(+)}$ (Assumptions \ref{assum:extrgrad} and \ref{assum:conv}), require having access to interpolation conditions for both classes. Hence, we answer this question in Section \ref{sec:new_sec3}, after iteratively building such conditions. 

\section{Interpolation conditions for $\FLip{M,\alpha,(+)}$, $\int \FLip{M,\alpha,(+)}$, and $\int^{(2)}\FLip{M,0,(+)}$}\label{sec:new_sec3}
We now exploit the technique proposed in Section \ref{sec:intrp} to obtain interpolation conditions for the basic function classes of Section \ref{sec:fun_classes}. First, we obtain interpolation conditions for $\FLip{M,\alpha,(+)}$, before proving these classes are extremally interpolable (see Assumption \ref{assum:extrgrad}) and extremally completable (see Assumption \ref{assum:conv}), and relying on Theorem \ref{lem:technique}, to obtain interpolations conditions for $\int \FLip{M,\alpha,(+)}$. We then proceed iteratively to obtain interpolation conditions for $\int^{(2)} \FLip{M,0,(+)}$. It appears that the non-convex solver of Gurobi \cite{gurobi} can deal with all the interpolation conditions considered (see Section \ref{sec:results}). 
\subsection{Interpolation conditions for $\FLip{M,\alpha,(+)}$}\label{sec:interp_basic}
Interpolation conditions for classes $\FLip{M,\alpha,(+)}$ happen to be discretized versions of Definition \eqref{eq:def1}.
\begin{theorem}\label{prop:interp_basic}
    Let $M,\alpha\geq 0$. A set $S=\{(x_i,f_i)\}_{i\in [N]}$ is  $\FLip{M,\alpha,+}$-interpolable if and only if $f_i=0, \ \forall i\in [N]$, or $\forall i,j\in[N]$:
        \begin{align} 
            & |\tilde f_i-\tilde f_j|\leq M |x_i-x_j|, \label{eq:interp1}\\
            & \begin{cases}
                f_i\geq 0 & \text{ if } \alpha < 1, \\
                f_i>0 & \text{ if } \alpha \geq 1,
            \end{cases}
        \end{align}
        where $\tilde f_i= 
        \begin{cases}
            f_i^{1-\alpha}, & \text{if } \alpha\neq 1, \\
            \log(f_i), &\text{if } \alpha=1.
        \end{cases}$
        
        \noindent In addition, $S$ is $\FLip{M,0}$-interpolable if and only if it satisfies \eqref{eq:interp1}.
\begin{proof}
     By Proposition \ref{prop:def2}, these conditions are necessary for interpolation. Let us now prove they are also sufficient.

     Suppose $S$ satisfies \eqref{eq:interp1}. We construct a function $f\in \FLip{M,\alpha,(+)}$ interpolating $S$. Let $\tilde f(x) = \min_k \tilde f_k+M|x-x_{k}|$ and
     \begin{equation}
         f(x) = \begin{cases}
             \tilde f(x)^{\alpha-1} & \text{if } \alpha \neq 1, \\
             e^{\tilde f(x)} & \text{if } \alpha = 1.
         \end{cases}
     \end{equation}
     This function interpolates $S$, i.e.,
     \begin{align}
         \text{when }\alpha \neq 1~:~&f(x_i) = (\min_k \tilde{f}_k + M|x_i-x_k|)^{\alpha-1} = \tilde f_i^{\alpha-1}= f_i, \\
         \text{when }\alpha = 1~:~&f(x_i) =e^{\min_k \tilde{f}_k + M|x_i-x_k|} = e^{\tilde f_i}= f_i.
     \end{align}
     since by \eqref{eq:interp1}, we have
     \begin{align*}
         \tilde f_i\leq \tilde f_j+M|x_j-x_i|,\ \forall j\in [N].
     \end{align*}
     Moreover, we can show that $f\in \FLip{M,\alpha,+}$ since $\forall x,y\in \R$, we have 
     \begin{align*}
         |\tilde f(x)-\tilde f(y)|&=| \tilde f_{k_\star(x)}+M|x-x_{k_\star(x)}|-\tilde f_{j_\star(y)}-M|y-x_{j_\star(y)}||\\
         &\leq | \tilde f_{j_\star(y)}+M|x-x_{j_\star(y)}|-\tilde f_{j_\star(y)}-M|y-x_{j_\star(y)}||\\
         &=M| |x-x_{j_\star(y)}|-|y-x_{j_\star(y)}||\leq M | x-y|,
     \end{align*}
     where $k_\star(x)$ and $j_\star(y)$ are the optimal indices in the definition of $\tilde f(x)$ and $\tilde f(y)$. The last inequality follows from the reverse triangle inequality. In addition, if $f_i\geq 0 \ \forall i\in [N]$ then $f(x)\geq 0 \ \forall x\in \R$, and the same holds if $f_i>0$. Finally, $\tilde f(x)\in \C^0$ and piecewise $\C^1$.
 \qed \end{proof}
\end{theorem}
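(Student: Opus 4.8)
\textbf{Proof plan for Theorem~\ref{prop:interp_basic}.}
The plan is to prove the two directions separately, reducing everything to the case $\alpha = 0$ (plain $M$-Lipschitzness) via the substitution $\tilde f = f^{1-\alpha}$ (or $\log f$ when $\alpha = 1$) already introduced in Proposition~\ref{prop:def2}. For necessity, I would simply invoke Proposition~\ref{prop:def2}: if $f \in \FLip{M,\alpha,+}$ interpolates $S$, then $\tilde f$ is $M$-Lipschitz and satisfies the relevant sign condition everywhere, so evaluating at the points $x_i$ gives \eqref{eq:interp1} together with $f_i \geq 0$ (resp. $f_i > 0$). This is immediate and needs no real work.

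The substance is in sufficiency. Given a set $S$ satisfying \eqref{eq:interp1}, I would explicitly construct an interpolant. The natural candidate is the \emph{maximal $M$-Lipschitz interpolant} of the data $\{(x_i, \tilde f_i)\}$, namely $\tilde f(x) = \min_k \big(\tilde f_k + M|x - x_k|\big)$, and then set $f = \tilde f^{1/(1-\alpha)}$ when $\alpha \neq 1$ (equivalently $f = \tilde f^{\,\alpha - 1}$ only if one reads exponents carefully — here I would double-check the exponent convention, since $\tilde f_i = f_i^{1-\alpha}$ forces $f_i = \tilde f_i^{1/(1-\alpha)}$) and $f = e^{\tilde f}$ when $\alpha = 1$. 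The verification then has three parts: (i) $\tilde f$ is genuinely $M$-Lipschitz on all of $\R$, (ii) $\tilde f(x_i) = \tilde f_i$, hence $f(x_i) = f_i$, and (iii) $f$ has the required regularity and sign properties so that $f \in \FLip{M,\alpha,+}$. Part (i) follows from the reverse triangle inequality: writing $k_\star(x)$, $j_\star(y)$ for the minimizing indices, $|\tilde f(x) - \tilde f(y)| \leq M\big||x - x_{j_\star(y)}| - |y - x_{j_\star(y)}|\big| \leq M|x-y|$, using optimality of $k_\star(x)$ on one side. Part (ii) is where \eqref{eq:interp1} is used: the hypothesis $|\tilde f_i - \tilde f_j| \le M|x_i - x_j|$ rearranges to $\tilde f_i \le \tilde f_j + M|x_j - x_i|$ for all $j$, so the minimum defining $\tilde f(x_i)$ is attained at $k = i$.

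For part (iii), once $\tilde f$ is $M$-Lipschitz, continuous and piecewise linear (hence piecewise $\C^1$), the composition $f$ inherits continuity and piecewise-$\C^1$ regularity on the effective domain, and Proposition~\ref{prop:def2} (sufficiency direction) immediately upgrades the $M$-Lipschitzness of $\tilde f$ plus the sign condition to $f \in \FLip{M,\alpha,+}$ — this is the cleanest route, since it avoids re-deriving the pointwise inequality \eqref{eq:function_class} by hand. The sign/positivity condition transfers automatically: if all $f_i \geq 0$ (resp. $>0$) the formula for $f$ produces nonnegative (resp. positive) values by construction. The degenerate case $f \equiv 0$ must be handled separately — when $\alpha \geq 1$ it is the only interpolant of the all-zero data and $\tilde f$ is not defined — but this is exactly the ``$f_i = 0 \ \forall i$'' escape clause in the statement. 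The final sentence about $\FLip{M,0}$ is the special case $\alpha = 0$ of the above with the nonnegativity bookkeeping simply dropped.

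The main obstacle is essentially bookkeeping rather than a genuine mathematical difficulty: one must be careful about (a) the exponent convention relating $f$ and $\tilde f$ and the sign of $1-\alpha$ (so that ``$\min$'' in $\tilde f$ corresponds to the correct extremal — the \emph{largest} admissible $f$ when $\alpha < 1$ but the \emph{smallest} when $\alpha > 1$, because $t \mapsto t^{1/(1-\alpha)}$ reverses monotonicity when $\alpha > 1$), and (b) the behaviour at points where $f = 0$ or $f = +\infty$, i.e. making sure $\tilde f$ stays in $\C^0$ and piecewise $\C^1$ and that $f$ lands in $\bar\C^0$ / piecewise $\bar\C^1$ as required by Definition~\ref{def:function_class} when $\alpha > 1$. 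Both of these are local checks already implicitly settled in the proof of Proposition~\ref{prop:def2}, so the theorem proof can lean on it. I would present the construction, verify interpolation at the $x_i$, verify the global $M$-Lipschitz bound on $\tilde f$, and then cite Proposition~\ref{prop:def2} to conclude membership in $\FLip{M,\alpha,+}$.
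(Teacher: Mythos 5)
Your proposal is correct and follows essentially the same route as the paper: necessity via Proposition~\ref{prop:def2}, and sufficiency by forming the extremal $M$-Lipschitz interpolant $\tilde f(x) = \min_k\bigl(\tilde f_k + M|x-x_k|\bigr)$, verifying interpolation and global Lipschitzness via the reverse triangle inequality, and then mapping back through $\nu$. You also correctly flag that the paper's displayed exponent $\alpha-1$ in $f = \tilde f^{\alpha-1}$ is a typo for $\ba = \frac{1}{1-\alpha}$, since $\tilde f_i = f_i^{1-\alpha}$ forces $f_i = \tilde f_i^{1/(1-\alpha)}$ and not $\tilde f_i^{\alpha-1}$.
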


Theorem \ref{prop:interp_basic} covers the particular case of interpolation conditions for Lipschitz continuous functions $\FLip{M,0}$, see, e.g. \cite{valentine1945lipschitz}.
\begin{remark}
    By Lemma \ref{lemma:interpnof}, Theorem \ref{prop:interp_basic} furnishes interpolation conditions without function values for $\int \FLip{M,\alpha,(+)}$, and interpolation conditions without function values and first derivative for $\int^{(2)} \FLip{M,\alpha,(+)}$.
\end{remark}
\subsection{Interpolation conditions for $\int \FLip{M,\alpha,(+)}$}
First, we prove $\FLip{M,\alpha,(+)}$ is extremally interpolable and provide its extremal interpolants.
\begin{proposition}\label{prop:extrem_interp_basic}
    Let $\alpha,M\geq 0$. Then, $\FLip{M,\alpha,(+)}$ is extremally interpolable, and the extremal interpolants (Definition \ref{def:gextr}) of an $\FLip{M,\alpha,(+)}$-interpolable set  $S=\{(x_i,f_i)\}_{i=1,2}$, where $x_1<x_2$, are given by:
    \\
    $\bullet \quad [f_{\min}, \text{Case 1}] \ $ If $\alpha \geq 1$, if we consider $\FLip{M,0}$, or if $\alpha <1$ and $\tilde f_2+\tilde f_1\geq M(x_2-x_1)$:
    \begin{align*}
        f_{\min}(x)=\begin{cases}
       \nu\left(\tilde f_1-\frac{\ba}{|\ba|}M(x-x_1)\right)& x\in [x_1,z]\\
      \nu\left(\tilde f_2+\frac{\ba}{|\ba|}M(x-x_2)\right) & x \in [z,x_2]
    \end{cases}, \text{ where } z=\frac{x_1+x_2}{2}+\frac{\ba}{|\ba|}\frac{\tilde f_1-\tilde f_2}{2M}.
    \end{align*}
    $\bullet \quad [f_{\min}, \text{Case 2}] \ $ Else:
        \begin{align*}
        f_{\min}(x)=\begin{cases}
       \nu\left(\tilde f_1-\frac{\ba}{|\ba|}M(x-x_1)\right)& x\in [x_1,x_1+\frac{\tilde f_1}{M}]\\
       0& x\in [x_1+\frac{\tilde f_1}{M}, x_2-\frac{\tilde f_2}{M}]\\
      \nu\left(\tilde f_2+\frac{\ba}{|\ba|}M(x-x_2)\right) & x \in [x_2-\frac{\tilde f_2}{M},x_2]
    \end{cases}.
    \end{align*}
    $\bullet\quad  [f_{\max}, \text{Case 1}] \ $ If $\alpha \leq 1$, or if $\alpha >1$ and $\tilde f_2+\tilde f_1> M(x_2-x_1)$:
    \begin{align*}
        f_{\max}(x)=\begin{cases}
    \nu\left(\tilde f_1+\frac{\ba}{|\ba|}M(x-x_1)\right) & x\in [x_1,y]\\
    \nu\left(\tilde f_2-\frac{\ba}{|\ba|}M(x-x_2)\right) & x \in [y,x_2]
    \end{cases}, \text{ where } y=\frac{x_1+x_2}{2}-\frac{\ba}{|\ba|}\frac{\tilde f_1-\tilde f_2}{2M}.
    \end{align*}
    $\bullet \quad [f_{\max}, \text{Case 2}] \ $ Else:
        \begin{align*}
        f_{\max}(x)=\begin{cases}
    \nu\left(\tilde f_1+\frac{\ba}{|\ba|}M(x-x_1)\right)& x\in [x_1,x_1+\frac{\tilde f_1}{M}]\\
    \infty& x\in [x_1+\frac{\tilde f_1}{M}, x_2-\frac{\tilde f_2}{M}]\\
    \nu\left(\tilde f_2-\frac{\ba}{|\ba|}M(x-x_2)\right) & x \in [x_2-\frac{\tilde f_2}{M},x_2]
    \end{cases},
    \end{align*}
    where
        \begin{align*}
            &\tilde f_i= f_i^{1-\alpha} &&\text{ if } \alpha\neq 1,\hspace{2cm} \log\Par{f_i} \text{ otherwise},\\
         &       \nu(x)= x^{\ba} &&\text{ if } \alpha\neq 1,\hspace{2cm} e^x \text{ otherwise}.
    \end{align*} 
    \end{proposition}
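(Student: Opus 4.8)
The plan is to transfer the problem to the class of $M$-Lipschitz functions using Proposition~\ref{prop:def2}. The map $\nu$ appearing there is exactly the inverse of $f\mapsto\tilde f$, so --- leaving aside the trivial case $f\equiv 0$ --- a function $f$ belongs to $\FLip{M,\alpha,+}$ if and only if $\tilde f$ is $M$-Lipschitz and obeys the relevant sign requirement ($\tilde f\ge 0$ for $\alpha<1$, $\tilde f>0$ for $\alpha>1$, no sign constraint for $\alpha\in\{0,1\}$), in which case $f=\nu\circ\tilde f$. Hence, for an $\FLip{M,\alpha,+}$-interpolable pair $S=\{(x_i,f_i)\}_{i=1,2}$, the interpolants of $S$ in $\FLip{M,\alpha,+}$ are in bijection, via $g\mapsto\tilde g$, with the $M$-Lipschitz interpolants of $\{(x_i,\tilde f_i)\}_{i=1,2}$ obeying that same sign constraint, under the conventions $\nu(0)=0$ when $\alpha<1$ and $\nu(0)=+\infty$ when $\alpha>1$ (so that the boundary of the sign constraint is retained). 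I would first dispose of the degenerate cases: $M=0$ forces $\tilde f_1=\tilde f_2$ and a single constant interpolant, and $f\equiv 0$ --- possible only for $\alpha<1$ with $\tilde f_1=\tilde f_2=0$ --- is recovered as $f_{\min}$ in Case~2 below.

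Next I would use monotonicity of $\nu$ to commute it with the pointwise infimum/supremum in Definition~\ref{def:gextr}: $\nu$ is strictly increasing for $\alpha\le 1$ and strictly decreasing for $\alpha>1$, so $g_{\min}=\nu\circ\underline{\ell}$ and $g_{\max}=\nu\circ\overline{\ell}$ when $\alpha\le 1$, while the two roles swap when $\alpha>1$; here $\underline{\ell}$ and $\overline{\ell}$ denote the pointwise lower and upper envelopes of the admissible $M$-Lipschitz interpolants of $\{(x_i,\tilde f_i)\}_{i=1,2}$. The upper envelope is always $\overline{\ell}(x)=\min_i(\tilde f_i+M|x-x_i|)$ --- every interpolant lies below it by the Lipschitz bound, it is itself $M$-Lipschitz and interpolating, and it is bounded below by $\min(\tilde f_1,\tilde f_2)$, so no sign constraint is ever active for it. The lower envelope is $\underline{\ell}(x)=\max_i(\tilde f_i-M|x-x_i|)$ whenever no sign constraint is active, i.e.\ either $\alpha\in\{0,1\}$ or the minimum value $\tfrac12(\tilde f_1+\tilde f_2-M(x_2-x_1))$ of this ``V'' on $[x_1,x_2]$ is nonnegative. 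On $[x_1,x_2]$ both $\overline{\ell}$ and this $\underline{\ell}$ are two-branch piecewise-affine functions whose kinks sit at the points $y$ and $z$ of the statement, as one obtains by equating the two affine branches and substituting $\ba/|\ba|=\pm1$ --- the interpolation inequality \eqref{eq:interp1}, $|\tilde f_1-\tilde f_2|\le M(x_2-x_1)$, being exactly what places the kink inside $[x_1,x_2]$. Whenever the relevant envelope is given by these closed forms --- which is precisely the ``Case~1'' regime of the statement --- composition with $\nu$, together with the observation that $\ba/|\ba|$ equals $+1$ for $\alpha<1$ and $-1$ for $\alpha>1$, yields the ``Case~1'' formulas for $f_{\min}$ and $f_{\max}$.

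It remains to treat the case where a sign constraint becomes active, namely $\alpha<1$ (resp.\ $\alpha>1$) together with $\tilde f_1+\tilde f_2<M(x_2-x_1)$ (resp.\ $\le$). Then the envelope $\underline{\ell}$ must be replaced by its nonnegative truncation $\max\bigl(0,\ \tilde f_1-M|x-x_1|,\ \tilde f_2-M|x-x_2|\bigr)$, which descends from $\tilde f_1$ with slope $-M$ to $0$ at $x_1+\tilde f_1/M$, stays at $0$ on the interval $[x_1+\tilde f_1/M,\,x_2-\tilde f_2/M]$ (nonempty precisely under this hypothesis), and climbs back to $\tilde f_2$ with slope $M$; it is still $M$-Lipschitz, still interpolating, and still extremal among nonnegative $M$-Lipschitz interpolants, since any such $g$ satisfies both $g\ge 0$ and $g\ge\tilde f_i-M|x-x_i|$. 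Composing with $\nu$ gives the ``Case~2'' formulas, with $\nu(0)=0$ producing the null middle piece of $f_{\min}$ when $\alpha<1$ and $\nu(0)=+\infty$ producing the infinite middle piece of $f_{\max}$ when $\alpha>1$. For the latter I would check that the resulting function truly lies in $\FLip{M,\alpha,+}$: for $\alpha>1$ this class is contained in $\bar{\C}^0$ and its members are allowed to take the value $+\infty$, the relevant transform $\tilde f$ being here the finite, continuous, piecewise-$\C^1$ truncated envelope, and $f>0$ holds. Since in every case the exhibited envelope belongs to $\FLip{M,\alpha,+}$, extremal interpolability in the sense of Assumption~\ref{assum:extrgrad} drops out as a by-product.

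The main obstacle is bookkeeping rather than mathematical depth: handling the four subcases ($f_{\min}$ or $f_{\max}$ against $\alpha<1$ or $\alpha>1$) and the boundary conventions consistently, flipping infimum and supremum correctly when $\nu$ is decreasing, and verifying that the three-way maximum collapses to the announced two- or three-branch formula with the kinks exactly where claimed. The one point that really needs care is confirming that the ``Case~2'' interpolant for $f_{\max}$ with $\alpha>1$ --- infinite on a subinterval --- is a genuine element of $\FLip{M,\alpha,+}$ and is actually attained as the pointwise supremum of the interpolants, not merely approached.
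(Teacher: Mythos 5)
Your proposal is correct and follows essentially the same strategy as the paper's proof: transfer via the transform $\tilde f$ (inverse of $\nu$) to reduce the problem to extremal $M$-Lipschitz interpolants of $\{(x_i,\tilde f_i)\}$ with a possible nonnegativity truncation, commute $\nu$ with the pointwise inf/sup using monotonicity (swapping roles when $\alpha>1$ because $\nu$ is decreasing), and then verify membership in $\FLip{M,\alpha,(+)}$ via Proposition~\ref{prop:def2}. The paper organizes this as a three-way case analysis on $\alpha$ with a pointwise optimization formulation of the envelopes, but the underlying argument and all the boundary conventions (including $\nu(0)=0$ for $\alpha<1$, $\nu(0)=+\infty$ for $\alpha>1$, and the kinks landing at $y,z$ by \eqref{eq:interp1}) match.
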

    \begin{proof}
    First, observe that all quantities in Proposition \ref{prop:extrem_interp_basic} are well-defined, i.e., $x_1\leq z\leq x_2$ and $x_1\leq y \leq x_2$. Indeed, e.g.,
\begin{align*}
    z-x_1=\frac{x_2-x_1}{2}+\frac{\ba}{|\ba|}\frac{\tilde f_1-\tilde f_2}{2M}\geq 0
\end{align*}
    by satisfaction of \eqref{eq:def1}.

We now show that $f_{\min}, f_{\max}$, as illustrated on Figure \ref{fig:proof_extr_interp}, are the extremal interpolating envelopes of $S$ as defined in \eqref{eq:defgmin}.
\begin{figure}[ht!]
  \centering
  \begin{subfigure}[b]{\textwidth}
    \includegraphics[width=\textwidth]{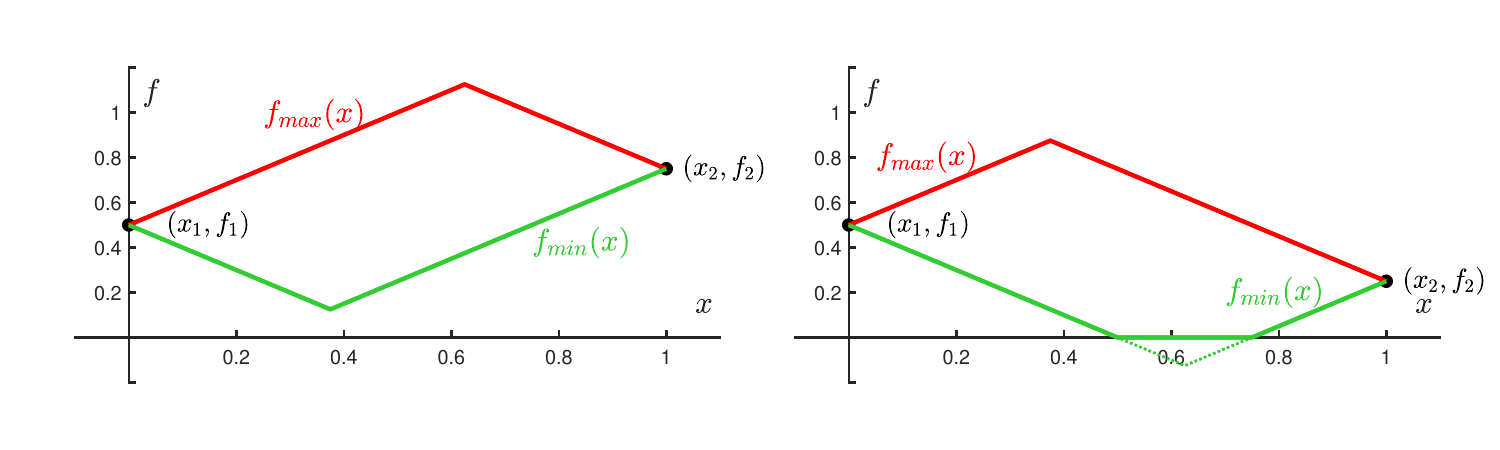}
    \caption{Illustration of the extremal interpolants when $\alpha<1$ ($\alpha=0$ in this example). The left-hand side figure considers $\tilde S_1$, and the associated extremal interpolants consist of two pieces, i.e., $\tilde f_{\min}$ is naturally always nonnegative. The right-hand side figure considers $\tilde S_2$, and the associated minimal interpolant consists of three pieces, i.e., $\tilde f_{\min}$ is forced to be zero while the $M$-Lipschitz minimal interpolant of $\tilde S_2$ is negative. }
    \label{subfig:alpha0}
  \end{subfigure}
  \hfill
  \begin{subfigure}[b]{\textwidth}
    \includegraphics[width=\textwidth]{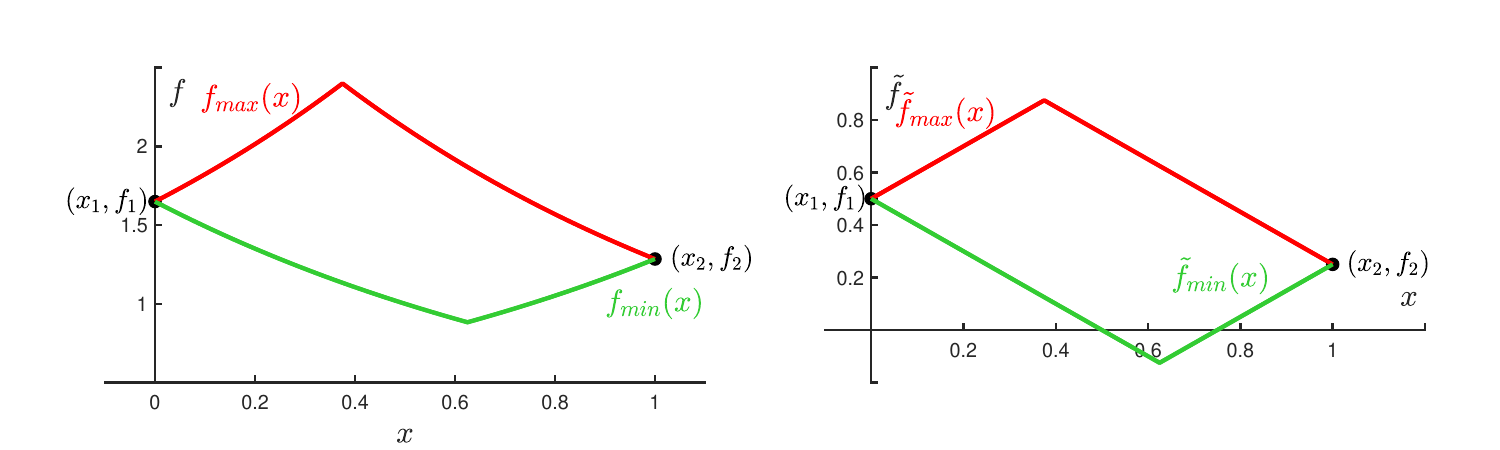}
    \caption{Extremal interpolants when $\alpha=1$, on $\tilde S_2$. The right-hand side figure displays $\tilde f_{\min}$ and $\tilde f_{\max}$, the extremal $M$-Lipschitz interpolants of $\tilde S_2$, even when negative. The left-hand side figure displays $f_{\min}$ and $f_{\max}$, the actual extremal interpolants of $S_2$, that are always nonnegative.}
    \label{subfig:alpha1}
  \end{subfigure}
  \hfill
  \begin{subfigure}[b]{\textwidth}
    \includegraphics[width=\textwidth]{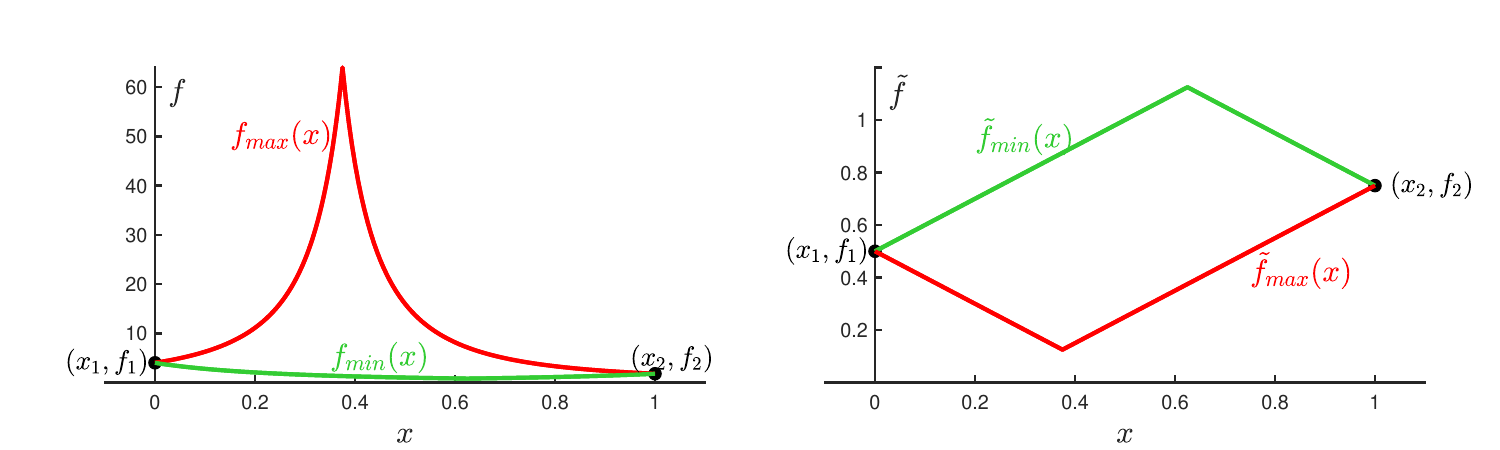}
    \caption{Extremal interpolants when $\alpha>1$ ($\alpha=\frac 32$ in this example) and $\tilde S_1$. The right-hand side figure displays $\tilde f_{\min}$ and $\tilde f_{\max}$, the permuted extremal $M$-Lipschitz interpolants of $\tilde S_1$, that are naturally nonnegative. The left-hand side figure displays $f_{\min}$ and $f_{\max}$, the actual extremal interpolants of $S_1$. }
    \label{subfig:alpha32_case1}
  \end{subfigure}
    \hfill
  \begin{subfigure}[b]{\textwidth}
    \includegraphics[width=\textwidth]{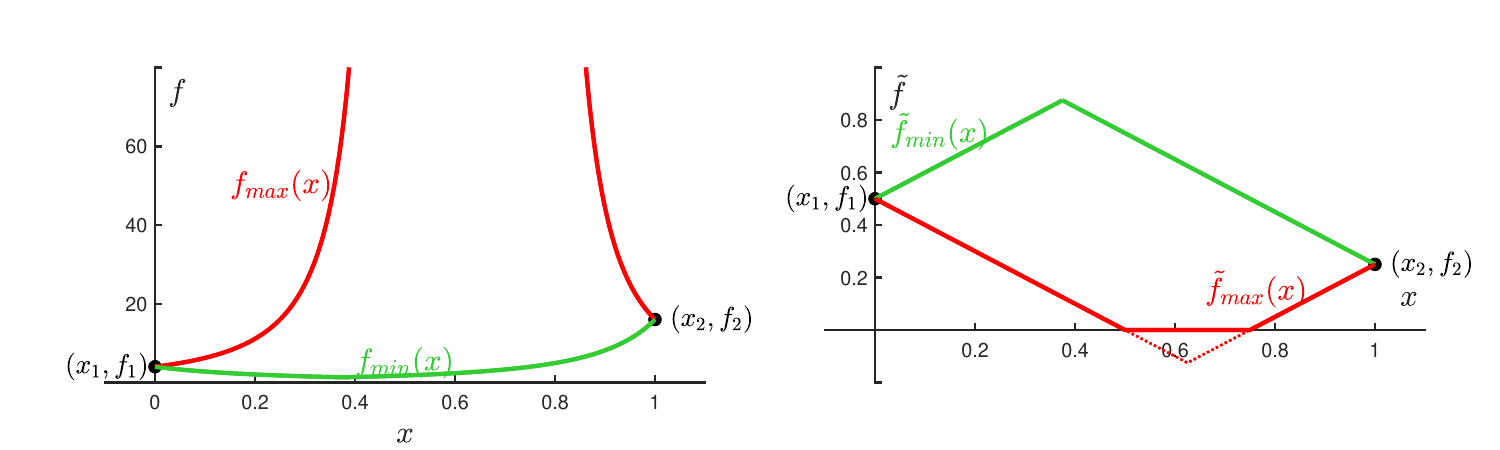}
    \caption{Extremal interpolants when $\alpha>1$ ($\alpha=\frac 32$ in this example) and $\tilde S_2$. The right-hand side figure displays $\tilde f_{\min}$ and $\tilde f_{\max}$, the permuted extremal $M$-Lipschitz interpolants of $\tilde S_2$, forced to be zero when these become nonnegative. The left-hand side figure displays $f_{\min}$ and $f_{\max}$, the actual extremal interpolants of $S_2$.}
    \label{subfig:alpha32_case2}
  \end{subfigure}
  \caption{Given $\alpha\geq 0$, $M=1$, and $S=\{(x_i,f_i)\}_{i=1,2}$, extremal interpolants $f_{\min}$, $f_{\max}$ (Definition \ref{def:gextr}) of $S$, depending on $\alpha$. The considered sets are either $\tilde{S}_1=\{(x_i,\tilde f_i)\}_{i=1,2}=\{(0,\frac{1}{2}),(1,\frac{3}{4})\}$, or $\tilde{S}_2=\{(x_i,\tilde f_i)\}_{i=1,2}=\{(0,\frac{1}{2}),(1,\frac{1}{4})\}$.}
  \label{fig:proof_extr_interp}
\end{figure}
It holds that $f_{\min}$, $f_{\max}$ interpolate $S$, since $\nu(\tilde{f_i})=f_i, \ i=1,2$. For any $x\in [x_1,x_2]$, they are also the extremal $f$ satisfying \eqref{eq:interp1} with respect to $S$, i.e., $f_{\min ~(\text{resp. } \max)}(x)=$ 
\begin{align*}
    \min~(\text{resp. } \max)_f \ f  \text{ s.t. } &|\tilde f-\tilde f_i|\leq M |x-x_i|, \ i=1,2,\\ & f\geq 0 \text{ (except for $\FLip{M,0}$).}
\end{align*}

\noindent \textit{Case $\alpha<1$ (Figure \ref{subfig:alpha0}).} Since $\ba>0$, $\nu(x)$ is an increasing nonnegative function of $x$, when $x\geq 0$, and either (i) $\nu(x)$ does not exists for $x<0$  (e.g., $\alpha=\frac 13$), (ii) $\nu(x)$ is negative for $x<0$  (e.g., $\alpha=0$), or (iii) $\nu(x)$ is a decreasing nonnegative function for $x<0$  (e.g., $\alpha=\frac 12$).

Hence, at all $x\in [x_1,x_2]$, computing the extremal interpolants of $S$, $f_{\min ~(\text{resp. } \max)}$, is equivalent to computing the extremal $M$-Lipschitz interpolants of $\{(x_i,\tilde f_i)\}_{i=1,2}$, whenever these are nonnegative. On the contrary, when these interpolants become negative, the extremal interpolants of $S$ are set to $0$.

Specifically, at all $x\in [x_1,x_2]$, $\tilde{f}_{\min}(x)$ takes the maximal value between $0$ and the lower boundaries $\tilde f\geq \tilde f_i- M |x-x_i|$. If $\tilde f_1+\tilde f_2\geq M(x_2-x_1)$, these boundaries are always larger than $0$, and $f_{\min}$ consists of two pieces. Else, $f_{\min}$ consists of three parts, since the associated function $\tilde f_{\min}(x)$ is set to $0$ whenever the lower boundaries $\tilde f_i- M |x-x_i|$ become negative. In addition, at all $x\in [x_1,x_2]$, $\tilde f_{\max}(x)$ takes the minimal value of the upper boundaries $\tilde f\leq \tilde f_i+ M |x-x_i|$, and always consists of two parts, since these boundaries are always nonnegative.

\noindent \textit{Case $\alpha=1$ (Figure \ref{subfig:alpha1}).} Since $\nu(x)$ is an increasing nonnegative function of $x$, $\forall x\in \R$, computing the extremal interpolants of $S$, $f_{\min~(\text{resp. } \max)}$, is equivalent to computing the extremal $M$-Lipschitz interpolants $\tilde f_{\min ~(\text{resp. } \max)}$ of $\{(x_i,\tilde f_i)\}_{i=1,2}$.

Specifically, at all $x\in [x_1,x_2]$, $\tilde{f}_{\min}(x)$ takes the maximal value between the lower boundaries $\tilde f\geq \tilde f_i- M |x-x_i|$, even when such boundaries become negative. Similarly, at all $x\in [x_1,x_2]$, $\tilde f_{\max}(x)$ takes the minimal value of the upper boundaries $\tilde f\leq \tilde f_i+ M |x-x_i|$. Both extremal interpolants always consist of two parts.

\noindent \textit{Case $\alpha>1$ (Figures \ref{subfig:alpha32_case1} and \ref{subfig:alpha32_case2}).} Since $\ba<0$, $\nu(x)$ is a \emph{decreasing} positive function of $x$, when $x> 0$, with asymptote in $x=0$, and either (i) $\nu(x)$ does not exists for $x<0$  (e.g., $\alpha=3$), (ii) $\nu(x)$ is negative for $x<0$  (e.g., $\alpha=2$), or (iii)~$\nu(x)$ is an increasing positive function for $x<0$  (e.g., $\alpha=\frac 32$).

Specifically, at all $x\in [x_1,x_2]$, $\tilde{f}_{\min}(x)$ takes the minimal value of the upper boundaries $\tilde f\leq \tilde f_i+ M |x-x_i|$ and always consists of two parts, since these boundaries are always positive. In addition, at all $x\in [x_1,x_2]$, $\tilde f_{\max}(x)$ takes the maximal value between $0$ and the lower boundaries $\tilde f\geq \tilde f_i- M |x-x_i|$. If $\tilde f_1+\tilde f_2> M(x_2-x_1)$, these boundaries are always strictly larger than $0$, and $f_{\max}$ consists of two pieces, i.e., the associated functions to these boundaries cross each other before meeting their respective asymptote. Else, $f_{\max}$ consists of three parts, since the associated function $\tilde f_{\max}(x)$ is set to $0$ whenever the lower boundaries $\tilde f_i- M |x-x_i|$ become negative. On this interval, $f_{\max}$ reaches $+\infty$.

To conclude the proof, it remains to show that $\FLip{M,\alpha,(+)}$ is extremally interpolable, that is, $f_{\min}, \ f_{\max} \in \FLip{M,\alpha,(+)}$. This holds since (i) $f_{\min}$ and $f_{\max}$ are nonnegative (and positive for $\alpha>1$) except for the extremal interpolants of $\FLip{M,0}$, (ii) $f_{\min}$ and $f_{\max}$ belong to $\bar{C}^0$ (or $\C^0$ if $\alpha \leq 1$), and are piecewise $\bar{\C}^1$ (or $\C^1$ when $\alpha \leq 1$), and (iii) $\FLip{M,\alpha,(+)}$ is order $0$-connectable and $f_{\min}, \ f_{\max}$ are functions by parts satisfying on each interval:
\begin{align*}
    |\nu\left(\tilde f_i\pm M(x-x_i)\right)'|&=M|\ba|\nu\left(\tilde f_i\pm M(x-x_i)\right)^{\alpha}, \quad i=1,2.
\end{align*} 
 \qed \end{proof}
 \FloatBarrier
\begin{remark}
    The cases $\FLip{M,0}$ and $\FLip{M,1,\alpha}$ are the only ones for which the extremal interpolants take a single expression.
\end{remark}
We build on these extremal interpolants to prove $\FLip{M,\alpha,(+)}$ is extremally completable when $\alpha >1$ (the case $\alpha \leq 1$ follows from Proposition \ref{prop:properties}).
\begin{proposition}\label{prop:extr_conn}
    Let $\alpha,M\geq 0$, and $\alpha>1$.  It holds that $\int \FLip{M,\alpha,(+)}$ is extremally completable.
\end{proposition}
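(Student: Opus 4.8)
The plan is to unwind Assumption~\ref{assum:conv} for this class: fix $x_1<x_2$, a set $S=\{(x_i,f_i^0,f_i^1)\}_{i=1,2}$ that is $\int\FLip{M,\alpha,+}$-interpolable without function values, the extremal interpolants $g_{\min},g_{\max}\in\FLip{M,\alpha,+}$ of $\tilde S=\{(x_i,f_i^1)\}_{i=1,2}$ (which exist by Proposition~\ref{prop:extrem_interp_basic}), and a target $V:=f_2^0-f_1^0$ with $\int_{x_1}^{x_2}g_{\min}(z)\,\mathrm{d}z\le V\le\int_{x_1}^{x_2}g_{\max}(z)\,\mathrm{d}z$. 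It suffices to produce one $g\in\FLip{M,\alpha,+}$ that interpolates $\tilde S$ and satisfies $\int_{x_1}^{x_2}g(z)\,\mathrm{d}z=V$: then $f(x):=f_1^0-\int_{-\infty}^{x_1}g+\int_{-\infty}^{x}g$ lies in $\int\FLip{M,\alpha,+}$ and interpolates $S$, exactly as in Lemma~\ref{lem:fromgtof}, which is what extremal completability requires.

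To build such a $g$ I would pass to the transformed variable of Proposition~\ref{prop:def2}. Since $\alpha>1$ and $\tilde S$ is interpolable, Theorem~\ref{prop:interp_basic} gives $f_i^1>0$; write $\tilde g_i:=(f_i^1)^{1-\alpha}>0$ and $\nu(u):=u^{\ba}$, a decreasing function since $\ba<0$. On $[x_1,x_2]$ the map $u\mapsto\nu(u)$ identifies the $\FLip{M,\alpha,+}$-interpolants of $\tilde S$ with the nonnegative $M$-Lipschitz functions $u$ with $u(x_i)=\tilde g_i$; because $\nu$ is decreasing, specializing Proposition~\ref{prop:extrem_interp_basic} gives $g_{\min}=\nu(\overline L)$ and $g_{\max}=\nu(\underline L)$, where $\overline L(x)=\min_i(\tilde g_i+M|x-x_i|)$ is the upper tent and $\underline L(x)=\max\big(\max_i(\tilde g_i-M|x-x_i|),0\big)$ the truncated lower tent. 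I would then set, for $t\in[0,1]$,
\begin{align*}
    u_t:=(1-t)\,\overline L+t\,\underline L,\qquad g_t:=\nu(u_t),\qquad \Phi(t):=\int_{x_1}^{x_2}g_t(z)\,\mathrm{d}z\in(0,+\infty].
\end{align*}
Each $u_t$ is a convex combination of nonnegative $M$-Lipschitz functions through $(x_1,\tilde g_1)$ and $(x_2,\tilde g_2)$, hence shares these properties; extending it to an $M$-Lipschitz nonnegative function on all of $\R$ with the same boundary values (possible since $\tilde S$ is $\FLip{M,0}$-interpolable, using order-$0$ connectability from Proposition~\ref{prop:properties}), Proposition~\ref{prop:def2} yields $g_t\in\FLip{M,\alpha,+}$ interpolating $\tilde S$, with $g_0=g_{\min}$ and $g_1=g_{\max}$.

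The core of the proof is then an intermediate value argument for $\Phi$. Since $\overline L\ge\underline L$, the family $u_t$ is pointwise nonincreasing in $t$, so $g_t$ is pointwise nondecreasing and $\Phi$ is nondecreasing, with $\Phi(0)=\int_{x_1}^{x_2}g_{\min}$ and $\Phi(1)=\int_{x_1}^{x_2}g_{\max}$. On any $[0,t_1]\subset[0,1)$ one has $u_t\ge(1-t_1)\overline L\ge(1-t_1)\min_i\tilde g_i>0$, so $\nu(u_t)$ is uniformly bounded there and dominated convergence makes $\Phi$ continuous on $[0,1)$. At $t=1$: if $\Phi(1)<\infty$ then $\nu(u_t)\le\nu(\underline L)\in L^1[x_1,x_2]$ and dominated convergence gives $\Phi(t)\to\Phi(1)$; if $\Phi(1)=+\infty$ then $u_t\downarrow\underline L$ pointwise, so $\nu(u_t)\uparrow\nu(\underline L)$ and monotone convergence gives $\Phi(t)\to+\infty$. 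Thus $\Phi$ is a continuous, nondecreasing $(0,+\infty]$-valued function and therefore attains every value between $\Phi(0)$ and $\Phi(1)$; picking $t^\star$ with $\Phi(t^\star)=V$ and $g:=g_{t^\star}$ finishes the argument.

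The step I expect to be the real obstacle is the degenerate ``Case~2'' of Proposition~\ref{prop:extrem_interp_basic}, where $g_{\max}\equiv+\infty$ on a subinterval $I$ and $\int_{x_1}^{x_2}g_{\max}=+\infty$: there $V$ ranges over the unbounded interval $[\Phi(0),+\infty)$, so the intermediate value argument must be carried out on $[0,1)$ with the endpoint handled by monotone rather than dominated convergence, and one must check that $\Phi$ genuinely blows up (on $I$ one has $u_t=(1-t)\overline L$ and, since $\ba<0$, $\int_I\nu(u_t)=(1-t)^{\ba}\int_I\overline L^{\ba}\to+\infty$). The remaining points — that each $u_t$ stays nonnegative and $M$-Lipschitz, that $g_t=\nu(u_t)$ is continuous as a $\bar{\C}^0$ function and piecewise $\C^1$, and that it extends to a global element of $\FLip{M,\alpha,+}$ — are routine given Propositions~\ref{prop:def2}, \ref{prop:interp_basic}, \ref{prop:extrem_interp_basic} and the connectability in Proposition~\ref{prop:properties}.
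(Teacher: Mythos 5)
Your proof is correct, and it takes a genuinely different route from the paper's. The paper constructs the intermediate interpolant by a level-cut directly in the original variable: it sets $g(x,a)=\min\{g_{\max}(x),\max\{g_{\min}(x),a\}\}$ for $a\geq 0$, observes that this function is, on at most three subintervals, either $g_{\min}$, $g_{\max}$, or the constant $a$, so it lies in $\FLip{M,\alpha,+}$ by order-$0$ connectability (Proposition~\ref{prop:properties}) and interpolates $\tilde S$; then it asserts that $a\mapsto\int_{x_1}^{x_2}g(z,a)\,\mathrm{d}z$ is continuous, ranging from $\int g_{\min}$ (at $a=0$) to $\int g_{\max}$ (as $a\to\infty$), and concludes by the intermediate value theorem. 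Your construction instead passes to the transformed variable $u=\tilde g=g^{1-\alpha}$ of Proposition~\ref{prop:def2}, where membership in $\FLip{M,\alpha,+}$ becomes plain $M$-Lipschitzness, and interpolates linearly between the tent functions $\overline L$ and $\underline L$ there, pulling back via $\nu$. Note that you are exploiting convexity of the \emph{transformed} class (nonnegative $M$-Lipschitz functions), which is legitimate even though $\int\FLip{M,\alpha,+}$ itself is not convex for $\alpha>1$ — that non-convexity is exactly why the paper introduced extremal completability as a weaker substitute. Each approach has its merits: the paper's clamp is more elementary and sidesteps the transformation entirely, while yours makes the monotonicity of $\Phi$ and the IVT endpoint behaviour (in particular the blow-up $\Phi(t)\to+\infty$ in the degenerate Case~2 where $g_{\max}\equiv+\infty$ on a subinterval) fully explicit, a point the paper glosses over by simply stating that the integral ``depends continuously on $a$''.
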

The proof of Proposition \ref{prop:extr_conn} is deferred to Appendix \ref{app:properties}. Combining Propositions \ref{prop:extr_conn}, \ref{prop:extrem_interp_basic}, and \ref{prop:properties}, and Theorem \ref{lem:technique}, we obtain interpolation conditions for $\int \FLip{M,\alpha,(+)}$. For the sake of clarity, we consider the cases $\alpha=1$ (Theorem \ref{thm:quasi_self}) and $\alpha \neq 1$ (Theorem \ref{thm:IC_generalized_self_concordant}) separately.
\begin{theorem}[Interpolation conditions for $\int \FLip{M,1,+}$]\label{thm:quasi_self}
        Let $M\geq 0$. A set $S=\{(x_i,f_i,g_i)\}_{i\in [N]}$ is $\int \FLip{M,1,(+)}$-interpolable if and only if, $\forall i,j\in[N]$, $g_i \geq 0$ and 
\begin{align}
            f_j-f_i&\geq \frac{1}{M} (g_i+g_j)-\frac{2}{M}\sqrt{ g_i g_j} e^{-\frac{M}{2}(x_j-x_i)}.\label{condqsc3}
\end{align}
\end{theorem}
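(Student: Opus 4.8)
The plan is to obtain this statement as a direct application of the lifting result Theorem~\ref{lem:technique}, applied with $m=0$ and $\F=\FLip{M,1,(+)}$, so that $\int\F=\int\FLip{M,1,(+)}$ is exactly the class in question. First I would check the three hypotheses of Theorem~\ref{lem:technique} in this instance: $\FLip{M,1,(+)}$ is extremally interpolable by Proposition~\ref{prop:extrem_interp_basic}; and since $\alpha=1\le 1$, Proposition~\ref{prop:properties} gives that $\int\FLip{M,1,(+)}$ is convex, hence extremally completable by the lemma following Assumption~\ref{assum:conv1}, and order~$1$ connectable. With these in hand, Theorem~\ref{lem:technique} reduces $\int\FLip{M,1,(+)}$-interpolability of an ordered dataset $S=\{(x_i,f_i,g_i)\}_{i\in[N]}$ to two requirements: (a) interpolability of $S$ without function values, and (b) the integral sandwich $\int_{x_i}^{x_{i+1}}g_{\min}(x)\,\mathrm{d}x\le f_{i+1}-f_i\le\int_{x_i}^{x_{i+1}}g_{\max}(x)\,\mathrm{d}x$ for each $i$, where $g_{\min},g_{\max}$ are the extremal interpolants of $\{(x_i,g_i),(x_{i+1},g_{i+1})\}$.

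For (a), Lemma~\ref{lemma:interpnof} identifies interpolability of $S$ without function values with $\FLip{M,1,+}$-interpolability of $\{(x_i,g_i)\}_{i\in[N]}$, which Theorem~\ref{prop:interp_basic} (case $\alpha=1$) describes as: either $g_i=0$ for all $i$, or $g_i>0$ for all $i$ together with $|\log g_i-\log g_j|\le M|x_i-x_j|$ for all $i,j$. For (b), I would specialize Proposition~\ref{prop:extrem_interp_basic} to $\alpha=1$, where $\ba=1$, $\tilde f_i=\log g_i$, $\nu=\exp$, and both extremal interpolants fall under ``Case~1'': on $[x_i,x_{i+1}]$, $g_{\min}$ is the two-piece function $g_i e^{-M(x-x_i)}$ followed by $g_{i+1}e^{M(x-x_{i+1})}$, and $g_{\max}$ is $g_i e^{M(x-x_i)}$ followed by $g_{i+1}e^{-M(x-x_{i+1})}$, with the breakpoints given by Proposition~\ref{prop:extrem_interp_basic}. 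Integrating these elementary exponentials (a routine computation using the breakpoint formulas) yields $\int_{x_i}^{x_{i+1}}g_{\min}=\tfrac1M(g_i+g_{i+1})-\tfrac2M\sqrt{g_ig_{i+1}}\,e^{-\frac M2(x_{i+1}-x_i)}$ and $\int_{x_i}^{x_{i+1}}g_{\max}=\tfrac2M\sqrt{g_ig_{i+1}}\,e^{\frac M2(x_{i+1}-x_i)}-\tfrac1M(g_i+g_{i+1})$.

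The last step is to collapse (a) and (b) into the single displayed inequality \eqref{condqsc3}. The lower bound in (b) for the ordered pair $(x_i,x_{i+1})$ is literally \eqref{condqsc3} with $j=i+1$, and the upper bound is \eqref{condqsc3} with the roles of $i,j$ swapped, the exchange flipping $e^{-\frac M2(x_{i+1}-x_i)}$ into $e^{\frac M2(x_{i+1}-x_i)}$. By the remark following Theorem~\ref{lem:technique} (via Lemma~\ref{lem:interval}, imposing the pair condition on all pairs is equivalent to imposing it on consecutive ordered pairs), requiring \eqref{condqsc3} for all $i,j$ together with $g_i\ge 0$ thus captures both bounds in (b). It then remains to note that \eqref{condqsc3} over all pairs also enforces (a): summing \eqref{condqsc3} for $(i,j)$ and $(j,i)$ gives $\tfrac{g_i+g_j}{2\sqrt{g_ig_j}}\le\cosh\!\big(\tfrac M2|x_i-x_j|\big)$, i.e. $\cosh\!\big(\tfrac12(\log g_i-\log g_j)\big)\le\cosh\!\big(\tfrac M2|x_i-x_j|\big)$, which is equivalent to $|\log g_i-\log g_j|\le M|x_i-x_j|$ because $\cosh$ is even and strictly increasing on $[0,\infty)$; and if $g_i=0$ for some $i$, then \eqref{condqsc3} forces $g_j=0$ and $f_j=f_i$ for all $j$, which is the degenerate branch of (a). Putting these equivalences together gives the stated conditions.

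I expect the main obstacle not to be any single deep step but the careful bookkeeping of the last paragraph: verifying that the one-parameter family \eqref{condqsc3}, ranged over all pairs, simultaneously encodes the two integral bounds \emph{and} the log-Lipschitz constraint on $\{g_i\}$, with the degenerate all-zero case falling out correctly. The integration itself and the structural hypotheses of Theorem~\ref{lem:technique} are either routine or already established in the cited results.
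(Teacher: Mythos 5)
Your proof follows essentially the same route as the paper's: invoke Theorem~\ref{lem:technique} after verifying its hypotheses (extremal interpolability via Proposition~\ref{prop:extrem_interp_basic}, convexity and order-$1$ connectability of $\int\FLip{M,1,+}$ via Proposition~\ref{prop:properties}), integrate the $\alpha=1$ extremal interpolants from Proposition~\ref{prop:extrem_interp_basic} to produce the two-sided integral sandwich, observe that the upper bound is \eqref{condqsc3} with $i,j$ swapped, and show that \eqref{condqsc3} over all pairs absorbs the log-Lipschitz condition of Theorem~\ref{prop:interp_basic}. The only cosmetic difference is that you phrase the last absorption step via monotonicity of $\cosh$ on $[0,\infty)$ whereas the paper uses the algebraic identity $\tfrac 1t + t \le \tfrac{1}{\sqrt s}+\sqrt s \Leftrightarrow \tfrac 1s \le t^2 \le s$; both are the same calculation, and you also make the degenerate $g_i=0$ branch explicit, which the paper leaves implicit.
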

\begin{theorem}[Interpolation conditions for $\int \FLip{M,\alpha,(+)}$ ($\alpha\neq1)$]\label{thm:IC_generalized_self_concordant}
        Let $\alpha \neq 1,M \geq 0$.
    A set $S=\{(x_i,f_i,g_i)\}_{i\in [N]}$ is $\int \FLip{M,\alpha,+}$-interpolable if and only if, $\forall i,j\in [N]$, $g_i=0$ and $f_i=f_j$, or $\forall i,j\in[N]$:
            \begin{align} 
            &|\tilde g_i-\tilde g_j|\leq M |x_i-x_j|, \text{ and } g_i\geq 0 \text{ if } \alpha< 1, g_i>0 \text{ else},\label{eq:interp1_level1}\\
            &\text{If }  \tilde g_i+\tilde g_j\geq \frac{\ba}{|\ba|}M(x_j-x_i):\nonumber \\
            & f_j-f_i\geq \frac{\ba}{|\ba|M(\ba+1)} \left( \tilde g_i^{\ba+1}+\tilde g_j^{\ba+1}-\frac{1}{2^\ba}(\tilde g_i+\tilde g_j-\frac{\ba}{|\ba|}M(x_j-x_i))^{\ba+1} \right),\label{eq:interp1_level2}\\ 
            &\text{If }  \alpha < 1 \text{ and }\tilde g_i+\tilde g_j\leq M(x_j-x_i):\nonumber\\
            & f_j-f_i\geq \frac{1}{M(\ba+1)} \left( \tilde g_i^{\ba+1}+\tilde g_j^{\ba+1} \right).\label{eq:interp1_level3}
        \end{align}
        where $ \tilde g_i=g_i^{1/\ba}$. In addition, $S$ is $\int \FLip{M,0}$-interpolable if and only if it satisfies \eqref{eq:interp1_level2}.
\end{theorem}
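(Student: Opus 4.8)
The proof applies the lifting theorem (Theorem~\ref{lem:technique}) with $\F=\FLip{M,\alpha,(+)}$ and $m=0$, so that $\int\F=\int\FLip{M,\alpha,(+)}$. Its three hypotheses are available to us: extremal interpolability of $\FLip{M,\alpha,(+)}$ (Assumption~\ref{assum:extrgrad}) is Proposition~\ref{prop:extrem_interp_basic}, which moreover supplies the closed forms of the extremal interpolants $g_{\min},g_{\max}$; order~$1$ connectability of $\int\FLip{M,\alpha,(+)}$ (Assumption~\ref{assum1}) is part of Proposition~\ref{prop:properties}; extremal completability of $\int\FLip{M,\alpha,(+)}$ (Assumption~\ref{assum:conv}) follows from Proposition~\ref{prop:properties} via convexity when $\alpha\le1$, and from Proposition~\ref{prop:extr_conn} when $\alpha>1$. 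Hence Theorem~\ref{lem:technique} yields that, after relabelling so that $x_0\le\dots\le x_N$, the set $S=\{(x_i,f_i,g_i)\}_{i\in[N]}$ is $\int\FLip{M,\alpha,+}$-interpolable if and only if it is $\int\FLip{M,\alpha,+}$-interpolable without function values and
\begin{align*}
\int_{x_i}^{x_{i+1}} g_{\min}(x)\,\mathrm{d}x \;\le\; f_{i+1}-f_i \;\le\; \int_{x_i}^{x_{i+1}} g_{\max}(x)\,\mathrm{d}x,\qquad i\in[N-1],
\end{align*}
which by Lemma~\ref{lem:interval} is equivalent to imposing the analogous bounds on every pair $i,j$, not merely the consecutive ones.

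It then remains to make these two ingredients explicit. For the ``without function values'' part, Lemma~\ref{lemma:interpnof} says it amounts to $\{(x_i,g_i)\}_{i\in[N]}$ being $\FLip{M,\alpha,+}$-interpolable \emph{with} function values, which by Theorem~\ref{prop:interp_basic} means: either $g_i=0$ for all $i$, or \eqref{eq:interp1_level1} holds for all $i,j$. In the first alternative the only admissible interpolant is $g\equiv0$ (Proposition~\ref{prop:def2}), so any $f\in\int\FLip{M,\alpha,+}$ realizing it is piecewise affine with zero slope; for $\alpha\ge1$ this forces $f_i=f_j$ for all $i,j$, which is the first branch of the statement (for $\alpha<1$ this branch is subsumed by the second one, as the integral bounds below applied with $\tilde g_i=\tilde g_j=0$ show). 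Otherwise we are in the regime of \eqref{eq:interp1_level1} and pass to the integral bounds.

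For the integral bounds, substitute the piecewise formulas of Proposition~\ref{prop:extrem_interp_basic}. On each affine-in-$\tilde g$ piece one has $g_{\min/\max}(x)=\nu\bigl(\tilde g_i\pm\tfrac{\ba}{|\ba|}M(x-x_i)\bigr)$ with $\nu(t)=t^{\ba}$, whose antiderivative is $\tfrac{|\ba|}{\ba M(\ba+1)}t^{\ba+1}$; using that the two affine pieces of $g_{\min}$ in Case~1 meet at the value $\tfrac12\bigl(\tilde g_i+\tilde g_j-\tfrac{\ba}{|\ba|}M(x_j-x_i)\bigr)$ and that the middle piece of $g_{\min}$ in Case~2 contributes $0$, one finds
\begin{align*}
\int_{x_i}^{x_j}g_{\min}(x)\,\mathrm{d}x=\frac{\ba}{|\ba|M(\ba+1)}\Bigl(\tilde g_i^{\ba+1}+\tilde g_j^{\ba+1}-\tfrac{1}{2^{\ba}}\bigl(\tilde g_i+\tilde g_j-\tfrac{\ba}{|\ba|}M(x_j-x_i)\bigr)^{\ba+1}\Bigr)
\end{align*}
in Case~1 and $\tfrac{1}{M(\ba+1)}\bigl(\tilde g_i^{\ba+1}+\tilde g_j^{\ba+1}\bigr)$ in Case~2. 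Since the Case~1/Case~2 split of Proposition~\ref{prop:extrem_interp_basic} is precisely the ``If'' conditions in \eqref{eq:interp1_level2} and \eqref{eq:interp1_level3} (note that these conditions also guarantee that the powered expressions have non-negative base), the lower bound $f_j-f_i\ge\int g_{\min}$ is exactly these two inequalities.

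Finally, the upper bound $f_j-f_i\le\int_{x_i}^{x_j}g_{\max}(x)\,\mathrm{d}x$ is not stated separately because imposing \eqref{eq:interp1_level2}--\eqref{eq:interp1_level3} on \emph{all} (ordered and reversed) pairs already contains it: when $\alpha>1$ and $\tilde g_i+\tilde g_j\le M(x_j-x_i)$ the active case of $g_{\max}$ is the one equal to $+\infty$ on a middle interval, so the upper bound is vacuous, and consistently \eqref{eq:interp1_level2}--\eqref{eq:interp1_level3} do not fire on the reversed pair $(j,i)$; otherwise $g_{\max}$ is given by its Case~1 formula and the same piecewise integration shows $\int_{x_i}^{x_j}g_{\max}$ equals the right-hand side of \eqref{eq:interp1_level2} written for $(j,i)$ (the relabelling $x_j-x_i\mapsto x_i-x_j$ converts ``$f_j-f_i\le\cdot$'' into the stated ``$f_i-f_j\ge\cdot$''). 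The $\FLip{M,0}$ claim is the specialization $\alpha=0$, $\ba=1$: no truncation occurs, only \eqref{eq:interp1_level2} survives, and imposing it in both orientations already implies $|g_i-g_j|\le M|x_i-x_j|$, so that \eqref{eq:interp1_level1} becomes redundant. I expect the bulk of the effort to be the bookkeeping of this last step --- tracking which of the four branches of Proposition~\ref{prop:extrem_interp_basic} is active as a function of the sign of $\ba$ and of the orientation of the pair, and treating separately the boundary situations $g_i=0$, $x_i=x_j$, and $\alpha=2$ (where $\ba=-1$ and $t^{\ba+1}/(\ba+1)$ must be replaced by $\log t$, giving a limiting form of \eqref{eq:interp1_level2}).
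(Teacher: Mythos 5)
Your proposal reproduces the paper's proof almost verbatim: apply Theorem~\ref{lem:technique} to $\F=\FLip{M,\alpha,(+)}$ (extremal interpolability from Proposition~\ref{prop:extrem_interp_basic}, order~1 connectability and convexity for $\alpha\le1$ from Proposition~\ref{prop:properties}, extremal completability for $\alpha>1$ from Proposition~\ref{prop:extr_conn}), pass to the without-function-values form via Lemma~\ref{lemma:interpnof} and Theorem~\ref{prop:interp_basic}, integrate the piecewise-closed-form extremal interpolants of Proposition~\ref{prop:extrem_interp_basic} over $[x_i,x_j]$ (two or three pieces depending on the case split), and finally observe that the $g_{\max}$ upper bound on the pair $(i,j)$ is exactly the $g_{\min}$ lower bound on the reversed pair $(j,i)$ (and vacuous in the $\alpha>1$ middle-$+\infty$ case), which is why the statement records only lower bounds imposed on all pairs. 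Your redundancy argument for $\FLip{M,0}$ matches the paper's algebra, and you correctly flag the boundary case $\alpha=2$ (where $\ba+1=0$ and the antiderivative degenerates to a logarithm) — a detail the paper's proof glosses over but that is worth keeping in mind.
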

The proofs of Theorem \ref{thm:quasi_self} and \ref{thm:IC_generalized_self_concordant} are deferred to Appendix \ref{app:proof_theo_GSC}.
\begin{remark}
    By Proposition \ref{lemma:interpnof}, Theorems \ref{thm:quasi_self} and \ref{thm:IC_generalized_self_concordant} furnish interpolation conditions without function values for $\int^{(2)} \FLip{M,\alpha,(+)}$.
\end{remark}
\begin{remark}\label{rem:nec_everywhere}
    Necessarily, the interpolation conditions of Theorems \ref{thm:quasi_self} and \ref{thm:IC_generalized_self_concordant} are satisfied everywhere by any function in $\int \FLip{M,\alpha,(+)}$. They are thus completely equivalent to the initial definition of $\int \FLip{M,\alpha,(+)}$, $|f''(x)|\leq |\ba|Mf'(x)^{\alpha}$, when imposed everywhere, even tough proving it from scratch is not straightforward.
\end{remark}
\subsection{Applications of Theorems \ref{thm:quasi_self} and \ref{thm:IC_generalized_self_concordant}}
We propose a list of corollaries to Theorems \ref{thm:quasi_self} and \ref{thm:IC_generalized_self_concordant}, furnishing interpolation conditions without function values to several specific classes $\int^{(2)}\FLip{M,\alpha,(+)}$. Whenever possible, we compare these conditions with existing results in the literature.

\subsubsection{Class $\HLip{M}$ of functions with $M$-Lipschitz Hessian}
We recover the interpolation conditions for $M$-smooth functions derived in \cite[Theorem 4]{taylor2017smooth}, or equivalently, the interpolation conditions without function values for $\HLip{M}$.
\begin{corollary}
    A set $\{(x_i,f_i,g_i,h_i)\}_{i\in[N]}$ is $\HLip{M}$-interpolable without functions values if and only if $\forall i,j \in [N]$:
    \begin{align}
        g_j - g_i -h_i (x_j-x_i) & \geq -\frac{M}{2}(x_j - x_i)^2 +\frac{1}{4M} ( h_j - h_i+M(x_j - x_i))^2.\label{eq:cismooth}
   \end{align}\label{prop:Hinterpnofi}
\begin{proof}
    Since $\HLip{M}=\int^{(2)}\FLip{M,0}$, consider Theorem \ref{thm:IC_generalized_self_concordant} as applied to $\int \FLip{M,0}$.
 \qed \end{proof}
\end{corollary}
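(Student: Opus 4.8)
The plan is to exploit the two-level structure $\HLip{M}=\int^{(2)}\FLip{M,0}=\int\bigl(\int\FLip{M,0}\bigr)$ and to reduce the claim to Theorem~\ref{thm:IC_generalized_self_concordant} specialized at $\alpha=0$. First I would write the inner class $\G:=\int\FLip{M,0}\subseteq\C^1$, so that $\HLip{M}=\int\G$. A data point $(x_i,f_i,g_i,h_i)$ for $\int\G$ encodes $(\psi(x_i),\psi'(x_i),\psi''(x_i))$ for some $\psi\in\HLip{M}$; setting $\phi:=\psi'$ identifies $\HLip{M}$-interpolation of $S=\{(x_i,f_i,g_i,h_i)\}_{i\in[N]}$ \emph{without} function values (i.e.\ dropping the $f_i$'s) with the existence of $\phi\in\G$ satisfying $\phi(x_i)=g_i$ and $\phi'(x_i)=h_i$, that is, with $\G$-interpolability \emph{with} function values of $\{(x_i,g_i,h_i)\}_{i\in[N]}$. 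This is exactly Lemma~\ref{lemma:interpnof} applied to $\F=\FLip{M,0}$, and it is the only place where the derivative shift induced by the two integrations has to be tracked carefully.

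Next I would invoke Theorem~\ref{thm:IC_generalized_self_concordant} for $\F=\FLip{M,0}$: its hypotheses all hold, since $\FLip{M,0}$ is extremally interpolable (Proposition~\ref{prop:extrem_interp_basic}) and, because $\alpha=0\leq1$, $\int\FLip{M,0}$ is convex (Proposition~\ref{prop:properties}), hence extremally completable, and order~$1$ connectable (Proposition~\ref{prop:properties}). At $\alpha=0$ we have $\ba=\beta(0)=1$, so $\tilde g_i=g_i^{1/\ba}=g_i$ in the notation of that theorem, and the theorem states directly that $\{(x_i,g_i,h_i)\}_{i\in[N]}$ is $\int\FLip{M,0}$-interpolable with function values if and only if \eqref{eq:interp1_level2} holds for all $i,j$ (the case guards present in the $\FLip{M,\alpha,+}$ statement do not appear for $\FLip{M,0}$, since no clamping at $0$ occurs). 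Translating the notation of the present corollary — the theorem's ``$f_i$'' is our $g_i$, and its ``$g_i$'' (hence its $\tilde g_i$) is our $h_i$ — and substituting $\ba=1$, condition \eqref{eq:interp1_level2} becomes
\begin{equation*}
g_j-g_i \;\geq\; \frac{1}{2M}\left( h_i^2+h_j^2-\tfrac12\left(h_i+h_j-M(x_j-x_i)\right)^2 \right), \qquad \forall i,j\in[N].
\end{equation*}

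Finally I would simplify this quadratic. Writing $\Delta:=x_j-x_i$, one has $h_i^2+h_j^2-\tfrac12(h_i+h_j-M\Delta)^2=\tfrac12(h_i-h_j)^2+M\Delta(h_i+h_j)-\tfrac12M^2\Delta^2$, so the displayed inequality reads $g_j-g_i\geq\frac{1}{4M}(h_i-h_j)^2+\frac{\Delta}{2}(h_i+h_j)-\frac{M}{4}\Delta^2$; moving $h_i\Delta$ to the left-hand side and completing the square in $h_j-h_i+M\Delta$ turns this into exactly \eqref{eq:cismooth}, and every step is reversible, which gives the equivalence. I do not expect any genuine obstacle: the mathematical content is carried entirely by Lemma~\ref{lemma:interpnof} and Theorem~\ref{thm:IC_generalized_self_concordant}, and the only care needed is the bookkeeping of the two integrations (so that the theorem's $(f,g)$-pair maps to the corollary's $(g,h)$-pair and the zeroth-order datum $f_i$ is discarded) together with the routine algebra identifying the $\alpha=0$ instance of \eqref{eq:interp1_level2} with the smooth interpolation inequality \eqref{eq:cismooth} of \cite[Theorem~4]{taylor2017smooth}.
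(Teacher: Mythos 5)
Your proposal is correct and follows exactly the route the paper intends: apply Lemma~\ref{lemma:interpnof} to reduce $\HLip{M}$-interpolability without function values to $\int\FLip{M,0}$-interpolability with function values, then specialize Theorem~\ref{thm:IC_generalized_self_concordant} at $\alpha=0$ (so $\ba=1$, $\tilde g_i=g_i$) and simplify the resulting quadratic. One small slip: the step identifying the two interpolability notions is Lemma~\ref{lemma:interpnof} applied with $\F=\G=\int\FLip{M,0}$ (so that $\int\F=\HLip{M}$), not with $\F=\FLip{M,0}$ as you wrote; the conclusion you state is nonetheless the correct one, and the remaining algebra checks out.
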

\subsubsection{Class $\HLip{M,+}$ of $\mu$-strongly convex functions with $M$-Lipschitz Hessian}
\begin{corollary} \label{th:ICg_convex_hessian_Lipschitz}
    A set $\{(x_i,f_i,g_i,h_i)\}_{i\in[N]}$ is $\HLip{M,+}$-interpolable without functions values if and only if $\forall i,j \in [N]$:
    \begin{align}
       & g_j - g_i   \geq h_i (x_j-x_i)-\frac{M}{2}(x_j - x_i)^2 +\frac{1}{4M} ( h_j - h_i+M(x_j - x_i))^2\label{eq:condintermconvmu}\\
        &h_i\geq \mu\label{eq:condhplusmu}\\
        &\text{If }x_j-x_i \geq \frac{h_i+h_j-2\mu}{M},\text{ then }g_j-g_i \geq \mu (x_j-x_i)+\frac{(h_i-\mu)^2+(h_j-\mu)^2}{2M}. \label{eq:condifmu}
    \end{align}
\begin{proof}
    When $\mu=0$, $\HLip{M,+}=\int^{(2)} \FLip{M,0,+}$, hence Corollary \ref{th:ICg_convex_hessian_Lipschitz} is an application of Theorem \ref{thm:IC_generalized_self_concordant} as applied to $\FLip{M,0,+}$. The case $\mu\geq 0$ follows from the observation that $f \in \int^{(2)}\FLip{M,0,+} \Leftrightarrow f+\frac{\mu}{2}x^2 \in \HLip{M,+}$, since the Lipschitzness condition remains valid.
 \qed \end{proof}
\end{corollary}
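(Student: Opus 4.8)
The plan is to deduce the corollary from Theorem~\ref{thm:IC_generalized_self_concordant} specialized to $\alpha=0$, after first removing the strong convexity by an affine change of variables. I would begin with the elementary observation that $f$ is $\mu$-strongly convex with $M$-Lipschitz Hessian if and only if $f-\tfrac{\mu}{2}x^2\in\int^{(2)}\FLip{M,0,+}$: subtracting $\tfrac{\mu}{2}x^2$ leaves $f'''$ unchanged, hence preserves the $M$-Lipschitz Hessian property, and lowers $f''$ by the constant $\mu$, so $f''\ge\mu$ becomes ordinary convexity $f''\ge 0$. Under this correspondence the data at $x_i$ transforms as $(g_i,h_i)\mapsto(g_i-\mu x_i,\,h_i-\mu)$, and $f_i$ plays no role since we interpolate without function values. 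Thus it suffices to (i)~prove the statement for $\mu=0$ and (ii)~substitute $g_i\mapsto g_i-\mu x_i$, $h_i\mapsto h_i-\mu$ into the $\mu=0$ conditions; in (ii) the constant cancels inside every Hessian difference $h_j-h_i$, the gradient difference $g_j-g_i$ picks up a term $-\mu(x_j-x_i)$, and $h_i-\mu\ge 0$ is exactly \eqref{eq:condhplusmu}, so this step is routine.

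For $\mu=0$ I would use the remark following Theorem~\ref{thm:IC_generalized_self_concordant} (which is Lemma~\ref{lemma:interpnof} with $\F=\int\FLip{M,0,+}$): a set $\{(x_i,f_i,g_i,h_i)\}$ is $\int^{(2)}\FLip{M,0,+}$-interpolable without function values exactly when $\{(x_i,g_i,h_i)\}$ is $\int\FLip{M,0,+}$-interpolable \emph{with} function values, the roles of ``value/first derivative'' now being played by ``gradient/Hessian''. Theorem~\ref{thm:IC_generalized_self_concordant} at $\alpha=0$ provides these conditions: here $\ba=1$, so $\tilde g_i=g_i$ and, after the relabeling, $\tilde h_i=h_i$; then \eqref{eq:interp1_level1} reads $|h_i-h_j|\le M|x_i-x_j|$ together with $h_i\ge 0$, \eqref{eq:interp1_level3} reads $g_j-g_i\ge (h_i^2+h_j^2)/(2M)$ on $x_j-x_i\ge (h_i+h_j)/M$ (which is \eqref{eq:condifmu} at $\mu=0$), and \eqref{eq:interp1_level2} reads $g_j-g_i\ge\frac{1}{2M}\bigl(h_i^2+h_j^2-\tfrac12(h_i+h_j-M(x_j-x_i))^2\bigr)$ on $x_j-x_i\le (h_i+h_j)/M$, which one expands to recover the right-hand side of \eqref{eq:condintermconvmu}. (The degenerate branch of Theorem~\ref{thm:IC_generalized_self_concordant} is subsumed by these inequalities when $\alpha=0$.)

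The one point requiring a genuine, though short, argument --- and the step I would flag as the main obstacle --- is why \eqref{eq:condintermconvmu} may be asserted for all pairs $i,j$, whereas \eqref{eq:interp1_level2} holds only on $x_j-x_i\le (h_i+h_j)/M$. Viewed as a function of $\Delta:=x_j-x_i$, the right-hand side of \eqref{eq:condintermconvmu} is a concave parabola whose maximum, attained at $\Delta=(h_i+h_j)/M$, equals $(h_i^2+h_j^2)/(2M)$; hence on the complementary regime \eqref{eq:condifmu} implies \eqref{eq:condintermconvmu}, so asserting \eqref{eq:condintermconvmu} unconditionally together with \eqref{eq:condifmu} in its own regime is equivalent to the regime-split pair \eqref{eq:interp1_level2}--\eqref{eq:interp1_level3}. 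Finally the Lipschitz clause in \eqref{eq:interp1_level1} is subsumed, being implied by \eqref{eq:condintermconvmu} taken with $i,j$ in both orders, and $h_i\ge 0$ reappears as \eqref{eq:condhplusmu} at $\mu=0$. Beyond this, the proof is bookkeeping of the two relabelings (``value/first derivative $\leadsto$ gradient/Hessian'', and the $\mu$-shift) together with the elementary parabola computation above; all the genuine analytic content is already in Theorem~\ref{thm:IC_generalized_self_concordant}.
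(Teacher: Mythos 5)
Your proof is correct and follows the paper's own route: reduce to $\mu=0$ via the quadratic shift $f\mapsto f-\tfrac{\mu}{2}x^2$, then apply Theorem~\ref{thm:IC_generalized_self_concordant} at $\alpha=0$ (so $\ba=1$) with the roles of value/derivative played by gradient/Hessian. You also supply the parabola-maximum computation explaining why the regime restriction on \eqref{eq:interp1_level2} may be dropped once \eqref{eq:condifmu} is imposed, and why \eqref{eq:condintermconvmu} taken in both orders recovers the Lipschitz clause of \eqref{eq:interp1_level1} --- details the paper's one-line proof leaves implicit.
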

\begin{remark}
    Condition \eqref{eq:condintermconvmu} is exactly \eqref{eq:cismooth} and hence is an interpolation condition for the $M$-smoothness of $S$, while \eqref{eq:condhplusmu} (lower bound on $h$) ensures $S$ to be consistent with an increasing function, of slope at least $\mu$. However, juxtaposing these conditions alone is not an interpolation condition for the class of $M$-smooth strongly monotone functions. One must add \eqref{eq:condifmu}, which links both properties, and ensures the existence of some $M$-Lipschitz continuous function $h(x)$ interpolating $h_i$ and $h_j$, and whose integral is smaller than or equal to $g_j - g_i$. 
\end{remark}
\subsubsection{Class $\Sc=\int^{(2)} \FLip{M,\frac{3}{2},+}$ of self-concordant functions}
\begin{corollary}\label{th:IC_sc}
    A set $\quadrnof$ is $\Sc$-interpolable if and only if, $\forall i,j \in [N]$, $h_i=0$ and $g_i=g_j$, or $\forall i,j \in [N]$,
    \begin{align} 
    &|\tilde h_j-\tilde h_i|\leq M| x_j-x_i| \text{ and }h_i> 0\label{sccond2}\\
    &\text{If $\tilde h_i+\tilde h_j> -M(x_j-x_i)$, then }
     g_j-g_i\geq \frac{1}{M\tilde h_i}+\frac{1}{M\tilde h_j}-\frac{4}{M(\tilde h_i+\tilde h_j+M(x_j-x_i))}, \label{sccond3}
    \end{align}
where $\tilde h_i=h_i^{-1/2}$.
\begin{proof}
    Consider Theorem \ref{thm:IC_generalized_self_concordant} as applied to $\int^{(2)} \FLip{M,\frac{3}{2},+}$.
 \qed \end{proof}
\end{corollary}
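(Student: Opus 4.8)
The statement is a direct specialization of Theorem~\ref{thm:IC_generalized_self_concordant}, so the plan is to instantiate that theorem at $\alpha=\frac32$ and then unwind the notation. Recall that $\Sc=\int^{(2)}\FLip{M,3/2,+}$, so a self-concordant function is precisely a function whose second derivative lies in $\FLip{M,3/2,+}$, i.e.\ satisfies \eqref{eq:generalized_sc} with $\alpha=\frac32$ (cf.\ Table~\ref{tab:tab5}). By the remark following Theorem~\ref{thm:IC_generalized_self_concordant}, itself a consequence of Lemma~\ref{lemma:interpnof}, interpolation \emph{without} function values for $\int^{(2)}\FLip{M,3/2,+}$ of the data $\{(x_i,g_i,h_i)\}_{i\in[N]}$ is equivalent to interpolation \emph{with} function values for $\int\FLip{M,3/2,+}$ of the relabeled data in which $g_i$ takes the role of the zeroth-order value $f_i$ (the value of the underlying function, which is itself a derivative here) and $h_i$ takes the role of the first-order value $g_i$.

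It then only remains to read off Theorem~\ref{thm:IC_generalized_self_concordant} at $\alpha=\frac32$. There, $\ba=\frac{1}{1-\alpha}=-2$, hence $|\ba|=2$, $\frac{\ba}{|\ba|}=-1$, $\ba+1=-1$, $2^{\ba}=\frac14$, and $\tilde g_i=g_i^{1/\ba}=g_i^{-1/2}$, which under the relabeling becomes $\tilde h_i=h_i^{-1/2}$. Condition~\eqref{eq:interp1_level1} then reads $|\tilde h_i-\tilde h_j|\le M|x_i-x_j|$ together with $h_i>0$ (the strict form, since $\alpha>1$), which is \eqref{sccond2}; condition~\eqref{eq:interp1_level3} is inactive since it applies only when $\alpha<1$. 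Substituting the constants above into \eqref{eq:interp1_level2}, using in particular $\frac{\ba}{|\ba|M(\ba+1)}=\frac1M$, $\frac{1}{2^{\ba}}=4$, and the exponent $\ba+1=-1$ turning powers into reciprocals, collapses its right-hand side to $\frac{1}{M\tilde h_i}+\frac{1}{M\tilde h_j}-\frac{4}{M(\tilde h_i+\tilde h_j+M(x_j-x_i))}$, while its activation clause $\tilde g_i+\tilde g_j\ge\frac{\ba}{|\ba|}M(x_j-x_i)$ becomes $\tilde h_i+\tilde h_j\ge-M(x_j-x_i)$; this is precisely \eqref{sccond3}. Finally, the degenerate alternative ``$g_i=0$ and $f_i=f_j$'' of Theorem~\ref{thm:IC_generalized_self_concordant} relabels to ``$h_i=0$ and $g_i=g_j$''.

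There is no genuine obstacle: all the content sits in Theorem~\ref{thm:IC_generalized_self_concordant} (and, behind it, Theorem~\ref{lem:technique} together with the extremal-interpolability and extremal-completability statements of Propositions~\ref{prop:extrem_interp_basic}, \ref{prop:properties} and \ref{prop:extr_conn}), which we may invoke directly. The only point deserving a line of care is the passage from the non-strict activation clause $\tilde h_i+\tilde h_j\ge-M(x_j-x_i)$ to the strict one appearing in \eqref{sccond3}: equality there forces $x_j<x_i$ and makes the denominator $\tilde h_i+\tilde h_j+M(x_j-x_i)$ vanish, so the bound degenerates to $-\infty$ and the constraint is vacuous; dropping it, equivalently restricting to the strict inequality, therefore yields an equivalent system. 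Everything else is the bookkeeping of the substitution $f_i\mapsto g_i$, $g_i\mapsto h_i$, $\tilde g_i\mapsto\tilde h_i$.
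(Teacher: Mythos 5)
Your proposal is correct and follows exactly the same route as the paper: instantiate Theorem~\ref{thm:IC_generalized_self_concordant} at $\alpha=\tfrac32$ and relabel via Lemma~\ref{lemma:interpnof}. The paper's proof is the terse one-liner ``Consider Theorem~\ref{thm:IC_generalized_self_concordant} as applied to $\int^{(2)}\FLip{M,3/2,+}$''; you spell out the same substitution ($\ba=-2$, $|\ba|=2$, $\ba+1=-1$, $2^\ba=\tfrac14$, $\tilde g_i\mapsto\tilde h_i$, $f_i\mapsto g_i$) and, as a bonus, justify the cosmetic passage from the non-strict activation clause of the theorem to the strict one in the corollary, a point the paper leaves implicit.
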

\begin{remark}
    The class of $M$-self-concordant functions is affine-invariant, namely, if $f(x) \in \Sc$ then $g(x)=f(ax+b) \in \Sc~~\forall a,b$. This property is reflected in the interpolation conditions since replacing $(x_i,x_j)$ by $(\frac{x_i-b}{a}, \frac{x_j-b}{a})$, $(g_i,g_j)$ by $a(g_i,g_j)$ and $(h_i,h_j)$ by $a^2 (h_i,h_j)$ does not modify them.
\end{remark}
\subsubsection{Class $\QSc=\int^{(2)}\FLip{M,1,+}$ of quasi-self-concordant functions} 
\begin{corollary}\label{th:IC_qsc}
    A set $\quadrnof$ is $\QSc$-interpolable if and only if, $\forall i,j \in [N]$, $h_i \geq 0$ and 
    \begin{align} 
     &g_j-g_i\geq \frac{h_i+h_j}{M}-\frac{2}{M}\sqrt{h_ih_j}e^{-\frac{M}{2}(x_j-x_i)}. \label{qsccond3}
    \end{align}
\begin{proof}
    Consider Theorem \ref{thm:quasi_self}.
 \qed \end{proof}
\end{corollary}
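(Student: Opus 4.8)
The plan is to derive Corollary~\ref{th:IC_qsc} from Theorem~\ref{thm:quasi_self} by one further pass of the lifting reduction, using the identity $\QSc = \int^{(2)}\FLip{M,1,+} = \int\bigl(\int\FLip{M,1,+}\bigr)$. First I would apply Lemma~\ref{lemma:interpnof} with the class $\F := \int\FLip{M,1,+} \subseteq \C^1$: it asserts that $\{(x_i,f_i,g_i,h_i)\}_{i\in[N]}$ is $\int\F = \QSc$-interpolable without function values if and only if the reduced data set $\{(x_i,g_i,h_i)\}_{i\in[N]}$ is $\F$-interpolable with function values, under the identification in which $g_i$ occupies the ``function value'' slot (since $g_i = f'(x_i) = \phi(x_i)$ for $\phi := f'$) and $h_i$ the ``first derivative'' slot (since $h_i = f''(x_i) = \phi'(x_i)$). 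Because the data set $\quadrnof$ appearing in the statement carries no $f_i$-entry, ``$\QSc$-interpolable'' there is precisely interpolation without function values, so this reduction applies directly.

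Second, I would invoke Theorem~\ref{thm:quasi_self}, which characterizes $\int\FLip{M,1,+}$-interpolability of a set $\{(x_i,f_i,g_i)\}_{i\in[N]}$ by $g_i \geq 0$ together with \eqref{condqsc3}, i.e. $f_j - f_i \geq \frac1M(g_i+g_j) - \frac2M\sqrt{g_ig_j}\,e^{-\frac M2(x_j-x_i)}$ for all $i,j$. Substituting the relabeling of the previous step — the value slot $f_i \mapsto g_i$ and the derivative slot $g_i \mapsto h_i$ — turns these conditions into $h_i \geq 0$ and $g_j - g_i \geq \frac{h_i+h_j}{M} - \frac2M\sqrt{h_ih_j}\,e^{-\frac M2(x_j-x_i)}$ for all $i,j$, which is exactly $h_i \geq 0$ together with \eqref{qsccond3}. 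Since Theorem~\ref{thm:quasi_self} and Lemma~\ref{lemma:interpnof} are both ``if and only if'' statements, this settles necessity and sufficiency simultaneously.

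I do not anticipate a real obstacle: all the analytic work — extremal interpolability of $\FLip{M,1,+}$, extremal completability and order-$2$ connectability of $\int\FLip{M,1,+}$, and the integration of the extremal interpolants that produces the closed form \eqref{condqsc3} — is already carried out in the proof of Theorem~\ref{thm:quasi_self}, so nothing needs to be re-verified. The only point demanding care is bookkeeping: tracking which derivative order each coordinate of the tuple records as one climbs $\FLip{M,1,+} \to \int\FLip{M,1,+} \to \int^{(2)}\FLip{M,1,+}$, and checking that the sign hypothesis in Theorem~\ref{thm:quasi_self} — where only the derivative-slot variable is forced nonnegative — translates under the identification to ``$h_i \geq 0$'' exactly as written in \eqref{qsccond3}, with the value slot, here $g_i$, left unconstrained in sign.
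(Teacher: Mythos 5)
Your proposal is correct and takes exactly the route the paper's one-line proof intends: reduce $\QSc$-interpolability without function values to $\int\FLip{M,1,+}$-interpolability with function values via Lemma~\ref{lemma:interpnof} (the paper flags this reduction in the remark immediately following Theorem~\ref{thm:IC_generalized_self_concordant}), then read off the conditions from Theorem~\ref{thm:quasi_self} after the relabeling $f_i\mapsto g_i$, $g_i\mapsto h_i$. Your bookkeeping of the slot identifications and of which variable inherits the nonnegativity constraint is accurate.
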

\begin{remark}
    The class of $M$-quasi-self-concordant is scale-invariant, namely, if $f \in \QSc$ then $c f \in \QSc~\forall c>0$. This property is reflected in the interpolation conditions since replacing $(g_i,g_j,h_i,h_j)$ by $c(g_i,g_j,h_i,h_j)$ does not modify them.
\end{remark}

It is known \cite[Lemma 2.7]{doikov2023minimizing} that quasi-self-concordant functions satisfy:
\begin{equation}\label{eq:qsc_existing}
    f'(y)-f'(x)-f''(x)(y-x) \leq \frac{1}{M} f''(x) (e^{M|y-x|}-M|y-x|-1).
\end{equation}
We rely on \eqref{qsccond3} to strengthen this condition (with the new term in blue):
\begin{lemma}
If $f\in \QSc$, then $\forall x,y \in \R$,
\begin{align}\label{eq:qsc_improved}
\begin{split}
    f'(y) - f'(x) - f''(x)(y-x) \leq & \frac{1}{M} f''(x)\Par{e^{M|y-x|}-M|y-x|-1} \\
    & \textcolor{\mycolor}{- \frac{1}{M} \Par{\sqrt{f''(y)} - \sqrt{f''(x)e^{M(y-x)}}}^2}.
    \end{split}
\end{align}
\begin{proof}
   By Corollary \ref{th:IC_qsc} and Remark \ref{rem:nec_everywhere}, $f$ satisfies, $\forall x,y\in \R$,
    \begin{align*}
        f'(y) - f'(x) & \leq -\frac{f''(x)}{M} - \frac{1}{M} \left(f''(y) - 2\sqrt{f''(x) f''(y)e^{M (y-x)}}\right) \\
        & = -\frac{f''(x)}{M} - \frac{1}{M} \left( -f''(x)e^{M(y-x)} +\Par{\sqrt{f''(y)} - \sqrt{f''(x)e^{M(y-x)}}}^2 \right) \\
        & \leq -\frac{f''(x)}{M} - \frac{1}{M} \big( -f''(x)e^{M|y-x|} + f''(x)M(|y-x|-(y-x)) \\&\quad+ \Par{\sqrt{f''(y)} - \sqrt{f''(x)e^{M(y-x)}}}^2 \big), \\
    \end{align*}
    where we used the identity $e^{|t|} -e^{t} -|t|+t \geq 0$.
     \qed \end{proof}
\end{lemma}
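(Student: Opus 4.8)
The plan is to obtain \eqref{eq:qsc_improved} as a pointwise specialization of the interpolation inequality \eqref{qsccond3} from Corollary~\ref{th:IC_qsc}. By Remark~\ref{rem:nec_everywhere}, every $f\in\QSc$ satisfies \eqref{qsccond3} at all pairs of points; applying it with the first point equal to $y$ and the second equal to $x$, and folding the exponential factor into the square root, gives
\begin{align*}
    f'(y) - f'(x) \leq -\frac{f''(x)}{M} - \frac{1}{M}\Par{f''(y) - 2\sqrt{f''(x)\,f''(y)\,e^{M(y-x)}}}.
\end{align*}
Since \eqref{qsccond3} is not symmetric in its two indices, both orientations are legitimate but distinct; choosing this one is the only real decision in the argument, and it is dictated by the requirement of producing an \emph{upper} bound on $f'(y)-f'(x)$ with the $e^{M(y-x)}$ placement that appears on the right-hand side of \eqref{eq:qsc_improved}.

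The second step is a completion of squares. Since $f''\geq 0$ everywhere for functions in $\QSc$, all the radicands are nonnegative and
\begin{align*}
    f''(y) - 2\sqrt{f''(x)\,f''(y)\,e^{M(y-x)}} = \Par{\sqrt{f''(y)} - \sqrt{f''(x)\,e^{M(y-x)}}}^{2} - f''(x)\,e^{M(y-x)}.
\end{align*}
Substituting this identity isolates the new negative term $-\frac{1}{M}\Par{\sqrt{f''(y)} - \sqrt{f''(x)\,e^{M(y-x)}}}^{2}$ that we want to keep, and reduces the claim to the one-variable estimate $\frac{f''(x)}{M}\,e^{M(y-x)} \leq f''(x)(y-x) + \frac{f''(x)}{M}\Par{e^{M|y-x|}-M|y-x|-1} + \frac{f''(x)}{M}$.

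Finally, this estimate is just the elementary inequality $e^{|t|}-e^{t}-|t|+t\geq 0$ (an equality for $t\geq 0$; for $t=-s\leq 0$ it reads $e^{s}-e^{-s}\geq 2s$), applied with $t=M(y-x)$ and multiplied by $\frac{f''(x)}{M}\geq 0$, which yields $\frac{f''(x)}{M}\,e^{M(y-x)}\leq \frac{f''(x)}{M}\,e^{M|y-x|}-f''(x)|y-x|+f''(x)(y-x)$. Plugging this into the bound from the first step, collecting the terms $-\frac{f''(x)}{M}$, $f''(x)(y-x)$, $-f''(x)|y-x|$ and $\frac{f''(x)}{M}e^{M|y-x|}$, and moving $f''(x)(y-x)$ to the left-hand side produces \eqref{eq:qsc_improved} verbatim. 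I do not expect a genuine obstacle: the argument is ``specialize an interpolation inequality, complete a square, apply one scalar exponential bound.'' The only points requiring care are the index orientation in the first step and quoting the exponential inequality in the direction that trades $e^{M(y-x)}$ for $e^{M|y-x|}$ at the cost of the linear correction $M(|y-x|-(y-x))$.
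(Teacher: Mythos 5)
Your proof is correct and follows essentially the same route as the paper's: specialize \eqref{qsccond3} with the ordered pair giving an upper bound on $f'(y)-f'(x)$, complete the square to extract $-\frac{1}{M}\bigl(\sqrt{f''(y)}-\sqrt{f''(x)e^{M(y-x)}}\bigr)^2$, and close with the scalar inequality $e^{|t|}-e^{t}-|t|+t\geq 0$ scaled by $f''(x)/M\geq 0$. The only cosmetic difference is that you isolate the reduction to a one-variable exponential estimate as an explicit intermediate claim, while the paper substitutes the exponential inequality directly into the chain of displayed inequalities.
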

\subsection{Interpolation conditions for $\int^{(2)} \FLip{M,0}$}

We rely on Theorems \ref{thm:IC_generalized_self_concordant} and \ref{lem:technique} to obtain interpolation conditions with function values for $\HLip{M}$.
\subsubsection{Interpolation conditions for $\HLip{M}$}
\begin{theorem}\label{th:ICf_hessian_Lipschitz} A set $\quadr$ is $\HLip{M}$-interpolable, if, and only if, $\forall i,j\in [N] $
    \begin{equation}\label{eq:cond1}
        |\dhhi| \leq M |\dxi|
    \end{equation}
    if $\dhhi + M|\dxi| \neq 0$, then
    \begin{align}
        \dfi \geq& -\frac{M}{6} |\dxi|^3 \label{eq:cond2}  \\&+ \frac{\Par{\dgi +\frac{M}{2} |\dxi|(\dxi)}^2}{2\Par{h_j-h_i+M|\dxi|}}\nonumber \\&+ \frac{\Par{h_j-h_i + M|\dxi|}^3}{96M^2}, \nonumber
    \end{align}
    if $\dhhi + M|\dxi| = 0$, then
        \begin{align}
        \dgi=& -\frac{M}{2} |\dxi|(\dxi), \label{eq:hMxlimite0} \\
        \dfi=& - \frac{M}{6}|\dxi|^3.\label{eq:hMxlimite} 
    \end{align}
    \label{thm:H-intrp}
\end{theorem}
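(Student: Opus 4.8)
The plan is to derive Theorem~\ref{th:ICf_hessian_Lipschitz} as a direct application of the lifting machinery developed so far, specifically Theorem~\ref{lem:technique} applied at level $m=1$ to the class $\G := \int \FLip{M,0}$, whose interpolation conditions \emph{with} function values (i.e., conditions on triples $\{(x_i,g_i,h_i)\}$) are exactly those of $\int \FLip{M,0}$ given by \eqref{eq:interp1_level2} in Theorem~\ref{thm:IC_generalized_self_concordant}. Since $\HLip{M} = \int^{(2)} \FLip{M,0} = \int \G$, and $\G$ is extremally interpolable, while $\int \G$ is convex hence extremally completable (Proposition~\ref{prop:properties} with $\alpha = 0 \le 1$) and order $2$ connectable (same proposition), all hypotheses of Theorem~\ref{lem:technique} are met. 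Thus $S=\{(x_i,f_i,g_i,h_i)\}$ is $\HLip{M}$-interpolable iff (i) the set $\{(x_i,g_i,h_i)\}$ is $\G$-interpolable, which by Corollary~\ref{prop:Hinterpnofi} is exactly \eqref{eq:cismooth}, equivalent to \eqref{eq:cond1}--\eqref{eq:hMxlimite0} after rearrangement, \emph{and} (ii) the integral condition \eqref{eq:cond_finale} on consecutive pairs holds, where $g_{\min}, g_{\max}$ are the extremal interpolants of $\{(x_i,g_i,h_i)\}$ as $\G$-data — equivalently (Theorem~\ref{lem:interval}, Remark after Theorem~\ref{lem:technique}) on \emph{all} pairs.

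The main computational work is therefore to (a) identify the extremal interpolants $g_{\min}, g_{\max}$ on an interval $[x_i,x_j]$ for the class $\G=\int\FLip{M,0}$ of $M$-smooth functions interpolating $\{(x_i,g_i,h_i),(x_j,g_j,h_j)\}$, and (b) integrate them explicitly to get the bounds on $f_j - f_i$. For (a), I would recall/rederive from the $\alpha=0$ case of Proposition~\ref{prop:extrem_interp_basic} that the extremal derivative-interpolants of the \emph{gradient} data (one level down, i.e., the extremal $M$-Lipschitz interpolants $h_{\min}, h_{\max}$ of $\{(x_i,h_i),(x_j,h_j)\}$) are the two-piece ``tent''/``valley'' functions with slopes $\pm M$, and then $g_{\min}, g_{\max}$ are obtained by integrating these, pinned at $g_i$ at the left endpoint — these are the piecewise-quadratic extremal interpolants familiar from the $M$-smooth interpolation proof in \cite{taylor2017smooth}. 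Explicitly, $g_{\min}$ has $h$-profile going down at slope $-M$ from $h_i$ then up at slope $+M$ to $h_j$, with the kink at the point determined by \eqref{eq:cismooth} being tight; symmetrically for $g_{\max}$. For (b), integrating a piecewise-quadratic over $[x_i,x_j]$ and simplifying: the $-\frac{M}{6}|\Delta x_{ij}|^3$ term comes from the pure cubic contribution of the slope-$M$ pieces, the middle term $\frac{(\dgi + \frac{M}{2}|\dxi|(\dxi))^2}{2(h_j - h_i + M|\dxi|)}$ is the ``excess gradient'' squared over the $h$-gap (mirroring the structure of \eqref{eq:cismooth} itself, integrated once more), and the last term $\frac{(h_j - h_i + M|\dxi|)^3}{96M^2}$ is the correction from the location of the kink. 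I would verify this by carefully parametrizing the kink location $z \in [x_i,x_j]$ from the tightness of the smoothness condition, writing $f_j - f_i = f_i\cdot 0 + \int_{x_i}^{x_j} g_{\min/\max}(t)\,dt$, and grinding the algebra — and importantly check the sign conventions $|\dxi|$ vs.\ $(\dxi)$ handle both orderings $x_i \lessgtr x_j$ uniformly, since Theorem~\ref{lem:technique} is stated for ordered pairs but extended to all pairs.

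The degenerate case $\dhhi + M|\dxi| = 0$ requires separate treatment, exactly as in Theorem~\ref{thm:IC_generalized_self_concordant} (the ``$g_i = 0$ and $f_i = f_j$'' branch there, or the boundary of the domain here): when $h_j - h_i + M|\dxi| = 0$, the only $M$-Lipschitz interpolant of the $h$-data is the single straight line of slope $\mp M$ (depending on orientation), so $g_{\min} = g_{\max}$ is a single quadratic, forcing $\dgi = -\frac{M}{2}|\dxi|(\dxi)$ (this is \eqref{eq:cismooth} with equality, i.e.\ \eqref{eq:hMxlimite0}) and then $f_j - f_i$ is \emph{uniquely determined} by integrating that quadratic, giving \eqref{eq:hMxlimite}. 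I would note this limiting case is also recovered by taking the limit $h_j - h_i + M|\dxi| \to 0$ in \eqref{eq:cond2}, where the middle term forces $\dgi \to -\frac{M}{2}|\dxi|(\dxi)$ and the remaining terms collapse to $-\frac{M}{6}|\dxi|^3$, consistent with \eqref{eq:hMxlimite}.

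The hard part will be the explicit integration in step (b): keeping track of the kink location, the orientation-independent absolute-value bookkeeping, and simplifying the resulting expression into the clean three-term form with the specific constants $\frac{1}{6}$, $\frac{1}{2}$, $\frac{1}{96M^2}$ — in particular the $96$ in the denominator is the kind of constant that is easy to get wrong and must be checked against a worst-case example (e.g., a piecewise-cubic $f$, which is the natural $\HLip{M}$ worst case). Everything else is a bookkeeping application of Theorem~\ref{lem:technique}, Corollary~\ref{prop:Hinterpnofi}, and the $\alpha=0$ specialization of Proposition~\ref{prop:extrem_interp_basic}.
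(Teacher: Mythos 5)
Your high-level route is the same as the paper's: apply Theorem~\ref{lem:technique} with $\mathcal{F} = \int\FLip{M,0}$ so that $\int\mathcal{F} = \HLip{M}$, check the hypotheses via Proposition~\ref{prop:properties}, identify the extremal $M$-smooth interpolants $g_{\min},g_{\max}$ of the data $\{(x_i,g_i,h_i)\}$, and integrate them. But your identification of $g_{\min},g_{\max}$ is wrong, and it would derail step~(b). You propose to take the extremal $M$-Lipschitz interpolants $h_{\min},h_{\max}$ of the bare Hessian data $\{(x_i,h_i)\}$ (the two-piece tents/valleys from Proposition~\ref{prop:extrem_interp_basic} with $\alpha=0$) and integrate them, pinned at $g_i$ on the left. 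That primitive fails to interpolate $g_j$: a two-piece tent/valley with prescribed endpoint values and slopes $\pm M$ has no remaining degree of freedom, and its integral over $[x_i,x_j]$ is forced to equal exactly the boundary value of \eqref{eq:cismooth}, which is generically strictly smaller (resp.\ larger) than $g_j-g_i$. Your remark "with the kink at the point determined by \eqref{eq:cismooth} being tight" describes the degenerate case, not the generic one. The correct $g_{\min},g_{\max}$ (paper's Lemma~\ref{prop:extremalgrad}, obtained by solving the pointwise minimization \eqref{eq:g_min} via KKT) are \emph{three-piece} quadratics: the derivative profile of $g_{\min}$ has slopes $-M,+M,-M$ with two kinks $y_1,y_2$ positioned so that both Hessian endpoints \emph{and} the gradient increment $g_j-g_i$ are matched. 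That middle piece is precisely what produces the $\tfrac{1}{96M^2}$ term once you integrate; integrating your two-piece profile would give the wrong constant.

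The second gap is that you gloss over why the theorem's final statement carries only \eqref{eq:cond1} and \eqref{eq:cond2} in the non-degenerate case. Theorem~\ref{lem:technique} applied directly yields, in addition to the integral bracketing, the full $M$-smooth gradient condition \eqref{eq:cismooth}, which you claim is "equivalent to \eqref{eq:cond1}--\eqref{eq:hMxlimite0} after rearrangement." It is not: \eqref{eq:cismooth} constrains $T^g_{ij}$ in the generic case, yet Theorem~\ref{th:ICf_hessian_Lipschitz} imposes no standalone constraint on $T^g_{ij}$ there. The paper needs the separate Lemma~\ref{lem:full_expr}, a non-obvious factorization argument showing that \eqref{eq:cond1} and \eqref{eq:cond2} imposed on both $(i,j)$ and $(j,i)$ already \emph{imply} \eqref{eq:cismooth} together with the upper cubic bound \eqref{eq:cond22sup}. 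Without such a reduction, your approach would produce a correct but redundant list of interpolation conditions rather than the exact statement. The all-pairs bookkeeping and the degenerate case $\Delta h_{ij} + M|\Delta x_{ij}| = 0$ are otherwise handled as you describe.
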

The proof of Theorem \ref{thm:H-intrp} is deferred to Appendix \ref{app:thmH}, and applies the method proposed in Section \ref{sec:intrp} on the extremal interpolants defined in Figure \ref{fig:method_hessian_Lip}.
\begin{figure}[ht!]
    \centering
    \includegraphics[width=0.7\linewidth]{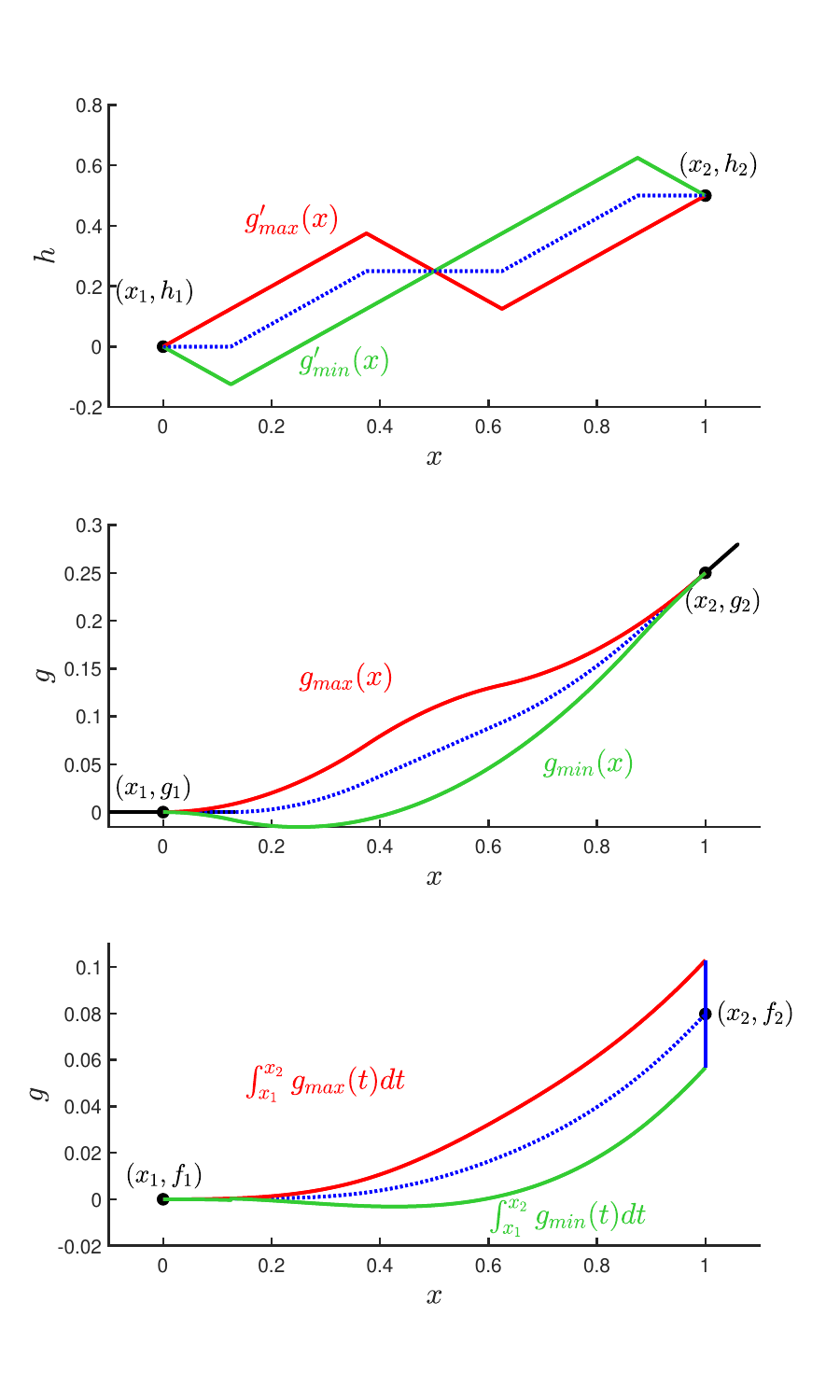}
    \caption{Derivation of interpolation conditions ensuring $S=\{(x_i,f_i,g_i,h_i)\}_{i=1,2}$ to be $\HH_{M}$-interpolable. The middle figure displays the extremal $M$-smooth interpolants $g_{\min}$ (green curve) and $ g_{\max}$ (red curve) of $\{(x_i,g_i,h_i)\}_{i=1,2}$. The upper figure displays their derivatives, interpolating  $\{(x_i,h_i)\}_{i=1,2}$. These derivatives are the intersection of slope $M$ functions, with given integral $g_2-g_1$. The lower figure displays the integrated extremal interpolants (red and green curves), which define an interval including all possible functions in $\HH_M$ interpolating $S$, except $f_2$. The interval $[f_1+\int_{x_1}^{x_2}g_{\min}(x) \mathrm{d}x, f_1+\int_{x_1}^{x_2}g_{\max}(x) \mathrm{d}x]$ (blue line) is exactly the interval of admissible values for $f_2$. Whenever $f_2$ belongs to this interval, there exists a convex combination of the integrals of $g_{\min}$ and $g_{\max}$ (blue dotted curve) which is in $\HH_{M}$ and interpolates $S$.}
    \label{fig:method_hessian_Lip}
\end{figure}
It is known \cite[Lemma 1.2.4]{nesterov2018lectures} that functions in $\HLip{M}$ satisfy a cubic bound:
    \begin{equation}\label{eq:cubic_bound} 
    -\frac{M}{6} |y-x|^3 \leq f(y) - f(x) - f'(x)(y-x) - \frac{f''(x)}{2} (y-x)^2 \leq \frac{M}{6} |y-x|^3 \ \forall x,y \in \R.
\end{equation}

We now show the conditions of Theorem \ref{thm:H-intrp} strengthen \eqref{eq:cubic_bound}, since the additional terms in \eqref{eq:cond2} are non-positive. In addition, we show that a set satisfying these conditions necessarily satisfies Condition \eqref{eq:cismooth}, as required by Theorem \ref{lem:technique}, and an upper bound on $f_j-f_i$, symmetrical to \eqref{eq:cond2}.
\begin{lemma} \label{lem:full_expr}
    Consider a set $S=\{(x_i,g_i,h_i,f_i)\}_{i\in[N]}$. If $S$ satisfies \eqref{eq:cond1} and \eqref{eq:cond2} for both pairs, then for both pairs $S$ satisfies \eqref{eq:cismooth}, the discretized version of \eqref{eq:cubic_bound}, and
    \begin{align}
        \dfi   \leq &\frac{M}{6} |\dxi|^3 \label{eq:cond22sup}\\&\textcolor{\mycolor}{- \frac{\Par{\dgi -M |\dxi|(\dxi)}^2}{2\Par{M|\dxi|-\dhhi}}}\nonumber\\& \textcolor{\mycolor}{-\frac{\Par{ M|\dxi|-(\dhhi)}^3}{96M^2}}.\nonumber
    \end{align}
\end{lemma}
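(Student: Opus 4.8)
The plan is to derive all three consequences from a single mechanism: \eqref{eq:cond2} imposed on \emph{both} orderings $(i,j)$ and $(j,i)$ of a pair, combined with the elementary identity
\begin{equation*}
  \dfi + \Par{f_i-f_j-g_j(x_i-x_j)-\frac{h_j}{2}(x_i-x_j)^2} = (\dgi)(\dxi)-\frac{\dhhi}{2}(\dxi)^2 ,
\end{equation*}
whose right-hand side contains no function values (I write $\Delta f_{ji}$ for the second summand on the left, i.e.\ $\dfi$ with $i,j$ swapped). By symmetry I argue for $\dxi>0$; the degenerate cases $\dxi=0$, and $\dhhi+M|\dxi|=0$ (where \eqref{eq:cond2} is replaced for the pair $(i,j)$ by \eqref{eq:hMxlimite0}--\eqref{eq:hMxlimite}, the pair $(j,i)$ still obeying \eqref{eq:cond2}), are checked by direct substitution.

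\textbf{The upper bound \eqref{eq:cond22sup} and the cubic bound.} I would first apply \eqref{eq:cond2} to the reversed pair $(j,i)$: this is a lower bound on $\Delta f_{ji}$, and feeding it into the identity above turns it into an upper bound on $\dfi$. It then remains to verify that this bound equals the right-hand side of \eqref{eq:cond22sup}: clearing the denominator $2\Par{M|\dxi|-\dhhi}$ (positive by \eqref{eq:cond1}) and using $g_j-g_i-h_j(\dxi)=\dgi-(\dhhi)(\dxi)$, the difference of the two sides factors and collapses identically to $0$. Once \eqref{eq:cond2} and \eqref{eq:cond22sup} are both available, the discretized \eqref{eq:cubic_bound} is immediate: by \eqref{eq:cond1} one has $\dhhi+M|\dxi|\ge 0$ and $M|\dxi|-\dhhi\ge 0$, so the fractional and cubic correction terms in \eqref{eq:cond2} are non-negative and those subtracted in \eqref{eq:cond22sup} are non-negative; dropping them yields $-\tfrac{M}{6}|\dxi|^3\le\dfi\le\tfrac{M}{6}|\dxi|^3$.

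\textbf{The smoothness condition \eqref{eq:cismooth}.} Here I would add \eqref{eq:cond2} for $(i,j)$ and for $(j,i)$ and use the identity to cancel the function values, producing an inequality in $x_i,x_j,g_i,g_j,h_i,h_j$ alone. With the shorthands $a=\dgi$, $v=\dhhi$, $p=M(\dxi)$ (so $|v|\le p$ by \eqref{eq:cond1}) and the relation $(p+v)^3+(p-v)^3=2p^3+6pv^2$, this reduces to a quadratic inequality $a^2-(\dhhi)(\dxi)\,a+C\le 0$, with $C$ an explicit polynomial in $M$, $\dxi$ and $\dhhi$. A short computation shows its discriminant equals $\left(\tfrac{(\dxi)(p^2-v^2)}{2p}\right)^2$, a perfect square; hence the inequality is equivalent to $a$ lying between the roots $\tfrac{(\dxi)}{4p}(p^2+2pv-v^2)$ and $-\tfrac{(\dxi)}{4p}(p^2-2pv-v^2)$, and one recognizes these two bounds as exactly the thresholds of \eqref{eq:cismooth} written for the pairs $(j,i)$ and $(i,j)$. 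So \eqref{eq:cismooth} holds for both orderings. (Equivalently: the previous two steps force $\dfi$ to lie between the right-hand sides of \eqref{eq:cond2} and \eqref{eq:cond22sup}, and non-emptiness of that interval \emph{is} this quadratic inequality.)

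The hard part will be the bookkeeping in the last step — carrying out the elimination and simplification to the quadratic and checking that its discriminant is a perfect square whose roots coincide with the \eqref{eq:cismooth} thresholds; it is this perfect-square phenomenon that makes the argument close. The identity needed for \eqref{eq:cond22sup} is of the same nature but much shorter. Throughout, the proof uses only \eqref{eq:cond1}, \eqref{eq:cond2} (applied to both orderings), and the function-value identity; in particular it does not invoke Theorem \ref{thm:H-intrp}, which is important since this lemma feeds into that theorem's proof (it establishes Condition \eqref{eq:cismooth}, as required by Theorem \ref{lem:technique}).
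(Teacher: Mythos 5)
Your proposal is correct and follows essentially the same route as the paper: you obtain \eqref{eq:cond22sup} by applying \eqref{eq:cond2} to the reversed pair $(j,i)$ and using the function-value identity (the paper's identity \eqref{fijji}), obtain the cubic bound by dropping the non-negative correction terms, and obtain \eqref{eq:cismooth} by eliminating $\dffi$ between \eqref{eq:cond2} and \eqref{eq:cond22sup} and recognizing the resulting quadratic in $\dgi$ as factoring into the product of the two \eqref{eq:cismooth} thresholds. Your perfect-square/roots presentation and the paper's explicit factorization $(T^g_{ij}-K)(-T^g_{ij}+\Delta h_{ij}\Delta x_{ij}-K)\geq 0$ are the same computation packaged differently.
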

The proof of Lemma \ref{lem:full_expr} is deferred to Appendix \ref{app:full_expr}. \\

We now propose a relaxation of \eqref{eq:cond2}, satisfied by all functions in $\HLip{M}$:
\begin{corollary} \label{cor:improved_cubic_bound}
    If $f$ is a univariate twice-differentiable function in $\HLip{M}$, then $\forall x,y \in \R$ such that $|f''(x)-f''(y)|\neq M|x-y|$, we have:
    \begin{align}
        f(y) - f(x) &- f'(x) (y-x) - \frac{f''(x)}{2}(y-x)^2 
         \leq \frac{M}{6} |y-x|^3\nonumber\\
         & \textcolor{\mycolor}{- \frac{M}{3}\Par{\frac{\left|f'(y)-f'(x)-f''(x)(y-x) - M|y-x|(y-x)\right|}{M}}^{\frac{3}{2}}} \label{eq:improved_cubic_bound2}
   \end{align}
   \begin{proof}
   By Remark \ref{rem:nec_everywhere}, $f \in \HLip{M}$ satisfies \eqref{eq:cond2} everywhere. In addition, for any $a,b,c\geq 0$ it holds:
       \begin{align}
           3 \frac{a}{c} + \frac{c^3}{b} \geq 4 \Par{\frac{a^3}{c^3}\frac{c^3}{b}}^{1/4}=4 \Par{\frac{a^3}{b}}^{1/4}.
       \end{align}
       Letting $ a = \frac 16 (f'(y)-f'(x)-f''(x)(y-x) - M|y-x|(y-x))^2$, $b= 96 M^2$, and
           $c=M|y-x|-(f''(x)-f''(y))$
       yields \eqref{eq:improved_cubic_bound2}.
    \qed \end{proof}
\end{corollary}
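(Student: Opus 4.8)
The plan is to obtain \eqref{eq:improved_cubic_bound2} directly from the interpolation conditions of Theorem~\ref{thm:H-intrp}, evaluated pointwise, and then to compress the two extra non-positive correction terms into a single power. First, by Remark~\ref{rem:nec_everywhere} the interpolation conditions \eqref{eq:cond1}--\eqref{eq:hMxlimite} hold for every ordered pair of points whenever $f\in\HLip{M}$; in particular, applying \eqref{eq:cond1} and \eqref{eq:cond2} to both orderings of $(x,y)$ and invoking Lemma~\ref{lem:full_expr}, the function $f$ also satisfies the symmetric upper bound \eqref{eq:cond22sup} at $(x,y)$. Introducing the shorthands
\[
c:=M|y-x|-\bigl(f''(y)-f''(x)\bigr),\qquad t:=\bigl|f'(y)-f'(x)-f''(x)(y-x)-M|y-x|(y-x)\bigr|,
\]
this upper bound reads $f(y)-f(x)-f'(x)(y-x)-\tfrac{f''(x)}{2}(y-x)^2\le\tfrac{M}{6}|y-x|^3-\tfrac{t^2}{2c}-\tfrac{c^3}{96M^2}$. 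The hypothesis $|f''(x)-f''(y)|\neq M|x-y|$ combined with \eqref{eq:cond1} ensures $c>0$, so that both correction terms are well-defined and nonnegative; this is exactly why the degenerate case must be excluded.

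The second step is to show $\tfrac{t^2}{2c}+\tfrac{c^3}{96M^2}\ge\tfrac{M}{3}(t/M)^{3/2}$. Setting $a:=\tfrac16 t^2$ and $b:=96M^2$ one has $\tfrac{t^2}{2c}=3a/c$ and $\tfrac{c^3}{96M^2}=c^3/b$, and the AM--GM inequality applied to the four nonnegative numbers $a/c$, $a/c$, $a/c$, $c^3/b$ gives
\[
\frac{t^2}{2c}+\frac{c^3}{96M^2}=\frac{a}{c}+\frac{a}{c}+\frac{a}{c}+\frac{c^3}{b}\ \ge\ 4\Par{\frac{a^3}{c^3}\cdot\frac{c^3}{b}}^{1/4}=4\Par{\frac{a^3}{b}}^{1/4}.
\]
Substituting $a=\tfrac16 t^2$ and $b=96M^2$ and using the identity $6^3\cdot 96=12^4$, the right-hand side simplifies to $4\cdot t^{3/2}/(12M^{1/2})=\tfrac{M}{3}(t/M)^{3/2}$, which combined with the first step yields \eqref{eq:improved_cubic_bound2}.

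I do not expect a genuine obstacle here: the whole argument is an instance of the general principle that two-point interpolation conditions, being necessary, must hold pointwise for every member of the class (Remark~\ref{rem:nec_everywhere}), followed by an elementary Young/AM--GM inequality. The only subtlety is the bookkeeping: one has to pick the AM--GM split with multiplicities $(3,1)$ so that the powers of $c$ cancel and the exponent $3/2$ emerges, and one has to notice that the intermediate bound \eqref{eq:cond22sup} already divides by $c$, so strict positivity of $c$ --- i.e.\ the hypothesis $|f''(x)-f''(y)|\neq M|x-y|$ --- is indispensable rather than cosmetic.
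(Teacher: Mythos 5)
Your proof is correct and follows the same strategy as the paper: invoke the pointwise validity of the interpolation conditions (Remark~\ref{rem:nec_everywhere}), pass to the upper-bound form \eqref{eq:cond22sup} via Lemma~\ref{lem:full_expr}, and absorb the two nonnegative correction terms by AM--GM with multiplicities $(3,1)$. One remark: your $c=M|y-x|-\bigl(f''(y)-f''(x)\bigr)$ is the actual denominator appearing in \eqref{eq:cond22sup}, whereas the paper's proof writes $c=M|y-x|-\bigl(f''(x)-f''(y)\bigr)$, which appears to be a sign typo; both quantities are nonnegative under \eqref{eq:cond1}, but only your choice makes the substitution into the AM--GM bound reproduce the correction term exactly.
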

Inequality \eqref{eq:improved_cubic_bound2} will be exploited in Section \ref{sec:results}, in Theorem \ref{lem:new_descent_lemma}.
\section{Performance guarantees of optimization methods}\label{sec:results}
In the previous sections, we obtained interpolation conditions, i.e., ``exact descriptions", of different function classes (see Table \ref{tab:tab2}). 
We now exploit these interpolation conditions, analytically or numerically with the PEP framework presented in Section \ref{sect:introduction}, to propose new convergence guarantees for second-order methods on classes of univariate functions. Table \ref{tab:tab1} summarizes results of the literature that we were able to improve (either analytically or numerically) in the univariate case, or for which we proved tightness. We also propose to analyze a few setups that were not considered before.

We selected a subset of existing second-order optimization methods for which we could obtain fair and interesting comparisons with the literature.

All the numerical experiments are performed in Matlab and can be found at
\begin{center}
    \url{https://github.com/NizarBousselmi/Second-Order-Univariate-PEP}.
\end{center}
As mentioned, the non-convex PEPs are solved with the non-convex solver of Gurobi \cite{gurobi}, which can deal with non-convex quadratic but also non-convex general constraints..

\subsection{Improved descent lemma of the Cubic Regularized Newton method}
The Cubic Regularized Newton method (CNM) \cite{nesterov2006cubic} minimizes at each iteration the quadratic approximation of the function around the current iterate regularized by a cubic penalty. In the univariate case, the next iterate $x_{k+1}$ is computed as the global solution of the following problem:
\begin{equation}\label{eq:CNM}
    x_{k+1} = \mathrm{Arg} \min_x f(x_k) + f'(x_k) (x-x_k) + \frac12 f''(x_k) (x-x_k)^2 + \frac{M}{6} |x-x_k|^3. \tag{CNM}
\end{equation}
In the multivariate case, there is no analytical expression for the solution $x_{k+1}$ of this problem. In practice, one can obtain $x_{k+1}$ by solving a convex univariate problem \cite[Section 5]{nesterov2006cubic}. From the existing multivariate descent lemma and convergence rate from \cite{nesterov2006cubic}, we can write the following global rate of convergence on non-convex Hessian Lipschitz functions:
\begin{theorem}[\cite{nesterov2006cubic}, Theorem 1]
    The iterates of the Cubic Regularized Newton method \eqref{eq:CNM} on Hessian $M$-Lipschitz univariate functions satisfy
    \begin{equation}\label{eq:old_descent_Lemma}
        f(x_k) - f(x_{k+1}) \geq \frac{M}{12} \max \left\{ \sqrt{\frac{|f'(x_{k+1})|}{M}}, - \frac{2}{3}\frac{f''(x_{k+1})}{M} \right\}^3.
    \end{equation}
    Moreover, if the function is bounded below by $f_\star$, then
    \begin{equation}\label{eq:CNM_old_sublinear}
        \min_{k=1,\ldots,N} |f'(x_k)| \leq 4 M^{\frac13} \left( \frac{3(f(x_0) - f_\star)}{2N} \right)^{\frac23}.
    \end{equation}
\end{theorem}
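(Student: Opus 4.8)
The plan is to reproduce, in the univariate setting, the classical analysis of Nesterov~\cite{nesterov2006cubic}, relying only on the cubic bound~\eqref{eq:cubic_bound} and its gradient analogue $|f'(y)-f'(x)-f''(x)(y-x)|\le\tfrac{M}{2}|y-x|^2$ (both are classical consequences of $M$-Lipschitzness of the Hessian, and are in fact implied by the interpolation condition~\eqref{eq:cismooth} applied to $f'$). Write $m_k$ for the cubic model minimized in~\eqref{eq:CNM} and set $r_k:=|x_{k+1}-x_k|$. \textbf{Per-step decrease.} Since $f\in\HLip{M}$, the upper bound~\eqref{eq:cubic_bound} gives $f(x_{k+1})\le m_k(x_{k+1})$. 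First-order stationarity of the global minimizer $x_{k+1}$ of $m_k$ reads $f'(x_k)+f''(x_k)(x_{k+1}-x_k)+\tfrac{M}{2}r_k(x_{k+1}-x_k)=0$, and substituting it into $m_k(x_{k+1})$ gives $m_k(x_{k+1})=f(x_k)-\tfrac12 f''(x_k)r_k^2-\tfrac{M}{3}r_k^3$. To bound $f''(x_k)$ from below I would use \emph{global} optimality of $x_{k+1}$ against the reflected point $2x_k-x_{k+1}$: the inequality $m_k(x_{k+1})\le m_k(2x_k-x_{k+1})$ reduces to $f'(x_k)(x_{k+1}-x_k)\le0$, which combined with stationarity yields $f''(x_k)\ge-\tfrac{M}{2}r_k$. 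Hence $f(x_k)-m_k(x_{k+1})=\tfrac12 f''(x_k)r_k^2+\tfrac{M}{3}r_k^3\ge\tfrac{M}{12}r_k^3$, so that $f(x_k)-f(x_{k+1})\ge\tfrac{M}{12}r_k^3$ (the sign of $x_{k+1}-x_k$ is immaterial by symmetry of~\eqref{eq:CNM} about $x_k$).

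\textbf{Lower bounds on $r_k$.} Combining the gradient inequality with the stationarity identity $f'(x_k)+f''(x_k)(x_{k+1}-x_k)=-\tfrac{M}{2}r_k(x_{k+1}-x_k)$ via the triangle inequality gives $|f'(x_{k+1})|\le Mr_k^2$, i.e.\ $r_k\ge\sqrt{|f'(x_{k+1})|/M}$; from $|f''(x_{k+1})-f''(x_k)|\le Mr_k$ together with $f''(x_k)\ge-\tfrac{M}{2}r_k$ we obtain $f''(x_{k+1})\ge-\tfrac{3M}{2}r_k$, i.e.\ $r_k\ge-\tfrac{2}{3}f''(x_{k+1})/M$. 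Substituting both lower bounds into $f(x_k)-f(x_{k+1})\ge\tfrac{M}{12}r_k^3$ yields the descent lemma~\eqref{eq:old_descent_Lemma}.

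\textbf{Sublinear rate.} Keeping only the first branch of the maximum in~\eqref{eq:old_descent_Lemma} gives $f(x_k)-f(x_{k+1})\ge\tfrac{1}{12\sqrt{M}}|f'(x_{k+1})|^{3/2}$. Summing over $k=0,\dots,N-1$ and telescoping, $f(x_0)-f_\star\ge f(x_0)-f(x_N)\ge\tfrac{N}{12\sqrt{M}}\bigl(\min_{1\le k\le N}|f'(x_k)|\bigr)^{3/2}$; rearranging gives $\min_{k}|f'(x_k)|\le\bigl(12\sqrt{M}(f(x_0)-f_\star)/N\bigr)^{2/3}=12^{2/3}M^{1/3}\bigl((f(x_0)-f_\star)/N\bigr)^{2/3}$, which is exactly~\eqref{eq:CNM_old_sublinear} since $\bigl(4(3/2)^{2/3}\bigr)^3=144=(12^{2/3})^3$.

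\textbf{Main obstacle.} The only delicate point is obtaining the sharp constant $\tfrac{M}{12}$, and with it the factor $\tfrac23$: merely invoking second-order stationarity $f''(x_k)\ge-Mr_k$ gives the useless bound $f(x_k)-f(x_{k+1})\ge-\tfrac{M}{6}r_k^3$, and even $m_k(x_{k+1})\le m_k(x_k)$ only yields $f''(x_k)\ge-\tfrac{2M}{3}r_k$. The sharp estimate $f''(x_k)\ge-\tfrac{M}{2}r_k$ genuinely requires comparing the global minimizer with its reflection about $x_k$; keeping track of this (and of the two sign cases of the step) carefully is the crux of the argument.
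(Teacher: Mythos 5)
The paper does not actually prove this theorem: it is quoted directly from~\cite{nesterov2006cubic}, and only the four auxiliary facts \eqref{eq:FOO}--\eqref{eq:proof_CNM2} (again credited to Nesterov's Equation~(2.5), Proposition~1, and Lemmas~2 and~3) are re-used downstream in the proof of the improved Theorem~\ref{lem:new_descent_lemma}. Your reconstruction is correct and follows Nesterov's original line: the model-value identity $m_k(x_{k+1})=f(x_k)-\tfrac12 f''(x_k)|x_{k+1}-x_k|^2-\tfrac{M}{3}|x_{k+1}-x_k|^3$ is right, the reflection comparison $m_k(x_{k+1})\le m_k(2x_k-x_{k+1})$ correctly reduces to $f'(x_k)(x_{k+1}-x_k)\le 0$, and combining this with stationarity does give the sharp bound $f''(x_k)\ge-\tfrac{M}{2}|x_{k+1}-x_k|$ (and hence the $\tfrac{M}{12}|x_{k+1}-x_k|^3$ decrease); the two lower bounds $|x_{k+1}-x_k|\ge\sqrt{|f'(x_{k+1})|/M}$ and $|x_{k+1}-x_k|\ge-\tfrac{2}{3M}f''(x_{k+1})$ follow as you state, and the constant check $4(3/2)^{2/3}=12^{2/3}=144^{1/3}$ is exact, so~\eqref{eq:CNM_old_sublinear} matches. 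In effect your argument re-derives exactly the ingredients \eqref{eq:FOO}--\eqref{eq:proof_CNM2} that the paper simply cites, so it serves both as a self-contained proof of the quoted theorem and as a justification for the facts used later in Theorem~\ref{lem:new_descent_lemma}.
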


Exploiting the refined description of Hessian Lipschitz functions, and in particular the improved cubic bound of Corollary \ref{cor:improved_cubic_bound} allows to improve this original descent lemma \eqref{eq:old_descent_Lemma} for univariate functions by a factor of 5. It results in an improvement of factor $5^{\frac23}\approx 2.9$ on the sub-linear global convergence rate \eqref{eq:CNM_old_sublinear} of \ref{eq:CNM} on univariate functions. 
\begin{theorem}[Improved descent lemma and gradient convergence rate]\label{lem:new_descent_lemma}
    The iterates of the Cubic Regularized Newton method \eqref{eq:CNM} on Hessian $M$-Lipschitz univariate functions satisfy
    \begin{equation}\label{eq:improved_descent_lemma}
        f(x_k) - f(x_{k+1}) \geq \frac{5M}{12} \sqrt{\frac{|f'(x_{k+1})|}{M}}^3.
    \end{equation}
    Moreover, if the function is bounded below by $f_\star$, then
    \begin{equation}\label{eq:CNM_new_sublinear}
        \min_{k=1,\ldots,N} |f'(x_k)| \leq \frac{4M^{\frac13}}{5^{\frac23}} \left( \frac{3(f(x_0) - f_\star)}{2N} \right)^{\frac23}.
    \end{equation}
    \begin{proof}
    We first establish \eqref{eq:improved_descent_lemma}. The \ref{eq:CNM} iterates satisfy the following properties (see \cite[Equation (2.5), Proposition 1, and Lemmas 2 and 3]{nesterov2006cubic})
    \begin{align}
        f'(x_k) + f''(x_k)(x_{k+1}-x_k) + \frac{M}{2} (x_{k+1} - x_k) |x_{k+1} - x_k| = 0, \label{eq:FOO} \\
        f''(x_k) + \frac{M}{2} |x_{k+1} - x_k| \geq 0, \label{eq:SOO} \\
        f'(x_k)(x_{k+1} - x_k) \leq 0, \label{eq:proof_CNM1}\\
        M|x_{k+1}-x_k|^2 \geq |f'(x_{k+1})| \label{eq:proof_CNM2}.
    \end{align}
    Exploiting the new term (in blue) from the improved cubic bound \eqref{eq:improved_cubic_bound2} of Corollary \ref{cor:improved_cubic_bound} yields
        \begin{align*} 
        \begin{split}
            f(x_{k+1})  \overset{\eqref{eq:improved_cubic_bound2}}{\leq} & f(x_k) + f'(x_k) (x_{k+1} - x_k) + \frac{f''(x_k)}{2} (x_{k+1} - x_k)^2  + \frac{M}{6} |x_{k+1}-x_k|^3 \\
            & \textcolor{\mycolor}{- \frac{M}{3}\Par{\frac{\left| f'(x_{k+1})-f'(x_k)-f''(x_k)(x_{k+1}-x_k) - \frac{M}{2}(x_{k+1}-x_k)|x_{k+1}-x_k|\right|}{M}}^{\frac{3}{2}}} \\
            \overset{\eqref{eq:FOO}}{=} & f(x_k) + \frac{ f'(x_k)}{2} (x_{k+1} - x_k) - \frac{M}{12} |x_{k+1} - x_k|^3  \textcolor{\mycolor}{-\frac{M}{3}\Par{\frac{\left|f'(x_{k+1})\right|}{M}}^{\frac{3}{2}}} \\
            \overset{\eqref{eq:proof_CNM1}}{\leq} & f(x_k) - \frac{M}{12} |x_{k+1} - x_k|^3 \textcolor{\mycolor}{-\frac{M}{3}\Par{\frac{\left|f'(x_{k+1})\right|}{M}}^{\frac{3}{2}}}\\
            \overset{\eqref{eq:proof_CNM2}}{\leq} & f(x_k) - \frac{M}{12} \Par{\frac{|f'(x_{k+1})|}{M}}^{\frac{3}{2}} \textcolor{\mycolor}{-\frac{M}{3}\Par{\frac{\left|f'(x_{k+1})\right|}{M}}^{\frac{3}{2}}} \\
            = &  f(x_k) - \frac{\textcolor{\mycolor}{5}M}{12} \Par{\frac{|f'(x_{k+1})|}{M}}^{\frac{3}{2}}
        \end{split}
        \end{align*}
        establishing the result. Telescoping this new descent lemma \eqref{eq:improved_descent_lemma} yields the convergence rate in \eqref{eq:CNM_new_sublinear}
        \begin{align*}
        \begin{split}
            f(x_0) - f_\star \geq f(x_0) - f(x_N) = \sum_{k=0}^{N-1} f(x_k)-f(x_{k+1}) & \overset{\eqref{eq:improved_descent_lemma}}{\geq} \frac{5M}{12} \sum_{k=0}^{N-1} \Par{\frac{| f'(x_{k+1})|}{M}}^{\frac{3}{2}} \\
            & \geq \frac{5MN}{12}  \min_{k=1,\ldots,N} \Par{\frac{| f'(x_{k})|}{M}}^{\frac{3}{2}}.
        \end{split}
        \end{align*}
         \qed \end{proof}
\end{theorem}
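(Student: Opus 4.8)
The plan is to establish the descent lemma \eqref{eq:improved_descent_lemma} by inserting the stationarity conditions of the Cubic Regularized Newton step into the improved cubic upper bound of Corollary~\ref{cor:improved_cubic_bound}, and then to derive the gradient rate \eqref{eq:CNM_new_sublinear} from it by telescoping. The whole gain over the classical descent lemma \eqref{eq:old_descent_Lemma} is carried by the extra non-positive term that Corollary~\ref{cor:improved_cubic_bound} adds to Nesterov's cubic bound \eqref{eq:cubic_bound}: evaluated along the \eqref{eq:CNM} step it contributes an amount proportional to $-(|f'(x_{k+1})|/M)^{3/2}$, which stacks on top of the term of the same type produced by the ordinary cubic part, and keeping track of the two constants is exactly what turns the factor $1/12$ into $5/12$.

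More concretely, I would first record the standard facts about the \eqref{eq:CNM} iterate at step $k$, all established in \cite{nesterov2006cubic}: first-order stationarity of the cubic model \eqref{eq:FOO}, its second-order condition \eqref{eq:SOO}, the descent-direction inequality $f'(x_k)(x_{k+1}-x_k)\le 0$ in \eqref{eq:proof_CNM1}, and the single-step gradient estimate $M|x_{k+1}-x_k|^2\ge|f'(x_{k+1})|$ in \eqref{eq:proof_CNM2}. I then apply Corollary~\ref{cor:improved_cubic_bound} with $x=x_k$, $y=x_{k+1}$. Substituting \eqref{eq:FOO} makes the linear and quadratic terms of the cubic model collapse, together with $\tfrac{M}{6}|x_{k+1}-x_k|^3$, to $\tfrac{f'(x_k)}{2}(x_{k+1}-x_k)-\tfrac{M}{12}|x_{k+1}-x_k|^3$, and it makes the argument of the correction term reduce to $|f'(x_{k+1})|$, so that the correction term equals $-\tfrac{M}{3}(|f'(x_{k+1})|/M)^{3/2}$. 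I then drop $\tfrac{f'(x_k)}{2}(x_{k+1}-x_k)\le 0$ using \eqref{eq:proof_CNM1}, and lower-bound $|x_{k+1}-x_k|^3\ge(|f'(x_{k+1})|/M)^{3/2}$ using \eqref{eq:proof_CNM2}; adding $-\tfrac{M}{12}(|f'(x_{k+1})|/M)^{3/2}$ and $-\tfrac{M}{3}(|f'(x_{k+1})|/M)^{3/2}$ yields exactly \eqref{eq:improved_descent_lemma}.

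For the sublinear rate, I would telescope \eqref{eq:improved_descent_lemma} over $k=0,\dots,N-1$ and use $f(x_N)\ge f_\star$ to obtain $f(x_0)-f_\star\ge\tfrac{5MN}{12}\min_{k=1,\dots,N}(|f'(x_k)|/M)^{3/2}$; isolating the minimum and raising to the power $2/3$ gives $\min_k|f'(x_k)|\le M^{1/3}\bigl(\tfrac{12(f(x_0)-f_\star)}{5N}\bigr)^{2/3}$, which is \eqref{eq:CNM_new_sublinear} after regrouping the numerical constants.

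The step I expect to demand the most care is the interplay between Corollary~\ref{cor:improved_cubic_bound} and \eqref{eq:FOO}: one has to make sure the sign and coefficient of the $(x_{k+1}-x_k)|x_{k+1}-x_k|$ contribution appearing inside the correction term are exactly those cancelled by the stationarity condition of the cubic model, so that the argument genuinely reduces to $|f'(x_{k+1})|$ and not to some residual multiple of $|x_{k+1}-x_k|^2$. I would also check the hypothesis $|f''(x_k)-f''(x_{k+1})|\ne M|x_{k+1}-x_k|$ of Corollary~\ref{cor:improved_cubic_bound}; in the excluded boundary case, the limiting identities \eqref{eq:hMxlimite0}--\eqref{eq:hMxlimite} of Theorem~\ref{thm:H-intrp} pin down $f(x_{k+1})-f(x_k)$ exactly, so the inequality can be verified directly. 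Apart from that, the argument is elementary algebra together with the same four-term AM--GM manipulation used to prove Corollary~\ref{cor:improved_cubic_bound}.
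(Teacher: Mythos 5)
Your proposal is correct and follows essentially the same route as the paper's proof: apply Corollary~\ref{cor:improved_cubic_bound} at $(x_k,x_{k+1})$, use \eqref{eq:FOO} to make the correction term's argument collapse to $|f'(x_{k+1})|$, drop the $\tfrac12 f'(x_k)(x_{k+1}-x_k)$ term via \eqref{eq:proof_CNM1}, lower-bound $|x_{k+1}-x_k|^3$ via \eqref{eq:proof_CNM2}, add $\tfrac{1}{12}+\tfrac13=\tfrac{5}{12}$, and telescope (your rearranged constant $M^{1/3}(12/(5N))^{2/3}$ agrees with the paper's $4M^{1/3}5^{-2/3}(3/(2N))^{2/3}$). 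The one place you go slightly beyond the paper is in explicitly noting that Corollary~\ref{cor:improved_cubic_bound} carries the hypothesis $|f''(x_k)-f''(x_{k+1})|\neq M|x_{k+1}-x_k|$ and resolving the excluded case via \eqref{eq:hMxlimite0}--\eqref{eq:hMxlimite}; the paper's proof does not address this degenerate case, so this is a welcome extra precaution rather than a different approach.
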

        Finally, the bound on the Hessian follows from the fact that the measure on the gradient of the iterates is always greater than the measure on the Hessian of the iterates for univariate functions.
        \begin{theorem}
            The iterates of the Cubic Regularized Newton method \eqref{eq:CNM} on Hessian $M$-Lipschitz univariate functions satisfy
            \begin{equation} \label{eq:goalhessian}
            \sqrt{\frac{|f'(x_{k+1})|}{M}} \geq -\frac{2}{3M}f''(x_{k+1}).   
            \end{equation}
            \begin{proof}
            Suppose w.l.o.g.\@ that $|x_{k+1}-x_k| = x_{k+1}-x_k\geq 0$ and recall that 
            \begin{equation}\label{eq:old_4.1}
                |f''(x_{k+1}) - f''(x_k) |\leq M |x_{k+1} - x_k|
            \end{equation}
            therefore, we have
            {\small
            \begin{align*}
                |f'(x_{k+1})| & \geq -f'(x_{k+1})\\ & \overset{\eqref{eq:cismooth}}{\geq} -f'(x_k)-\frac{M (x_{k+1}-x_k)^2}{4}+\frac{(f''(x_{k+1})-f''(x_{k}))^2}{4M}-\frac{1}{2}(f''(x_{k+1})+f''(x_{k}))(x_{k+1}-x_k)\\
                &\overset{\eqref{eq:FOO}}{=}\frac{M (x_{k+1}-x_k)^2}{4}+\frac{(f''(x_{k+1})-f''(x_{k}))^2}{4M}-\frac{1}{2}(f''(x_{k+1})-f''(x_{k}))(x_{k+1}-x_k)\\
                &=\frac{1}{4M}(f''(x_{k+1})-f''(x_{k})-M (x_{k+1}-x_k))^2 \\
                \Rightarrow \sqrt{\frac{|f'(x_{k+1})|}{M}} & \geq \frac{1}{2M}|f''(x_{k+1})-f''(x_{k})-M |x_{k+1}-x_k|| \\
                & \overset{\eqref{eq:old_4.1}}{\geq} \frac{1}{2M}\left(M |x_{k+1}-x_k|-f''(x_{k+1})+f''(x_{k})\right) \\
                & = \frac{1}{2M}\left(\frac{2}{3} M |x_{k+1}-x_k|+ \frac{1}{3} M |x_{k+1}-x_k|-f''(x_{k+1})+f''(x_{k})\right) \\
                & \overset{\eqref{eq:SOO},\eqref{eq:old_4.1}}{\geq} -\frac{4}{3}  f''(x_{k})+ \frac{1}{3} (f''(x_{k})-f''(x_{k+1})) -f''(x_{k+1})+f''(x_{k})) \\
                & = -\frac{2}{3M}f''(x_{k+1}).
            \end{align*}
            }
         \qed \end{proof}
\end{theorem}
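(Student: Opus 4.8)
The plan is to produce a quadratic lower bound on $|f'(x_{k+1})|$ in terms of the Hessian gap along the step, take square roots, and then recombine the pieces using the optimality conditions of the cubic model. I would first assume without loss of generality that $t := x_{k+1}-x_k \ge 0$ (the other case being symmetric under $x\mapsto -x$), so that $|x_{k+1}-x_k|=t$. The ingredients I would use are: the first-order stationarity \eqref{eq:FOO}, which then reads $f'(x_k) = -f''(x_k)\,t - \tfrac{M}{2}t^2$; the second-order condition \eqref{eq:SOO}, namely $f''(x_k)+\tfrac{M}{2}t\ge 0$; the Hessian-Lipschitz inequality \eqref{eq:cond1} applied to the pair, i.e.\ $|f''(x_{k+1})-f''(x_k)|\le Mt$; and the interpolation inequality \eqref{eq:cismooth} of Corollary~\ref{prop:Hinterpnofi}.

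The crucial step is to apply \eqref{eq:cismooth} to the pair taken in the order $(x_{k+1},x_k)$ — that is, with the index $i$ playing the role of iterate $k+1$, so that the left-hand Hessian term is $f''(x_{k+1})$ — and then to substitute $f'(x_k)$ via \eqref{eq:FOO}. A short computation shows that the right-hand side collapses to a perfect square, yielding
\begin{equation}
|f'(x_{k+1})| \;\ge\; -f'(x_{k+1}) \;\ge\; \frac{1}{4M}\Big(Mt + f''(x_k) - f''(x_{k+1})\Big)^{2},
\end{equation}
where the bracket is nonnegative by \eqref{eq:cond1}. Taking square roots gives $\sqrt{|f'(x_{k+1})|/M}\ \ge\ \tfrac{1}{2M}\big(Mt+f''(x_k)-f''(x_{k+1})\big)$, with no need for a case split on the sign of $f''(x_{k+1})$.

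It then remains to bound the bracket below by $-\tfrac{4}{3}f''(x_{k+1})$. I would split $Mt = \tfrac23 Mt + \tfrac13 Mt$, use \eqref{eq:SOO} in the form $\tfrac23 Mt = \tfrac43\cdot\tfrac{M}{2}t \ge -\tfrac43 f''(x_k)$, and use \eqref{eq:cond1} in the form $\tfrac13 Mt \ge \tfrac13\big(f''(x_k)-f''(x_{k+1})\big)$; adding these to $f''(x_k)-f''(x_{k+1})$ cancels the $f''(x_k)$ contributions and leaves exactly $-\tfrac43 f''(x_{k+1})$, which after division by $2M$ is \eqref{eq:goalhessian}. The step I expect to be the main obstacle is the first application of \eqref{eq:cismooth}: one must pick the correct orientation of the pair — only $(x_{k+1},x_k)$ produces the useful perfect-square right-hand side once \eqref{eq:FOO} is substituted — and one must keep careful track of $|x_{k+1}-x_k|$ versus $(x_{k+1}-x_k)$, which coincide here only thanks to the normalization $t\ge 0$.
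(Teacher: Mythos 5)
Your proposal is correct and follows essentially the same route as the paper: apply the interpolation condition \eqref{eq:cismooth} with the pair oriented as $(x_{k+1},x_k)$, substitute via \eqref{eq:FOO} to collapse the right-hand side to the perfect square $\tfrac{1}{4M}(Mt+f''(x_k)-f''(x_{k+1}))^2$, take square roots, and split $Mt=\tfrac23 Mt+\tfrac13 Mt$ to apply \eqref{eq:SOO} and the Hessian-Lipschitz bound. The only cosmetic difference is that you observe the bracket is nonnegative directly, whereas the paper first writes the absolute value and then resolves it with \eqref{eq:old_4.1}; these are the same step.
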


For a single iteration, the improved descent lemma \eqref{eq:improved_descent_lemma} is satisfied with equality for the function $f(x) =  M\frac{x^3}{6} - \frac{x^2}{2}$ on the starting point $x_0=0$ (see Appendix \ref{app:improved_descent_lemma_is_tight}).

\begin{remark}\label{rem:CNM_gradient_dominated}
    Thanks to the improved descent lemma \eqref{eq:improved_descent_lemma}, we can also improve the results on the convergence rate of \ref{eq:CNM} on univariate gradient-dominated functions \cite[Section 4.2]{nesterov2006cubic}. We can replace quantities $\hat{\omega}$ and $\tilde{\omega}$ respectively by $\frac{\hat{\omega}}{5^2} $ and $5^4\tilde{\omega}$ in \cite[Theorems 6.1 and 6.2]{nesterov2006cubic}.
\end{remark}

\begin{remark}
We can also analyze an inexact variant of CNM with arbitrary $g$ and $h$ instead of exact derivatives $f'$ and $f''$. Then, we obtain an improved descent lemma:
\begin{align}
\begin{split}
    f(x_{k+1}) \leq & f(x_k) + \frac23 \sqrt{\frac{12}{M}} |f'(x_k)-g|^{\frac32} +\frac{72}{M^2}|f''(x_k)-h|^3-\frac{M}{36} |x_{k+1}-x_k|^3 \\
    & - \frac{1}{3\sqrt{M}} \Par{|f'(x_{k+1})| - |f'(x_k)-g| - |(f''(x_k)-h)(x_{k+1} - x_k)|}^\frac32,
\end{split}
\end{align}
which can be compared to \cite[Theorem C.1]{chayti2023unified} and has better factors.
\end{remark}
\begin{remark}
    Since our interpolation conditions only hold for univariate functions, Theorem \ref{lem:new_descent_lemma} only holds in this setting. However, we believe and conjecture that it also holds in the multivariate case. To support this conjecture, we computed the decrease $f(x_k) - f(x_{k+1})$ of CNM on functions of the form $f(x):\R^d \rightarrow \R: x\mapsto f(x) = \sum_{i=1}^n a_i x_i^3 + \frac12 x^T Q x + b^T x + c$ for $10^6$ values for randomly generated parameters $a$, $Q$, $b$, $c$ and $d=1,\ldots,10$, and observed that the descent \eqref{eq:improved_descent_lemma} (with the norm of the gradient instead of the absolute value of the derivative) with the new factor 5 always held.
\end{remark}

\subsubsection*{Tightness of multiple steps}
Even if the improved descent lemma is tight for a single iteration, the worst-case decrease after multiple iterations can be better than predicted by \eqref{eq:improved_descent_lemma} only. Indeed, a better bound can result from the use of a more sophisticated combination of the interpolation conditions. The PEP methodology automatically computes numerically this optimal combination. Figure \ref{fig:CNM_descent_lemma} compares the worst-case performance for multiple iterations of \ref{eq:CNM} computed by PEP (blue dots) and as predicted by \cite[Theorem 1]{nesterov2006cubic} (red line) and by Theorem \ref{lem:new_descent_lemma} (blue line). We also plot the gradient residual of \ref{eq:CNM} for the univariate function $f^{(4)}$ of \cite{cartis2010complexity} for which \ref{eq:CNM} shows a $\mathcal{O}(1/k^{\frac23})$ convergence.
\begin{figure}[ht!]
    \centering
    \figureH{
        \includegraphics[width = 1\textwidth]{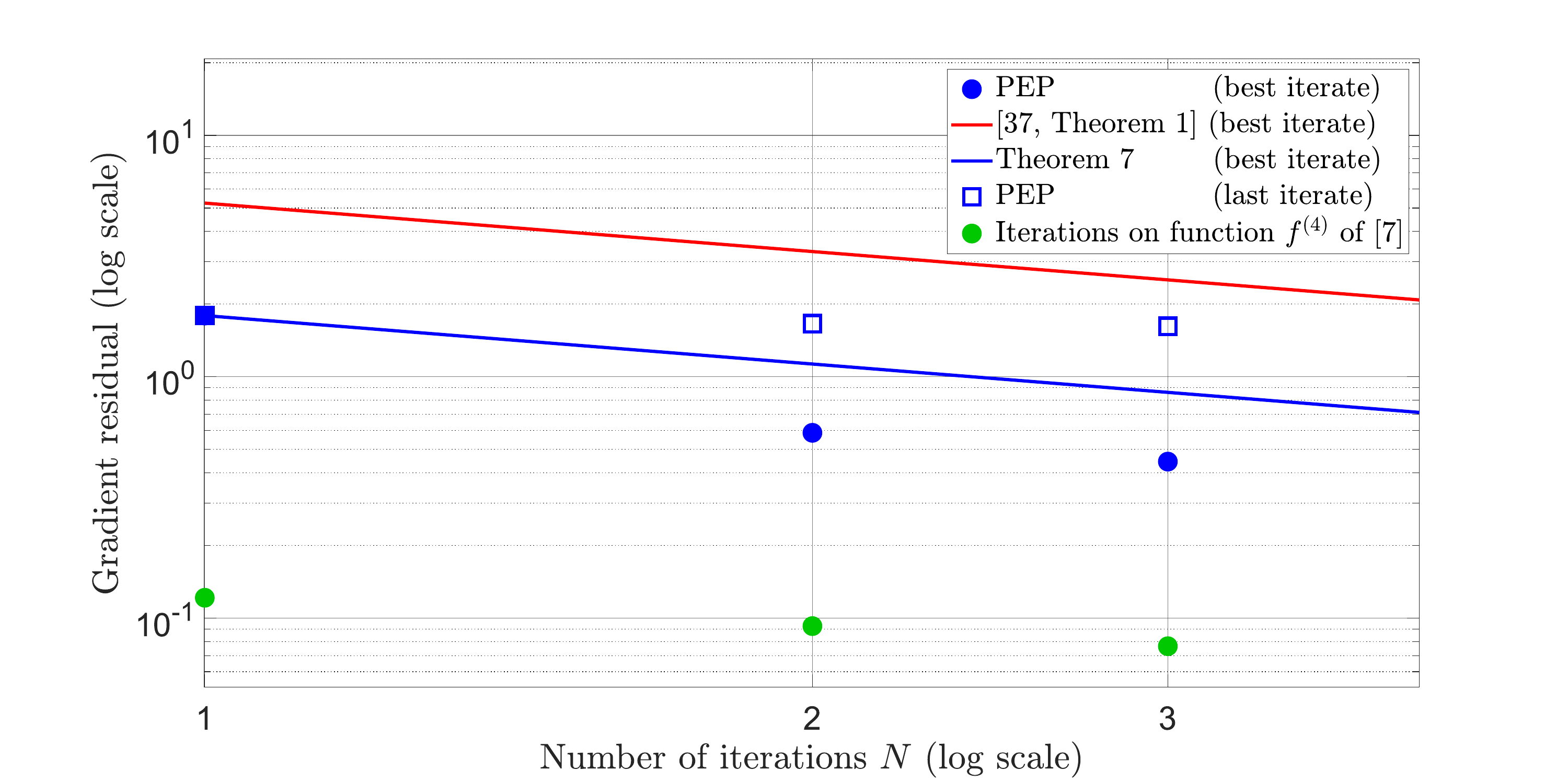}
    }
    \setlength{\belowcaptionskip}{-5pt}
    \setlength{\abovecaptionskip}{-2pt}
    \caption{Worst-case performance of the best $\min_{k=1,\ldots,N}  |f'(x_k)|$ and last $|f'(x_N)|$ iterates of the Cubic Newton method on Hessian $M$-Lipschitz functions for varying number of iterations $N$ and $M = f_0 - f_N = 1$.}
    \label{fig:CNM_descent_lemma}
\end{figure}
Figure \ref{fig:CNM_descent_lemma} illustrates the two possible kinds of conservatism of a performance guarantee. On one hand, we must use the correct interpolation conditions (blue lines instead of red lines), and on the other hand, we must combine them optimally (blue dots instead of blue line).

\subsubsection*{Lower bound on the general worst-case performance}
Since we solve the PEP in the univariate case, we do not have the general worst-case performance of the method. However, the univariate worst-case is an example of a function for which the method is slow, in other words, it provides a lower bound on the general worst-case performance. In \cite{cartis2010complexity}, they exhibited a univariate function $f^{(4)}$ for which \ref{eq:CNM} has the $\mathcal{O}(1/k^{\frac23})$ convergence (green dots in Figure \ref{fig:CNM_descent_lemma}), thus providing a lower bound on the exact worst-case performance. With PEP, we can provide numerically a worse function (blue dots in Figure \ref{fig:CNM_descent_lemma}) and therefore close the gap between the best known upper (red line) and lower bounds on the general multivariate worst-case performance of \ref{eq:CNM}.

\subsubsection*{Last-iterate rate of convergence}
PEP allows the study of different performance criteria. In particular, in addition to the classical best iterate analysis of \cite[Theorem 1]{nesterov2006cubic}, we can also consider the last-iterate convergence of \ref{eq:CNM} with PEP without additional effort. In Figure \ref{fig:CNM_descent_lemma} the last-iterate convergence (blue squares) has a slower order of convergence than the best iterate convergence. Such last-iterate analysis of \ref{eq:CNM} has never been done and is straightforward with PEP.

\subsubsection*{Parameter selection}
A tight worst-case performance guarantee allows selecting the optimal parameters optimizing the worst-case. We consider the Cubic Regularized Newton method with step size $\alpha$ (CNM-$\alpha$) defined as the solution of the modified CNM iteration
\begin{equation}
    x_{k+1} = \mathrm{Arg} \min_x f(x) + f'(x_k) (x-x_k) + \frac12  f''(x_k) (x-x_k)^2 + \frac{M}{6\alpha} |x-x_k|^3.
\end{equation}
Interpolation conditions together with PEP allow to analyze the worst-case performance of CNM-$\alpha$ for different values of $\alpha$. In particular, we can select the step size $\alpha$ optimizing the worst-case performance. Figure \ref{fig:CNM_alpha} shows the worst-case performance of one iteration of CNM-$\alpha$ for varying step size $\alpha$ and different values of Hessian Lipschitz constant $M$.
\begin{figure}[ht!]
    \centering
    \figureH{
        \includegraphics[width = 1\textwidth]{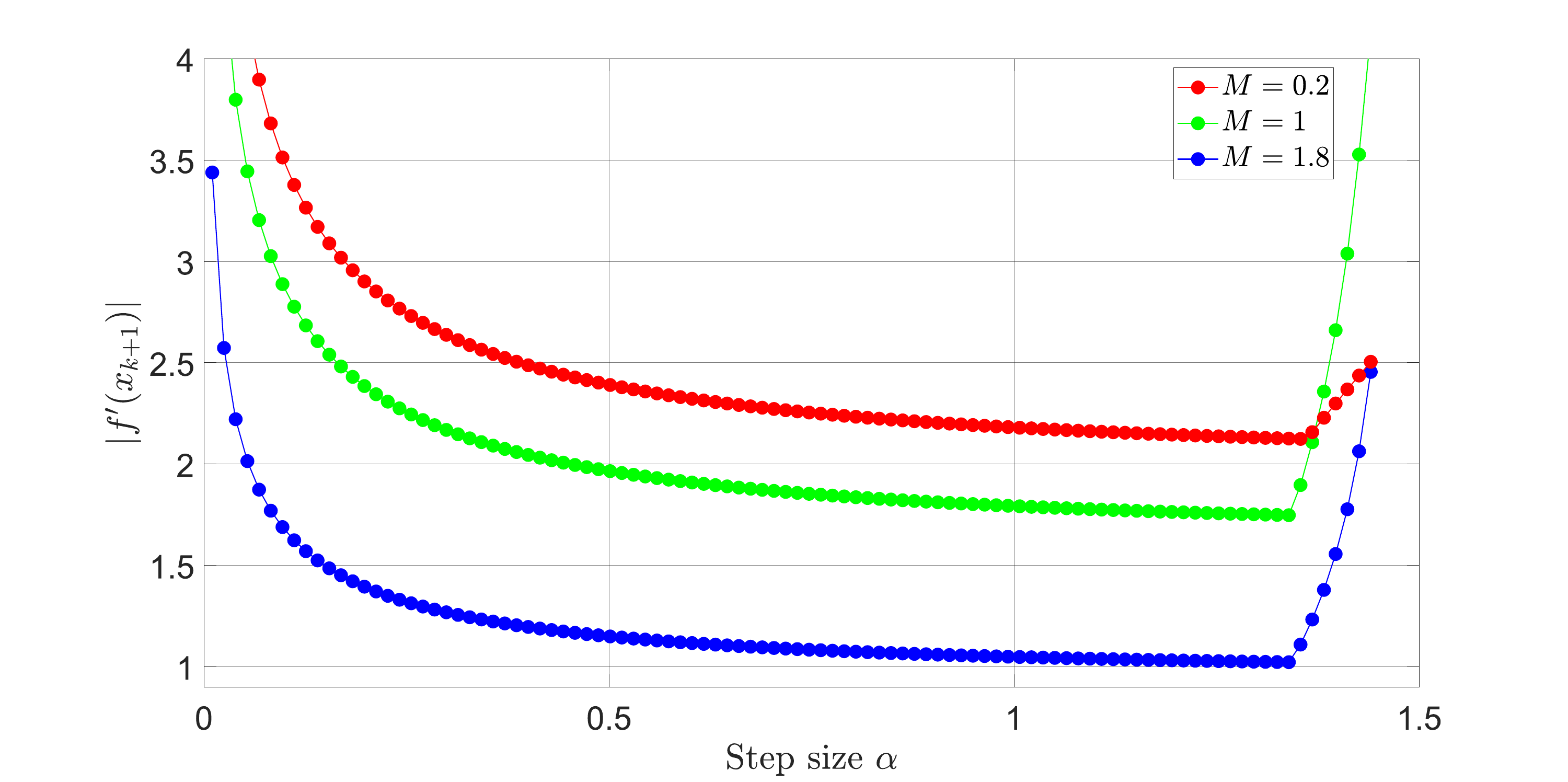}
    }
    \setlength{\belowcaptionskip}{-5pt}
        \setlength{\abovecaptionskip}{-0pt}
    \caption{Worst-case performance $|f'(x_{k+1})|$ of the Cubic Regularized Newton method for different step size $\alpha$ on Hessian $M$-Lipschitz functions with $f_k - f_{k+1} = 1$ and different values of $M$.}
    \label{fig:CNM_alpha}
\end{figure}
Following a similar reasoning that \cite[Section 3.5]{taylor2017smooth}, it can be shown that this worst-case scales with $(f_k-f_{k+1})^\frac{2}{3}$. Further analysis of such numerical results can lead to the development of new optimized methods.

\subsection{Gradient Regularized Newton method on convex and strongly convex functions}
The Gradient Regularized Newton method 2 (GNM2) \cite{mishchenko2023regularized} is a variant of the Cubic Newton method that achieves a $\mathcal{O}(1/k^2)$ global convergence (faster than the $\mathcal{O}(1/k^\frac23)$ of \ref{eq:CNM}) and, unlike \ref{eq:CNM}, has a fully explicit iteration
\begin{equation}\label{eq:GNM_1/2}
    x_{k+1} = x_k  - \frac{f'(x_k)}{f''(x_k)+\sqrt{\frac{M}{2} |f'(x_k)|}}. \tag{GNM2}
\end{equation}

\ref{eq:GNM_1/2} has a superlinear convergence on strongly convex functions formalized in the following theorem (proved in the multivariate case):
\begin{theorem}[Theorem 2.7 of \cite{mishchenko2023regularized}]
    The iterations of \eqref{eq:GNM_1/2} on $\mu$-strongly convex and Hessian $M$-Lipschitz univariate functions satisfy
    \begin{equation}\label{eq:bound_GRN_local}
        |f'(x_{k+1})| \leq \frac{M}{2\mu^2} |f'(x_k)|^2 + \frac{1}{\mu} \sqrt{\frac{M}{2}} |f'(x_k)|^\frac{3}{2}.
    \end{equation}
    Moreover, if $|f'(x_0)| < \frac{\mu^2}{2M}$, then
    \begin{equation}
        \frac{|f'(x_{k+1})|}{|f'(x_{k})|} \leq \sqrt{\frac{2M}{\mu^2} |f'(x_{k})|} < 1.
    \end{equation}
\end{theorem}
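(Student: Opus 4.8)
The plan is to reproduce, in the univariate setting, the elementary argument behind Theorem~2.7 of \cite{mishchenko2023regularized}, using only the first-order consequence of the Hessian-Lipschitz property together with strong convexity; should one wish to stay entirely within the PEP formalism, the interpolation conditions of Corollary~\ref{th:ICg_convex_hessian_Lipschitz} can be substituted for that first-order estimate. Throughout, write $g_k=f'(x_k)$, $h_k=f''(x_k)$, $\lambda_k=\sqrt{\tfrac{M}{2}|g_k|}$ and $r_k=x_{k+1}-x_k=-\tfrac{g_k}{h_k+\lambda_k}$. By $\mu$-strong convexity, $h_k\geq\mu>0$, so the denominator satisfies $h_k+\lambda_k\geq\mu$, the iteration is well defined, and if $g_k=0$ the iterate is a fixed point and both inequalities hold trivially; hence I assume $g_k\neq 0$.

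First I would record the auxiliary first-order estimate: since $f\in\HLip{M}$, $g_{k+1}-g_k-h_kr_k=\int_{x_k}^{x_{k+1}}(f''(t)-f''(x_k))\,\mathrm{d}t$, and bounding the integrand by $M|t-x_k|$ gives $|g_{k+1}-g_k-h_kr_k|\leq\tfrac{M}{2}r_k^2$ (equivalently, combine \eqref{eq:cismooth} with its symmetric counterpart and discard the nonnegative square term). Second, I would use the algebraic identity coming from the GNM2 step: substituting the definition of $r_k$,
\[
g_k+h_kr_k=g_k\Par{1-\frac{h_k}{h_k+\lambda_k}}=\frac{\lambda_kg_k}{h_k+\lambda_k}=-\lambda_kr_k,
\]
so that $|g_{k+1}|\leq\lambda_k|r_k|+\tfrac{M}{2}r_k^2$. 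Third, strong convexity again yields $|r_k|=\tfrac{|g_k|}{h_k+\lambda_k}\leq\tfrac{|g_k|}{\mu}$; inserting this and using $\lambda_k|g_k|=\sqrt{\tfrac{M}{2}}\,|g_k|^{3/2}$ gives precisely \eqref{eq:bound_GRN_local}.

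For the second claim, divide \eqref{eq:bound_GRN_local} by $|g_k|$ and set $u_k=\sqrt{\tfrac{2M}{\mu^2}|g_k|}$, i.e.\ $|g_k|=\tfrac{\mu^2u_k^2}{2M}$; a one-line substitution shows $\tfrac{|g_{k+1}|}{|g_k|}\leq\tfrac{u_k^2}{4}+\tfrac{u_k}{2}$. The hypothesis $|f'(x_0)|<\tfrac{\mu^2}{2M}$ says $u_0<1$, hence $\tfrac{u_0^2}{4}+\tfrac{u_0}{2}\leq u_0<1$, which proves the ratio bound at $k=0$ and simultaneously gives $|g_1|<|g_0|<\tfrac{\mu^2}{2M}$; an immediate induction then propagates both $u_{k}<1$ and the ratio bound $\tfrac{|g_{k+1}|}{|g_k|}\leq u_k<1$ to all $k\geq 0$.

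The argument is essentially bookkeeping, so there is no deep obstacle; the points that need care are the sign and absolute-value handling in the identity of the second step, the well-definedness of the iteration (guaranteed by $h_k\geq\mu>0$), and — this being the only place the Hessian-Lipschitz hypothesis enters — the auxiliary first-order estimate. The mild subtlety worth checking is that no slack beyond $|r_k|\leq|g_k|/\mu$ and the $\tfrac{M}{2}r_k^2$ remainder is introduced, so that the exact constants $\tfrac{M}{2\mu^2}$ and $\tfrac{1}{\mu}\sqrt{M/2}$ are recovered rather than merely up to a factor.
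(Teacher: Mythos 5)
The paper itself does not prove this theorem; it is quoted verbatim (specialized to one dimension) from \cite{mishchenko2023regularized} and used only as a benchmark for the PEP experiments in Figure~\ref{fig:GRN_local_strongly}. Your reconstruction is correct and is essentially the original argument from that reference: you combine the Hessian-Lipschitz first-order bound $|g_{k+1}-g_k-h_k r_k|\le\tfrac{M}{2}r_k^2$ with the algebraic identity $g_k+h_kr_k=-\lambda_k r_k$ that follows from the GNM2 step, then use $h_k\ge\mu$ to bound $|r_k|\le|g_k|/\mu$, which reproduces the constants exactly; the contraction statement then follows from the substitution $u_k=\sqrt{2M|g_k|/\mu^2}$ and an induction showing $u_k<1$ is preserved. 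One small remark: you invoke Corollary~\ref{th:ICg_convex_hessian_Lipschitz} as an optional alternative, but your actual derivation uses only the standard one-sided smoothness consequence of \eqref{eq:cismooth} (equivalently, the mean-value bound for $f''$), not the full interpolation conditions developed in the paper — which is consistent with this being an imported, not PEP-derived, result.
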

Figure \ref{fig:GRN_local_strongly} compares the worst-case local convergence of \ref{eq:GNM_1/2} on $\mu$-strongly convex Hessian $M$-Lipschitz univariate functions from \eqref{eq:bound_GRN_local} (red dots) and from PEP (blue dots) for varying number of iterations $N$, initial gradient $|f'(x_0)| \leq R =\frac{\mu^2}{4M}$, $\mu = 0.1$ and $M=1$.
\begin{figure}[ht!]
    \centering
    \figureH{
    \includegraphics[width=\textwidth]{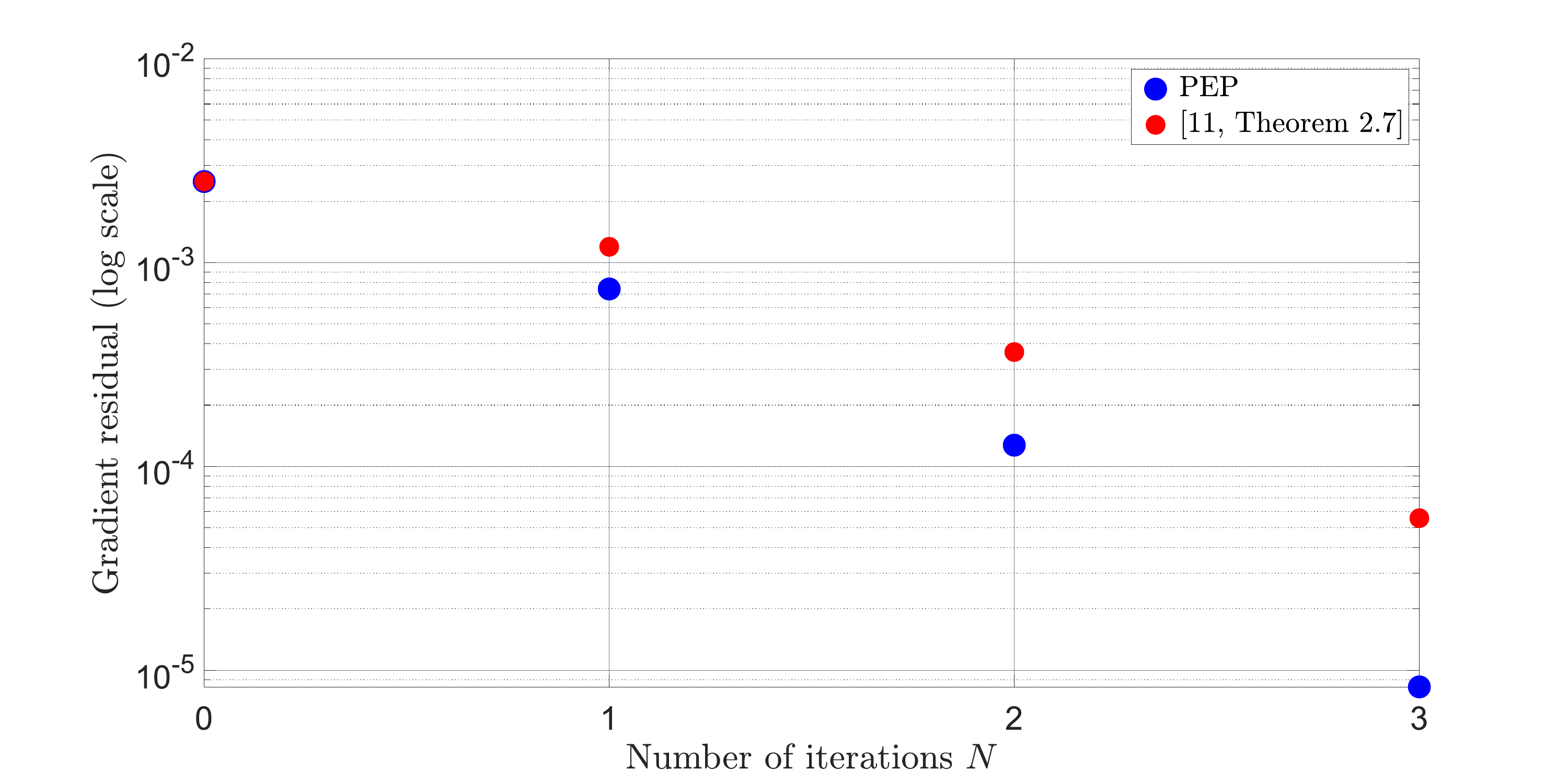}
    }
    \setlength{\abovecaptionskip}{-5pt}
    \setlength{\belowcaptionskip}{-5pt}
    \caption{Worst-case performance of \ref{eq:GNM_1/2} for varying number of iterations $N$ and $|f'(x_0)|\leq \frac{\mu^2}{4M}$, $M = 1$, and $\mu = 0.1$.}
    \label{fig:GRN_local_strongly}
\end{figure}

\subsection{Local quadratic convergence of Newton's method on non-convex Hessian Lipschitz functions}
Newton's method benefits from a local quadratic convergence. The following result from \cite{nesterov2018lectures} formally states this rate of convergence.
\begin{theorem}[Theorem 1.2.5 of \cite{nesterov2018lectures}]\label{th:local_conv_Newton}
    If
    \begin{itemize}
        \item $f$ has a $M$-Lipschitz continuous Hessian,
        \item $\exists x_\star$ such that $\nabla f(x_\star) = 0$, $\nabla^2 f(x_\star)=\mu I \succ 0$,
        \item $\frac{M}{\mu} ||x_0 - x_\star|| \leq \frac{2}{3}$,
    \end{itemize}
    then all Newton iterations  $x_{k+1} = x_k - \nabla^2 f(x_k)^{-1} \nabla f(x_k)$ satisfy 
    \begin{align}
        ||x_{k+1} - x_\star|| & \leq  \frac{  \frac{M}{\mu} ||x_k-x_\star|| ^2 }{2\left(1-\frac{M}{\mu}||x_k-x_\star||\right)} \label{eq:bound_x}
    \end{align}
\end{theorem}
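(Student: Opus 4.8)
The statement is the classical local analysis of Newton's method, and I would prove it essentially as in Nesterov, phrasing the argument so that it covers the displayed (multivariate) form and specializes trivially to $d=1$ by replacing operators with scalars. Write $r_k=\|x_k-x_\star\|$ and $q_k=\tfrac{M}{\mu}r_k$. The plan is to show, by induction on $k$, that $q_k\le\tfrac23$ — in particular $q_k<1$, which makes $\nabla^2 f(x_k)$ invertible and hence $x_{k+1}$ well defined — together with the bound \eqref{eq:bound_x} at every step.

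First I would establish the one-step estimate. Since $\nabla f(x_\star)=0$, the fundamental theorem of calculus gives
\begin{equation*}
    \nabla f(x_k)=\int_0^1 \nabla^2 f\big(x_\star+t(x_k-x_\star)\big)(x_k-x_\star)\,\mathrm{d}t,
\end{equation*}
so, multiplying the Newton update $x_{k+1}=x_k-\nabla^2 f(x_k)^{-1}\nabla f(x_k)$ through by $\nabla^2 f(x_k)$,
\begin{equation*}
    x_{k+1}-x_\star=\nabla^2 f(x_k)^{-1}\int_0^1\Big(\nabla^2 f(x_k)-\nabla^2 f\big(x_\star+t(x_k-x_\star)\big)\Big)(x_k-x_\star)\,\mathrm{d}t .
\end{equation*}
The $M$-Lipschitz Hessian hypothesis bounds $\|\nabla^2 f(x_k)-\nabla^2 f(x_\star+t(x_k-x_\star))\|\le M(1-t)r_k$, so the integral has norm at most $\int_0^1 M(1-t)r_k^2\,\mathrm{d}t=\tfrac{M}{2}r_k^2$.

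Next I would lower-bound the Hessian at $x_k$. From $\nabla^2 f(x_\star)=\mu I$ and $\|\nabla^2 f(x_k)-\nabla^2 f(x_\star)\|\le M r_k$ we get $\nabla^2 f(x_k)\succeq(\mu-M r_k)I$, hence, as long as $q_k<1$, $\|\nabla^2 f(x_k)^{-1}\|\le(\mu-M r_k)^{-1}$. Combining with the previous bound,
\begin{equation*}
    r_{k+1}\le\frac{\tfrac{M}{2}r_k^2}{\mu-M r_k}=\frac{\tfrac{M}{\mu}r_k^2}{2\big(1-\tfrac{M}{\mu}r_k\big)},
\end{equation*}
which is exactly \eqref{eq:bound_x}. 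Dividing by $\mu/M$, the recursion reads $q_{k+1}\le \tfrac{q_k^2}{2(1-q_k)}$; the map $q\mapsto\tfrac{q^2}{2(1-q)}$ is increasing on $[0,1)$ and equals $\tfrac23$ at $q=\tfrac23$, so $q_k\le\tfrac23$ forces $q_{k+1}\le\tfrac23<1$, closing the induction from the hypothesis $q_0\le\tfrac23$. (Moreover $q_k<\tfrac23$ gives $q_{k+1}<q_k$, so $q_k\to 0$ quadratically.)

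The individual estimates are all direct consequences of the Lipschitz-Hessian assumption, so I do not expect any one of them to be a genuine obstacle; the only point needing care is the bookkeeping in the last step, namely checking that the inequality $q_k<1$ guaranteeing invertibility of $\nabla^2 f(x_k)$ (and thus well-posedness of the whole iteration) is itself preserved along the run — this is where the precise constant $\tfrac23$ in the hypothesis enters.
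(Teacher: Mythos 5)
Your proof is correct and is precisely the standard argument from Nesterov's textbook: integral representation of $\nabla f(x_k)$, Lipschitz bound on the integrand giving the $\tfrac{M}{2}r_k^2$ numerator, a Weyl-type lower bound $(\mu - Mr_k)$ on the Hessian for the denominator, and the monotonicity of $q\mapsto q^2/(2(1-q))$ on $[0,1)$ with fixed point at $q=\tfrac23$ to close the induction and keep the Hessian invertible. The paper itself does not prove this theorem — it states it as a citation to Nesterov's Theorem 1.2.5 — so there is nothing to compare against; the paper's own contribution for this setting is the tightness result (its Theorem~\ref{th:local_quadratic_tight}, exhibiting $f(x)=-M|x|^3/6+\mu x^2/2$ as an attaining instance), which your writeup does not attempt and was not asked for.
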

This theorem is well established, but, to the best of our knowledge, its tightness remained an open question. Relying on a PEP analysis, we identified a simple function attaining \eqref{eq:bound_x}, establishing its tightness for any number of iterations.
\begin{theorem}\label{th:local_quadratic_tight}
    Theorem \ref{th:local_conv_Newton} (i.e., \cite[Theorem 1.2.5]{nesterov2018lectures}) is tight and attained by the following univariate cubic by parts function:
    \begin{equation}
        f(x) = -M\frac{|x|^3}{6} + \mu \frac{x^2}{2}.
    \end{equation}
    \begin{proof}
        We have {\small $x_\star = 0$} and {\small $ x_{k+1} = \frac{-\frac{M}{\mu}x_k |x_k|}{2(1-\frac{M}{\mu}|x_k|)}$} that attains \eqref{eq:bound_x} when {\small$\frac{M}{\mu}|x_k| <1$}.
     \qed \end{proof}
\end{theorem}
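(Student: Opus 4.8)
The plan is to verify directly that $f(x) = -M|x|^3/6 + \mu x^2/2$ is an admissible instance for Theorem~\ref{th:local_conv_Newton} and that Newton's method on it saturates \eqref{eq:bound_x} at every step. First I would compute the derivatives using $\frac{d}{dx}|x|^3 = 3x|x|$ and $\frac{d}{dx}(x|x|) = 2|x|$, obtaining $f'(x) = -\frac{M}{2}x|x| + \mu x$ and $f''(x) = \mu - M|x|$. Thus $f \in \C^2$ (the kink of $f''$ at the origin only prevents $f \in \C^3$, which is irrelevant here), its Hessian is exactly $M$-Lipschitz since $|f''(x) - f''(y)| = M\bigl||x| - |y|\bigr| \leq M|x-y|$, and $f'(0) = 0$, $f''(0) = \mu > 0$. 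Hence $f \in \HLip{M}$ and $x_\star = 0$ is a point with $\nabla f(x_\star) = 0$ and $\nabla^2 f(x_\star) = \mu \succ 0$, as the hypotheses require.

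Next I would run the Newton iteration explicitly. As long as $\tfrac{M}{\mu}|x_k| < 1$ we have $f''(x_k) = \mu - M|x_k| > 0$, so the step is well defined and
\[
    x_{k+1} = x_k - \frac{f'(x_k)}{f''(x_k)} = x_k - \frac{x_k\bigl(\mu - \tfrac{M}{2}|x_k|\bigr)}{\mu - M|x_k|} = \frac{-\tfrac{M}{2}\,x_k|x_k|}{\mu - M|x_k|}.
\]
Dividing numerator and denominator by $\mu$ and taking absolute values gives
\[
    |x_{k+1} - x_\star| = \frac{\tfrac{M}{\mu}\,|x_k - x_\star|^2}{2\bigl(1 - \tfrac{M}{\mu}|x_k - x_\star|\bigr)},
\]
which is precisely \eqref{eq:bound_x} with equality.

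It remains to check that the third hypothesis of Theorem~\ref{th:local_conv_Newton} is preserved along the trajectory, so that the instance is admissible for arbitrarily many iterations rather than just the first. Setting $t_k := \tfrac{M}{\mu}|x_k - x_\star|$, the recursion above reads $t_{k+1} = \tfrac{t_k^2}{2(1 - t_k)}$, and for $t_k \leq \tfrac{2}{3}$ this yields $t_{k+1} \leq \tfrac{t_k\cdot(2/3)}{2(1 - 2/3)} = t_k \leq \tfrac{2}{3}$; hence $(t_k)$ is non-increasing, stays in $[0, \tfrac{2}{3}]$, and in particular $f''(x_k) > 0$ throughout, so Newton's method never breaks down. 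Composing the per-step identity then shows that \eqref{eq:bound_x} is attained after any number $N$ of iterations. I expect no real obstacle here; the only delicate points are the sign bookkeeping induced by the absolute values in $f'$ and $f''$, and verifying the monotonicity of $t_k$ that keeps the iterates inside the region where the analysis of Theorem~\ref{th:local_conv_Newton} applies.
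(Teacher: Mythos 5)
Your proof is correct and follows the same route as the paper: compute the Newton update on the explicit cubic-by-parts function and observe it saturates \eqref{eq:bound_x} with equality. You supply more detail than the paper's terse two-line argument — in particular, the verification that $t_{k} = \tfrac{M}{\mu}|x_k-x_\star|$ is non-increasing whenever $t_0 \leq \tfrac{2}{3}$, which the paper leaves implicit but which is genuinely needed to conclude that \eqref{eq:bound_x} is attained at every iteration rather than only the first.
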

This is another example where the univariate case is sufficiently ``rich'' to provide the general worst-case behavior. Such surprisingly simple worst-case functions were already observed, for example, for the Gradient method applied to smooth (strongly) convex functions \cite{taylor2017smooth}.

\subsubsection*{Gradient method applied to Hessian Lipschitz functions}
The Gradient method has a local linear rate of convergence when applied to Hessian Lipschitz functions \cite{nesterov2018lectures}. Since  the tool is not restricted to second-order schemes, we can also examine the tightness of the following theorem:
\begin{theorem}[\cite{nesterov2018lectures}, Theorem 1.2.4 and Equation (1.2.26)] \label{th:Nesterov_GM}
If
\begin{itemize}
    \item $f$ has $M$-Lipschitz continuous Hessian,
    \item $\exists x_\star$ such that $\nabla f(x_\star) =0$, $\mu I \preceq \nabla^2 f(x_\star) \preceq LI$,
    \item $\|x_0 - x_\star\| < \frac{2 \mu}{M}$,
\end{itemize}
then all Gradient iterations with constant step size $x_{k+1} = x_k - h \nabla f(x_k)$ satisfy
{\small
\begin{equation}\label{eq:bound_GM}
    \|x_{k+1} - x_\star\| \leq \max \left\{ 1-h\Par{\mu-\frac{M}{2} \|x_k - x_\star\|}, h\Par{L+\frac{M}{2} \|x_k-x_\star\|} -1\right\} \|x_k - x_\star\|.
\end{equation}    
}
\end{theorem}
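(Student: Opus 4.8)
The plan is to reproduce the classical contraction argument for the gradient method under a Lipschitz Hessian, reducing the whole estimate to a spectral bound on an averaged Hessian along the segment $[x_\star,x_k]$. Since $\nabla f(x_\star)=0$, the iteration rewrites as $x_{k+1}-x_\star = x_k-x_\star - h\big(\nabla f(x_k)-\nabla f(x_\star)\big)$, and the fundamental theorem of calculus gives $\nabla f(x_k)-\nabla f(x_\star) = G_k\,(x_k-x_\star)$, where $G_k := \int_0^1 \nabla^2 f\big(x_\star+t(x_k-x_\star)\big)\,\mathrm{d}t$ is symmetric. Hence $x_{k+1}-x_\star = (I-hG_k)(x_k-x_\star)$ and $\|x_{k+1}-x_\star\| \le \|I-hG_k\|\,\|x_k-x_\star\|$, so the task reduces to bounding $\|I-hG_k\|$.

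To that end I would bound the spectrum of $G_k$. The $M$-Lipschitz Hessian assumption gives $\|\nabla^2 f(x_\star+t(x_k-x_\star))-\nabla^2 f(x_\star)\| \le Mt\,\|x_k-x_\star\|$ for $t\in[0,1]$, hence the operator inequalities $\nabla^2 f(x_\star)-Mt\|x_k-x_\star\|I \preceq \nabla^2 f(x_\star+t(x_k-x_\star)) \preceq \nabla^2 f(x_\star)+Mt\|x_k-x_\star\|I$. Integrating over $t\in[0,1]$ and combining with $\mu I\preceq \nabla^2 f(x_\star)\preceq LI$ yields $\big(\mu-\tfrac{M}{2}\|x_k-x_\star\|\big)I \preceq G_k \preceq \big(L+\tfrac{M}{2}\|x_k-x_\star\|\big)I$. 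As $G_k$ is symmetric, $\|I-hG_k\|$ is its largest eigenvalue in modulus, and since $\lambda\mapsto|1-h\lambda|$ is convex, that maximum over the interval $[\mu-\tfrac{M}{2}\|x_k-x_\star\|,\,L+\tfrac{M}{2}\|x_k-x_\star\|]$ is attained at an endpoint: $\|I-hG_k\| \le \max\big\{\,|1-h(\mu-\tfrac{M}{2}\|x_k-x_\star\|)|,\ |1-h(L+\tfrac{M}{2}\|x_k-x_\star\|)|\,\big\}$.

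It then remains to put this maximum into the form of \eqref{eq:bound_GM}. Writing $a := \mu-\tfrac{M}{2}\|x_k-x_\star\|$ and $b := L+\tfrac{M}{2}\|x_k-x_\star\|$, the hypothesis $\|x_k-x_\star\|<\tfrac{2\mu}{M}$ gives $0\le a\le b$ (at $k=0$), and an elementary case analysis on the signs of $1-ha$ and $1-hb$ shows $\max\{|1-ha|,|1-hb|\}\le\max\{1-ha,\,hb-1\}$ for every $h>0$; the right-hand side is exactly the contraction factor of \eqref{eq:bound_GM}. Finally, when the step size is chosen so that this factor does not exceed $1$, one gets $\|x_{k+1}-x_\star\|\le\|x_k-x_\star\|<\tfrac{2\mu}{M}$, so $0\le a\le b$ holds again at the next iterate and \eqref{eq:bound_GM} propagates to every $k$ by induction.

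The integral-remainder identity, the submultiplicativity $\|AB\|\le\|A\|\,\|B\|$, and the spectral inclusion for $G_k$ are entirely routine, and in the univariate case everything collapses to scalars and is even more transparent. The only slightly delicate point is the sign bookkeeping that converts the symmetric two-sided estimate $\max\{|1-ha|,|1-hb|\}$ into the one-sided two-branch maximum of \eqref{eq:bound_GM}, together with pinning down the step-size range on which the factor is genuinely contractive; I expect that to be the main---if ultimately minor---obstacle.
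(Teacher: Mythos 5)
The paper does not prove this theorem: it is cited from \cite{nesterov2018lectures}, and the paper only adds a proof of its \emph{tightness} in Theorem \ref{th:local_linear_tight}. Your derivation---writing $x_{k+1}-x_\star=(I-hG_k)(x_k-x_\star)$ with $G_k$ the averaged Hessian along the segment $[x_\star,x_k]$, bounding its spectrum via the $M$-Lipschitz assumption and $\mu I\preceq\nabla^2 f(x_\star)\preceq LI$, and then maximizing $|1-h\lambda|$ over the resulting eigenvalue interval---is exactly the classical argument behind Nesterov's Theorem 1.2.4 and is correct. One small remark: the concern raised in your final paragraph about sign bookkeeping and propagating $\mu-\tfrac{M}{2}\|x_k-x_\star\|\ge 0$ by induction is unnecessary, since for any $a\le b$ and any $h>0$ one has the unconditional identity $\max\{|1-ha|,|1-hb|\}=\max\{1-ha,\,hb-1\}$ (check the three cases $1-hb\ge 0$, $1-ha\ge 0>1-hb$, $0>1-ha$); so the one-step bound \eqref{eq:bound_GM} holds at every $k$ for every constant step size $h>0$ without any restriction, and the hypothesis $\|x_0-x_\star\|<2\mu/M$ is needed only to make the factor contractive for suitable $h$.
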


PEP results exactly match the worst-case behavior guaranteed by Theorem \ref{th:Nesterov_GM}, meaning that it is also unimprovable. Indeed, we can exhibit simple univariate functions attaining this bound for any number of iterations..
\begin{theorem}\label{th:local_linear_tight}
    Theorem \ref{th:Nesterov_GM} (i.e., \cite[Equation (1.2.26)]{nesterov2018lectures} is tight and attained by the two following functions:
    \begin{align}
        f(x) = -M\frac{|x|^3}{6} + \mu \frac{x^2}{2} \text{ and }g(x) = M\frac{|x|^3}{6} + L \frac{x^2}{2} 
    \end{align}
    \begin{proof}
        The iterations of the Gradient method on functions $f$ and $g$ are
        \begin{align*}
            x_{k+1}  & = x_k - h f'(x_k)  = x_k - h \Par{-\frac{M}{2}x_k|x_k| + \mu x_k}  = \Par{1 - h\Par{\mu - \frac{M}{2} |x_k|}}x_k, \\
            x_{k+1} & = x_k - h g'(x_k) =  x_k + h\Par{\frac{M}{2}x_k|x_k| + L x_k} = \Par{1 + h\Par{L + \frac{M}{2} |x_k|}}x_k.
        \end{align*}
     \qed \end{proof}
\end{theorem}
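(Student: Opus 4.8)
The plan is to verify directly that each of the two displayed functions lies in the class to which Theorem~\ref{th:Nesterov_GM} applies, and then to evaluate the Gradient iteration on it and observe that it reproduces one of the two branches of the maximum in \eqref{eq:bound_GM} with equality.

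First I would check membership in the function class. For $f(x) = -M\frac{|x|^3}{6} + \mu\frac{x^2}{2}$ one computes $f'(x) = -\frac{M}{2}x|x| + \mu x$ and $f''(x) = -M|x| + \mu$, so that $|f''(x) - f''(y)| = M\big||x| - |y|\big| \le M|x-y|$ by the reverse triangle inequality; hence $f$ has $M$-Lipschitz continuous Hessian. Moreover $x_\star = 0$ satisfies $f'(x_\star) = 0$ and $\nabla^2 f(x_\star) = \mu$, so $\mu I \preceq \nabla^2 f(x_\star) \preceq L I$ (using $\mu \le L$). The analogous computation for $g(x) = M\frac{|x|^3}{6} + L\frac{x^2}{2}$ gives $g'(x) = \frac{M}{2}x|x| + Lx$ and $g''(x) = M|x| + L$, again $M$-Lipschitz, with $g'(0) = 0$ and $\nabla^2 g(x_\star) = L$, so $\mu I \preceq \nabla^2 g(x_\star) \preceq L I$. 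The remaining hypothesis $|x_0 - x_\star| < \frac{2\mu}{M}$ is simply imposed on the starting point.

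Next I would run the iteration on each function. On $f$, $x_{k+1} = x_k - h f'(x_k) = \Par{1 - h\Par{\mu - \tfrac{M}{2}|x_k|}} x_k$; since $|x_k| < \frac{2\mu}{M}$ keeps $\mu - \tfrac{M}{2}|x_k| > 0$ and the step-size range of the theorem keeps $h\Par{\mu - \tfrac{M}{2}|x_k|} \le 1$, the multiplicative factor lies in $[0,1)$, so $|x_{k+1}| = \Par{1 - h\Par{\mu - \tfrac{M}{2}|x_k|}}|x_k|$ and the bound $|x_k| < \frac{2\mu}{M}$ is preserved along the sequence. On $g$, $x_{k+1} = x_k - h g'(x_k) = \Par{1 - h\Par{L + \tfrac{M}{2}|x_k|}} x_k$, and in the regime where $h\Par{L + \tfrac{M}{2}|x_k|} \ge 1$ this gives $|x_{k+1}| = \Par{h\Par{L + \tfrac{M}{2}|x_k|} - 1}|x_k|$.

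Finally, the two quantities just obtained are exactly the two arguments of the maximum in \eqref{eq:bound_GM}, and their comparison, namely $1 - h\Par{\mu - \tfrac{M}{2}|x_k|}$ versus $h\Par{L + \tfrac{M}{2}|x_k|} - 1$, reduces to comparing $h(\mu + L)$ with $2$, which is independent of $x_k$. Hence a single branch of the maximum is active at every iteration, and the associated function --- $f$ when $h \le \tfrac{2}{\mu+L}$ and $g$ otherwise --- attains \eqref{eq:bound_GM} with equality for all $k$. The only delicate point, and the closest this argument comes to an obstacle, is this bookkeeping of which branch is active together with the verification that the iterates remain inside the neighbourhood on which Theorem~\ref{th:Nesterov_GM} is stated; neither involves any real difficulty.
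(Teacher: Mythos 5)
Your proof is correct and takes the same route as the paper's: compute the Gradient iteration on the two explicit cubics and observe that the multiplicative factor on $|x_k|$ reproduces one of the two branches of \eqref{eq:bound_GM}, with $f$ covering the short-step regime $h \le \tfrac{2}{L+\mu}$ and $g$ the long-step (overshoot) regime. You also fill in bookkeeping the paper leaves implicit (class membership, persistence of the neighbourhood, identification of the active branch), and in doing so you avoid a sign slip in the paper's displayed computation for $g$, which writes $x_k - h g'(x_k) = x_k + h\Par{\tfrac{M}{2}x_k|x_k| + L x_k}$ when the sign should be $-h$, giving the factor $1 - h\Par{L + \tfrac{M}{2}|x_k|}$ whose absolute value equals $h\Par{L + \tfrac{M}{2}|x_k|} - 1$ in the overshoot regime exactly as you state.
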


Figure \ref{fig:GM_2_regimes} shows the iterations of the Gradient method on functions $f$ and $g$. Interestingly, it exhibits the same kind of behavior as for smooth convex functions, i.e., short steps slowly converge whereas long steps overshoot \cite[Figure 3]{taylor2017smooth}.

\begin{figure}[ht!]
     \centering
     \begin{subfigure}[b]{0.49\textwidth}
         \centering
         \figureH{
         \includegraphics[width = \textwidth]{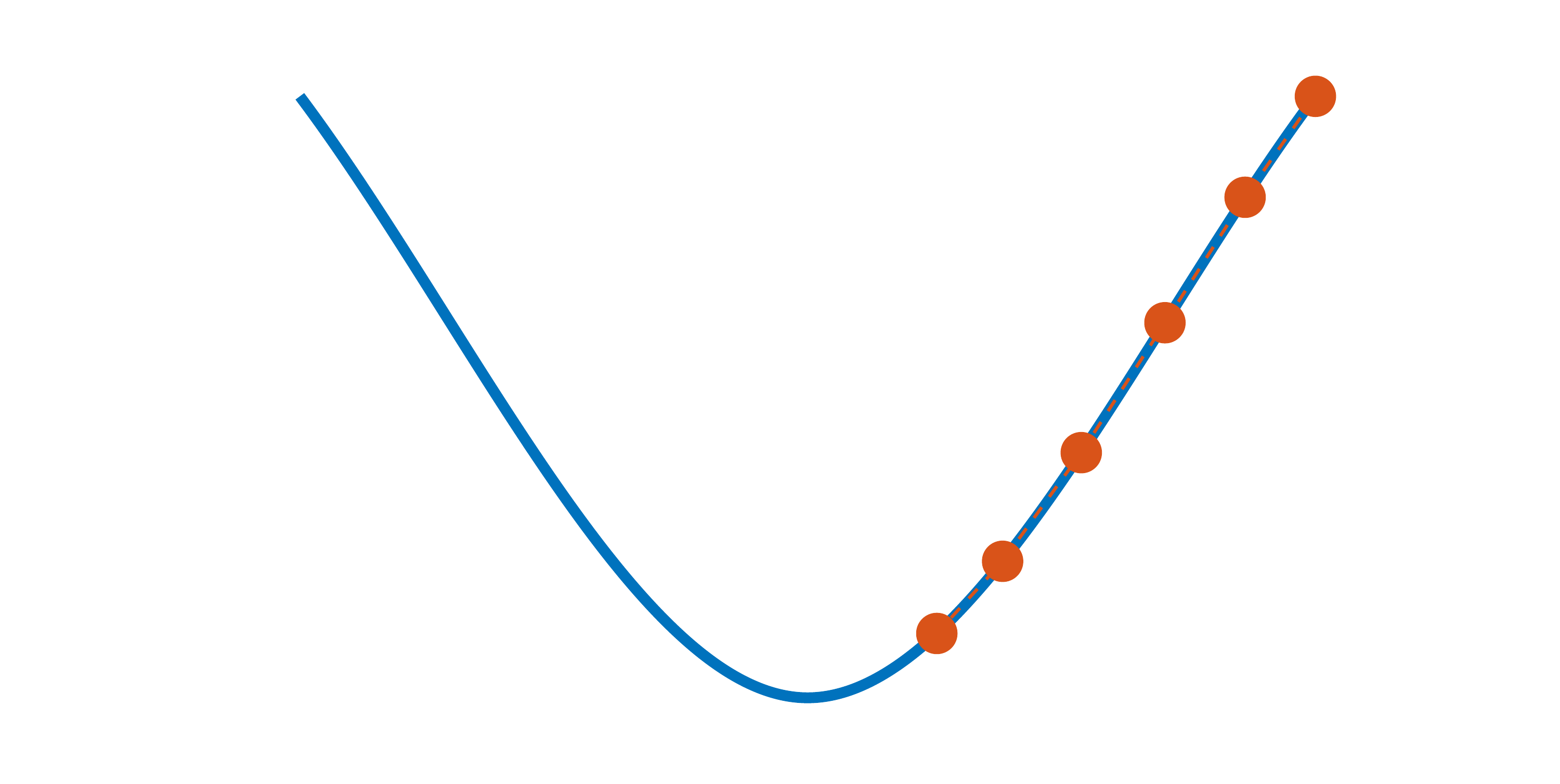}
         }
     \end{subfigure}
     \hfill
     \begin{subfigure}[b]{0.49\textwidth}
         \centering
         \figureH{
         \includegraphics[width = \textwidth]{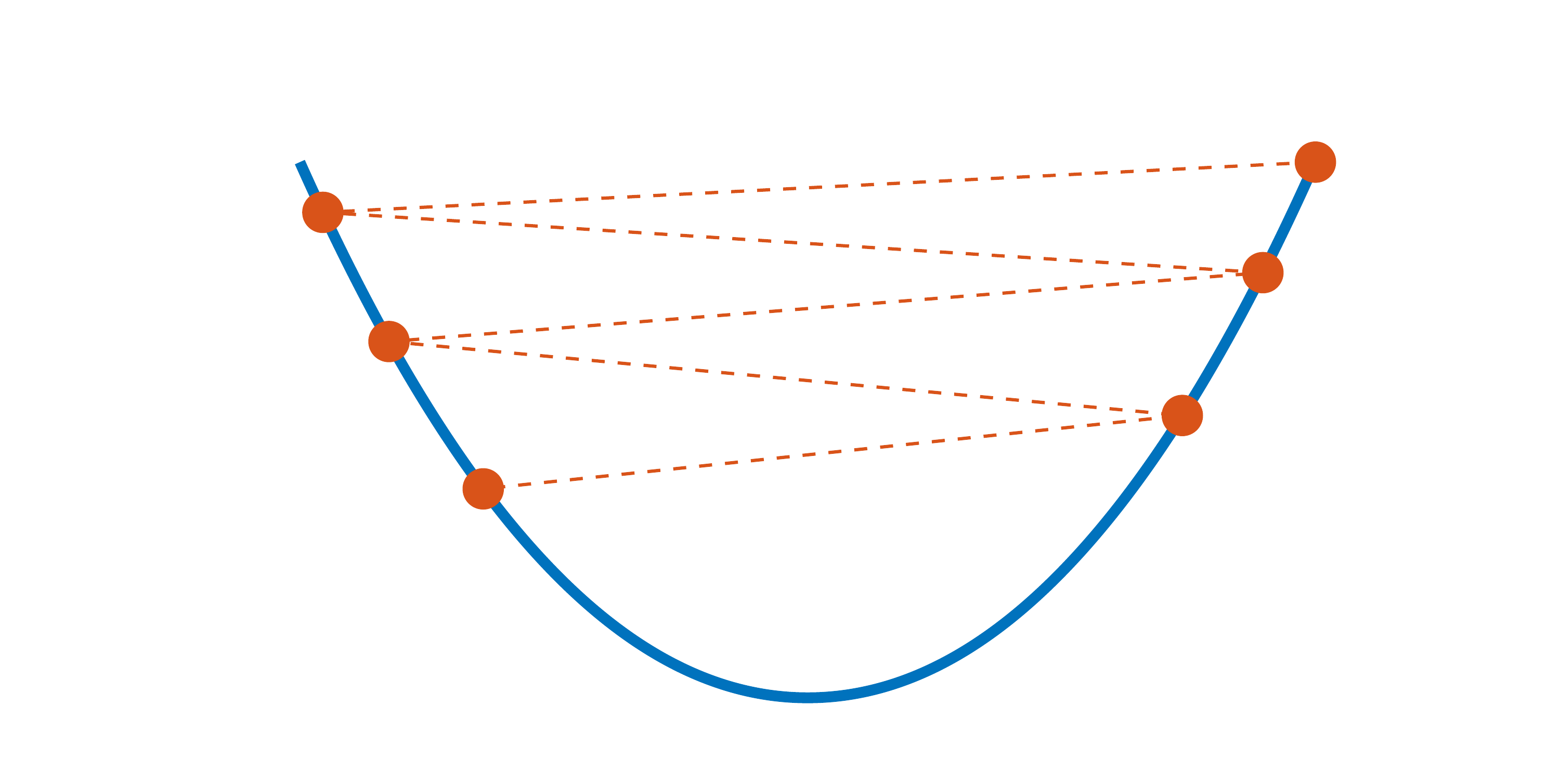}
         }
     \end{subfigure}
        \caption{$N=5$ iterations of gradient method on $f$ (left) and $g$ (right) with step sizes $h=\frac{2}{L+\mu}$ (left) and $h=\frac{2.1}{L+\mu}$ (right) and $|x_0 - x_\star| = 0.42$, $M=L=1$, $\mu = 0.3$.}
        \label{fig:GM_2_regimes}
\end{figure}

\subsubsection*{Fixed damped Newton method}
A simple variant of the Newton method is the fixed damped Newton method (DNM) adding a damping coefficient $\alpha \in (0,1)$ to the classical Newton step 
\begin{equation}
    x_{k+1} = x_k - \alpha \frac{f'(x_k)}{f''(x_k)}.
\end{equation}
This modification expands the region of convergence of the classical Newton's method although the adaptive damped Newton method is even more efficient \cite{hildebrand2021optimal,ivanova2024optimal,nesterov2018lectures}. We analyze the local behavior of DNM on the same setting as Theorem \ref{th:local_conv_Newton} (i.e., Hessian Lipschitz functions) for different initial distances and damping coefficients.

The worst-case performance of DNM on univariate Hessian Lipschitz functions returned by PEP allowed us to identify the analytical expression of functions attaining this worst-case. Depending on the parameters of the settings, we observed that the worst-case performance is exactly attained by one of the functions of the following family of functions parametrized by $a$

\begin{align}
    f(x) & = -M\frac{|x|^3}{6} + \mu \frac{x^2}{2} \\
    g_{a}(x) & = 
    \begin{cases}
        -M\frac{x^3}{6} + \mu\frac{x^2}{2} & \text{ if } x\leq \frac{\mu+a}{2M}, \\
        M\frac{x^3}{6} + a\frac{x^2}{2} + A_1x + B_1 & \text{ if } x\geq \frac{\mu+a}{2M},
    \end{cases} \label{eq:f3} \\
    h_{a}(x) & = \begin{cases}
        -M\frac{x^3}{6} + a \frac{x^2}{2} & \text{ if } x\leq \frac{a-\mu}{2M}, \\
       M\frac{x^3}{6} + \mu \frac{x^2}{2} + A_2x + B_2 & \text{ if } \frac{a-\mu}{2M} \leq x \leq 0, \\
       -M\frac{x^3}{6} + \mu \frac{x^2}{2} + A_3x + B_3 & \text{ if } x\geq 0 ,
    \end{cases} \label{eq:f4}
\end{align}
where $A_i$ and $B_i$ are (tediously but elementary to compute) constants such that the different parts of the function (and its gradient) connect correctly. Given the parameters of the setting, we must consider the worst functions of the families, denoted $g_\star$ and $h_\star$.

Figure \ref{fig:DNM_r} shows the evolution of the performance of DNM for varying initial distances.
\begin{figure}[ht!]
    \centering
    \figureH{
    \includegraphics[width=\textwidth]{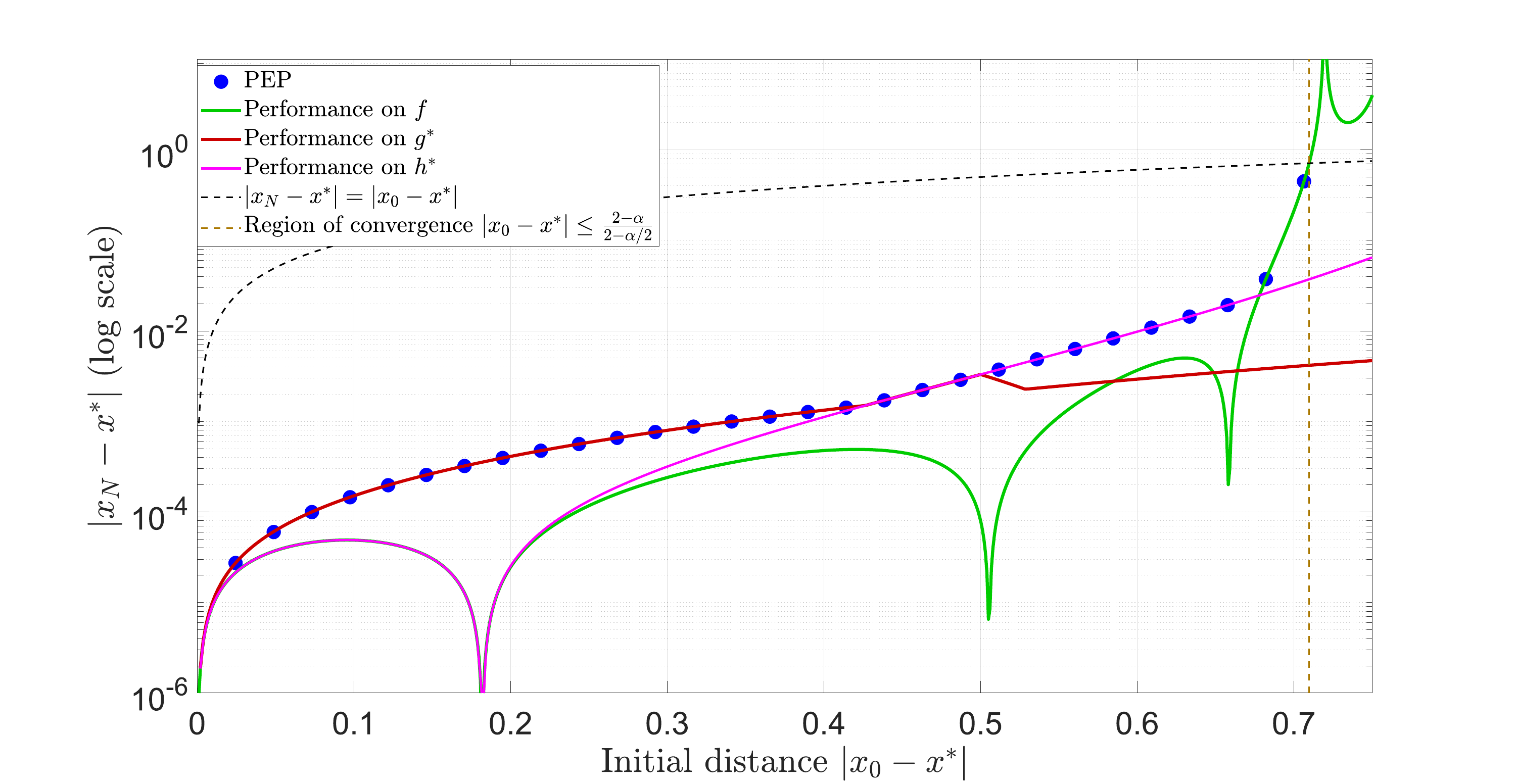}
    }
    \setlength{\abovecaptionskip}{-5pt}
    \setlength{\belowcaptionskip}{-5pt}
    \caption{Worst-case performance by PEP (blue dots) of $N=3$ iterations of damped Newton method for varying initial distance $|x_0 - x_\star|$ and $M = \mu = 1$, $\alpha = 0.9$. We also represented the performance of DNM on functions $f$ (green curve), $g_\star$ (red curve), and $h_\star$ (magenta curve).}
    \label{fig:DNM_r}
\end{figure}
We can also observe the impact of the step size on the worst-case performance in Figure \ref{fig:DNM_alpha}. It allows to select the optimal step size.
\begin{figure}[ht!]
    \centering
    \figureH{
    \includegraphics[width = \textwidth]{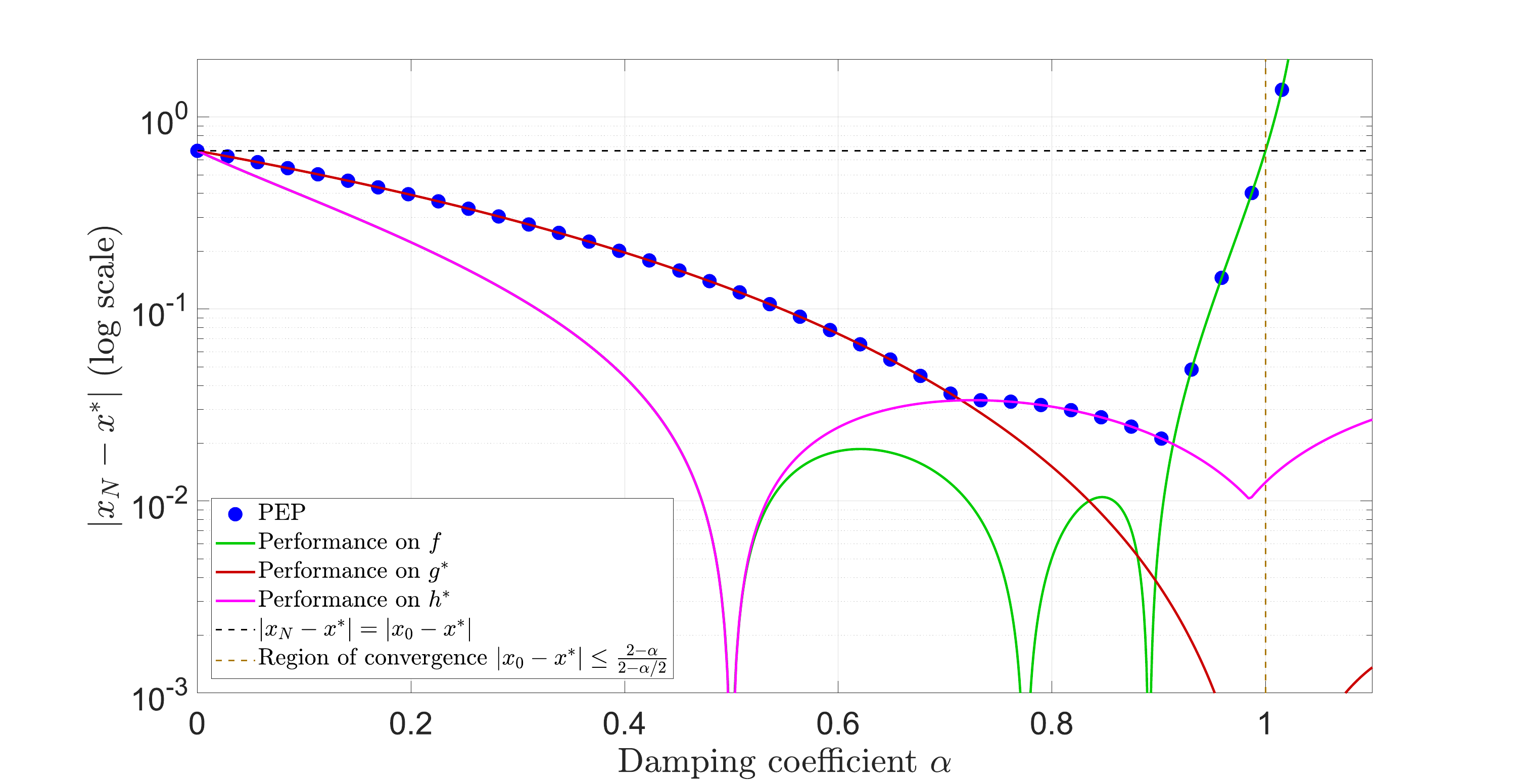}
    }
    \caption{Worst-case performance by PEP (blue dots) of $N=3$ iterations of damped Newton method with $M = \mu = 1$ and $\frac{M}{\mu}|x_0 - x_\star| = \frac23$ on univariate functions for varying damped coefficient. We also represented the performance of DNM on functions $f$ (green curve), $g_\star$ (red curve), and $h_\star$ (magenta curve).}
    \label{fig:DNM_alpha}
\end{figure}

\subsection{Self-concordant functions}
We now analyze the convergence of Newton and damped Newton methods on self-concordant functions using the interpolation conditions of Theorem \ref{th:IC_sc}. We consider standard-self-concordant functions, i.e., $M=1$. We consider the evolution of the \textit{Newton decrement} defined as the size of the Newton step in the local norm, namely, in the univariate case
\begin{equation}
    \lambda_f(x_k) = \frac{|f'(x_k)|}{\sqrt{f''(x_k)}}
\end{equation}
We only need interpolation without function value since neither the methods nor the performance criterion use function values.
\subsubsection{Newton's method}\label{sect:sc_nm}
The following Theorem \cite[Equation (11)]{hildebrand2021optimal} provides the tight worst-case performance of a single iteration of Newton method on univariate functions.
\begin{theorem}[\cite{hildebrand2021optimal}, Equation (11)] \label{th:sc_nm}
    Given the initial Newton decrement $\lambda_f(x_k) \leq 1$. The Newton decrement of a single iteration of Newton method, $\lambda_f(x_{k+1})$, on standard-self-concordant functions satisfies
    \begin{equation}\label{eq:sc_nm}
        \lambda_f(x_{k+1}) \leq 4 - \lambda_f(x_k)^2 - 4 \sqrt{1-\lambda_f(x_k)^2}.
    \end{equation}
    Moreover, there exists a standard-self-concordant function attaining exactly this bound.
\end{theorem}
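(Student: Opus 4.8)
The plan is to recast the claim as a one‑step Performance Estimation Problem for Newton's method on $\Sc$ with $M=1$, to substitute the exact interpolation conditions of Corollary~\ref{th:IC_sc}, reduce the worst case to a one‑dimensional optimization, and solve it in closed form; the optimal point will then yield both the bound~\eqref{eq:sc_nm} and a function attaining it. First I would use the affine invariance of $\Sc$ (for $M=1$) and of the Newton decrement, together with invariance under $x\mapsto -x$: given an iterate with $\lambda_f(x_k)=\lambda\in(0,1]$ (the case $\lambda=0$ is trivial, and since the right‑hand side of~\eqref{eq:sc_nm} is increasing in $\lambda$ on $[0,1]$ it suffices to treat equality), I may assume $x_k=0$, $f''(x_k)=1$, $f'(x_k)=-\lambda$, so the Newton step lands at $x_{k+1}=x_k-f'(x_k)/f''(x_k)=\lambda>0$. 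Writing $(g,h):=(f'(x_{k+1}),f''(x_{k+1}))$ and, as in Corollary~\ref{th:IC_sc}, $s:=h^{-1/2}=\tilde h_{k+1}$ while $\tilde h_k=1$, the worst value of $\lambda_f(x_{k+1})^2=g^2s^2$ is the supremum over all $(g,s)$ for which $\{(0,-\lambda,1),(\lambda,g,s^{-2})\}$ is $\Sc$‑interpolable; since interpolation of a single pair is necessary and sufficient, this equals the true worst case.

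Next I would write out the conditions of Corollary~\ref{th:IC_sc} for this pair: \eqref{sccond2} becomes $|s-1|\le\lambda$ (and $h>0$); for $\lambda<1$ the side condition in~\eqref{sccond3} holds in both directions, and~\eqref{sccond3} applied to the ordered pair $(k,k+1)$ gives $g\ge\ell(s):=1-\lambda+\tfrac1s-\tfrac{4}{1+s+\lambda}$, while applied to $(k+1,k)$ it gives $g\le u(s):=-1-\lambda-\tfrac1s+\tfrac{4}{1+s-\lambda}$. Hence for fixed $s$ the quantity $g^2$ is maximized at an endpoint of the box, so $\lambda_f(x_{k+1})^2\le\max_{s\in[1-\lambda,1+\lambda]}s^2\max\{\ell(s)^2,u(s)^2\}$. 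A short computation shows $\psi(s):=s\,\ell(s)$ has $\psi'<0$ on $[1-\lambda,1+\lambda]$ with $\psi(1-\lambda)=\lambda^2$, $\psi(1+\lambda)=-\lambda^2$, so $|s\,\ell(s)|\le\lambda^2$ there; whereas $\phi(s):=s\,u(s)=-s(1+\lambda)-1+\tfrac{4s}{1+s-\lambda}$ is strictly concave ($\phi''(s)=-8(1-\lambda)(1+s-\lambda)^{-3}<0$), with $\phi(1-\lambda)=\lambda^2$, $\phi(1+\lambda)=-\lambda^2$, and a unique interior critical point where $(1+s-\lambda)^2=\tfrac{4(1-\lambda)}{1+\lambda}$, i.e. $s^\star=\lambda-1+2\sqrt{\tfrac{1-\lambda}{1+\lambda}}\in(1-\lambda,1+\lambda)$. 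Substituting $1+s^\star-\lambda=2\sqrt{(1-\lambda)/(1+\lambda)}$ (so that $\tfrac{4s^\star}{1+s^\star-\lambda}=4-2\sqrt{1-\lambda^2}$ and $s^\star(1+\lambda)=2\sqrt{1-\lambda^2}-(1-\lambda^2)$) gives $\phi(s^\star)=4-\lambda^2-4\sqrt{1-\lambda^2}$, and since $\phi(s^\star)\ge\lambda^2$ for all $\lambda\in[0,1]$ (equivalently $\lambda^4\ge0$), the maximum above equals $\phi(s^\star)^2$, which proves~\eqref{eq:sc_nm}.

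For the ``moreover'' part I would check admissibility of the extremal point: at $s=s^\star$ one has $\ell(s^\star)<u(s^\star)$ (because $s^\star\ell(s^\star)\le\lambda^2<\phi(s^\star)=s^\star u(s^\star)$ and $s^\star>0$) and $|s^\star-1|\le\lambda$, so by Corollary~\ref{th:IC_sc} the data $\{(0,-\lambda,1),(\lambda,u(s^\star),(s^\star)^{-2})\}$ is $\Sc$‑interpolable (without function values, which is all the statement needs). Any interpolant $f\in\Sc$ then satisfies $\lambda_f(x_k)=\lambda$, $x_k-f'(x_k)/f''(x_k)=\lambda=x_{k+1}$, and $\lambda_f(x_{k+1})=u(s^\star)s^\star=4-\lambda^2-4\sqrt{1-\lambda^2}$ (note $u(s^\star)>0$), so~\eqref{eq:sc_nm} is attained with equality; an explicit worst‑case $f$ can be displayed by taking $f''$ to be the extremal interpolant of Proposition~\ref{prop:extrem_interp_basic} (with $\alpha=3/2$) of the corresponding second‑derivative data and integrating twice.

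The main obstacle I anticipate is the reduction of the first step rather than the ensuing calculus: one must be confident that imposing the interpolation conditions of Corollary~\ref{th:IC_sc} on the single pair $\{x_k,x_{k+1}\}$ is \emph{exactly} equivalent to the existence of a self‑concordant function passing through that data (this equivalence, guaranteed by the univariate theory of Section~\ref{sec:intrp}, is precisely what makes the resulting bound tight rather than merely valid), and that the affine normalization loses no generality. Once the problem has been reduced to the $(s,g)$‑box form, locating $s^\star$ and evaluating $\phi(s^\star)$ is elementary, if slightly tedious.
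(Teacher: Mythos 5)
Your proof is correct and follows essentially the same route as the paper's own alternative proof: formulate the one-step PEP for Newton's method on $\Sc$, normalize by affine invariance (you fix $f''(x_k)=1$ so that $x_{k+1}=\lambda$, while the paper fixes $x_1=1$ so that $\tilde h_0 = 1/R$; the two are related by the scaling $x\mapsto x/\lambda$), impose the two interpolation inequalities of Corollary~\ref{th:IC_sc}, and maximize $\lambda_f(x_{k+1})$ over the remaining one-dimensional degree of freedom, yielding the same stationary point and the same optimal value $4-\lambda^2-4\sqrt{1-\lambda^2}$. The only substantive difference is that where the paper simply asserts ``one can show that the optimal solution will satisfy'' the upper constraint with equality, you actually prove it by comparing the endpoint functions $\psi(s)=s\ell(s)$ (bounded by $\lambda^2$) and $\phi(s)=s u(s)$ (concave, peaking at $\phi(s^\star)\ge\lambda^2$), which is a welcome tightening of that step.
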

This theorem is tight and cannot be improved. However, to illustrate the scope of the tool, we (i) propose an alternative proof to the theorem that consists of explicitly solving the associated PEP, (ii) provide the analytical expression of a standard-self-concordant function attaining the bound of the theorem, (iii) extend numerically the theorem to multiple steps.
\paragraph{Alternative proof of Theorem \ref{th:sc_nm}}
We propose to formulate the PEP associated to Theorem \ref{th:sc_nm} and to solve it analytically. The PEP can be written as
\begin{equation}\label{eq:PEP_sc}
    \begin{aligned}
      \underset{S=\{(x_0,g_0,h_0),(x_1,g_1,h_1)\}}{\max \ } & \frac{|g_1|}{\sqrt{h_1}}\\
     \text{ s.t. } &  x_{1}=x_0-\frac{g_0}{h_0}, \\
     &\frac{|g_0|}{\sqrt{h_0}} = R, \\
     & S \text{ is standard-self-concordant-interpolable.}
\end{aligned}
\end{equation}
For simplicity, we consider variable $\tilde h_i = \frac{1}{\sqrt{h_i}}$ instead of $h_i$. We assume $x_0 = 0$ and $x_1 = 1$ w.l.o.g.\@ since a translation can move $x_0$ to zero and a linear change of variable $f(x) \rightarrow f(c x)$ can move $x_1$ to one. Therefore, the two equality constraints of \eqref{eq:PEP_sc} lead to $g_0 = -R^2$ and $\tilde h_0 = \frac{1}{R}$. The interpolation conditions are (see Theorem \ref{th:IC_sc})
\begin{align}
        & | \tilde h_1 - \tilde h_2 | \leq 1 \\ 
        & g_1 - g_0 \geq \frac{1}{\tilde h_0} + \frac{1}{\tilde h_1} - \frac{4}{\tilde h_0 + \tilde h_1 +1}\\
        & \text{If } \tilde{h}_0 + \tilde h_1 > 1 \text{ then } g_0 - g_1 \geq \frac{1}{\tilde h_0} + \frac{1}{\tilde h_1} - \frac{4}{\tilde h_0 + \tilde h_1 -1}.
\end{align}
We considered that $h_1 \neq 0$, otherwise \eqref{eq:PEP_sc} would be unbounded. The problem can now be written as
\begin{equation}\label{eq:PEP_sc2}
    \begin{aligned}
      \underset{g_1 \tilde h_1}{\max \ } & |g_1|\tilde h_1\\
     \text{ s.t. } &  | \tilde h_1 - \tilde h_0 | \leq 1, \\
     & g_1 - g_0 \geq \frac{1}{\tilde h_0} + \frac{1}{\tilde h_1} - \frac{4}{\tilde h_0 + \tilde h_1 +1}, \\
     & \text{If } \tilde{h}_0 + \tilde h_1 > 1 \text{ then } g_0 - g_1 \geq \frac{1}{\tilde h_0} + \frac{1}{\tilde h_1} - \frac{4}{\tilde h_0 + \tilde h_1 -1}
\end{aligned}
\end{equation}
with $g_0 = -R^2$ and $\tilde h_0 = \frac 1R$. One can show that the optimal solution will satisfy
\begin{equation}
     g_1 =  g_0 - \frac{1}{\tilde h_0} - \frac{1}{\tilde h_1} + \frac{4}{\tilde h_0 + \tilde h_1 -1} \geq 0.
\end{equation}
Therefore, we maximize the performance $p(\tilde h_1)$
\begin{equation}
    p(\tilde h_1) = g_1 \tilde h_1 = g_0 \tilde h_1 - \frac{\tilde h_1}{\tilde h_0} - 1 + \frac{4\tilde h_1}{\tilde h_0 + \tilde h_1 - 1} = -R(R+1) \tilde h_1 - 1 \frac{4\tilde h_1 R}{1+ \tilde h_1 R - R}.
\end{equation}
The first-order optimality condition $p'(\tilde h_1)=0$ yields
\begin{equation}
    \tilde h_1 R = R-1 + 2 \sqrt{\frac{1-R}{1+R}}
\end{equation}
and therefore the optimal value of \eqref{eq:PEP_sc} is
\begin{equation}
    p(\tilde h_1) = g_1 \tilde h_1 = 4-R^2 - 4\sqrt{1-R^2}
\end{equation}
with $R$ and $p(\tilde h_1)$ the Newton decrement before and after the iteration, which is the bound of Theorem \ref{th:sc_nm}.

\paragraph{Function attaining the bound for a single iteration}
Relying on the tool, we found a function attaining the bound \eqref{th:sc_nm} for a single iteration. One can check that Newton iteration from the point $x_k = 0$ on the following functions yields the result
\begin{equation}
    f(x) = 
    \begin{cases}
        Bx - \log(x-A) & \text{if } x\leq \frac12 (A-R) \\
        \Par{B+\frac{4}{R+A}}x - \log \Par{x+R} & \text{if } x > \frac12 (A-R),
    \end{cases}
\end{equation}
where $R = \lambda_f(x_k)$, $A = R\frac{-R^2 + (-4+2\sqrt{1-R^2})R + 5 - \sqrt{1-R^2}}{R^2-1+2\sqrt{1-R^2}}$, and $B=\frac{R^2-1+2\sqrt{1-R^2}}{R(R-1)}$.

\paragraph{Worst-case performance for multiple iterations}
As in the previous case of Theorem \ref{lem:new_descent_lemma} for \eqref{eq:CNM}, even if Theorem \ref{th:sc_nm} is tight and unimprovable for a single iteration, the worst-case performance could be better for multiple iterations. We can analyze the worst-case behavior of multiple iterations of Newton's method on self-concordant functions with the tool. Figure \ref{fig:nm_sc} shows the worst-case Newton decrement $\lambda_f(x_N)$ of $N=2$ iterations of Newton method on standard-self-concordant functions for varying initial Newton decrement $\lambda_f(x_0)$. We compare PEP results (blue dots), \ref{th:sc_nm} from \cite[Equation (11)]{hildebrand2021optimal} (solid red line), and \cite[Theorem 5.2.2.1]{nesterov2018lectures} (broken red line).
\begin{figure}[ht!]
    \centering
    \figureH{
    \includegraphics[width=\textwidth]{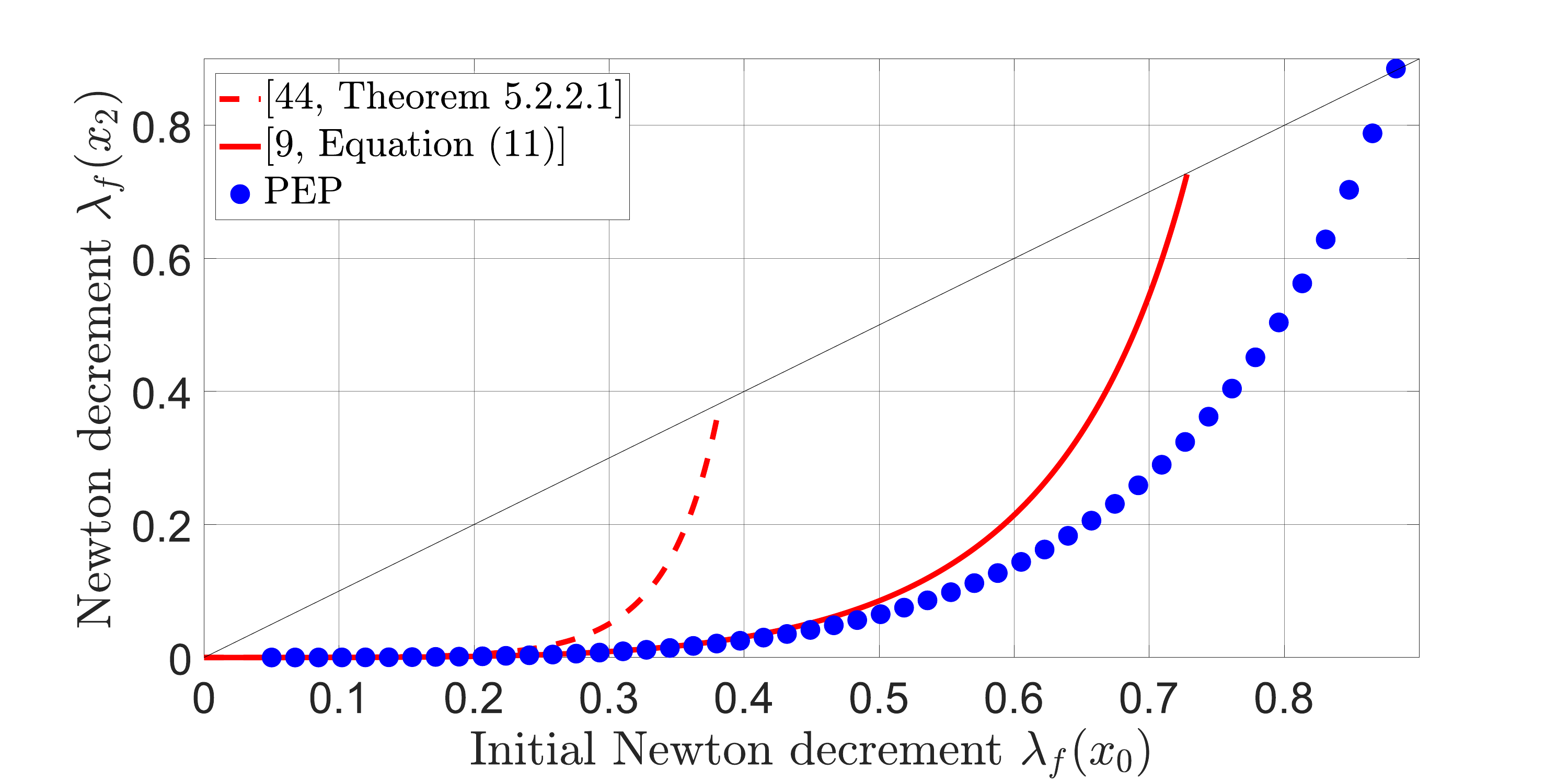}
    }
    \caption{Worst-case performance $\lambda(x_{2})$ for varying initial $\lambda(x_0)$ of two iterations of Newton method (black dots) compared to bounds \cite[Theorem 5.2.2.1]{nesterov2018lectures} (dashed lines), \cite[Equation (11)]{hildebrand2021optimal} (solid lines), and the PEP results (dots).}
    \label{fig:nm_sc}
\end{figure}
We observe that the performance is actually better than predicted by previous theoretical bounds and also that the region of convergence is larger.

\subsubsection{Damped Newton method}
We now consider the fixed damped Newton method
\begin{equation}\label{eq:DNM}
    x_{k+1} = x_k - \gamma_k \frac{f'(x_k)}{f''(x_k)}. \tag{DNM}
\end{equation}
The following Theorem \cite[Equation (11)]{hildebrand2021optimal} provides the tight worst-case performance of a single iteration of damped Newton method on univariate functions for some step sizes $\gamma_k$.
\begin{theorem}[\cite{hildebrand2021optimal}, Equation (11)]\label{th:dnm_sc}
    A single iteration of damped Newton method \eqref{eq:DNM} with $\gamma_k \leq 2 \frac{\sqrt{1+\lambda_f(x_k)^3}-1}{\lambda_f(x_k)^3}$ on standard-self-concordant functions satisfies
    \begin{equation}\label{eq:dnm_sc}
        \lambda_f(x_{k+1}) \leq \lambda_f(x_k) - \gamma \lambda_f(x_k) + \gamma \lambda_f(x_k)^2.
    \end{equation}
    Moreover, there exists a standard-self-concordant function attaining exactly this bound.
\end{theorem}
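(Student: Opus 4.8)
Following the pattern of the alternative proof of Theorem~\ref{th:sc_nm}, the plan is to cast the claimed bound as a two–point PEP and solve it in closed form using the self-concordant interpolation conditions of Corollary~\ref{th:IC_sc}. Write $R=\lambda_f(x_k)$ and $\gamma=\gamma_k$. Since neither \eqref{eq:DNM} nor $\lambda_f$ involves function values and self-concordance is affine invariant, I first normalize by a translation and a rescaling $f(x)\mapsto f(cx)$ so that $x_0=0$, $x_1=1$, and I set $\tilde h_i=h_i^{-1/2}$. The iteration $x_1=x_0-\gamma g_0/h_0$ together with $|g_0|\tilde h_0=R$ then forces (taking $g_0<0$, consistent with $x_1>x_0$)
\[
  \tilde h_0=\frac{1}{\gamma R},\qquad g_0=-\gamma R^2 .
\]
In the admissible range $\gamma\le 1$, hence $\tilde h_0\ge 1$ and $\tilde h_0+\tilde h_1>1$ for every feasible $\tilde h_1$; consequently both interpolation inequalities of Corollary~\ref{th:IC_sc} are active, namely $g_1-g_0\ge A$ and $g_0-g_1\ge B$ with
\[
  A=\frac{1}{\tilde h_0}+\frac{1}{\tilde h_1}-\frac{4}{\tilde h_0+\tilde h_1+1},\qquad
  B=\frac{1}{\tilde h_0}+\frac{1}{\tilde h_1}-\frac{4}{\tilde h_0+\tilde h_1-1},
\]
while the only remaining constraint is $|\tilde h_1-\tilde h_0|\le 1$, which one checks to be equivalent to $A+B\le 0$, i.e.\ to joint feasibility of the two inequalities.

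The PEP objective is $\lambda_f(x_1)=|g_1|\tilde h_1$, and I would split the analysis according to the sign of $g_1$. On the \emph{undershoot} branch $g_1\le 0$, maximality forces $g_1=g_0+A$; substituting $\tilde h_0=1/(\gamma R)$ and $g_0=-\gamma R^2$ turns $-g_1\tilde h_1$ into an explicit function of $\tilde h_1\in[\tilde h_0-1,\tilde h_0+1]$ whose derivative is monotone, so the maximum sits at the right endpoint $\tilde h_1=\tilde h_0+1$ (where indeed $g_1\le0$), with value $R-\gamma R+\gamma R^2$. On the \emph{overshoot} branch $g_1\ge 0$, maximality forces $g_1=g_0-B$; writing $v=\tilde h_0+\tilde h_1-1>0$, the objective becomes $ab+3-\bigl(av+\tfrac{4b}{v}\bigr)$ with $a=\gamma R(R+1)>0$ and $b=\tilde h_0-1>0$, so by the arithmetic–geometric mean inequality the overshoot objective never exceeds $ab+3-4\sqrt{ab}$, where $ab=(R+1)(1-\gamma R)$.

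It then remains to compare the two branches. A direct computation (solve a quadratic in $\sqrt{ab}$, then simplify) shows
\[
  ab+3-4\sqrt{ab}\ \le\ R-\gamma R+\gamma R^2
  \quad\Longleftrightarrow\quad
  \gamma^2R^3+4\gamma-4\le 0
  \quad\Longleftrightarrow\quad
  \gamma\le\frac{2\bigl(\sqrt{1+R^3}-1\bigr)}{R^3}=\frac{2}{1+\sqrt{1+R^3}},
\]
which is exactly the stated step-size hypothesis. Hence, under this hypothesis the undershoot branch dominates, the optimal PEP value equals $R-\gamma R+\gamma R^2=\lambda_f(x_k)-\gamma\lambda_f(x_k)+\gamma\lambda_f(x_k)^2$, and the bound \eqref{eq:dnm_sc} follows. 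For tightness I would reverse the normalization on the extremal self-concordant interpolant of Corollary~\ref{th:IC_sc} associated with the optimal point ($\tilde h_1=\tilde h_0+1$, $g_1=g_0+A$): this yields a two-piece function of the form $B_1x-\log(x-A_1)$ below a breakpoint and $B_2x-\log(x+A_2)$ above it, with constants fixed by $C^1$ matching at the breakpoint and by $\lambda_f(0)=R$, on which a \eqref{eq:DNM} step with coefficient $\gamma$ from $x_0=0$ attains $\lambda_f(x_1)=R-\gamma R+\gamma R^2$, in the same spirit as the worst-case function displayed for Theorem~\ref{th:sc_nm}.

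The main obstacle is the two-branch optimization bookkeeping: correctly identifying which interpolation inequalities bind on each branch, verifying the sign conditions on $g_1$ at the relevant extrema, handling the degenerate limits $R\to1$ and $\gamma R\to1$, and — most importantly — carrying the algebra in the comparison step cleanly enough to recognize that the threshold is precisely $\gamma\le 2(\sqrt{1+R^3}-1)/R^3$; everything else is a routine, if slightly tedious, one-variable calculus exercise.
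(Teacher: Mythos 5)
The paper does not actually prove Theorem~\ref{th:dnm_sc}; it imports both the bound and its tightness directly from \cite[Equation (11)]{hildebrand2021optimal}, and the paper's own contributions are the explicit single-piece log-barrier attaining function \eqref{eq:worst_fun_dnm_sc} and the multi-iteration extension in Theorem~\ref{th:dnm_sc_our}. Your proposal, by contrast, rederives the bound from scratch by analytically solving the two-point PEP via the self-concordant interpolation conditions of Corollary~\ref{th:IC_sc}, following the same programme that the paper carries out only for the undamped case in the ``alternative proof of Theorem~\ref{th:sc_nm}'' subsection. I verified the key computations: the normalization $\tilde h_0 = \frac{1}{\gamma R}$, $g_0 = -\gamma R^2$; that the undershoot branch $g_1 = g_0 + A$ evaluated at $\tilde h_1 = \tilde h_0 + 1$ gives exactly $R - \gamma R + \gamma R^2$; that the overshoot objective reduces to $ab + 3 - (av + \tfrac{4b}{v})$ with $ab = (R+1)(1-\gamma R)$ and hence is dominated by $ab + 3 - 4\sqrt{ab}$; and that the branch comparison reduces cleanly to $\gamma^2 R^3 + 4\gamma - 4 \le 0$, i.e.\ precisely the stated step-size hypothesis. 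This is a genuine PEP-based proof of a result the paper merely cites, and it buys something the paper does not supply: a self-contained derivation within the same framework as Theorem~\ref{th:sc_nm}.

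Two points deserve correction. First, your description of the attaining function as ``two-piece'' is wrong: at the undershoot optimum $\tilde h_1 = \tilde h_0 + 1$ the Lipschitz constraint $|\tilde h_1 - \tilde h_0| \le 1$ is \emph{active}, so the extremal interpolant of Proposition~\ref{prop:extrem_interp_basic} degenerates to a single affine piece for $\tilde h(x) = f''(x)^{-1/2}$, yielding after integration a one-piece log-barrier, exactly the $f(x) = \frac{R-1}{R}x - \log(R-x)$ displayed in \eqref{eq:worst_fun_dnm_sc}. (Theorem~\ref{th:sc_nm}'s worst case is genuinely two-piece because there the optimal $\tilde h_1$ sits in the interior of the Lipschitz ball; you have carried over that intuition without checking whether the constraint binds here.) Second, the phrase ``whose derivative is monotone, so the maximum sits at the right endpoint'' is a non sequitur as stated: monotone derivative gives concavity, not endpoint optimality. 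What you actually need is that the critical point $\tilde h_1^\ast$ of the concave undershoot objective lies to the right of the feasible interval $[\tilde h_0 - 1, \tilde h_0 + 1]$, which reduces (after substituting $\tilde h_0 = \frac{1}{\gamma R}$) to $\gamma(1-R) \le 1$, true in the admissible range $\gamma \le 1$; that step should be made explicit.
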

Note that \cite[Equation (11)]{hildebrand2021optimal} covered all values of $\gamma_k$ and that the case $\gamma_k = \frac{1}{1+\lambda_f(x_k)}$ is always covered by Theorem \ref{th:dnm_sc}. This theorem is tight for a single iteration and, relying on our tool, we can exhibit a function attaining its bound \eqref{eq:dnm_sc}. Indeed, one can check that a single iteration of \eqref{eq:DNM} (with any $\gamma_k$) from the point $x_k = 0$ on the following function yields bound \eqref{eq:dnm_sc}
\begin{equation}\label{eq:worst_fun_dnm_sc}
    f(x) = \frac{R-1}{R}x - \log(R-x)
\end{equation}
where $R = \lambda_f(x_k)$.

Moreover, we observed with PEP that bound \eqref{eq:dnm_sc} is actually tight even for multiple iterations (unlike Theorem \ref{th:sc_nm}). Therefore, we propose the following theorem on the rate of convergence of \eqref{eq:DNM} on univariate standard-self-concordant functions.
\begin{theorem}\label{th:dnm_sc_our}
    After $N$ iterations of \eqref{eq:DNM} with $\gamma_k \leq 2 \frac{\sqrt{1+\lambda_f(x_k)^3}-1}{\lambda_f(x_k)^3}$ for $k=0,\ldots,N-1$ on standard-self-concordant functions, we have
    \begin{equation}
        \lambda_f(x_{k+1}) \leq \lambda_f(x_k) - \gamma \lambda_f(x_k) + \gamma \lambda_f(x_k)^2, \quad \forall k=0,\ldots,N-1.
    \end{equation}
    Moreover, there exists a standard-self-concordant function attaining exactly this bound.
    \begin{proof}
        The bound follows directly from Theorem \ref{th:sc_nm} and the attaining function is \eqref{eq:worst_fun_dnm_sc}.
     \qed \end{proof}
\end{theorem}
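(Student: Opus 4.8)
The plan is to separate the statement into its two halves: the chain of per-iteration inequalities, and the existence of one function that turns every inequality into an equality.

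\textbf{The per-step bound.} This part I would obtain directly from the single-iteration result. For each $k\in\{0,\ldots,N-1\}$ the pair $(x_k,x_{k+1})$ consists of a point and its image under one step of \eqref{eq:DNM} with $\gamma_k\le 2\frac{\sqrt{1+\lambda_f(x_k)^3}-1}{\lambda_f(x_k)^3}$, so Theorem~\ref{th:dnm_sc} applies at $x_k$ and yields $\lambda_f(x_{k+1})\le \lambda_f(x_k)-\gamma_k\lambda_f(x_k)+\gamma_k\lambda_f(x_k)^2$. Collecting these $N$ inequalities is exactly the claimed chain; no telescoping or recombination of interpolation inequalities is needed, since each iterate is again a legitimate point of a standard-self-concordant function and the step-size hypothesis is imposed iterate by iterate.

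\textbf{Tightness for all iterations.} Here I would exhibit the single function $f(x)=\frac{R-1}{R}x-\log(R-x)$ from \eqref{eq:worst_fun_dnm_sc}, with $R=\lambda_f(x_0)$ and starting point $x_0=0$, and check it saturates every inequality. First, $f$ is standard-self-concordant, in fact on the boundary of the class: from $f''(x)=(R-x)^{-2}$ and $f'''(x)=2(R-x)^{-3}$ one reads $|f'''(x)|=2\,f''(x)^{3/2}$, i.e.\ \eqref{eq:generalized_sc} with $\alpha=\tfrac32$, $M=1$ holds with equality everywhere. Then I would track the iterates through the auxiliary quantity $v_k:=\tfrac{R-1}{R}(R-x_k)+1$: writing $u_k=R-x_k$ one has $f'(x_k)=\tfrac{R-1}{R}+u_k^{-1}$, $f''(x_k)=u_k^{-2}$, hence $\lambda_f(x_k)=|v_k|$ and the \eqref{eq:DNM} update becomes $u_{k+1}=u_k\bigl(1+\gamma_k v_k\bigr)$; substituting collapses everything to the single recursion $v_{k+1}=v_k\bigl(1-\gamma_k+\gamma_k v_k\bigr)$ with $v_0=R$. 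Because the bound on $\gamma_k$ keeps the factor $1-\gamma_k+\gamma_k v_k$ nonnegative along the trajectory (trivially when $v_k\ge 1$, and by a short estimate on $t\mapsto 2\frac{\sqrt{1+t^3}-1}{t^3}(1-t)$ for $t\in(0,1]$ otherwise), the induction $v_k\ge 0\Rightarrow v_{k+1}\ge 0$ goes through, so $\lambda_f(x_{k+1})=v_{k+1}=\lambda_f(x_k)-\gamma_k\lambda_f(x_k)+\gamma_k\lambda_f(x_k)^2$ for every $k$, and all $N$ inequalities are equalities.

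The point I expect to be the real obstacle is \emph{simultaneity} of tightness: a priori the extremal function for step $k$ could depend on the current value $\lambda_f(x_k)$, and it is not obvious a single fixed $f$ serves the whole sequence. What makes it work is that the family $x\mapsto\frac{R-1}{R}x-\log(R-x)$ is stable under the \eqref{eq:DNM} map up to the affine reparametrisation hidden in $u_k\mapsto u_{k+1}$, which is precisely why the Newton decrement obeys the clean one-step recursion on $v_k$; this is also the structural reason the multi-step bound here is unimprovable, in contrast with the plain-Newton bound of Theorem~\ref{th:sc_nm}. Apart from that, the only care needed is bookkeeping the absolute value $\lambda_f=|v_k|$, handled by the nonnegativity of the update factor noted above.
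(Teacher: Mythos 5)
Your proposal is correct and takes essentially the same route as the paper: apply the single-iteration bound (Theorem~\ref{th:dnm_sc}, which the paper's proof miscites as Theorem~\ref{th:sc_nm}) at each $x_k$, then show the fixed function \eqref{eq:worst_fun_dnm_sc} saturates every step. The paper leaves the verification implicit, while you carry it out explicitly via the substitution $u_k=R-x_k$, $v_k=\frac{R-1}{R}u_k+1$ and the resulting clean recursion $v_{k+1}=v_k(1-\gamma_k+\gamma_k v_k)$; a small simplification to your nonnegativity check: $\sqrt{1+t^3}\le 1+\tfrac{t^3}{2}$ gives $\gamma_k\le 1$ outright, so $1-\gamma_k(1-v_k)\ge 0$ for all $v_k\ge 0$ without any further case analysis.
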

 
\subsection{Quasi-self-concordant functions}\label{sect:NM_qsc}
The interpolation conditions for the class of quasi-self-concordant functions (Theorem \ref{th:IC_qsc}) involve the exponential function, which makes the associated non-convex PEP challenging to solve. However, Gurobi 11 \cite{gurobi} allows dealing with such non-convex constraints.

\subsubsection{Gradient Regularized Newton method}
The Gradient Regularized Newton method 1 (GNM1) \cite{doikov2023minimizing} is a variant of the Cubic Newton method that achieves a linear rate of convergence on quasi-self-concordant functions \cite[Theorem 3.3]{doikov2023minimizing} with the following explicit iteration
\begin{equation}\label{eq:GNM1}
    x_{k+1} = x_k - \frac{f'(x_k)}{f''(x_k) + M |f'(x_k)|}. \tag{GNM1}
\end{equation}
\ref{eq:GNM1} also exhibits a quadratic local convergence on quasi-self-concordant functions \cite[Theorem 4.1]{doikov2023minimizing}. The proof of this quadratic convergence relies on the following descent lemma (proved in the multivariate case).
\begin{lemma}[\cite{doikov2023minimizing}, Equation (49)]
    The iterations of \eqref{eq:GNM1} on univariate $M$-quasi-self-concordant functions satisfy
    \begin{equation}
        \eta(x_{k+1}) \leq e^{\eta(x_k)}( e^{\eta(x_k)} + \eta(x_k)^2 -\eta(x_k) -1)
    \end{equation}
    where $\eta(x) = M\frac{|f'(x)|}{f''(x)}$.
\end{lemma}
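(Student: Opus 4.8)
The plan is to track how the two ingredients of $\eta$, namely $|f'|$ and $f''$, transform under a single \ref{eq:GNM1} step, using only two consequences of Corollary~\ref{th:IC_qsc} combined with Remark~\ref{rem:nec_everywhere}: (a) $\log f''$ is $M$-Lipschitz on $\R$ (equivalently $f''>0$ everywhere, the degenerate case $f''\equiv 0$ making the claim either trivial or vacuous); and (b) the two-sided remainder bound $\bigl|f'(y)-f'(x)-f''(x)(y-x)\bigr|\le\tfrac{f''(x)}{M}\bigl(e^{M|y-x|}-M|y-x|-1\bigr)$, which follows from (a) by writing $f'(y)-f'(x)-f''(x)(y-x)=\int_x^y\bigl(f''(s)-f''(x)\bigr)\,\mathrm{d}s$ and using $|f''(s)-f''(x)|\le f''(x)\bigl(e^{M|s-x|}-1\bigr)$.

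First I would reduce to $f'(x_k)>0$ (the map $x\mapsto -x$ preserves the class and $\eta$, and sends \ref{eq:GNM1} iterates to \ref{eq:GNM1} iterates, while $f'(x_k)=0$ gives $\eta(x_{k+1})=0=e^0(e^0-1)$). Writing $g=f'(x_k)>0$, $h=f''(x_k)>0$, $r=\eta(x_k)=Mg/h$ and $t=x_{k+1}-x_k=-g/(h+Mg)$, one gets the crucial (deliberately lossy) estimate $M|t|=\tfrac{Mg}{h+Mg}\le\tfrac{Mg}{h}=r$. For the denominator, (a) gives $f''(x_{k+1})\ge h\,e^{-M|t|}\ge h\,e^{-r}$. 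For the numerator, split $f'(x_{k+1})=\bigl(f'(x_k)+f''(x_k)t\bigr)+R$ with $R:=f'(x_{k+1})-f'(x_k)-f''(x_k)t$; a one-line computation gives $f'(x_k)+f''(x_k)t=\tfrac{Mg^2}{h+Mg}\in\bigl(0,\tfrac{Mg^2}{h}\bigr]=\bigl(0,\tfrac{r^2h}{M}\bigr]$, while (b) with $(x,y)=(x_k,x_{k+1})$ and the monotonicity of $z\mapsto e^z-z-1$ give $|R|\le\tfrac{h}{M}(e^{M|t|}-M|t|-1)\le\tfrac{h}{M}(e^r-r-1)$. Hence $|f'(x_{k+1})|\le\tfrac{r^2h}{M}+\tfrac{h}{M}(e^r-r-1)=\tfrac{h}{M}\bigl(e^r+r^2-r-1\bigr)$, the bracket being nonnegative since $e^r\ge 1+r$. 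Dividing the two estimates yields $\eta(x_{k+1})=\tfrac{M|f'(x_{k+1})|}{f''(x_{k+1})}\le e^r\bigl(e^r+r^2-r-1\bigr)$, which is the claim; a Taylor expansion then confirms the quadratic local rate $\eta(x_{k+1})=\mathcal{O}(\eta(x_k)^2)$.

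The main obstacle is the numerator estimate. One must resist plugging in the exact value $M|t|=r/(1+r)$, which produces a genuinely different, incomparable expression involving $e^{r/(1+r)}$, and instead relax $M|t|\le r$ at the right places; and one must control $R$ on \emph{both} sides, since a priori $f'(x_{k+1})$ may have either sign, so the symmetric remainder bound (b) — rather than the one-sided inequalities \eqref{eq:qsc_existing} or \eqref{eq:qsc_improved} alone — is exactly what is needed. As a by-product, feeding Corollary~\ref{th:IC_qsc} to the PEP of Section~\ref{sec:results} numerically recovers this descent guarantee and, for several steps, improves on it.
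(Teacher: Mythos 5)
Your proof is correct; every step in the estimate chain holds. Note, however, that the paper does not prove this lemma at all: it is cited verbatim as \cite{doikov2023minimizing}, Equation~(49), and the actual contribution of Section~\ref{sect:NM_qsc} is the strictly sharper univariate bound of Lemma~\ref{lem:descent_qsc}, obtained by solving the one-step PEP exactly against the interpolation conditions of Corollary~\ref{th:IC_qsc}. Your argument --- decomposing $f'(x_{k+1})$ into the explicit linearization $Mg^2/(h+Mg)\le r^2h/M$ plus a remainder controlled by the symmetrized exponential bound $|R|\le\tfrac{h}{M}\bigl(e^{M|t|}-M|t|-1\bigr)$, relaxing $M|t|=r/(1+r)$ to $r$ in both numerator and denominator, and lower-bounding $f''(x_{k+1})\ge h\,e^{-r}$ via the $M$-Lipschitzness of $\log f''$ --- is a clean self-contained derivation of the cited estimate. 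What you correctly flag as the lossy relaxation $M|t|\le r$ is precisely the slack eliminated by the PEP analysis behind Lemma~\ref{lem:descent_qsc}, which replaces the right-hand side by $e^{\eta/(1+\eta)}(\eta-1)+1$, strictly smaller for $\eta>0$. If you wrote this up you would want to state explicitly that $\QSc$ forces $f''>0$ everywhere unless $f$ is affine (in which case $\eta$ is undefined), so that the two-sided remainder bound and the ratio $\eta$ are well defined at both iterates; a minor side note is that $Mg^2/(h+Mg)$ actually lies in the open interval $(0,r^2h/M)$ when $g>0$, which does not affect the conclusion.
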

Relying on the tool, we observed numerically that this lemma can be improved in the univariate case. And, we can provably improve it by solving analytically the associated PEP.
\begin{lemma}\label{lem:descent_qsc}
    The iterations of \eqref{eq:GNM1} on univariate $M$-quasi-self-concordant functions satisfy
    \begin{equation}
        \eta(x_{k+1}) \leq e^{\frac{\eta(x_k)}{\eta(x_k)+1}}(\eta(x_k)-1)+1
    \end{equation}
    where $\eta(x) = M\frac{|f'(x)|}{f''(x)}$.
    \begin{proof}
        We propose to formulate the associated PEP and to solve it analytically. The PEP can be written as
        \begin{equation}\label{eq:PEP_qsc}
            \begin{aligned}
                \underset{S=\{(x_0,g_0,h_0),(x_1,g_1,h_1)\}}{\max \ } & \frac{|g_1|}{h_1}\\
                \text{ s.t. } &  x_{1}=x_0-\frac{g_0}{h_0+M|g_0|}, \\
                 &\frac{|g_0|}{h_0} = R, \\
                & S \text{ is $M$-quasi-self-concordant-interpolable.}
            \end{aligned}
        \end{equation}
        We assume $x_0 = 0$ and $g_0 = 1$ w.l.o.g.\@ since a translation can move $x_0$ to zero and a scaling $f(x) \rightarrow c f( x)$ can move $g_0$ to one. Therefore, the two equality constraints of \eqref{eq:PEP_qsc} lead to $h_0 = \frac 1R$ and $x_1 = \frac{-R}{MR+1}$. The interpolation conditions are (see Theorem \ref{th:IC_qsc})
        \begin{align}
                & g_1 - g_0 \geq \frac{h_0 + h_1}{M} - \frac{2}{M}  \sqrt{h_0 h_1}e^{-\frac M2 (x_1 - x_0)} \\
                & g_0 - g_1 \geq \frac{h_1 + h_0}{M} - \frac{2}{M}  \sqrt{h_1 h_0}e^{-\frac M2 (x_0 - x_1)}.
        \end{align}
        Figure \ref{fig:qsc} is an illustration of the feasible domain for $(g_1,h_1)$. One can check that the optimal solution is
        \begin{align}
            h_1^* & = h_0 e^{-\frac{\eta_0}{\eta_0+1}} \\
            g_1^*& = g_0 + \frac{-h_0+h_1^*}{M}
        \end{align}
        yielding the optimal value
        \begin{equation}
            M\frac{|g_1^*|}{h_1^*} = e^{\frac{\eta_0}{\eta_0+1}}(\eta_0-1)+1
        \end{equation}
        where $\eta_0 = M R = M \frac{|g_0|}{h_0}$.
     \qed \end{proof}
\end{lemma}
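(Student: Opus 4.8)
The plan is to realize the worst case of one step of \eqref{eq:GNM1} as the performance estimation problem \eqref{eq:PEP_qsc} and to solve that problem in closed form. First I would normalize: by translation invariance I may assume $x_0=0$, and since $\QSc$ is scale-invariant ($f\in\QSc\Rightarrow cf\in\QSc$, see the Remark after Corollary~\ref{th:IC_qsc}) I may rescale $f$ so that $g_0=1$. The constraint $|g_0|/h_0=R$ then forces $h_0=1/R$, and the \eqref{eq:GNM1} update gives $x_1=-R/(1+MR)=-\eta_0/(M(\eta_0+1))$, where $\eta_0:=MR=\eta(x_0)$; in particular $\tfrac M2(x_1-x_0)=-\tfrac{\eta_0}{2(\eta_0+1)}<0$. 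The quantity to be maximized is $\eta(x_{k+1})=M|g_1|/h_1$.

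Next I would write out the only two relevant interpolation constraints, namely \eqref{qsccond3} from Corollary~\ref{th:IC_qsc} for the ordered pairs $(0,1)$ and $(1,0)$. Setting $c:=e^{-\eta_0/(2(\eta_0+1))}\in(0,1)$ and $d:=1/c$, and substituting $u:=\sqrt{h_1/h_0}$ (so that $h_1=h_0u^2$, $\sqrt{h_0h_1}=h_0u$, and $h_0/M=1/\eta_0$), the pair $(0,1)$ yields the lower bound $g_1\ge 1+\tfrac1{\eta_0}\big((u-d)^2+1-d^2\big)$ and the pair $(1,0)$ yields the upper bound $g_1\le 1-\tfrac1{\eta_0}\big((u-c)^2+1-c^2\big)$. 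Compatibility of these two bounds, after using $cd=1$, collapses to $(u-c)(u-d)\le 0$, i.e.\ $u\in[c,d]$; in particular $h_1\ge h_0c^2>0$, so the degenerate case $h_1=0$ is infeasible. A short elementary estimate (using $1-e^{-t}<t<\tfrac{t}{1-t}$ for $t\in(0,1)$ with $t=\tfrac{\eta_0}{\eta_0+1}$, hence $1-c^2<\eta_0$ and also $d^2-1<\eta_0$) shows the lower bound on $g_1$ is strictly positive on $[c,d]$, so $|g_1|=g_1$ at every feasible point and, for fixed $u$, the objective $Mg_1/h_1=\eta_0 g_1/u^2$ is increasing in $g_1$. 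Hence the worst case takes $g_1$ at its upper bound, and the problem reduces to maximizing
\[
\psi(u):=\frac{\eta_0-1}{u^2}+\frac{2c}{u}-1,\qquad u\in[c,d].
\]

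Finally I would compute $\psi'(u)=-2u^{-3}\big((\eta_0-1)+cu\big)$; since $(\eta_0-1)+cu$ is increasing in $u$ with minimum $(\eta_0-1)+c^2=\eta_0-(1-c^2)>0$ (by the estimate above, and trivially if $\eta_0\ge1$), $\psi$ is strictly decreasing on $[c,d]$ and attains its maximum at $u=c$. Evaluating gives $h_1^\star=h_0c^2=h_0e^{-\eta_0/(\eta_0+1)}$, $g_1^\star=1+\tfrac{c^2-1}{\eta_0}=g_0+\tfrac{h_1^\star-h_0}{M}$ (both interpolation constraints being tight there), and worst-case value $\psi(c)=(\eta_0-1)c^{-2}+1=e^{\eta_0/(\eta_0+1)}(\eta_0-1)+1$. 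Since $\eta(x_{k+1})=M|g_1|/h_1$ cannot exceed this PEP optimum, the claimed inequality follows (and is tight, the optimum being attained by the configuration above).

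The step I expect to be the main obstacle is the reduction: getting the sign of $\tfrac M2(x_1-x_0)$ — and hence the roles of $c$ and $d$ — correct, verifying that $g_1>0$ on the whole feasible set so that the absolute value may be dropped, and correctly identifying the feasibility interval $[c,d]$ from the interplay of the two interpolation inequalities. Once the problem has been genuinely reduced to one dimension, maximizing $\psi$ on $[c,d]$ is routine.
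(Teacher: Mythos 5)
Your proof is correct and follows the paper's approach exactly: same PEP formulation, same normalization ($x_0=0$, $g_0=1$), same two interpolation constraints from Corollary~\ref{th:IC_qsc}, and the same optimal point $(g_1^\star,h_1^\star)$. The paper simply asserts the optimizer (``One can check\ldots''), whereas you carry out the one-dimensional reduction in $u=\sqrt{h_1/h_0}$, establish the feasibility interval $[c,d]$ via $cd=1$, verify $g_1>0$ so the absolute value can be dropped, and show $\psi$ is decreasing on $[c,d]$---precisely the verification the paper leaves implicit.
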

Figure \ref{fig:LRN1} compares the numerical and analytical solution of PEP (blue dots and curve) with \cite[Equation (49)]{doikov2023minimizing} (red curve) on the worst-case performance of one iteration of \eqref{eq:GNM1} on $1$-quasi-self-concordant functions.

\begin{figure}[ht!]
    \centering
    \figureH{
    \includegraphics[width=\textwidth]{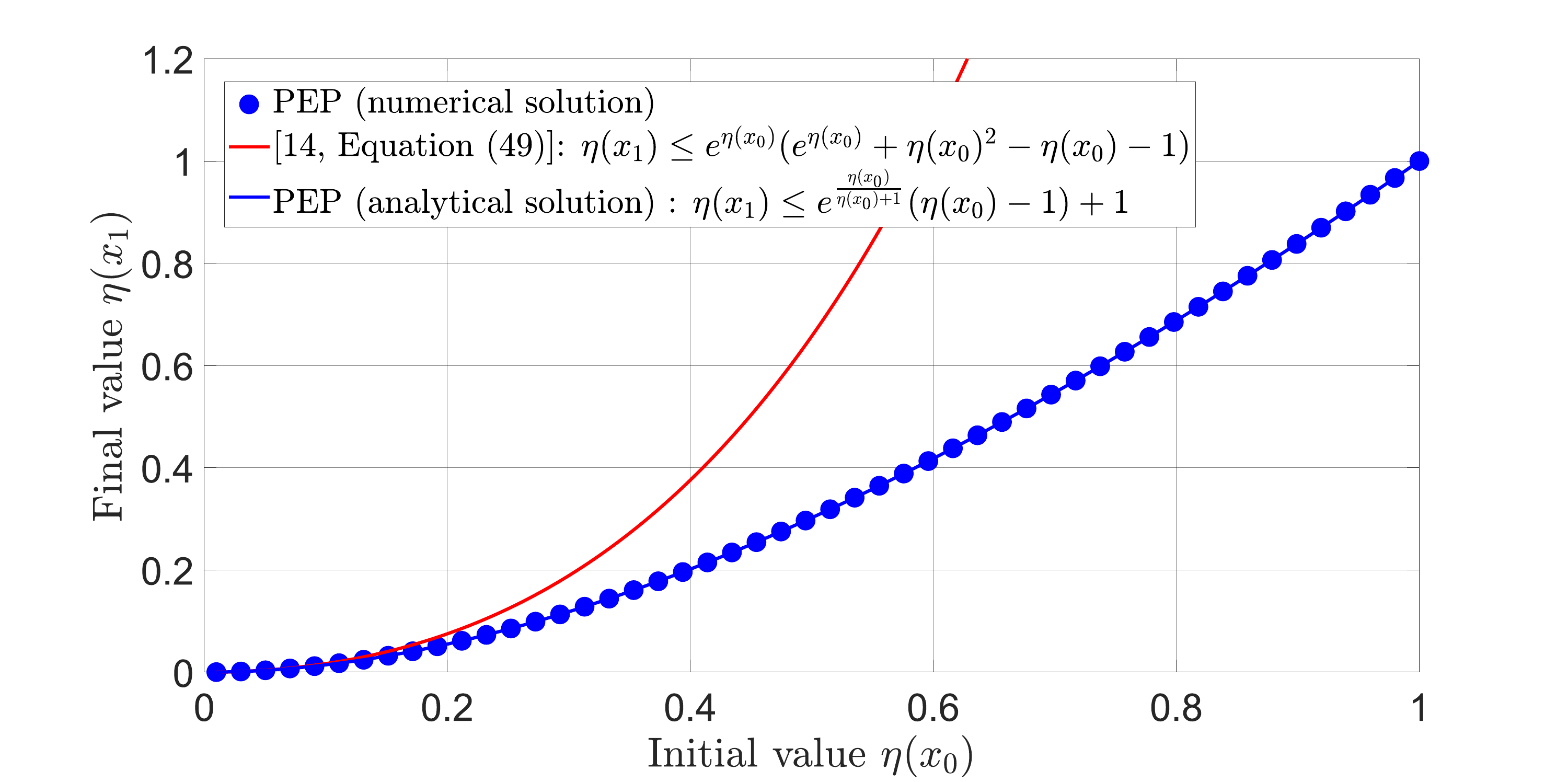}
    }
    \caption{Worst-case performance $\eta(x_{1})=M\frac{|f'(x_{1})|}{f''(x_{k})}$ of one iteration of \eqref{eq:GNM1} on $1$-quasi-self-concordant functions for different values of $\eta(x_{0})=M\frac{|f'(x_{0})|}{f''(x_{0})}$. We compare the numerical solutions of PEP (blue dots), the known descent lemma of \cite[Equation (49)]{doikov2023minimizing} (red curve), and the analytical solution of PEP (blue curve).}
    \label{fig:LRN1}
\end{figure}

We can also observe the effect of several iterations of \eqref{eq:GNM1} on Figure \ref{fig:LRN2}.
\begin{figure}[ht!]
    \centering
    \figureH{
    \includegraphics[width=\textwidth]{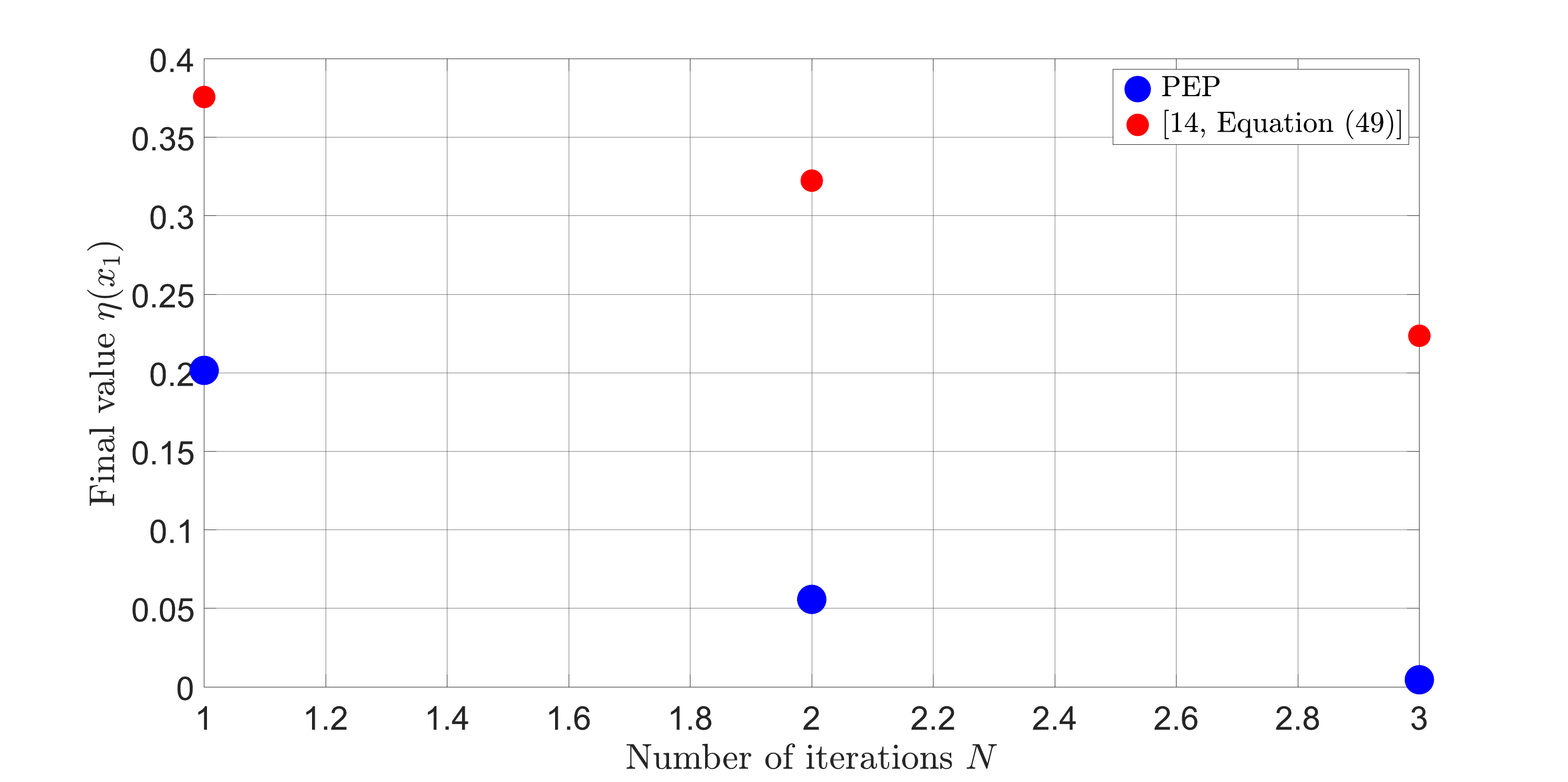}
    }
    \caption{Worst-case performance of \ref{eq:GNM1} for varying number of iterations $N$ on $1$-quasi-self-concordant functions with $\eta(x_0) = M \frac{|f'(x_0)|}{f''(x_0)} \leq 0.4$.}
    \label{fig:LRN2}
\end{figure}
\subsubsection{Newton's method}
Newton method also exhibits a local quadratic rate of convergence on quasi-self-concordant and strongly convex functions \cite[Theorem 4.1]{doikov2023minimizing}. The proof of the quadratic convergence relies on the following lemma:
\begin{lemma}[\cite{doikov2023minimizing}, Equation (53)]\label{lem:lemma_qsc}
    A single iteration of Newton method $x_+ = x - \frac{f'(x)}{f''(x)}$ on $M$-quasi-self-concordant $\mu$-strongly convex univariate functions satisfy
    \begin{equation}\label{eq:lemma_qsc}
        \frac{M}{\mu} |f'(x_+)| \leq e^{\frac{M}{\mu} |f'(x)|} - \frac{M}{\mu} |f'(x)| -1.
    \end{equation}
\end{lemma}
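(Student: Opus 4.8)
The plan is to derive \eqref{eq:lemma_qsc} directly from the interpolation conditions of Corollary \ref{th:IC_qsc}, evaluated on the pair $(x, x_+)$, together with the strong convexity lower bound on the Hessian. Write $M$ for the quasi-self-concordance constant and recall that on a strongly convex function $f''(x)\geq \mu>0$ everywhere. The Newton step gives $x_+ - x = -f'(x)/f''(x)$, so that $M|x_+-x| = M|f'(x)|/f''(x) \leq \tfrac{M}{\mu}|f'(x)| =: \eta$. I would abbreviate $h := f''(x)$, $h_+ := f''(x_+)$, and $g := f'(x)$, $g_+ := f'(x_+)$.

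First, I would apply Corollary \ref{th:IC_qsc} (valid everywhere by Remark \ref{rem:nec_everywhere}) to the ordered pair, in the orientation that bounds $g_+ - g$ from one side and $g - g_+$ from the other, exactly as in the proof of Lemma \ref{lem:descent_qsc}:
\begin{align*}
    g - g_+ &\geq \frac{h + h_+}{M} - \frac{2}{M}\sqrt{h h_+}\, e^{-\frac{M}{2}|x_+-x|}, \\
    g_+ - g &\geq \frac{h + h_+}{M} - \frac{2}{M}\sqrt{h h_+}\, e^{-\frac{M}{2}|x_+-x|}.
\end{align*}
Combining the appropriate one with the Newton identity $g = h(x - x_+)$, i.e. $g = h\cdot|x_+-x|\cdot\mathrm{sign}$, lets me eliminate $g$ and express $|g_+|$ purely in terms of $h$, $h_+$, and $t := M|x_+-x| \in [0,\eta]$. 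The key point is that $|g_+| \leq \frac{h+h_+}{M} - \frac{2}{M}\sqrt{hh_+}\,e^{-t/2} + |g|$, and $|g| = h t / M$, giving $M|g_+| \leq h + h_+ - 2\sqrt{hh_+}\,e^{-t/2} + h t = h\bigl(1 + t - 2\sqrt{h_+/h}\,e^{-t/2}\bigr) + h_+$.

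Second, I would bound $h_+$ using the interpolation condition \eqref{qsccond3} again (or the Lipschitz condition $|\log h_+ - \log h| \leq t$, which is what \eqref{eq:interp1} becomes for $\alpha=1$), which gives $h_+ \geq h e^{-t}$, hence $\sqrt{h_+/h} \geq e^{-t/2}$, and also $h_+ \leq h e^{t}$. Substituting the worst admissible value of $h_+$ — I expect $h_+ = h e^{t}$ to be the maximizer, so that $\sqrt{h_+/h}\,e^{-t/2} = 1$ and the middle term collapses — yields $M|g_+| \leq h(1 + t - 2) + h e^{t} = h(e^{t} - t - 1)$. Dividing by $\mu$ and using $h \geq \mu$ on the left while $t \leq \eta$ and $e^{t}-t-1$ is increasing in $t$ on the right gives $\frac{M}{\mu}|g_+| \leq \frac{h}{\mu}(e^{t}-t-1)$; the remaining task is to control the ratio $h/\mu$ against the scaling, which I would handle by the scale-invariance remark following Corollary \ref{th:IC_qsc} (replacing $f$ by $f/h$ normalizes $h$ to $1$, and the performance criterion $\frac{M}{\mu}|f'|$ rescales consistently since $\mu$ is replaced by $\mu/h$), after which the bound becomes exactly $e^{\eta} - \eta - 1$.

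The main obstacle I anticipate is the bookkeeping around which orientation of the interpolation inequality is active and verifying rigorously that $h_+ = h e^{t}$ (equivalently, the extremal $M$-Lipschitz interpolant for $\log f''$ running at maximal slope away from $x$) is indeed the Hessian value that maximizes $M|g_+|/\mu$ under the joint constraints; this is a one-dimensional optimization over $(h_+, t)$ that should be monotone in each variable, but the interaction between the $\sqrt{hh_+}\,e^{-t/2}$ cross term and the linear-in-$t$ term from $|g|$ needs to be checked carefully. A secondary subtlety is the strong-convexity normalization: one must be sure that reducing to $h = 1$, $\mu \leq 1$ does not lose the case where the true worst function has $h$ strictly larger than $\mu$ at $x$, which is precisely why the final comparison uses $e^{t}-t-1$ evaluated at the \emph{largest} allowed $t$ rather than at $t = M|g|/h$.
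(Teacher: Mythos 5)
First, note that the paper does not prove Lemma~\ref{lem:lemma_qsc} at all: it is stated as a quoted result from \cite{doikov2023minimizing} (Equation~(53) therein), and the paper's own contribution in this vicinity is only Lemma~\ref{lem:lemma_qsc_improved}, which establishes that the bound is tight via an explicit worst-case function. There is therefore no paper proof to compare against; your plan of re-deriving it from Corollary~\ref{th:IC_qsc} is a legitimate independent route, and the overall strategy (write both orientations of \eqref{qsccond3} for the pair $(x,x_+)$, eliminate $g$ via the Newton identity, optimize over $h_+$, and finally compare $t=M|g|/h$ to $\eta=M|g|/\mu$) does in fact reach the stated bound. However, the execution has several concrete errors.

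The first is the sign in the interpolation inequalities: you wrote $e^{-\frac{M}{2}|x_+-x|}$ in both displayed lines, but the argument of the exponential in \eqref{qsccond3} is the \emph{signed} difference $x_j-x_i$, so the two orientations carry opposite signs, $e^{-t/2}$ and $e^{+t/2}$. The second is that the bound you keep, $g_+\le g-\frac{h+h_+}{M}+\frac{2}{M}\sqrt{hh_+}e^{-t/2}$, only caps $g_+$ from above; when $g_+<0$ (which is exactly the regime where the claimed bound $e^\eta-\eta-1$ is nearly attained), you need the other orientation, the one carrying $e^{+t/2}$, and it is that inequality whose optimization over $h_+$ yields $h(e^t-t-1)$. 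Indeed, substituting $h_+=he^t$ into the expression you wrote gives $h(1+t-2)+he^t=h(e^t+t-1)$, not $h(e^t-t-1)$; the minus sign on $t$ comes from the other orientation, whose cross term is $2\sqrt{hh_+}e^{+t/2}$ and for which $h_+=he^t$ is the correct maximizer. Finally, the ``scale-invariance'' step does not close the proof on its own: after normalizing $h=1$ you are left with $\mu\le 1$, $t=M|g|$, $\eta=t/\mu$, and you still need the genuinely non-trivial scalar inequality $s\,(e^t-t-1)\le e^{st}-st-1$ for all $s=h/\mu\ge 1$ and $t\ge 0$. This inequality does hold (both sides agree at $s=1$, and the $s$-derivative of the right side dominates because $te^{st}-t\ge te^{t}-t\ge e^t-t-1$, itself equivalent to $(t-1)e^t+1\ge0$), but it must be stated and proved; evaluating $e^t-t-1$ ``at the largest allowed $t$'' is not the same operation, because the factor $h/\mu$ multiplies from outside while the Newton step length $t$ sits inside the exponential.

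In short: the plan is sound and would give an interpolation-based proof that the paper itself does not supply, but as written the two orientations are confused, the arithmetic for the $h_+$-optimization has a sign error (the correct maximizing $h_+$ for the $g_+>0$ branch is $he^{-t}$, giving the weaker $h(e^{-t}+t-1)$, while $h_+=he^t$ belongs to the $g_+<0$ branch), and the final comparison between $t$ and $\eta$ needs the convexity inequality $s(e^t-t-1)\le e^{st}-st-1$ rather than a normalization.
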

Relying on the tool, we can show that this lemma is tight for a single iteration by exhibiting a starting point and a function reaching \eqref{eq:lemma_qsc}.
\begin{lemma}\label{lem:lemma_qsc_improved}
    Lemma \ref{lem:lemma_qsc} (i.e., \cite[Equation (53)]{doikov2023minimizing}) is tight and attained by the starting point $x=0$ and the function
\begin{equation}
    f(x) = 
    \begin{cases}
        \frac{\mu}{M^2} e^{-Mx} +  \frac{2\mu}{M} x & \text{ if } x\leq 0, \\
        \frac{\mu}{M^2} e^{Mx} & \text{ if } x> 0.
    \end{cases}
\end{equation}
\begin{proof}
We have $f'(0) = \frac{\mu}{M}$, $f''(0)=\mu$, $x_+ = -\frac{1}{M}$, $f'(x_+)=\frac{\mu}{M}(-e+2)$, and
\begin{equation}
    \frac{M}{\mu}|f'(x_+)| = e^{\frac{M}{\mu} |f'(0)|} - \frac{M}{\mu} |f'(0)| -1 = e-2.
\end{equation}
 \qed \end{proof}
\end{lemma}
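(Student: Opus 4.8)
The plan is to prove tightness by simply exhibiting the pair $(f,x)$ from the statement and checking two things: that $f$ genuinely lies in the class of $M$-quasi-self-concordant, $\mu$-strongly convex univariate functions, and that a single Newton step from $x=0$ turns inequality \eqref{eq:lemma_qsc} into an equality. Since Lemma \ref{lem:lemma_qsc} is already known to hold for the whole class, producing one instance where both sides agree is enough; no optimisation over the class is needed once the candidate has been written down (it is exactly the function returned by the PEP solver).

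First I would check the regularity at the junction $x=0$. Evaluating the two branches there gives $f(0^-)=f(0^+)=\frac{\mu}{M^2}$, $f'(0^-)=-\frac{\mu}{M}+\frac{2\mu}{M}=\frac{\mu}{M}=f'(0^+)$, and $f''(0^-)=\mu=f''(0^+)$, so the pieces glue into a $\C^2$ function that is $\C^\infty$ away from $0$; the possible jump of $f'''$ at the single point $x=0$ is harmless for membership in $\QSc=\int^{(2)}\FLip{M,1,+}$, whose defining inequality only needs to hold piecewise in the sense of Definition \ref{def:function_class}. Next I would verify the class conditions on each branch: for $x<0$ one has $f''(x)=\mu e^{-Mx}>0$ and $f'''(x)=-M\mu e^{-Mx}$, while for $x>0$ one has $f''(x)=\mu e^{Mx}>0$ and $f'''(x)=M\mu e^{Mx}$; hence $|f'''(x)|=M\mu e^{M|x|}=Mf''(x)$ on both branches, so $f$ satisfies the one-point definition of $M$-quasi-self-concordance (the $\alpha=1$ case of \eqref{eq:generalized_sc}), in fact with equality. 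Finally, $f''(x)=\mu e^{M|x|}\ge\mu$ everywhere, so $f$ is $\mu$-strongly convex.

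It then remains to run the Newton step and read off the constants. From $x=0$ we get $x_+=0-\frac{f'(0)}{f''(0)}=-\frac{\mu/M}{\mu}=-\frac{1}{M}<0$, so $f'(x_+)$ is evaluated on the left branch: $f'(-\tfrac1M)=-\frac{\mu}{M}e^{1}+\frac{2\mu}{M}=\frac{\mu}{M}(2-e)$, whence $\frac{M}{\mu}|f'(x_+)|=e-2$ (using $e>2$). On the other hand $\frac{M}{\mu}|f'(0)|=\frac{M}{\mu}\cdot\frac{\mu}{M}=1$, so the right-hand side of \eqref{eq:lemma_qsc} equals $e^{1}-1-1=e-2$. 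The two quantities coincide, which shows \eqref{eq:lemma_qsc} is attained with equality and proves Lemma \ref{lem:lemma_qsc} tight.

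I do not expect any serious analytic obstacle: once the worst-case function has been identified, the argument is a finite verification. The only steps needing a little care are the regularity check at the non-smooth point $x=0$ (confirming that the piecewise construction really does land in $\QSc$ there) and the sign/positivity bookkeeping on the two branches, which is what makes the quasi-self-concordance inequality hold globally rather than just on one side.
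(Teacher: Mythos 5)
Your proof is correct and follows the same approach as the paper: exhibit the candidate function, compute the Newton step from $x=0$, and check that both sides of \eqref{eq:lemma_qsc} equal $e-2$. You are in fact somewhat more thorough than the paper's proof, which records only the final numerical identity and leaves the $\C^2$ gluing at $x=0$, the pointwise inequality $|f'''(x)|=Mf''(x)$, and the bound $f''(x)\ge\mu$ implicit; these checks are worth including and are carried out correctly in your write-up.
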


\subsection{Comparing methods together}
As can be seen in Table \ref{tab:tab1} and in the rest of the literature, there is no comparison of the different second-order methods on the same settings, i.e., same function class, performance measure, and initial condition. The tool allows to perform such comparisons on any setting of choice. Figure \ref{fig:all_methods} compares the worst-case performance $|f'(x_{k+1})|$ of $N=1$ iteration of the following methods on $\mu$-strongly convex Hessian $M$-Lipschitz functions for varying $\mu$ and initial gradient $|f'(x_k)| = 1$ and $M=1$:
\begin{itemize}
    \item Newton method: $x_{k+1} = x_k - \frac{f'(x_k)}{f''(x_k)}$;
    
    \item Cubic Regularized Newton method:
    \begin{equation*}
        x_{k+1} = \mathrm{Arg} \min_x f(x_k) + f'(x_k) (x-x_k) + \frac12 f''(x_k) (x-x_k)^2 + \frac{M}{6} |x-x_k|^3;
    \end{equation*}
    \item Gradient Regularized Newton method 1: $x_{k+1} = x_k  - \frac{f'(x_k)}{f''(x_k)+M |f'(x_k)|}$;
    \item Gradient Regularized Newton method 2: $x_{k+1} = x_k  - \frac{f'(x_k)}{f''(x_k)+\sqrt{\frac{M}{2} |f'(x_k)|}}$;
    
    \item Adaptive damped Newton method: $ x_{k+1} = x_k - \frac{1}{1+M\sqrt{\frac{f'(x_k)^2}{f''(x_k)}}}\frac{f'(x_k)}{f''(x_k)}$.
\end{itemize}

\begin{figure}[ht!]
    \centering
    \figureH{
    \includegraphics[width = \textwidth]{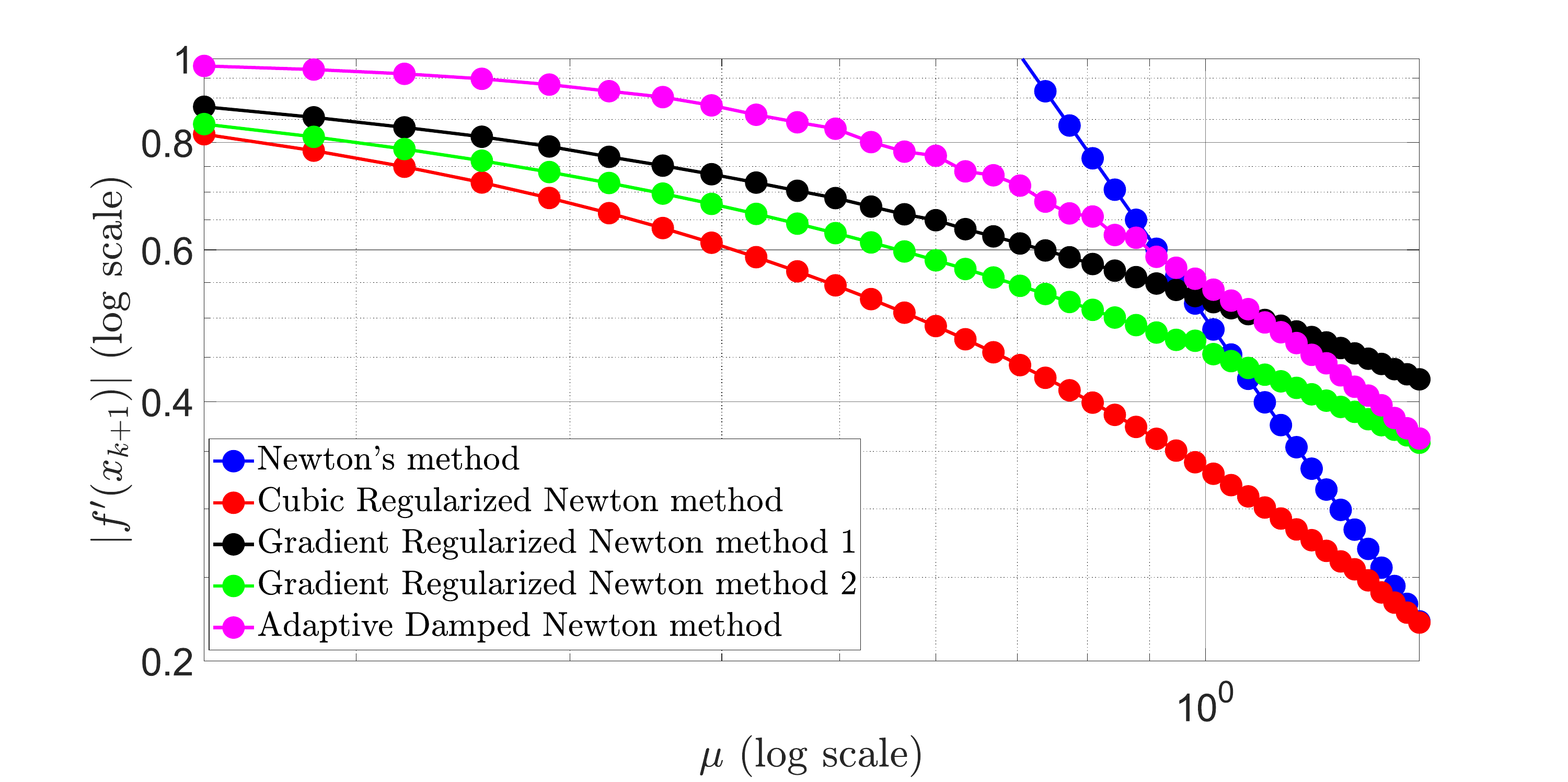}
    }
     \setlength{\abovecaptionskip}{-5pt}
    \setlength{\belowcaptionskip}{-5pt}
    \caption{Worst-case performance $|f'(x_{k+1})|$ of different second-order methods on $\mu$-strongly convex Hessian $M$-Lipschitz functions for varying $\mu$ and initial gradient $|f'(x_k)| = 1$ and $M=1$.}
    \label{fig:all_methods}
\end{figure}
We observe that CNM, GNM1, GNM2, and ADNM share similar behavior with CNM being faster. We also see the local quadratic convergence of NM when $\mu$ is sufficiently large. The figure exhibits the contraction factor $\frac{|f'(x_{k+1})|}{|f'(x_{k})|}$ after a single iteration from which we can deduce an upper bound on the decrease of the gradient after $N$ iterations.
\section{Conclusion and future perspectives}
We took a step forward into tightly analyzing the performance of second-order methods with respect to several performance measures and on several function classes. Indeed, we were able to obtain exact performance guarantees in the \emph{univariate} setting, by (i) providing exact discrete representations of many univariate function classes of interest, and (ii) proposing a tractable second-order PEP formulation by leveraging advances in solving non-convex PEPs.

This allowed us to (i) get an insight into interpolation conditions for second-order function classes (i.e., classes of interest for second-order optimization methods), which are already non-trivial when restricted to the univariate case, (ii)~prove tightness of existing multivariate bounds by exhibiting worst-case instances, (iii)~improve existing convergence rates, either analytically or numerically, and (iv)~compare methods on a fair basis, i.e., using common setup and performance criteria, as opposed to existing results from the literature, which vary in their assumptions.

We see several directions for future research. First, the analysis of methods on univariate functions could be further developed, and lead to a better understanding of bottlenecks in existing methods or to the design of optimal ones. In particular, leveraging on the method to obtain interpolation conditions for various function classes and on solving non-convex PEPs, one could tightly analyze other function classes, e.g., functions with higher-order properties, and other methods, e.g., zeroth-order, quasi-Newton, or adaptive methods. A second line of research, more challenging, would be to extend our results to the multivariate case. This would require (i) obtaining multivariate interpolation conditions, for instance by building on the interpolation conditions we derived, which naturally furnish candidate multivariate conditions, and (ii) finding a way to efficiently solve PEP formulations involving matrix variables for the Hessians.

\begin{acknowledgements}
The authors would like to thank Adrien Taylor, Moslem Zamani, Nikita Doikov, and Anton Rodomanov for fruitful discussions. A. Rubbens and N. Bousselmi are, respectively, Research Fellow and FRIA grantees of the Fonds de la Recherche Scientifique - FNRS. 
\end{acknowledgements}

\bibliographystyle{spmpsci}      
\bibliography{bibliography}   

\appendix
\section{Proof of Lemma \ref{lem:interval}} \label{app:interval}
\begin{proof}[Proof of Lemma \ref{lem:interval}]
    \textit{(Necessity)}
    Suppose $S=\{(x_i,f_i^0,f_i^1,...,f_i^m)\}_{i\in[N]}$ is $\F$-interpolable, and let $f\in \F$ be a function interpolating $S$, and in particular all pairs $(i,j)$. For each pair $(i,j)$, the restriction of $f$ to the interval $[x_i,x_j]$ is a function in $\F$ interpolating $\left\{ (x_i,f_i^0,f_i^1,...,f_i^m),(x_j,f_j^0,f_j^1,...,f_j^m)\right\}$, and \eqref{eq:goal} is satisfied.
\\

    \textit{(Sufficiency)} Suppose \eqref{eq:goal} is satisfied, let $x_1 \leq \ldots \leq x_N$ and let $f^{(i)} \in \F$ be a function interpolating the pair $(i,i+1)~~\forall i\in [N-1]$.
    Define
     \begin{align}
        \mathcal{I}_i = \begin{cases}
             ]-\infty,x_{2}] &~~i=1\\
            [x_i,x_{i+1}] &~~i=2,\cdots,N-2\\
            [x_{N-1},\infty[&~~i=N-1,
        \end{cases}
    \end{align}
    where $\mathcal{I}_1$ and $\mathcal{I}_{N-1}$ are defined such that $\bigcup\limits_{i=1}^{N-1} \mathcal{I}_{i} = \mathbb{R}$ and not just $[x_1,x_N]$. Let $f:\R\to\R$ be defined by:
    \begin{align}
        f(x) := f^{(i)}(x) \quad \forall x\in\mathcal{I}_i, \ \forall i\in[N-1].
    \end{align}
    By construction, $f$ interpolates $S=\{(x_i,f_i^0,f_i^1,...,f_i^m)\}_{i\in[N]}$. Furthermore, by order $m$ piecewise invariance of $\F$, it holds that $f\in \F$, which completes the proof.
 \qed \end{proof}
\section{Proof of Proposition \ref{prop:extr_conn}}\label{app:properties}
\begin{proof}
The proof is illustrated on Figure \ref{fig:proof_completable}.
\begin{figure}[ht!]
    \centering
    \includegraphics[width=\linewidth]{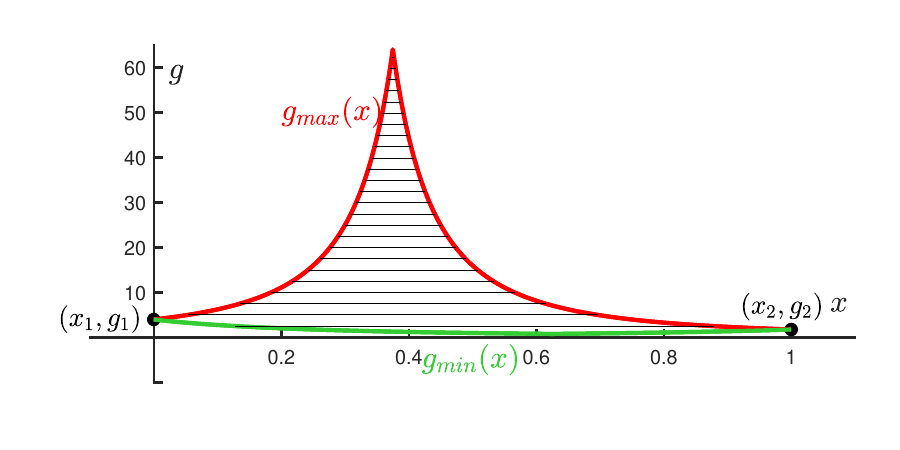}
    \caption{Illustration of the proof of Proposition \ref{prop:extr_conn}. Given $S=\{(x_i,f_i,g_i)\}_{i=1,2}$, $\tilde S=\{(x_i,g_i)\}_{i=1,2}$, and its extremal interpolants $g_{\min}$ and $g_{\max}$, we construct a function $g\in \F_{M,\alpha,+}$, given in \eqref{eq:g(x,a)}; interpolating $\tilde S$, and whose integral $f_1+\int_{x_1}^{x_2} g(z,a) \mathrm{d}z$ interpolates between $f_1+\int_{x_1}^{x_2} g_{\min}(z) \mathrm{d}z$ and $f_1+\int_{x_1}^{x_2} g_{\max}(z) \mathrm{d}z$. Specifically, we continuously decrease the integral of $g_{\max}$ by ``cutting" $g_{\max}$ at height $a$.}
    \label{fig:proof_completable}
\end{figure}
Let $S=\{(x_i,f_i^0,f_i^1)\}_{i=1,2}$ and $g_{\min/\max}$ be defined as in \eqref{eq:defgmin}, interpolating $\tilde S=\{(x_i,f_i^1\}_{i=1,2}$. Let $a\geq 0$ and define the following function
\begin{align}\label{eq:g(x,a)}
    g(x,a)=\min \{g_{\max}(x),\max\{g_{\min}(x),a\}\}.
\end{align}
By construction, $g$ is defined as the juxtaposition of functions in $\FLip{M,\alpha,+}$ on at most three intervals: either a constant function, either $g_{\min}$ or $g_{\max}$. Hence, since $\FLip{M,\alpha,+}$ is order 0-connectable, $g\in \FLip{M,\alpha,+}$. In addition, $g$ interpolates $\tilde S$ since $g_{\max}(x_i)=g_{\min}(x_i)=f_i^1$, $i=1,2$. Let \begin{align*}
        f(x,a):=f_1^0-\int_{-\infty}^{x_1} g(z,a) \mathrm{d}z +\int_{-\infty}^x g(z,a) \mathrm{d}z.
\end{align*}
By Lemma \ref{lem:fromgtof}, $f\in \int \FLip{M,\alpha,+}$, $f'(x_i,a)=f_i^1$, $i=1,2$, and $f(x_1,a)=f_1^0$. In addition, 
\begin{align*}
    f(x_2,a)=f_1^0+\int_{x_1}^{x_2} g(z,a) \mathrm{d}z.
\end{align*}
Since $f_1^0+\int_{x_1}^{x_2} g_{\min}(z) \mathrm{d}z\leq f_2^0\leq f_1^0+\int_{x_1}^{x_2} g_{\max}(z) \mathrm{d}z$ and 
$\int_{x_1}^{x_2}g(z,a) \mathrm{d}z$ depends continuously on $a$, with a minimum in $\int_{x_1}^{x_2} g_{\min}(z) \mathrm{d}z$ and a maximum in $\int_{x_1}^{x_2} g_{\max}(z) \mathrm{d}z$, there exists some $a\geq 0$ for which $f(x_2,a)=f_2^0$, hence $\int \FLip{M,\alpha,+}$ is extremally completable.
 \qed \end{proof}

\section{Proof of Theorems \ref{thm:quasi_self} and \ref{thm:IC_generalized_self_concordant}}\label{app:proof_theo_GSC}
\begin{proof}[Proof of Theorem \ref{thm:quasi_self}]
    Classes $\FLip{M,1,(+)}$ and $\int \FLip{M,1,(+)}$ satisfy all assumptions in Theorem \ref{lem:technique}. Hence, $S$ is interpolable if and only if it satisfies 
    \begin{align}
        |\log\Par{g_i}-\log\Par{g_j}|\leq M|x_i-x_j|,\label{condqsc2}
    \end{align} and $\forall x_i<x_j$:
    \begin{align}
    \begin{cases}
        f_j-f_i \geq\frac{g_i+g_j}{M}-\frac{2}{M}\sqrt{g_i g_j} e^{-\frac{M}{2}(x_j-x_i)}\\
        f_j-f_i\leq -\frac{g_i+g_j}{M}+\frac{2}{M}\sqrt{g_i g_j}e^{\frac{M}{2}(x_j-x_i)}
    \end{cases} \nonumber\\
        \Leftrightarrow  f_i-f_j\leq \frac{g_i+g_j}{M}-\frac{2}{M}\sqrt{g_i g_j}e^{-\frac{M}{2}(x_i-x_j)}.\label{eq:condsc_interl}
    \end{align}
    One can show that imposing \eqref{condqsc3} on all pairs is strictly equivalent to \eqref{condqsc2} and \eqref{eq:condsc_interl}.

    In addition, \eqref{condqsc3} implies \eqref{condqsc2} when applied to all pairs. Indeed, summing \eqref{condqsc3} imposed on $(i,j)$ and $(j,i)$ yields
    \begin{equation*}
        \frac{g_i + g_j}{\sqrt{g_i g_j}} \leq e^{-\frac M2 (x_j-x_i)} + e^{\frac M2 (x_j-x_i)} 
    \end{equation*}
    and using $\frac 1t + t \leq \frac{1}{\sqrt{s}} + \sqrt{s} \Leftrightarrow \frac{1}{s} \leq t^2 \leq s$ when $s\geq 1$, $t\geq 0$ with $t=\sqrt{\frac{g_j}{g_i}}$ and $s = e^{M(x_j-x_i)}$ yields
    \begin{equation}
        e^{-M(x_j-x_i)} \leq \frac{g_j}{g_i} \leq e^{M(x_j-x_i)},
    \end{equation}
    which is equivalent to \eqref{condqsc2}.
 \qed \end{proof}

\begin{proof}[Proof of Theorem \ref{thm:IC_generalized_self_concordant}]
Classes $\FLip{M,\alpha,+}$ and $\int \FLip{M,\alpha,+}$ satisfy all assumptions in Theorem \ref{lem:technique}. Hence, $S$ is interpolable if and only if it satisfies \eqref{eq:interp1_level1}, and $\forall x_i<x_j$:
\begin{align*}
    &\bullet \text{ Integration of $f_{\min}(x)$, Case 1: }\\
    &\text{If $\alpha > 1$, or if $\alpha <1$ and $\tilde g_i+\tilde g_j\geq M(x_j-x_i)=\frac{\ba}{|\ba|} M(x_j-x_i)$, or considering $\FLip{M,0}$:}\\   
    &f_j-f_i\geq \frac{\ba}{|\ba|M(\ba+1)} \left( \tilde g_i^{\ba+1}+\tilde g_j^{\ba+1}-\frac{1}{2^\ba}(\tilde g_i+\tilde g_j-\frac{\ba}{|\ba|}M(x_j-x_i))^{\ba+1} \right)\\
    &\bullet\text{ Integration of $f_{\min}(x)$, Case 2: If $\alpha <1$ and $\tilde g_i+\tilde g_j\leq M(x_j-x_i)$:}\\   
    &f_j-f_i\geq \frac{1}{M(\ba+1)} \left( \tilde g_i^{\ba+1}+\tilde g_j^{\ba+1}\right)\\
    &\bullet\text{ Integration of $f_{\max}(x)$, Case 1:}\\
    &\text{If $\alpha < 1$, or if $\alpha >1$ and $\tilde g_i+\tilde g_j\geq M(x_j-x_i)=\frac{\ba}{|\ba|} M(x_i-x_j)$:}\\   
    &f_j-f_i\leq \frac{\ba}{|\ba|M(\ba+1)} \left( -\tilde g_i^{\ba+1}-\tilde g_j^{\ba+1}+\frac{1}{2^\ba}(\tilde g_i+\tilde g_j+\frac{\ba}{|\ba|}M(x_j-x_i))^{\ba+1} \right)\\
    \Leftrightarrow& f_i-f_j\geq \frac{\ba}{|\ba|M(\ba+1)} \left( \tilde g_i^{\ba+1}+\tilde g_j^{\ba+1}-\frac{1}{2^\ba}(\tilde g_i+\tilde g_j-\frac{\ba}{|\ba|}M(x_i-x_j))^{\ba+1} \right).
\end{align*}

The second case for $f_{\max}$ leads to no further constraint. Considering $\FLip{M,\alpha,+}$, imposing \eqref{eq:interp1_level2} and \eqref{eq:interp1_level3} on all pairs is strictly equivalent to these conditions, since $\frac{\ba}{|\ba|}=-1$ if $\alpha>1$, $\frac{\ba}{|\ba|}=1$ otherwise.

Considering $\FLip{M,0}$, there are no subcases to consider, and imposing \eqref{eq:interp1_level2} on all pairs is strictly equivalent to these conditions. Furthermore, satisfaction of \eqref{eq:interp1_level2} on both pairs $(i,j)$ and $(j,i)$ implies satisfaction of $|g_i-g_j|\leq M|x_j-x_i|$, since adding both inequalities gives:
\begin{align*}
    &0\geq g_i^2+g_j^2-\frac{1}{4}(g_i+g_j-M(x_j-x_i))^2-\frac{1}{4}(g_i+g_j+M(x_j-x_i))^2\\
    \Leftrightarrow& 0\geq \frac{(g_i-g_j)^2}{2}-\frac{M^2}{2} (x_j-x_i)^2 \Leftrightarrow |g_i-g_j|\leq M|x_i-x_j|.
\end{align*} 
 \qed \end{proof}
\section{Proof of Theorem \ref{thm:H-intrp}}\label{app:thmH}
For the sake of readability, we let $\Delta x_{ij}=x_j-x_i,\
    \Delta x_{xi}=x-x_i,\
    \Delta h_{ij}=h_j-h_i,$ $T^g_{ij}=g_j-g_i-h_i(x_j-x_i),$ and $ T^f_{ij}=f_j-f_i-g_i(x_j-x_i)-\frac{h_i}{2}(x_j-x_i)^2,$ and use
\begin{align}
     \Delta x_{ij}&=-\Delta x_{ji}, \label{xijji}
    \\ \Delta h_{ij}&=-\Delta h_{ji}, \label{hijji}
    \\ T^g_{ij}&=-T^g_{ji}+\Delta h_{ij} \Delta x_{ij}, \label{gijji}
    \\ T^f_{ij}&=-T^f_{ji}+T^g_{ij}\Delta x_{ij}-\frac{1}{2} \Delta h_{ij} \Delta x_{ij}^2. \label{fijji}
\end{align}

By Proposition  \ref{prop:properties}, $\HLip{M}$ is convex and order $2$ connectable. We now show that $\int \F_M$ is extremally interpolable, and provide its extremal interpolants:
\begin{lemma}[Extremal interpolants of $\HLip{M}$] \label{lem:min_max_grad}
Consider an interval $[x_1,x_2]$ and a $\HLip{M}$-interpolable pair of points $S=\{(x_i,f_i,g_i,h_i)\}_{i=1,2}$. Then, in $[x_1,x_2]$, the expressions of $g_{\min}$ and $g_{\max}$ as defined in \eqref{eq:defgmin} are given by:
\\
When $\dhh\neq \pm M\dx,$
\begin{align}
    g_{\min}(x)=\begin{cases}
        g_1+ h_1\Delta x_{x1} -\frac M2 \Delta x_{x1}^2 & \text{if } x\in [x_1,\yun] \\
        g_1+h_1\Delta x_{x1} -\frac M2 (\yun-x_1)^2&\\ +\frac M2 (x^2-\yun^2)+M(x_1-2\yun)(x-\yun) & \text{if } x\in [\yun,\ydeux] \\
        g_2+h_2\Delta x_{x2}-\frac M2\Delta x_{x2}^2 & \text{if } x\in [\ydeux,x_2],
    \end{cases} \label{def:gmin}
\end{align}
where
\begin{align*}
        \yun & = x_2-\frac{\dg +\frac{M}{2} \dx^2}{\dhh+M\dx} -\frac{\dhh+M\dx}{4M} \\
    \ydeux & = x_2-\frac{\dg +\frac{M}{2} \dx^2}{\dhh+M\dx} +\frac{\dhh+M\dx}{4M},
\end{align*}
and 
\begin{align}
    g_{\max}(x)=\begin{cases}
        g_1+h_1\Delta x_{x1}+\frac M2 \Delta x_{x1}^2 & \text{if } x\in [x_1,\zun] \\
        g_1+h_1\Delta x_{x1}+\frac M2 (\zun-x_1)^2  &\\-\frac M2 (x^2-\zun^2)-M(x_1-2\zun)(x-\zun) & \text{if } x\in [\zun,\zdeux] \\
        g_2+h_2\Delta x_{x2}+\frac M2\Delta x_{x2}^2& \text{if } x\in [\zdeux,x_2],
    \end{cases}
\end{align}
where 
\begin{align*}
        \zun & = x_2 +\frac{\dg -\frac{M}{2} \dx^2}{-\dhh+M\dx} -\frac{(-\dhh+M\dx)}{4M}\\
    \zdeux & = x_2 +\frac{\dg -\frac{M}{2} \dx^2}{-\dhh+M\dx} +\frac{(-\dhh+M\dx)}{4M}.
\end{align*}
When $\dhh= \pm M\dx:$
\begin{align}
    g_{\max}(x)=g_{\min}(x)&=
        g_1+h_1\Delta x_{x1}\pm\frac M2 \Delta x_{x1}^2=g_2+h_2\Delta x_{x2}\pm\frac M2 \Delta x_{x2}^2.
\end{align}
\label{prop:extremalgrad}
\begin{proof}
Suppose $\dhh\neq \pm \dx$. For any $x\in [x_1,x_2]$, let
    \begin{equation}
    g'_{\min}(x) = 
    \begin{cases}
        h_1-M\Delta x_{x1} & \text{if } x\in [x_1,\yun] \\
        h_1+M(x+x_1-2\yun) & \text{if } x\in [\yun,\ydeux] \\
        h_2-M\Delta x_{x2} & \text{if } x\in [\ydeux,x_2],
    \end{cases}
\end{equation}
It holds that $g'_{\min}$ is the first derivative of $g_{\min}$ since $\forall x\in [x_1,x_2], \ g_{\min}(x)=\int_{x_1}^x g'_{\min}(t)\mathrm{d}t$. If $x \in [x_1,\ydeux]$, this is straightforward, and if $x\in[\ydeux,x_2]$ it follows from:
\begin{align*}
    \int_{x_1}^x g'_{\min}(t)\mathrm{d}t&=g_1+h_1(\ydeux-x_1)-\frac{M}{2} (\yun-x_1)^2+\frac{M}{2}(\ydeux^2-\yun^2)\\&\quad+M(x_1-2\yun)(\ydeux-\yun) +h_2(x-\ydeux)-\frac{M}{2}(x^2-\ydeux^2)+Mx_2(x-\ydeux)\\
    &=g_2+h_2\Delta x_{x2}-\frac{M}{2} \Delta x_{x2}^2-\dg -\dhh (\ydeux-x_2)\\&\quad-M\left( (\ydeux-x_2)\dx +\frac{\dx^2}{2}-(\ydeux-\yun)^2\right)\\
    &=g_2+h_2\Delta x_{x2}-\frac{M}{2} \Delta x_{x2}^2-(\dg+M\frac{\dx^2}{2}) \\&\quad-(\dhh+M\dx) (\ydeux-x_2)+\frac{\dhh+M\dx}{4M}
    \\&    =g_2+h_2\Delta x_{x2}-\frac{M}{2} \Delta x_{x2}^2.
\end{align*}

We first observe that $g_{\min}$ interpolates $S$ since  \begin{align*}
    g_{\min}(x_1)=g_1, \ g_{\min}(x_2)=g_2, \ g'_{\min}(x_1)=h_1 \text{ and }g'_{\min}(x_2)=h_2.
\end{align*}
In addition, $g_{\min}\in \int\F_{M}$, since $g'_{\min}\in \F_M$. Indeed,  (i) $g'_{\min}\in\C^0$ and is piecewise $\C^1$, and (ii) it is a linear function by part, of slope $M$. Inclusion in $\F_M$ follows from order $0$-connectability of $\F_M$.

It remains to show that $g_{\min}$ and $g'_{\min}$ are the minimal $g$ and associated $h$ satisfying \eqref{eq:cismooth} with respect to $S$, which completes the proof. To this end, consider the following optimization problem: $g_{\min}(x)=$
\begin{equation}
\left\{\begin{aligned}
\min_{g,h} \quad & g \\
\textrm{s.t.} \quad & g_i - g  \geq \frac{1}{4M}(h-h_i)^2 - \frac{M}{4}\Delta x_{xi}^2-\frac12(h+h_i)\Delta x_{xi}, \ i=1,2 &\quad (\mu_i) \\
                    & g - g_i  \geq \frac{1}{4M}(h_i-h)^2 - \frac{M}{4}\Delta x_{xi}^2+\frac12(h_i+h)\Delta x_{xi}, \ i=1,2 &\quad (\lambda_i)
\end{aligned} \right.\label{eq:g_min}
\end{equation} 
where $\mu_i$, $\lambda_i$ are the associated dual variables. We consider its KKT conditions.
\begin{alignat}{2}
    &\text{Stat. cond. in $g$: }&&1+\sum_i(\mu_i-\lambda_i)=0\nonumber\\
    &\text{Stat. cond. in $h$: }&&\sum_i(\mu_i(\frac{1}{2M}(h-h_i)-\frac{1}{2}\Delta x_{xi})\nonumber\\
    & &&+\lambda_i(\frac{1}{2M}(h-h_i)+\frac{1}{2}\Delta x_{xi}))=0\nonumber\\
    &\text{Dual feas.: }&&\mu_i,\lambda_i\geq 0\nonumber\\&\text{Primal feas.: }&&g_i - g  \geq \frac{1}{4M}(h-h_i)^2 - \frac{M}{4}\Delta x_{xi}^2-\frac12(h+h_i)\Delta x_{xi}, \ i=1,2 \label{1}
    \\&& &g - g_i  \geq \frac{1}{4M}(h_i-h)^2 - \frac{M}{4}\Delta x_{xi}^2+\frac12(h_i+h)\Delta x_{xi}, \ i=1,2\label{3}\\
    &\text{Slackness: }&& \mu_i (g_i - g  -( \frac{1}{4M}(h-h_i)^2 - \frac{M}{4}\Delta x_{xi}^2-\frac12(h+h_i)\Delta x_{xi}))=0\nonumber
    \\
    && &\lambda_i(g - g_i - (\frac{1}{4M}(h_i-h)^2 - \frac{M}{4}\Delta x_{xi}^2+\frac12(h_i+h)\Delta x_{xi} ))=0.\nonumber
\end{alignat}
Let $x\in [x_1,\yun]$. Then, setting all dual coefficients zeros except for $\lambda_1=1$, $h=h_1-M\Delta x_{x1}$ and $g=g_1+h_1\Delta x_{x1}-\frac{M}{2}\Delta x_{x1}^2$ is a valid solution. Indeed, all conditions follow directly from the definition of $g$ and $h$, except for \eqref{1}, $i=2$, which becomes equivalent to \eqref{eq:cismooth} and is thus satisfied by assumption on $S$, and \eqref{3}, $i=2$, which becomes:
\begin{align*}
    &\dg-\frac{3M}{4}\Delta x_{x1}^2-\frac M4 \Delta x_{x2}-\frac M2 \Delta x_{x1}\Delta x_{x2}+\frac{\dhh^2}{4M}+\frac{\dhh}{2} (2x-x_1-x_2)\leq 0\\
    &\Leftrightarrow \Delta x_{x2}(\dhh+M\dx)\leq -\dg-\frac{\dhh^2}{4M}-\frac{3M\dx^2}{4}-\frac{\dhh\dx}{2}\\
    &\Leftrightarrow x\leq \yun.
\end{align*}
Let $x\in [\ydeux,x_2]$. Then, setting all dual coefficients zeros except for $\lambda_2=1$, $h=h_2-M\Delta x_{x2}$ and $g=g_2+h_2\Delta x_{x2}-\frac{M}{2}\Delta x_{x2}^2$ is a valid solution. Again, all conditions follow directly from the definition of $g$ and $h$, except for \eqref{1}, $i=1$, which becomes equivalent to \eqref{eq:cismooth} as applied to the pair $(j,i)$ and is thus satisfied by assumption on $S$, and \eqref{3}, $i=1$, which becomes:
\begin{align*}
    &\dg-\frac{3M}{4}\Delta x_{x2}^2+\frac M4 \Delta x_{x1}+\frac M2 \Delta x_{x1}\Delta x_{x2}-\frac{\dhh^2}{4M}+\frac{\dhh}{2} (2x+\frac{x_1}{2}-\frac{3x_2}{2})\geq 0\\
    &\Leftrightarrow \Delta x_{x2}(\dhh+M\dx)\geq -\dg+\frac{\dhh^2}{4M}-\frac{M\dx^2}{4}+\frac{\dhh\dx}{2}\\
    &\Leftrightarrow x\geq \ydeux.
\end{align*}

Else, setting $\mu_1=\mu_2=0$, \begin{align*}
    &\lambda_2=\frac{2M(x-\yun)}{\dhh+M\dx}\underset{x\geq \yun, \eqref{eq:cond1}}{\geq} 0, \ \lambda_1=1-\lambda_0\underset{x\leq \ydeux}{\geq}1- \frac{2M(\ydeux-\yun)}{\dhh+M\dx}\underset{\text{Def. of } y_2}{=}0\\&h=h_1+M(x+x_1-2\yun)
\end{align*} and \begin{align*}
    g&=g_1+\frac{M}{4}(x+x_1-2\yun)^2-\frac M4\Delta x_{x1}^2+\frac{1}{2}(2h_1+M(x+x_1-2\yun))\Delta x_{x1}\\
    &=g_1+h_1\Delta x_{x1} -\frac M2 (\yun-x_1)^2 +\frac M2 (x^2-\yun^2)+M(x_1-2\yun)(x-\yun),
\end{align*} is a valid solution. Indeed, all conditions follow directly from the definition of $g$, $h$, $\lambda_0$ and $\lambda_1$ except for \eqref{1}, $i=1,2$, which become equivalent to \eqref{eq:cond1} and are thus satisfied by assumption on $S$. 

Suppose now $\dhh=-M\dx$, which implies $\dg=-\frac{M}{2}\dx^2$ since \eqref{eq:cismooth} applied to both pairs becomes:
\begin{align*}
    \dg&\geq -\frac{M}{2}\dx^2\\
    T_{21}^g&\geq -\frac{M}{2}\dx^2 \underset{\eqref{gijji}}{\Leftrightarrow} \dg \leq \dhh \dx+\frac{M}{2}\dx^2=-\frac{M}{2}\dx^2.
\end{align*}
In this case, let $g'_{\min}(x)=h_1-M\Delta x_{x1}=h_2-M\Delta x_{x2}$. Therefore, $g'_{\min}$ is the first derivative of $g_{\min}$, and this function interpolates $S$. Moreover, in this case, $\mu_0=\mu_1=\lambda_1=0$, $\lambda_0=1$, \begin{align*}
    g&=g_1+h_1\Delta x_{x1}-\frac{M}{2}\Delta x_{x1}^2\underset{\dg=-\frac{M}{2}\dx^2}{=}g_2+h_2\Delta x_{x2}-\frac{M}{2}\Delta x_{x2}^2
    \\h&=h_1-M\Delta x_{x1}=h_2-M\Delta x_{x2}.
\end{align*} are always a solution satisfying the KKT conditions associated with Problem \eqref{eq:g_min}. Indeed, all conditions follow directly from these two definitions of $g$ and $h$.

The computation of $g_{\max}$ follows the same argument, except with the associated derivative being:
    \begin{equation}
    g'_{\max}(x) = 
    \begin{cases}
        h_1+M\Delta x_{x1} & \text{if } x\in [x_1,\zun] \\
        h_1-M(x+x_1-2\zun) & \text{if } x\in [\zun,\zdeux] \\
        h_2+M\Delta x_{x2} & \text{if } x\in [\zdeux,x_2].
    \end{cases}
\end{equation}
 \qed \end{proof}
\end{lemma}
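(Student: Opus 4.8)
The plan is to verify the displayed formula for $g_{\min}$ by three reductions — it interpolates the data $\{(x_i,g_i,h_i)\}_{i=1,2}$, it lies in $\int\FLip{M,0}$, and for every $x\in[x_1,x_2]$ its value equals the infimum in \eqref{eq:defgmin} — and then to obtain $g_{\max}$ by the mirror argument. For the first two reductions I would introduce the candidate derivative $g'_{\min}$: the continuous piecewise-linear function equal to $h_1-M(x-x_1)$ on $[x_1,\yun]$, of slope $+M$ on $[\yun,\ydeux]$, and equal to $h_2-M(x-x_2)$ on $[\ydeux,x_2]$. A direct computation using the definitions of $\yun,\ydeux$ shows that $g_{\min}(x)=g_1+\int_{x_1}^x g'_{\min}(t)\,\mathrm{d}t$ and, in particular, that this integral at $x_2$ equals $g_2$; since moreover $g'_{\min}(x_1)=h_1$ and $g'_{\min}(x_2)=h_2$, $g_{\min}$ interpolates $S$. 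Membership in $\int\FLip{M,0}$ is then immediate: $g'_{\min}$ is continuous, piecewise $\C^1$, and piecewise linear of slope $M$, hence $M$-Lipschitz by order-$0$ connectability of $\FLip{M,0}$ (Proposition~\ref{prop:properties}). One should first check $x_1\le\yun\le\ydeux\le x_2$, so the description is well posed; since $\ydeux-\yun=(\dhh+M\dx)/(2M)$ and the common offset of $\yun,\ydeux$ from $x_2$ is governed by $\dg+\tfrac M2\dx^2$, these inequalities follow from $|\dhh|\le M|\dx|$ (condition \eqref{eq:cond1}) and from \eqref{eq:cismooth}, both of which hold for the pair because $S$ is $\HLip{M}$-interpolable (Corollary~\ref{prop:Hinterpnofi}).

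The core step is the pointwise optimality. Fix $x\in[x_1,x_2]$. The pairs $(g(x),g'(x))$ realized by $M$-smooth interpolants of $\{(x_i,g_i,h_i)\}_{i=1,2}$ are exactly the $(g,h)$ such that the three-point set $\{(x_1,g_1,h_1),(x,g,h),(x_2,g_2,h_2)\}$ is $\int\FLip{M,0}$-interpolable with a function value at the middle node; by Lemma~\ref{lem:interval} applied to the order-$1$ connectable class $\int\FLip{M,0}$, this is equivalent to interpolability of the consecutive pairs $(x_1,x)$ and $(x,x_2)$, i.e.\ to \eqref{eq:cismooth} between $(x,g,h)$ and each $(x_i,g_i,h_i)$ in both orderings — four inequalities, with dual variables $\mu_i,\lambda_i$ for the two orderings of the pair $(x,x_i)$. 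Hence $g_{\min}(x)$ is the optimal value of the problem ``minimize $g$ over $(g,h)$ subject to these four inequalities''. This program is convex (linear objective; each constraint cuts out the epigraph, resp.\ hypograph, of a convex, resp.\ concave, quadratic in $h$), so it suffices to exhibit on each of the three subintervals a primal point with multipliers satisfying stationarity in $g$ and $h$, dual feasibility, primal feasibility, and complementary slackness.

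I would do this interval by interval. On $[x_1,\yun]$: take $\lambda_1=1$ and $\mu_1=\mu_2=\lambda_2=0$, $h=h_1-M(x-x_1)$, $g=g_1+h_1(x-x_1)-\tfrac M2(x-x_1)^2$; stationarity, dual feasibility and the active slackness hold by construction, and the remaining primal inequalities reduce, after simplification via the identities \eqref{xijji}--\eqref{fijji}, either to \eqref{eq:cismooth} for the pair $(x_1,x_2)$ (true by hypothesis), or to the membership inequality $x\le\yun$, or are trivially slack. On $[\ydeux,x_2]$: symmetrically take $\lambda_2=1$, $h=h_2-M(x-x_2)$, $g=g_2+h_2(x-x_2)-\tfrac M2(x-x_2)^2$, the nontrivial primal inequalities reducing to \eqref{eq:cismooth} in the ordering $(2,1)$ and to $x\ge\ydeux$. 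On $[\yun,\ydeux]$: take $\mu_1=\mu_2=0$, $h=h_1+M(x+x_1-2\yun)$ with the corresponding $g=g_1+h_1(x-x_1)-\tfrac M2(\yun-x_1)^2+\tfrac M2(x^2-\yun^2)+M(x_1-2\yun)(x-\yun)$, and $\lambda_2=2M(x-\yun)/(\dhh+M\dx)$, $\lambda_1=1-\lambda_2$; dual feasibility $\lambda_2\ge0$ holds because $x\ge\yun$ and \eqref{eq:cond1}, and $\lambda_1\ge0$ because $x\le\ydeux$ through the definition of $\ydeux$, while the $\lambda$-constraints are active by construction and the two $\mu$-constraints reduce to \eqref{eq:cond1}. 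The degenerate case $\dhh=-M\dx$ forces $\dg=-\tfrac M2\dx^2$ — \eqref{eq:cismooth} in both orderings pins $\dg$ from below and above — so the three intervals collapse ($\yun=\ydeux$) and $g_{\min}(x)=g_1+h_1(x-x_1)-\tfrac M2(x-x_1)^2=g_2+h_2(x-x_2)-\tfrac M2(x-x_2)^2$; the case $\dhh=M\dx$ is symmetric. Finally $g_{\max}$ follows by applying the above to $-g$ (equivalently, by the mirror construction with derivative of slopes $+M,-M,+M$ and nodes $\zun,\zdeux$ replacing $\yun,\ydeux$).

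I expect the main obstacle to be the bookkeeping in the optimality step: selecting the correct active constraint on each subinterval and then reducing, by elementary but lengthy algebra with \eqref{xijji}--\eqref{fijji} and the definitions of $\yun,\ydeux$, each slack primal inequality precisely to ``\eqref{eq:cismooth} for the original pair'' or ``$x$ in the right subinterval''. A secondary point needing care is checking that $g'_{\min}$ (resp.\ $g'_{\max}$) is genuinely $\C^0$ at the inner nodes $\yun,\ydeux$ (resp.\ $\zun,\zdeux$), which again uses the chosen formulas for those nodes.
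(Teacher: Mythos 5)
Your proposal is essentially identical to the paper's proof: you verify interpolation and membership via the piecewise-linear derivative $g'_{\min}$, reduce pointwise optimality to the four-constraint convex program obtained from the two-sided $M$-smoothness interpolation conditions at $(x,x_1)$ and $(x,x_2)$, and certify optimality on each of the three subintervals with the same KKT multipliers $(\lambda_1,\lambda_2,\mu_1,\mu_2)$ and the same degenerate-case argument, finishing $g_{\max}$ by symmetry. The only differences are expository: you explicitly justify the four-constraint formulation through $3$-point interpolability and Lemma~\ref{lem:interval}, and you flag the need to check $x_1\le\yun\le\ydeux\le x_2$ up front, both of which the paper leaves implicit.
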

Exploiting Theorem  \ref{lem:technique} and Lemma \ref{prop:extremalgrad}, we are now ready to proceed to the proof of Theorem \ref{thm:H-intrp}. Indeed, in Proposition \ref{prop:CIextension}, we prove necessity and sufficiency of 
 \eqref{eq:cond1}, \eqref{eq:cond2}, \eqref{eq:cond22sup} and \eqref{eq:cismooth} to $\HLip{M}$-interpolability of a set $S=\{(x_i,g_i,h_i,f_i)\}_{i=1,2}$, relying on the integration of the extremal gradients derived in Lemma \ref{lem:min_max_grad}. By Lemma \ref{lem:full_expr}, these conditions are equivalent to the conditions of Theorem \ref{thm:H-intrp}. And, by Lemma \ref{lem:interval}, these interpolation conditions hold for any set $\quadr$. Therefore, Proposition \ref{prop:CIextension} concludes the proof of Theorem \ref{thm:H-intrp}.

\begin{proposition}\label{prop:CIextension}
    A set $S=\{(x_i,g_i,h_i,f_i)\}_{i=1,2}$ is $\HLip{M}$-interpolable if and only if, $\forall i,j=1,2 $, 
    \\
   1) If $ \Delta h_{ij} + M| \Delta x_{ij}| \neq 0$, then
   \begin{align}
        | \Delta h_{ij}|\leq& M| \Delta x_{ij}|, \label{eq:c1} \\ 
        T^f_{ij} \geq &-\frac{M}{6} | \Delta x_{ij}|^3 \label{eq:c2}+ \frac{\Par{T^g_{ij} +\frac{M}{2}  \Delta x_{ij} | \Delta x_{ij}|}^2}{2\Par{ \Delta h_{ij}+M| \Delta x_{ij}|}} + \frac{\Par{ \Delta h_{ij} + M| \Delta x_{ij}|}^3}{96M^2}, \\
        T^f_{ij}   \leq &\frac{M}{6} | \Delta x_{ij}|^3 \label{eq:c3}- \frac{\Par{T^g_{ij} -\frac{M}{2}  \Delta x_{ij} | \Delta x_{ij}|}^2}{2\Par{M| \Delta x_{ij}|- \Delta h_{ij}}} -\frac{\Par{ M| \Delta x_{ij}|- \Delta h_{ij}}^3}{96M^2}, \\
        T^g_{ij}  \geq &-\frac{M}{2} \Delta x_{ij}^2 +\frac{1}{4M} (  \Delta h_{ij} +M \Delta x_{ij})^2,\label{eq:c4}
    \end{align}
   \\
   2) If $ \Delta h_{ij} \pm M| \Delta x_{ij}|=0$, then
   \begin{align}
       T^g_{ij}&=\mp \frac{M}{2} \Delta x_{ij}| \Delta x_{ij}| \\
       T^f_{ij}&=\mp \frac{M}{6}| \Delta x_{ij}|^3
   \end{align}
\label{prop:H-intrp}  

\begin{proof}
Suppose $x_1<x_2$. By Lemma \ref{lem:technique}, it holds that $S$ is $\FLip{M}$-interpolable if and only if it satisfies $|\dhhi|\leq M |\dxi|$ (Proposition \ref{prop:def2}), \eqref{eq:c4} and
\begin{align}
    \int_{x_1}^{x_2}g_{\min}(x)\mathrm{d}x \leq f_2-f_1\leq  \int_{x_1}^{x_2} g_{\max} (x)\mathrm{d}x,
\end{align}
 where $g_{\min}$ and $g_{\max}$ are given in Lemma \ref{prop:extremalgrad}.

Consider first the case $\dhh\neq \pm M|\dx|$. Integration of extremal interpolants gives:
\begin{align}
\df &\geq -\frac{M}{6} \dx^3 +\frac{(\dg+\frac{M}{2}\dx^2)^2}{2(\dhh+ M \dx) } + \frac{(\dhh+ M \dx)^3}{96M^2}\label{condijinf} \\
\df &\leq \frac{M}{6} \dx^3 -\frac{(\dg-\frac{M}{2}\dx^2)^2}{2( M \dx-\dhh) } - \frac{( M \dx-\dhh)^3}{96M^2} . \label{condijsup}
       \end{align}
Suppose now $x_2 < x_1$. Then, taking Lemma \ref{prop:extremalgrad} with $x_2$ and $x_1$ permuted and following the same argument as in the case $x_1<x_2$, we obtain the following inequalities to be satisfied:
\begin{align}
T^f_{21}&\geq-\frac{M}{6} \Delta x_{21}^3 +\frac{(T^g_{21}+\frac{M}{2}\Delta x_{21}^2)^2}{2(\Delta h_{21}+ M \Delta x_{21}) } + \frac{ (\Delta h_{21}+ M \Delta x_{21})^3}{96M^2}\label{eq:21inf}\\
T^f_{21}&\leq \frac{M}{6} \Delta x_{21}^3 +\frac{(T^g_{21}-\frac{M}{2}\Delta x_{21}^2)^2}{2(\Delta h_{21}- M \Delta x_{21}) } + \frac{(\Delta h_{21}- M \Delta x_{21})^3}{96M^2} .\label{eq:21sup}
       \end{align}
       By \eqref{xijji}, \eqref{hijji}, \eqref{gijji} and \eqref{fijji}, Condition \eqref{eq:21inf} becomes:
\begin{align}
&-\df+\dg\dx-\frac{1}{2}\dhh \dx^2\geq \frac{M}{6} \dx^3 - \frac{(\dhh+ M \dx)^3}{96M^2}   \nonumber\\&\hspace{5cm}+\frac{(-\dg+\dhh\dx+\frac{M}{2}\dx^2)^2}{-2(\dhh+ M \dx) } \nonumber
\\\Leftrightarrow&
-\df+\dg\dx-\frac{1}{2}\dhh \dx^2\geq \frac{M}{6} \dx^3 -\frac{(\dg+\frac{M}{2}\dx^2)^2}{2(\dhh+ M \dx) } \nonumber \\&\qquad \qquad\qquad \qquad \qquad\qquad \qquad+\dg\dx-\frac{1}{2}\dhh \dx^2 - \frac{(\dhh+ M \dx)^3}{96M^2} \nonumber
\\
\Leftrightarrow&
\ \df\leq - \frac{M}{6} \dx^3 +\frac{(\dg+\frac{M}{2}\dx^2)^2}{2(\dhh+M \dx) }+ \frac{(\dhh+ M \dx)^3}{96M^2} 
\label{condjiinf}
\end{align}
Similarly, \eqref{eq:21sup} becomes:
\begin{align}
    -&\df+\dg\dx-\frac{1}{2}\dhh \dx^2 \leq -\frac{M}{6} \dx^3 - \frac{(\dhh- M \dx)^3}{96M^2} \nonumber\\
    & +\frac{(-\dg+\dhh \dx-\frac{M}{2}\dx^2)^2}{-2(\dhh- M \dx) }\nonumber\\
    \Leftrightarrow &\df\geq \frac{M}{6} \dx^3 +\frac{(\dg-\frac{M}{2}\dx^2)^2}{2(\dhh- M \dx) } + \frac{(\dhh- M \dx)^3}{96M^2}.\label{condjisup}
\end{align}
Conditions \eqref{condijinf} and \eqref{condjisup} are equivalent to a single constraint, independently of the sign of $\dx$:
    \begin{align*}
         \df&\geq -\frac{M}{6} |\dx|^3 + \frac{\Par{\dg +\frac{M}{2} \dx |\dx|}^2}{2\Par{\dhh+M|\dx|}} + \frac{\Par{\dhh + M|\dx|}^3}{96M^2}.  
   \end{align*}
   Similarly, Conditions \eqref{condijsup} and \eqref{condjiinf} can be expressed as:
   \begin{align*}
       \df &  \leq \frac{M}{6} |\dx|^3 - \frac{\Par{\dg -\frac{M}{2} \dx |\dx|}^2}{2\Par{M|\dx|-\dhh}} -\frac{\Par{ M|\dx|-\dhh}^3}{96M^2}.
   \end{align*}
   We have obtained interpolation conditions to be satisfied by the pair $(1,2)$ for $S$ to be $\HLip{M}$-interpolable. Extending the argument to the pair $(2,1)$ allows concluding that a $\HLip{M}$-interpolable set $S=\{(x_i,g_i,h_i,f_i)\}_{i=1,2}$ necessarily satisfies, for all $i,j=1,2$, Conditions \eqref{eq:c2} and \eqref{eq:c3}.

Suppose now $x_2>x_1$ and $\dhh=\pm M\dx$,\ $\dg=\pm \frac{M}{2}\dx^2$. Then, 
\begin{align}
f_2-f_1=\int_{x_1}^{x_2}g_{\min}(x)\mathrm{d}x
\Leftrightarrow \df= \pm \frac{M}{6} \dx^3\label{equal1}
       \end{align}
       
       Finally, suppose $x_1<x_2$ and $\dhh=\pm M \dx \Leftrightarrow \Delta h_{21}=\pm M \Delta x_{21}$, implying $\dg=\mp\frac{M}{2}\dx^2$. By Lemma \ref{prop:extremalgrad} applied to the pair $(2,1)$, it holds that 
       \begin{align}
           T^f_{21}= \pm \frac{M}{6} \Delta x_{21} ^3\underset{\eqref{gijji},\eqref{hijji}}{\Leftrightarrow} \df=\mp \dx^3.\label{equal2}
       \end{align}
      Combining  \eqref{equal1} and \eqref{equal2}, it holds that $\forall i,j=1,2$ if $ \Delta h_{ij}=\pm M| \Delta x_{ij}|$, then necessarily $T^f_{ij}=\pm \frac{M}{6}| \Delta x_{ij}|^3$ (and $T^g_{ij}=\pm \frac{M}{2} \Delta x_{ij}| \Delta x_{ij}|$) for $S$ to be $\HLip{M}$-interpolable.
 \qed \end{proof}
\end{proposition}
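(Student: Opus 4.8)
The plan is to invoke the lifting machinery of Theorem~\ref{lem:technique} with $\F=\GLip{M,0}=\int\FLip{M,0}$, the class of $M$-smooth univariate functions, so that $\int\F=\int^{(2)}\FLip{M,0}=\HLip{M}$. First I would verify the three hypotheses of Theorem~\ref{lem:technique}: $\GLip{M,0}$ is extremally interpolable and its extremal interpolants $g_{\min},g_{\max}$ of a pair $\{(x_i,g_i),(x_i,h_i)\}_{i=1,2}$ are explicitly available --- this is precisely Lemma~\ref{lem:min_max_grad}; and $\HLip{M}$ is both extremally completable and order-$2$ connectable --- both follow from Proposition~\ref{prop:properties} since $\HLip{M}$ is convex. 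Granting these, Theorem~\ref{lem:technique} applied to a single ordered pair $x_1<x_2$ states that $S=\{(x_i,f_i,g_i,h_i)\}_{i=1,2}$ is $\HLip{M}$-interpolable if and only if (a) $S$ is $\HLip{M}$-interpolable without function values, and (b) $\int_{x_1}^{x_2}g_{\min}(x)\,\mathrm{d}x\le f_2-f_1\le\int_{x_1}^{x_2}g_{\max}(x)\,\mathrm{d}x$.

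For (a), by Lemma~\ref{lemma:interpnof} this is the statement that $\{(x_i,g_i,h_i)\}_{i=1,2}$ is $\GLip{M,0}$-interpolable, which by Corollary~\ref{prop:Hinterpnofi} (equivalently, Theorem~\ref{thm:IC_generalized_self_concordant} applied to $\int\FLip{M,0}$) is exactly inequality \eqref{eq:cismooth} $=$ \eqref{eq:c4}, with the Lipschitz bound \eqref{eq:c1} following by summing \eqref{eq:cismooth} over both orderings. For (b), the substantive step is to integrate the explicit three-piece expressions for $g_{\min}$ and $g_{\max}$ from Lemma~\ref{lem:min_max_grad} over $[x_1,x_2]$; conveniently, in this Hessian-Lipschitz case each envelope is given by one three-piece formula (no further subcases). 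Splitting the integral at the breakpoints $\yun,\ydeux$ (resp.\ $\zun,\zdeux$), inserting their expressions in terms of $\dg,\dhh,\dx$, and simplifying, I expect $\int_{x_1}^{x_2}g_{\min}$ to reduce to $g_1\dx+\tfrac{h_1}{2}\dx^2-\tfrac{M}{6}\dx^3+\frac{(\dg+\frac{M}{2}\dx^2)^2}{2(\dhh+M\dx)}+\frac{(\dhh+M\dx)^3}{96M^2}$, and $\int_{x_1}^{x_2}g_{\max}$ to the analogue $g_1\dx+\tfrac{h_1}{2}\dx^2+\tfrac{M}{6}\dx^3-\frac{(\dg-\frac{M}{2}\dx^2)^2}{2(M\dx-\dhh)}-\frac{(M\dx-\dhh)^3}{96M^2}$. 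Rewriting through $\df=f_2-f_1-g_1\dx-\tfrac{h_1}{2}\dx^2$ then turns (b) into the ordered-pair versions of \eqref{eq:c2} and \eqref{eq:c3}.

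It then remains to symmetrize, i.e.\ to obtain the conditions for all $i,j=1,2$ with absolute values, independently of the order of $x_1,x_2$. The plan is to run the same argument with $x_1,x_2$ interchanged --- obtaining inequalities in $T^g_{21},T^f_{21},\Delta x_{21},\Delta h_{21}$ --- and to use the purely algebraic index-swap identities \eqref{xijji}--\eqref{fijji} to check that \eqref{eq:c2} imposed on the pair $(2,1)$ is equivalent to \eqref{eq:c3} imposed on the pair $(1,2)$, and vice versa; since $|\Delta x_{ij}|=|\Delta x_{ji}|$, the four resulting inequalities (namely \eqref{eq:c2} and \eqref{eq:c3}, each on $(1,2)$ and on $(2,1)$) amount exactly to the two ordered conditions, settling the generic case. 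Finally I would treat the degenerate case $\dhh+M|\dx|=0$: condition \eqref{eq:c4} on both orderings pins $T^g_{ij}=\mp\tfrac{M}{2}\Delta x_{ij}|\Delta x_{ij}|$, and in this regime Lemma~\ref{lem:min_max_grad} gives $g_{\min}\equiv g_{\max}$, so the integral inequality collapses to an equality pinning $T^f_{ij}=\mp\tfrac{M}{6}|\Delta x_{ij}|^3$ --- this is part~2) of the statement. Combined with Lemma~\ref{lem:interval} (passage from a pair to $N$ points) and Lemma~\ref{lem:full_expr} (equivalence with the shorter list of Theorem~\ref{thm:H-intrp}), this would complete the proof of Theorem~\ref{thm:H-intrp}.

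The hard part will be the second paragraph: carrying out the integration of the piecewise-quadratic extremal interpolants and checking that the expression involving the breakpoints really collapses to the clean closed form, in particular the cubic correction $\frac{(\dhh+M\dx)^3}{96M^2}$ with exactly that constant. The symmetrization via \eqref{xijji}--\eqref{fijji} is routine but error-prone, and one must ensure the boundary subcases $\dhh=\pm M|\dx|$ --- where pieces of $g_{\min}$ or $g_{\max}$ degenerate --- are consistently absorbed into the non-degenerate formulas.
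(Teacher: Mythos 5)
Your proposal is correct and follows essentially the same route as the paper: apply Theorem~\ref{lem:technique} with $\F=\GLip{M,0}$ so that the missing no-function-value condition is \eqref{eq:c4} (via Corollary~\ref{prop:Hinterpnofi}), integrate the explicit three-piece extremal interpolants from Lemma~\ref{lem:min_max_grad} to get the ordered-pair versions of \eqref{eq:c2}--\eqref{eq:c3}, symmetrize via the index-swap identities \eqref{xijji}--\eqref{fijji}, and handle the degenerate case $\dhh=\pm M\dx$ where $g_{\min}\equiv g_{\max}$ forces equality. The only minor difference is that you observe \eqref{eq:c1} follows directly from imposing \eqref{eq:c4} on both orderings, while the paper invokes Proposition~\ref{prop:def2}; both are valid and essentially equivalent.
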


\section{Proof of Lemma \ref{lem:full_expr}}\label{app:full_expr}
\begin{proof}[Proof of Lemma \ref{lem:full_expr}]
Condition \eqref{eq:cond2} evaluated at $(j,i)$ is equivalent to Condition \eqref{eq:cond22sup} evaluated at $(i,j)$:
   \begin{align*}
       &T^f_{ji}\geq -\frac{M}{6} |\Delta x_{ji}|^3 + \frac{\Par{T^g_{ji} +\frac{M}{2} \Delta x_{ji} |\Delta x_{ji}|}^2}{2\Par{\Delta h_{ji}+M|\Delta x_{ji}|}} + \frac{\Par{\Delta h_{ji} + M|\Delta x_{ji}|}^3}{96M^2}
       \\
       \Leftrightarrow&
       -T^f_{ij}+T^g_{ij} \Delta x_{ij}-\frac{1}{2} \Delta h_{ij} \Delta x_{ij}^2\geq -\frac{M}{6} | \Delta x_{ij}|^3 - \frac{\Par{-T^g_{ij}+ \Delta h_{ij} \Delta x_{ij} -\frac{M}{2}  \Delta x_{ij} | \Delta x_{ij}|}^2}{2\Par{ \Delta h_{ij}-M| \Delta x_{ij}|}} \\&\hspace{5cm}- \frac{\Par{ \Delta h_{ij} - M| \Delta x_{ij}|}^3}{96M^2} \\
       \Leftrightarrow &T^f_{ij} \leq \frac{M}{6} | \Delta x_{ij}|^3 + \frac{\Par{T^g_{ij} -\frac{M}{2}  \Delta x_{ij} | \Delta x_{ij}|}^2}{2\Par{ \Delta h_{ij}-M| \Delta x_{ij}|}}+T^g_{ij} \Delta x_{ij}-\frac{1}{2} \Delta h_{ij} \Delta x_{ij}^2-T^g_{ij} \Delta x_{ij}\\&\quad -\frac{1}{2} \Delta h_{ij} \Delta x_{ij}^2 + \frac{\Par{ \Delta h_{ij} - M| \Delta x_{ij}|}^3}{96M^2}\\
       \Leftrightarrow &T^f_{ij} \leq \frac{M}{6} | \Delta x_{ij}|^3 -\frac{\Par{T^g_{ij} -\frac{M}{2}  \Delta x_{ij} | \Delta x_{ij}|}^2}{2\Par{M| \Delta x_{ij}|- \Delta h_{ij}}}-\frac{\Par{M| \Delta x_{ij}|- \Delta h_{ij}}^3}{96M^2}.
   \end{align*}
Hence, if $S$ satisfies \eqref{eq:cond2} $\forall i,j=1,2$, then it satisfies \eqref{eq:cond22sup} $\forall i,j=1,2$.    

We now show that satisfaction $\forall i,j=1,2$ of \eqref{eq:cond1} and \eqref{eq:cond2} (hence satisfaction of \eqref{eq:cond22sup}) imply satisfaction of \eqref{eq:cismooth} $\forall i,j=1,2$. Observe that \eqref{eq:cismooth} is satisfied by both pairs $(i,j)$ and $(j,i)$ if and only if
\begin{align}
   & T^g_{ij}- K\geq 0 \text{ and } T^g_{ji}-K\geq 0 \underset{\eqref{gijji}}{\Leftrightarrow} -T^g_{ij}+ \Delta h_{ij} \Delta x_{ij}-K\geq 0,\label{blop1}
\end{align}
where $ K = -\frac{M}{2} \Delta x_{ij}^2 +\frac{1}{4M} (  \Delta h_{ij} +M \Delta x_{ij})^2$. Condition \eqref{blop1} is equivalent to
\begin{align}
    (T^g_{ij}- K)(-T^g_{ij}+ \Delta h_{ij} \Delta x_{ij}-K)\geq 0,\label{blop}
\end{align}where the equivalence follows from the fact that $T^g_{ij}< K$ and $-T^g_{ij}+ \Delta h_{ij} \Delta x_{ij}<K$ is in contradiction with Condition \eqref{eq:cond1}, since it implies:
\begin{align*}
    2K> \Delta h_{ij} \Delta x_{ij} \Leftrightarrow -\frac{M}{2} \Delta x_{ij}^2 +\frac{1}{2M}   \Delta h_{ij} ^2>0.
\end{align*}
Substracting \eqref{eq:cond2} and \eqref{eq:cond22sup} yields:
\begin{align*}
&\frac{M}{3}| \Delta x_{ij}|^3\geq\frac{\Par{T^g_{ij} -\frac{M}{2}  \Delta x_{ij} | \Delta x_{ij}|}^2}{2\Par{M| \Delta x_{ij}|- \Delta h_{ij}}}+\frac{\Par{T^g_{ij}+\frac{M}{2}  \Delta x_{ij} | \Delta x_{ij}|}^2}{2\Par{ \Delta h_{ij}+M| \Delta x_{ij}|}}+ \\&\hspace{3cm}\frac{\Par{M| \Delta x_{ij}|- \Delta h_{ij}}^3+\Par{ \Delta h_{ij} + M| \Delta x_{ij}|}^3}{96M^2}
\\
\Leftrightarrow& \frac{M| \Delta x_{ij}|}{(M| \Delta x_{ij}|)^2- \Delta h_{ij}^2}\left(-(T^g_{ij})^2+T^g_{ij} \Delta h_{ij} \Delta x_{ij}-\frac{M^2 \Delta x_{ij}^2}{16}-\frac{3 \Delta h_{ij}^2 \Delta x_{ij}^2}{8}
+\frac{ \Delta h_{ij}^4}{16M^2}\right)\geq 0\\
\underset{\eqref{eq:cond1}}{\Leftrightarrow}& (T^g_{ij}-\frac{ \Delta h_{ij}^2}{4M}+\frac{M}{4}\dx^2-\frac{ \Delta h_{ij}  \Delta x_{ij}}{2})(-T^g_{ij}-\frac{ \Delta h_{ij}^2}{4M}+\frac{M}{4} \Delta x_{ij}^2+\frac{ \Delta h_{ij}  \Delta x_{ij}}{2})\geq 0
\\
\Leftrightarrow &(T^g_{ij} - K)(-T^g_{ij} + \Delta h_{ij} \Delta x_{ij}-K ) \geq 0,
\end{align*}
and \eqref{eq:cismooth} is satisfied.

Finally, since $ \Delta h_{ij}\geq -M| \Delta x_{ij}|$, \eqref{eq:cond2} implies $T^f_{ij}\geq-\frac{M}{6} | \Delta x_{ij}|^3$, and since $ \Delta h_{ij}\leq M| \Delta x_{ij}|$, \eqref{eq:cond22sup} implies $T^f_{ij}\leq\frac{M}{6} | \Delta x_{ij}|^3$.
 \qed \end{proof}
\begin{remark}
Conditions \eqref{eq:cismooth} and \eqref{eq:cond22sup} can be removed from the final interpolation conditions since they are implied by the two remaining conditions. On the contrary, \eqref{eq:cond1} cannot be removed from the interpolation conditions since it is not implied by \eqref{eq:cond2}. Indeed, consider for example the set $S=\{(0,0,0,0),(\frac14,\frac13,0,0)\}$ satisfying \eqref{eq:cond2} but not \eqref{eq:cond1} for $M=1$.
\end{remark}

\section{Improved univariate descent lemma of CNM is tight}\label{app:improved_descent_lemma_is_tight}
The univariate descent lemma \eqref{eq:improved_descent_lemma} is tight and attained by the function $f(x) = M\frac{ x^3}{6} - \frac{x^2}{2}$ and $x_0 = 0$. The Cubic Regularized Newton iteration \eqref{eq:CNM} on $f$ from $x_0$ is
\begin{align*}
    x_1 & = \mathrm{Arg} \min_x f(x_0) + f'(x_0)(x-x_0) + \frac12 f''(x_0)(x-x_0)^2 + \frac{M}{6} |x-x_0|^3 \\
     & = \mathrm{Arg} \min_x \frac12 (-x^2 + \frac{M}{3} |x|^3)  = \pm \frac{2}{M}
\end{align*}
where both signs are global minimum of the cubic bound. Let $x_1=-\frac{2}{M}$. We have $f(x_0) = 0$, $f(x_1) = -\frac{10}{3M^2}$, $f'(x_1) = \frac{4}{M}$ and therefore $ f(x_0) - f(x_1) = \frac{5M}{12}\Par{\frac{|f'(x_1)|}{M}}^{\frac{3}{2}} = \frac{10}{3M^2}$.

\end{document}